\documentclass[11pt,a4paper]{article}

\usepackage[english]{babel}

\usepackage[letterpaper,top=2cm,bottom=2cm,left=3cm,right=3cm,marginparwidth=1.75cm]{geometry}

\usepackage{graphicx} 
\usepackage{amsfonts}
\usepackage{amssymb}
\usepackage{mathtools}
\usepackage[colorlinks=true, allcolors=blue]{hyperref}
\usepackage{amsthm}
\usepackage{mathrsfs}
\usepackage{enumerate}
\usepackage{cases}

\newtheorem{theorem}{Theorem}[section]

\newtheorem{lemma}{Lemma}[section]
\newtheorem{definition}{Definition}[section]

\newtheorem{corollary}{Corollary}[section]
\newtheorem{remark}{Remark}[section]

\allowdisplaybreaks

\title{A Parameterization of Small-Amplitude KP Finite-Gap Solutions via Classical Schottky Uniformization and Persistence of One- and Two-Gap Solutions}

\author{Haoxuan Liu  \thanks{The work was supported  by National Natural Science Foundation of China (Grant No. 12371189).}$^1$,  Zuhong You \footnotemark[1] $^1$ and Xiaoping Yuan \footnotemark[1] $^1$  \\
$^{1}$School of Mathematical Sciences, Fudan University, Shanghai 200433, P. R. China   }
\date{}

\begin{document}

\maketitle

\begin{abstract}
We develop a parameterization of small-amplitude finite-gap solutions to the KP equation with prescribed spatial wavenumber vectors using classical Schottky uniformization. This parameterization is suitable for a Lyapunov–Schmidt reduction.  Using this parameterization, we prove the persistence, under Hamiltonian perturbations, of periodic one-gap solutions for both KP-I and KP-II and of bi-periodic two-gap solutions for KP-I. 
\end{abstract}

\section{Introduction}
\label{sec:introduction}

The Kadomtsev--Petviashvili (KP) equation is a fundamental two-dimensional
nonlinear dispersive equation of the form
\begin{equation}
    u_t+u_{xxx}-\lambda\partial_x^{-1}u_{yy}+uu_x=0,
    \qquad \lambda=\pm1.
    \label{kp}
\end{equation}
Under the convention adopted in this paper, the cases $\lambda=1$ and
$\lambda=-1$ correspond to the KP-I and KP-II equations, respectively.
We consider \eqref{kp} on the two-dimensional torus
\[
    (x,y)\in\mathbb{T}_{\gamma}^{2},
    \quad
    \mathbb{T}_{\gamma}:=\mathbb{R}/\gamma\mathbb{Z},
    \quad
    \gamma>0,
\]
subject to the zero-$x$-mean condition
\begin{equation*}
    \int_{\mathbb{T}_{\gamma}}u(t,x,y)\,dx=0
    \qquad
    \text{for a.e. }y\in\mathbb{T}_{\gamma}.
\end{equation*}

The KP equation arises as an asymptotic model for weakly nonlinear long
waves with weak transverse effects; see
\cite{kadomtsev1970stability,ablowitz1981solitons,novikov1984theory}
for classical accounts. They also form one of the basic
classes of integrable equations in two spatial dimensions, admitting a Lax
representation, inverse-scattering formulations, and an associated
integrable hierarchy
\cite{ablowitz1981solitons,novikov1984theory,ablowitz1991solitons}. 
Their integrable structure gives rise to several distinguished families of
exact solutions, including line solitons, periodic traveling waves,
and algebro-geometric finite-gap solutions. The latter can be
represented in terms of Riemann theta functions associated with
compact Riemann surfaces and Abelian differentials; see, for example,
\cite{krichever1977methods,dubrovin1981theta,
ablowitz1991solitons,belokolos1994algebro}.

The reality and regularity of theta-functional KP solutions require
additional restrictions on the underlying spectral data. Dubrovin and
Natanzon obtained an algebro-geometric characterization of smooth
real finite-gap solutions of the KP equation \cite{dubrovin1989real}. More recently,
Ichikawa and Kodama constructed real Schottky groups uniformizing Riemann
surfaces associated with real finite-gap solutions and showed that KP
solitons arise through a corresponding degeneration \cite{ichikawa2025kp}.

A genus-$g$ finite-gap solution depends on $g$ phase variables and can be regarded
as a finite-dimensional invariant torus of the integrable KP flow.
The richness of the exact solution theory for the KP equation naturally
leads to a basic question: which of these structures persist beyond the
integrable regime? In the finite-gap setting, the invariant-torus
interpretation above turns this into a KAM-type problem, namely whether
the corresponding tori survive under Hamiltonian perturbations.
This problem was also raised explicitly by Bourgain and Kuksin in their
respective survey articles \cite{bourgain2004recent,kuksin2004fifteen}.

For the periodic KdV equation, the persistence of finite-gap invariant
tori under Hamiltonian perturbations was developed in the foundational
work of Kuksin \cite{Kuk89,Kuk98kdv}; see also \cite{Kuk00} for a systematic
account. A central ingredient in this theory is the nondegeneracy of the
unperturbed finite-gap family. Bobenko and Kuksin established the relevant
nondegeneracy for periodic finite-gap KdV solutions by means of Schottky
uniformization \cite{Kuk91}. Closely related to the parameterization problem
considered here, Bikbaev and Kuksin investigated frequency and wavenumber
vectors as coordinates on manifolds of real finite-gap solutions, emphasizing
their role in perturbation and averaging theories \cite{bikbaev1993parametrization}.

Motivated by this question, the present paper studies the persistence
of small-amplitude finite-gap solutions when the KP equation is
subjected to a Hamiltonian perturbation.

The first difficulty is to find a parameterization of the unperturbed
finite-gap solutions that is suitable for perturbative analysis. A
Lyapunov--Schmidt reduction requires an explicit finite-dimensional
family of unperturbed solutions, together with internal parameters for
which the dependence of the associated frequencies can be controlled.
For the periodic KdV equation,
global Birkhoff coordinates provide a canonical system of action
variables and hence a natural description of its finite-gap
invariant tori \cite{kappeler2003perturbed}.
In the present KP problem, however, one cannot rely on an analogous global
Birkhoff-coordinate framework.
Although the Its--Matveev formula gives an
explicit theta-functional representation of KP finite-gap solutions,
the associated Riemann-surface data do not directly 
yield a small-amplitude coordinate system in which the prescribed periodic
spatial wavenumber vectors are kept fixed.

Our construction is related to the preceding works, but differs in both its
purpose and its choice of parameters. In the KP setting, one must
simultaneously prescribe the two spatial wavenumber vectors associated
with the $x$- and $y$-directions. Rather than using the temporal frequency
and spatial wavenumber vectors themselves as local coordinates, we take
the square roots of the Schottky multipliers as amplitude parameters and
vary the fixed points of the Schottky generators such that the prescribed
spatial wavenumber vectors are retained. The period matrix and the temporal
frequency vector are then obtained as functions of these amplitudes.

Thus, the novelty of the present construction lies not in the use of Schottky uniformization by itself, but in a constructive KP-specific small-amplitude parameterization near the degeneration limit that preserves the prescribed periodic spatial wavenumber vectors and is directly adapted to the subsequent Lyapunov--Schmidt reduction.

Our parameterization result may be summarized as follows.

\begin{theorem} \label{intro}
Fix $g\in \mathbb{Z}_+$. If $g>1$, suppose that $\sqrt{3}\pi/\gamma\in\mathbb{R}\setminus\mathbb{Q}$. Set $\boldsymbol m = (m_1,\cdots, m_g), \boldsymbol n= (n_1,\cdots, n_g) \in \mathbb{Z}^g$, satisfying $m_1\cdots m_g\neq 0$ and $m_j n_k-m_k n_j\neq 0$, for $j,k\in\mathcal{G}$ and $j\neq k$.

For $i,j\in\mathcal{G}$, let
\begin{align*}
    &A_{j,0}^{(1)}=-\dfrac{n_j}{2\sqrt{3} m_j}-\dfrac{\pi}{\gamma}m_j, B_{j,0}^{(1)}=-\dfrac{n_j}{2\sqrt{3} m_j}+\dfrac{\pi}{\gamma}m_j,\\
    &A_{j,0}^{(2)}=\dfrac{n_j}{2\sqrt{3}m_j}+i\dfrac{\pi}{\gamma}m_j,\ B_{j,0}^{(2)}=\dfrac{n_j}{2\sqrt{3}m_j}-i\dfrac{\pi}{\gamma}m_j.
\end{align*}
and
\begin{align*}
    &\kappa_{ij,0}^{(\iota)} = \{ B_{j,0}^{(\iota)}, A_{j,0}^{(\iota)}, B_{i,0}^{(\iota)}, A_{i,0}^{(\iota)}\}, \quad \{a,b,c,d\} := \frac{(a-c)(b-d)}{(a-d)(b-c)},\quad i\neq j,\quad \iota =1,2,\\
    &\epsilon_{jj}:=0, \qquad \epsilon_{ij} :=
    \begin{cases}
        0,\textup{ if }\kappa_{ij,0}^{(1)}>0,\\
        \pi,\textup{ if }\kappa_{ij,0}^{(1)}<0,
    \end{cases}\qquad  E := \bigl( \epsilon_{ij} \bigr)_{i,j\in\mathcal G}.
\end{align*}

$(i)$ 
There exists a constant $\tilde\rho \in (0,1)$ and real-analytic functions $B_{real}^{(1)}(\boldsymbol r) =(B^{(1)}_{real,ij}(\boldsymbol r))_{i,j\in \mathcal{G}} \in \mathbb{R}^{g\times g},\ \boldsymbol \omega_0^{(1)}(\boldsymbol r)\in \mathbb{R}^g$ on $(0,\tilde\rho)^g$.
For every $\boldsymbol d\in \mathbb{R}^g$ and $\boldsymbol r \in (0,\tilde\rho)^g$, these functions determine a finite-gap solution
\begin{equation*}
    u_0^{(1)}(t,x,y;\boldsymbol r) = 12\frac{\partial^2}{\partial x^2} \log \theta\left( \frac{2\pi i}{\gamma}(\boldsymbol m x+\boldsymbol n y+ \boldsymbol \omega_0^{(1)}(\boldsymbol r)t +\boldsymbol d); B_{real}^{(1)}(\boldsymbol r)+i E \right)
\end{equation*}
to the KP-I equation \eqref{kp}.
Moreover, for $i,j\in\mathcal{G}$,
\begin{equation*}
    B^{(1)}_{real,jj}(\boldsymbol r)=\log r_j^2+O(\boldsymbol r^2),\quad B^{(1)}_{real,ij}(\boldsymbol r)=\log |\kappa_{ij,0}^{(1)}| + O(\boldsymbol r^2),\quad i\neq j.
\end{equation*}

$(ii)$ There exists a constant $\tilde\rho \in (0,1)$ and real-analytic functions $ B_{real}^{(2)}(\boldsymbol r) = (B^{(2)}_{real,ij}(\boldsymbol r))_{i,j\in \mathcal{G}}\in \mathbb{R}^{g\times g},\ \boldsymbol \omega_0^{(2)}(\boldsymbol r)\in \mathbb{R}^g$ on $(0,\tilde\rho)^g$.
For every $\boldsymbol d\in \mathbb{R}^g$ and $\boldsymbol r \in (0,\tilde\rho)^g$, these functions determine a finite-gap solution
\begin{equation*}
    u_0^{(2)}(t,x,y;\boldsymbol r) = 12\frac{\partial^2}{\partial x^2} \log \theta\left( \frac{2\pi i}{\gamma}(\boldsymbol m x+\boldsymbol n y+\boldsymbol \omega_0^{(2)}(\boldsymbol r)t +\boldsymbol d); B_{real}^{(2)}(\boldsymbol r) \right)
\end{equation*}
to the KP-II equation \eqref{kp}. Moreover, for $i,j\in \mathcal{G}$,
\begin{equation*}
    B^{(2)}_{real,jj}(\boldsymbol r)=\log r_j^2+O(\boldsymbol r^2),\quad B^{(2)}_{real,ij}(\boldsymbol r)=\log \kappa_{ij,0}^{(2)} + O(\boldsymbol r^2),\quad i\neq j.
\end{equation*}
\end{theorem}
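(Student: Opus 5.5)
Our plan is to build the solutions from a classical Schottky group and then use the implicit function theorem to fix the wavenumber vectors. For $\iota=1,2$ we consider a Schottky group generated by loxodromic maps $\sigma_j$, $j\in\mathcal G$, with fixed points $A_j,B_j$ and multipliers $\mu_j=r_j^2$, normalized by
\begin{equation*}
\frac{\sigma_j z-A_j}{\sigma_j z-B_j}=\mu_j\,\frac{z-A_j}{z-B_j}.
\end{equation*}
For $\iota=1$ the fixed points are real; for $\iota=2$ they are complex conjugate pairs, which encodes the two different reality conditions. For $\boldsymbol r$ small the Poincaré theta series converge. They give the normalized holomorphic differentials
\begin{equation*}
\omega_j=\frac{1}{2\pi i}\sum_{\sigma\in G/\langle\sigma_j\rangle}\Bigl(\frac{1}{z-\sigma B_j}-\frac{1}{z-\sigma A_j}\Bigr)dz
\end{equation*}
and the period matrix
\begin{equation*}
B_{ij}=\log\mu_j\,\delta_{ij}+\sum_{\sigma}\log\{B_j,A_j,\sigma B_i,\sigma A_i\}.
\end{equation*}
The leading terms, coming from the identity coset, are $\log r_j^2$ and $\log\kappa_{ij}$. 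The corrections are convergent series in the multipliers, real-analytic in $(\boldsymbol r,A,B)$ and of size $O(\boldsymbol r^2)$. The branch of the logarithm produces the imaginary shift $iE$ when $\kappa^{(1)}_{ij,0}<0$ in the real case. We take the puncture $\infty$ as the KP base point with local parameter $k^{-1}=1/z$. The Its--Matveev (Krichever) formula then gives a KP solution of the form $12\partial_x^2\log\theta(\boldsymbol Ux+\boldsymbol Vy+\boldsymbol Wt+\boldsymbol d;B)$, with $\boldsymbol U,\boldsymbol V,\boldsymbol W$ the expansion coefficients of $\omega_j$ at $\infty$. Explicitly,
\begin{equation*}
U_j\propto\sum_\sigma(\sigma B_j-\sigma A_j),\qquad V_j\propto\sum_\sigma\bigl((\sigma B_j)^2-(\sigma A_j)^2\bigr),\qquad W_j\propto\sum_\sigma\bigl((\sigma B_j)^3-(\sigma A_j)^3\bigr),
\end{equation*}
up to scalings in $\lambda$. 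We will fix the normalization constants ($\sqrt3$, factors of $i$ for KP-I versus KP-II) so that the equation matches \eqref{kp}.

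Next we impose periodicity. We require
\begin{equation*}
\boldsymbol U=\frac{2\pi i}{\gamma}\boldsymbol m,\qquad \boldsymbol V=\frac{2\pi i}{\gamma}\boldsymbol n.
\end{equation*}
At $\boldsymbol r=0$ only the identity terms survive. The conditions then reduce to $B_j-A_j\propto m_j$ and $B_j^2-A_j^2\propto n_j$, i.e. $(B_j-A_j)(B_j+A_j)\propto n_j$. This system is solved exactly by the listed $A^{(\iota)}_{j,0},B^{(\iota)}_{j,0}$, and we will check this directly. The Jacobian of the map $(A_j,B_j)\mapsto(B_j-A_j,B_j^2-A_j^2)$ has determinant $2(B_j-A_j)$. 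This is nonzero since $m_j\neq0$, and the map is block diagonal in $j$ at $\boldsymbol r=0$. The implicit function theorem therefore gives real-analytic $A_j(\boldsymbol r),B_j(\boldsymbol r)$, respecting the real or conjugate structure, with $A_j(\boldsymbol r)-A_{j,0}=O(\boldsymbol r^2)$ because the corrections are even in $r$. Substituting, we define
\begin{equation*}
B^{(\iota)}_{real}(\boldsymbol r):=\operatorname{Re}B(\boldsymbol r),\qquad \boldsymbol\omega^{(\iota)}_0(\boldsymbol r):=\frac{\gamma}{2\pi i}\boldsymbol W(\boldsymbol r).
\end{equation*}
The stated asymptotics then follow from the leading terms of the series.

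The main obstacle is the reality and regularity of the resulting $u$. We must show that the Schottky group is invariant under the appropriate antiholomorphic involution ($z\mapsto\bar z$, possibly composed with a sign or shift), which makes $B$ real modulo $iE$ and $\boldsymbol\omega_0$ real. We must also show that $\theta$ has no zeros on the real phase torus. For small $\boldsymbol r$ we would argue directly from the theta series, whose terms are dominated by $\boldsymbol n\in\{0,\pm e_j\}$, so $\theta\approx1+\sum_j2r_j^2\cos(\cdot)>0$. A second delicate point is that the fixed-point configuration must remain a valid Schottky configuration, with disjoint circles. This requires the fixed points $A_{j,0},B_{j,0}$ to be pairwise distinct across $j$, which is where $m_jn_k-m_kn_j\neq0$ enters. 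For $g>1$ the irrationality of $\sqrt3\pi/\gamma$ is what we expect to exclude accidental coincidences such as $A_{i,0}=B_{j,0}$ that would make some $\kappa_{ij,0}$ degenerate. We will verify this by writing out the coincidence equations, which force a rational relation between $\sqrt3\pi/\gamma$ and the integers $m,n$.
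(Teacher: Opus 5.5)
Your architecture is the paper's. It uses Schottky generators with $\mu_j=r_j^2$, the Poincar\'e-series formulas for $B,\boldsymbol U,\boldsymbol V,\boldsymbol W$, and the implicit function theorem at $\boldsymbol r=0$ with a block-diagonal Jacobian of determinant $2(B_j-A_j)\neq0$. It takes real fixed points for KP-I and conjugate pairs for KP-II, with reality coming from invariance under $z\mapsto\bar z$, and it uses the irrationality of $\sqrt3\pi/\gamma$ only to keep the real base points distinct. Your evenness-in-$r_j$ argument for the $O(\boldsymbol r^2)$ displacement of $A_j,B_j$ is a valid substitute for the paper's Lipschitz-inverse step. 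The genuine gap is the frequency: $\boldsymbol\omega_0^{(\iota)}:=\frac{\gamma}{2\pi i}\boldsymbol W$ does not yield a solution, even after you fix the factors $-4$, $\sqrt3$ and $i$.

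The Its--Matveev formula is $u=2\partial_x^2\log\theta(\boldsymbol Ux+\boldsymbol Vy+\boldsymbol Wt+\boldsymbol D;B)+2c$, where $\Omega_1=k-c/k+O(k^{-2})$. With your local parameter $k=z$ at $\infty$ one has $c=\sum_{\sigma\neq\mathrm{id}}\sigma_{21}^{-2}=2\sum_j(A_j-B_j)^2r_j^2+O(\boldsymbol r^4)$, which is nonzero. After rescaling to \eqref{kp} you obtain $12\partial_x^2\log\theta(\cdots)+12c$. The constant is not allowed, since the theorem's $u_0$ carries no constant and has zero $x$-mean. It must be removed by the Galilean symmetry $v\mapsto v(t,x+st,y)-s$ with $s=12c$, which replaces the time vector by $\boldsymbol W'=-4\boldsymbol W+12c\,\boldsymbol U$. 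The correct frequencies are therefore
\[
\boldsymbol\omega_0^{(1)}=-12c\,\boldsymbol m-\frac{2\gamma}{\pi}\boldsymbol W,\qquad \boldsymbol\omega_0^{(2)}=12c\,\boldsymbol m+\frac{2i\gamma}{\pi}\boldsymbol W .
\]
The missing term is not a normalization constant. For $g=1$ the coset set $G/G_1$ is $\{\mathrm{id}\}$, so $A,B$ do not depend on $r$ and $W=A^3-B^3$ is constant. The $\boldsymbol W$-part of the frequency is then exactly the linear frequency $\frac{4\pi^2}{\gamma^2}m_0^3+\lambda\frac{n_0^2}{m_0}$. The entire amplitude dependence of the cnoidal-wave frequency, starting with $-\frac{96\pi^2m_0^3}{\gamma^2}r^2$, comes from $\pm12c\,m_0$.

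Fixing this also obliges you to show that $c(\boldsymbol r)$ converges, is real analytic, and is real in both cases. Your involution argument covers this once $c$ is included. The quantity $\sigma_{21}^{-2}$ is insensitive to the sign of the determinant-one representative. When $A_j=\overline{B_j}$, the relation $\overline{\sigma_j(\bar z)}=\sigma_j^{-1}(z)$ induces a bijection of reduced words.

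Two smaller points follow.
\begin{itemize}
\item The $\pm e_j$ terms of $\theta$ are $e^{B_{jj}/2}=r_j(1+O(\boldsymbol r^2))$, not $r_j^2$. Your conclusion $|\theta-1|<1$ for small $\boldsymbol r$ still holds.
\item You only assert that the correction series are analytic across $r_j=0$. This is exactly what the implicit function theorem at $\boldsymbol r=0$ requires, so it needs a proof. The paper extends $\sigma_j^{\pm1}$ to $r_j=0$ and uses removable singularities, Hartogs' theorem, and Burnside-type estimates that are uniform in degenerate parameters.
\end{itemize}
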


\bigskip

With this parameterization at hand, we turn to the perturbed problem.
We now specify the classes of Hamiltonian differential perturbations
considered in this paper. Denote by
\[
J^s u
:=
\bigl(
\partial_x^{\alpha_1}\partial_y^{\alpha_2}u
\bigr)_{\alpha_1+\alpha_2\leq s}
\]
the spatial \(s\)-jet of \(u\).
 We consider the perturbed KP
equation
\begin{equation}
u_t+u_{xxx}-\lambda\partial_x^{-1}u_{yy}
   +uu_x+\mathcal P[u]=0,
\qquad (x,y)\in\mathbb T_\gamma^2,
\label{eq:perturbed-KP}
\end{equation}
where the perturbation vector field is of the Hamiltonian form
\begin{equation*}
\mathcal P[u]
=
\partial_x Q(J^su),
\qquad
 Q(J^su)
=
\frac{\delta\mathcal H_{\mathrm{pert}}}{\delta u}[u].
\end{equation*}
Here \(\mathcal H_{\mathrm{pert}}\) is a real-analytic Hamiltonian
functional whose Taylor expansion at the origin starts at degree
\(p+1\), with the integer \(p\geq3\). Equivalently, the real-analytic differential
expression \( Q\) vanishes to order at least \(p\) at the zero
jet. All Taylor coefficients are assumed to be real.

We use two classes of perturbations.
Assume that $Q_1,Q_2$ are variational gradients of real-analytic Hamiltonian functionals.
In the sequel, only the first two homogeneous terms of the perturbation contribute to the leading bifurcation equations.
For the one-gap persistence result, we therefore set
\begin{equation}
\mathcal P_1[u] := \partial_x Q_1(J^2u),\quad
 Q_1(J^2u) := u^p+u^{p+1} +Q_{1,>p+1} \bigl(u,u_x,u_y,u_{xx},u_{xy},u_{yy}\bigr).
\label{eq:intro-P1}
\end{equation}
The remainder $Q_{1,>p+1}(z_0,z_1,z_2,z_3,z_4,z_5)$ is real analytic on the polydisc
\[
\bigl\{
(z_0,\cdots,z_5)\in\mathbb C^6:
|(z_0,\cdots,z_5)|_\infty<R_1
\bigr\},
\]
and its Taylor expansion at the origin vanishes to order at least
\(p+2\). We further impose the symmetry condition
\begin{equation}
Q_{1,>p+1}
(z_0,-z_1,-z_2,z_3,z_4,z_5)
=
Q_{1,>p+1}
(z_0,z_1,z_2,z_3,z_4,z_5).
\label{eq:intro-P1-parity}
\end{equation}
Here the variables $(z_0,z_1,z_2,z_3,z_4,z_5)$ correspond respectively to
$ (u,u_x,u_y,u_{xx},u_{xy},u_{yy})$.

For the two-gap persistence result, we consider a more structured
Hamiltonian gradient. Set
\[
J_x^2u:=(u,u_x,u_{xx}),
\qquad
J_y^1u:=(u,u_y),
\]
and write
\begin{equation}
 Q_2(J^2u)
:=
 Q_{2,1}(J_x^2u)
+
 Q_{2,2}(J_y^1u),
\label{eq:two-gap-Q-decomposition}
\end{equation}
where
\begin{align}
 Q_{2,1}(u,u_x,u_{xx})
&:=
a_pu^p+a_{p+1}u^{p+1}
+Q_{2,1,>p+1}(u,u_x,u_{xx}),
\label{eq:intro-P21}
\\
 Q_{2,2}(u,u_y)
&:=
b_pu^p+b_{p+1}u^{p+1}
+Q_{2,2,>p+1}(u,u_y).
\label{eq:intro-P22}
\end{align}
Here $a_p,\ a_{p+1},\ b_p,\ b_{p+1}\in\mathbb R$ are fixed constants. The corresponding perturbation vector field
\begin{equation}
\mathcal P_2[u]
:=
\partial_x Q_2(J^2u)
=
\partial_x Q_{2,1}(J_x^2u)
+
\partial_x Q_{2,2}(J_y^1u).
\label{eq:two-gap-P}
\end{equation}
The remainders $Q_{2,1,>p+1}, Q_{2,2,>p+1}$ are real analytic on the respective polydiscs
\[
|(z_0,z_1,z_2)|_\infty<R_2,
\qquad
|(z_0,z_1)|_\infty<R_2,
\]
and their Taylor expansions at the origin vanish to order at least
\(p+2\). We assume that
\begin{align}
Q_{2,1,>p+1}(z_0,-z_1,z_2)
&=
Q_{2,1,>p+1}(z_0,z_1,z_2),
\label{eq:intro-P21-parity}
\\
Q_{2,2,>p+1}(z_0,-z_1)
&=
Q_{2,2,>p+1}(z_0,z_1).
\label{eq:intro-P22-parity}
\end{align}

The symmetry assumptions imposed on the perturbations ensure that the
bifurcation equations preserve the real subspace and, in particular,
that the resulting corrections to the frequencies remain real.

With the two perturbation classes specified above, we now state our
persistence results. The first result concerns one-gap solutions and
applies to both the KP-I and KP-II equations under perturbations of
the form \(\mathcal P_1\). The second result concerns two-gap
solutions of the KP-I equation under the more structured perturbation
class \(\mathcal P_2\).

For convenience, set
\[
\lambda_\iota:=(-1)^{\iota-1},\qquad \iota=1,2,
\]
so that \(\iota=1\) corresponds to the KP-I equation and \(\iota=2\) to the
KP-II equation.

\begin{theorem}[Persistence of one-gap solutions]
\label{thm:intro-one-gap}
For each $\iota=1,2$, consider the perturbed KP equation \eqref{eq:perturbed-KP} with $\mathcal P=\mathcal P_1$ and $\lambda = \lambda_\iota$,
where the perturbation term satisfies the assumptions stated in \eqref{eq:intro-P1}--\eqref{eq:intro-P1-parity}. Let $m_0,n_0\in\mathbb{Z}$ with $m_0\neq0$.

With Theorem \ref{intro}, there exist a constant $\rho>0$ and real-analytic functions $q^{(\iota)}(r),\ \omega_0^{(\iota)}(r)$ on $(0,\rho)$. For every $d^{(\iota)}\in\mathbb{R}$ and $r \in (0,\rho)$, these functions determine a periodic one-gap solution $u_0^{(\iota)}(t,x,y;r)$ of the unperturbed KP equation with $\lambda=\lambda_{\iota}$, where
\begin{align*}
    &u_0^{(\iota)}(t,x,y;r)
    =
    \tilde u_0^{(\iota)}(\zeta_0;r)
    =
    \sum_{k\in\mathbb{Z}\setminus\{0\}}
    \hat{\tilde u}_0^{(\iota)}(k;r)
    \exp
    \left(
        \frac{2\pi i}{\gamma}
        k\zeta_0
    \right),\
    \zeta_0 = \omega_0^{(\iota)}(r)t+m_0x+n_0y+d^{(\iota)},\\
    &\hat{\tilde u}_0^{(\iota)}(k;r)
    =
    \frac{48m_0^2\pi^2}{\gamma^2}
    (-1)^{|k|}
    \frac{
        |k|\bigl(q^{(\iota)}(r)\bigr)^{|k|}
    }{
        1-\bigl(q^{(\iota)}(r)\bigr)^{2|k|}
    },\quad
    q^{(\iota)}(r)=r+O(r^3).
\end{align*}

For sufficiently small $\rho' \in (0,\rho)$ and $r \in (0,\rho')$, there exists a real-analytic
frequency $\omega^{(\iota)}(r)$ such that $u^{(\iota)}(t,x,y;r)$ is a periodic traveling-wave solution of the perturbed equation, where
\begin{align*}
    &u^{(\iota)}(t,x,y;r)
    =
    \tilde u^{(\iota)}(\zeta;r)
    =
    \sum_{k\in\mathbb{Z}\setminus\{0\}}
    \hat{\tilde u}^{(\iota)}(k;r)
    \exp
    \left(
        \frac{2\pi i}{\gamma}
        k\zeta
    \right), \quad
    \zeta = \omega^{(\iota)}(r)t +m_0x+n_0y+d^{(\iota)}.
\end{align*}
Moreover, for some $\sigma>0$,
\begin{align*}
    &\sum_{|k|\neq1} \left| \hat{\tilde u}^{(\iota)}(k;r) - \hat{\tilde u}_0^{(\iota)}(k;r) \right|^2 \exp \left( \frac{4\pi\sigma}{\gamma}|k| \right) \lesssim |r|^{2p},\\
    &\hat{\tilde u}^{(\iota)}(1;r) = \hat{\tilde u}_0^{(\iota)}(1;r), \quad \hat{\tilde u}^{(\iota)}(-1;r) = \hat{\tilde u}_0^{(\iota)}(-1;r).
\end{align*}
The perturbed frequency satisfies
\begin{equation}
    \omega^{(\iota)}(r)-\omega_0^{(\iota)}(r)
    =
    \begin{cases}
        O(r^{p-1}), & p \text{ odd},
        \\[2mm]
        O(r^p), & p \text{ even}.
    \end{cases}
    \label{eq:intro-one-gap-frequency}
\end{equation}
The leading-order terms in \eqref{eq:intro-one-gap-frequency} are computed
explicitly in Section~\ref{sec:one-gap-2}.
\end{theorem}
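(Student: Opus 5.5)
The plan is to reduce to a one-dimensional traveling-wave ODE and then carry out a Lyapunov--Schmidt reduction on the $\pm1$ Fourier modes.

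\textbf{Reduction to an ODE.} Substitute $u=\tilde u(\zeta)$ with $\zeta=\omega t+m_0x+n_0y+d$ into \eqref{eq:perturbed-KP} with $\mathcal P=\mathcal P_1$. Because $\tilde u$ has zero mean, $\partial_x^{-1}$ acts as $m_0^{-1}\partial_\zeta^{-1}$. Multiply by $m_0$, integrate once in $\zeta$ (the constant is fixed by the zero-mean condition), and obtain
\[
F(\tilde u,\omega):=\Bigl(\omega-\lambda_\iota\frac{n_0^2}{m_0}\Bigr)\tilde u+m_0^3\tilde u''+\frac{m_0}{2}\Pi_0\tilde u^2+m_0\Pi_0 Q_1(J^2u)=0,
\]
where $\Pi_0$ removes the mean. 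Under $u=\tilde u(\zeta)$ the jet becomes $(\tilde u,m_0\tilde u',n_0\tilde u',m_0^2\tilde u'',m_0n_0\tilde u'',n_0^2\tilde u'')$. The parity condition \eqref{eq:intro-P1-parity} then makes $F$ commute with the reflection $\zeta\mapsto-\zeta$, so we may restrict to even functions. In Fourier variables these are real cosine series. Work in the analytic space $X_\sigma$ of even, zero-mean functions with weight $e^{2\pi\sigma|k|/\gamma}$ in $\ell^2$, which is a Banach algebra; the map $F:X_\sigma\times\mathbb R\to X_{\sigma}$ (losing two derivatives, or inverting the linear part instead) is real analytic for small norm.

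\textbf{Unperturbed family and splitting.} By Theorem \ref{intro} with $g=1$, $\boldsymbol m=m_0$, $\boldsymbol n=n_0$, the family $u_0^{(\iota)}(\cdot;r)$ solves $F_0(\tilde u_0,\omega_0)=0$, where $F_0$ is $F$ without $Q_1$. The Fourier coefficients follow from expanding $12\partial_x^2\log\theta$ with $q=e^{B/2}$, which gives the stated Lambert-type series and $q=r+O(r^3)$. Split $X_\sigma=V\oplus W$, with $V=\mathrm{span}\{\cos(2\pi\zeta/\gamma)\}$. Write $\tilde u=\tilde u_0(r)+w$ with $w\in W$, i.e.\ impose $\hat{\tilde u}(\pm1)=\hat{\tilde u}_0(\pm1)$, and set $\omega=\omega_0(r)+\nu$. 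The linearized operator at zero restricted to $W$ is the diagonal operator $\bigl(\omega_0(0)-\lambda_\iota n_0^2/m_0\bigr)-m_0^3(2\pi k/\gamma)^2$. Here $\omega_0(0)$ is chosen so that the $k=\pm1$ entry vanishes, so for $|k|\ge2$ the entries are of size $\gtrsim k^2$ and invertible with a two-derivative gain. Since $\tilde u_0=O(r)$, the full linearization $D_uF_0(\tilde u_0,\omega_0)|_W$ is an $O(r)$ perturbation and stays invertible, uniformly for small $r$. The implicit function theorem then solves the range equation $P_WF=0$ for $w=w(r,\nu)$. Since the perturbation $m_0\Pi_0Q_1(\tilde u_0+w)$ is $O(r^p)$, we get $\|w\|_{X_\sigma}\lesssim r^p+r|\nu|$.

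\textbf{Bifurcation equation and main obstacle.} It remains to solve the scalar equation $P_VF=0$, i.e.\ to make the $k=1$ coefficient vanish:
\[
\nu\,\hat{\tilde u}_0(1;r)+\nu\,\hat w(1)+\bigl[\text{quadratic interaction of }w\text{ with }\tilde u_0\bigr]_1+m_0\bigl[\Pi_0 Q_1(\tilde u_0+w)\bigr]_1=0.
\]
Since $\hat{\tilde u}_0(1;r)\sim -c\,r$ with $c\neq0$, divide by $r$ and apply the implicit function theorem in $\nu$. This yields a real-analytic $\nu(r)$ with $\nu=O(r^{-1}\cdot[\,\cdot\,]_1)$, and realness is guaranteed by the even (cosine) symmetry.

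The main obstacle is the sharp order of $\nu$ claimed in \eqref{eq:intro-one-gap-frequency}. To leading order $\tilde u_0\approx 2a r\cos$, and then $[u^p]_1$ is nonzero only when $p$ is odd, since $\cos^p$ contains the first harmonic only for odd $p$. That term is of size $r^p$, giving $\nu\sim r^{p-1}$. For even $p$, $[u^p]_1$ picks up the $k=2$ harmonic of $\tilde u_0$, which is $O(r^2)$, so it is $O(r^{p+1})$; likewise $[u^{p+1}]_1=O(r^{p+1})$. The contributions from $w$, namely the $\tilde u_0w$ terms with $\hat w(0)=0$ and $\hat w(2)=O(r^p)$, are also $O(r^{p+1})$. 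This gives $\nu=O(r^p)$.

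I would carry out this bookkeeping carefully by expanding in powers of $r$ and tracking which harmonics survive. The leading coefficients are then computed explicitly, and they are what is referred to in Section~\ref{sec:one-gap-2}. The estimate on $\sum_{|k|\ne1}|\hat{\tilde u}-\hat{\tilde u}_0|^2e^{4\pi\sigma|k|/\gamma}$ is exactly $\|w\|^2_{X_\sigma}\lesssim r^{2p}$.
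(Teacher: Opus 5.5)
Your proposal is correct. It has the same architecture as the paper's proof: freeze $\hat u(\pm1)=\hat u_0(\pm1)$, solve the $|k|\ge2$ range equation (where the diagonal part has no small divisors), divide the $k=1$ equation by $\hat u_0(1)\sim r$, and read off the orders from the parity of $\sum|k_i|$.

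The difference is in how the two equations are closed. You apply the analytic implicit function theorem at the degenerate point $(r,\nu)=(0,0)$, working in real cosine spaces. The paper never touches $r=0$:
\begin{itemize}
\item it works on a complex neighborhood $\mathcal U_\rho$ of $(0,\rho)$ inside the window $|\omega-\omega_0(r)|<C|r|^{p-1}$;
\item it runs a contraction in the shrinking ball $\|\hat v\|_{\sigma,s,P}\le M_0|r|^p$;
\item it proves $\|\partial_\omega\hat v\|,\|\partial_\omega^2\hat v\|\lesssim r^2$ and solves the bifurcation equation by an explicit Newton iteration;
\item it gets realness by Schwarz reflection.
\end{itemize}
Your route is shorter and gives analyticity down to $r=0$ at no extra cost. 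The paper's route gives explicit constants and transfers to the two-gap setting on $(\rho'/2,\rho')^2$.

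Two points in your version need to be made explicit.

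First, the implicit function theorem at $r=0$ needs the unperturbed family to extend analytically there, and Theorem~\ref{intro} only gives it on $(0,\rho)$. The extension does hold. For $g=1$ the double-coset sum in \eqref{pmatrix2} is empty, so $B=\log r^2$ and $q=r$ exactly. Moreover
\[
\omega_0=\frac{4\pi^2}{\gamma^2}m_0^3+\lambda_\iota\frac{n_0^2}{m_0}-\frac{m_0}{2}\,\frac{\widehat{u_0^2}(1)}{\hat u_0(1)}
\]
is analytic, because every term of $\widehat{u_0^2}(1)$ carries a factor $q^3$. Since $w(0,\nu)=0$ by uniqueness, the numerator of the bifurcation equation vanishes on $\{r=0\}$. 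Dividing by $\hat u_0(1)$ therefore gives a function $\Phi(r,\nu)$ analytic at $(0,0)$ with $\partial_\nu\Phi(0,0)=1$, which is exactly what your implicit-function step requires.

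Second, $\ell^2$ with only the weight $e^{2\pi\sigma|k|/\gamma}$ is not a Banach algebra. You need the extra factor $\langle k\rangle^{s}$ with $s>5/2$, as in the paper's $H^{\sigma,s,\nu_1}$. Then $u\mapsto Q_1(J^2u)$ maps $H^{\sigma,s}\to H^{\sigma,s-2}$ analytically for small norm, and the two-derivative gain of $L_0^{-1}$ brings it back to $H^{\sigma,s}$.

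A minor slip: the term $\nu\,\hat w(1)$ in your bifurcation equation is identically zero, since $w\in W$.
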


We next consider the persistence of two-gap solutions. Our result in
this case is obtained for the KP-I equation under perturbations of the
form \(\mathcal P_2\).

\begin{theorem}[Persistence of two-gap solutions]
\label{thm:intro-two-gap}
Consider the perturbed KP-I equation \eqref{eq:perturbed-KP} with $\mathcal P=\mathcal P_2$,
where the perturbation term satisfies the assumptions stated in \eqref{eq:two-gap-Q-decomposition}--\eqref{eq:intro-P22-parity}.
Assume that $\frac{4\pi^2}{\gamma^2}\notin\mathbb{Q}$ and $(m_1,n_1),(m_2,n_2)\in\mathbb{Z}^2$
satisfy $m_1m_2\neq0,\ m_1n_2-m_2n_1\neq0$. In particular, $\gamma = 1$ satisfies the restrictions.
Set $S_0 = \{(k_1,k_2) \in \mathbb{Z}^2:m_1 k_1 + m_2 k_2 = 0\}$.

With Theorem \ref{intro}, there exist constants $\rho,\eta \in (0,1)$, and real-analytic functions $\hat{u}_0(k_1,k_2;\boldsymbol r),\allowbreak \omega_{j,0}(\boldsymbol r)$ on $(0,\rho)^2$ such that the following results hold:

For every $d_1,d_2\in\mathbb{R}$ and $\boldsymbol r \in (0,\rho)^2$, these functions determine a bi-periodic two-gap solution $u_0(t,x,y;\boldsymbol r)$ of the unperturbed KP-I equation, where
\begin{align*}
    &u_0(t,x,y;\boldsymbol r)
    =
    \tilde u_0(\boldsymbol \zeta_0;\boldsymbol r)
    =
    \sum_{(k_1,k_2)\in\mathbb{Z}^2\setminus S_0}
    \hat{\tilde u}_0(k_1,k_2;\boldsymbol r)
    \exp
    \left\{
        \frac{2\pi i}{\gamma}
        \left(
            k_1\zeta_{1,0}
            +
            k_2\zeta_{2,0}
        \right)
    \right\},\\
    &\zeta_{j,0} = m_jx+n_jy+\omega_{j,0}(\boldsymbol r)t+d_j, \quad j=1,2,\quad \boldsymbol\zeta_0 = (\zeta_{1,0},\zeta_{2,0}),\\
    &\left| \hat{\tilde u}_0(k_1,k_2;\boldsymbol r) \right| \leq C (k_1m_1+k_2m_2)^2 \left(\frac{|r_1|}{\eta}\right)^{|k_1|}\left(\frac{|r_2|}{\eta}\right)^{|k_2|}, \quad (k_1,k_2)\in\mathbb{Z}^2\setminus S_0.
\end{align*}
For sufficiently small $\rho' \in (0,\rho)$ and $\boldsymbol r\in(\rho'/2,\rho')^2$, there exist real-analytic frequencies $\omega_1(\boldsymbol r),\omega_2(\boldsymbol r)$ such that $u(t,x,y;\boldsymbol r)$ is a bi-periodic traveling-wave solution of the perturbed KP-I equation, where
\begin{align*}
    &u(t,x,y;\boldsymbol r) = \tilde u(\boldsymbol \zeta;\boldsymbol r) = \sum_{(k_1,k_2)\in\mathbb{Z}^2\setminus S_0} \hat{\tilde u}(k_1,k_2;\boldsymbol r) \exp \left\{ \frac{2\pi i}{\gamma} \left( k_1\zeta_1 + k_2\zeta_2 \right) \right\},\\
    &\zeta_j = m_jx+n_jy+\omega_j(\boldsymbol r)t+d_j, \quad j=1,2,\quad \boldsymbol\zeta = (\zeta_1,\zeta_2).
\end{align*}
Moreover, for some $\sigma>0$,
\begin{align*}
    \sum_{ (k_1,k_2)\notin (S_0\cup \{ \pm(1,0), \pm(0,1) \}) } &\left| \hat{\tilde u}(k_1,k_2;\boldsymbol r) - \hat{\tilde u}_0(k_1,k_2;\boldsymbol r) \right|^2 \exp \left( \frac{4\pi\sigma}{\gamma}|(k_1,k_2)|_1 \right) \lesssim |\boldsymbol r|_\infty^{2p},\\
    \hat{\tilde u}(\pm1,0;\boldsymbol r) &= \hat{\tilde u}_0(\pm1,0;\boldsymbol r),\quad
    \hat{\tilde u}(0,\pm1;\boldsymbol r) = \hat{\tilde u}_0(0,\pm1;\boldsymbol r).
\end{align*}
The perturbed frequencies are real analytic and satisfy
\begin{equation}
    \omega_j(\boldsymbol r)-\omega_{j,0}(\boldsymbol r)
    =
    \begin{cases}
        O(\boldsymbol r^{p-1}), & p \text{ odd},
        \\[2mm]
        O(\boldsymbol r^{p}), & p \text{ even}.
    \end{cases}
    \label{eq:intro-two-gap-frequency}
\end{equation}
The leading-order terms in \eqref{eq:intro-two-gap-frequency} are computed explicitly in Subsection~\ref{sec:two-gap-2} and Appendix \ref{Appendix}.
\end{theorem}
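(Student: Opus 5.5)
We prove Theorem \ref{thm:intro-two-gap} by a Lyapunov--Schmidt reduction built on the genus-two parameterization of Theorem \ref{intro}(i), with $g=2$ and $\boldsymbol m=(m_1,m_2)$, $\boldsymbol n=(n_1,n_2)$.

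\textbf{Setting up the problem in phase variables.} We look for solutions $u=\tilde u(\boldsymbol\zeta)$ with $\zeta_j=m_jx+n_jy+\omega_jt+d_j$. Then \eqref{eq:perturbed-KP} becomes an equation on $\mathbb T_\gamma^2$ in $\boldsymbol\zeta$. On the Fourier mode $k=(k_1,k_2)$, with $\mu_k=k_1m_1+k_2m_2\neq0$ and $\nu_k=k_1n_1+k_2n_2$, the linear part acts by the symbol
\[
L_{\boldsymbol\omega}(k)=\tfrac{2\pi i}{\gamma}\Bigl(k\cdot\boldsymbol\omega-\tfrac{4\pi^2}{\gamma^2}\mu_k^3-\lambda\,\nu_k^2/\mu_k\Bigr).
\]
We work in the analytic space $X_\sigma$ of real, even-symmetric functions (symmetric under $\boldsymbol\zeta\mapsto-\boldsymbol\zeta$) carrying the weight $e^{2\pi\sigma|k|_1/\gamma}$, with mode set $\mathbb Z^2\setminus S_0$. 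The parity assumptions \eqref{eq:intro-P21-parity}--\eqref{eq:intro-P22-parity} guarantee that $\partial_xQ_2$ maps even functions to odd ones. Hence the equation closes on this real subspace, and the frequencies stay real.

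\textbf{Splitting into range and kernel equations.} Write $\boldsymbol\omega=\boldsymbol\omega_0(\boldsymbol r)+\boldsymbol\delta$, where $\boldsymbol\omega_0$ is the unperturbed frequency from Theorem \ref{intro}. Let the kernel be $V=\{\pm(1,0),\pm(0,1)\}$, and decompose $u=u_0(\cdot;\boldsymbol r)+w$ with $w\perp V$, so that the modes $\pm e_j$ are fixed as required.

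The range equation is
\[
\Pi_{V^\perp}\bigl[L_{\boldsymbol\omega}(u_0+w)+\partial_x\bigl(\tfrac12(u_0+w)^2\bigr)+\mathcal P_2[u_0+w]\bigr]=0 .
\]
Since $u_0$ solves the unperturbed equation with $\boldsymbol\omega_0$, the forcing is $\mathcal P_2[u_0]+(\boldsymbol\delta\cdot k)\hat u_0=O(|\boldsymbol r|^p)$ plus $\boldsymbol\delta$-terms.

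To invert the range operator, we need $|L_{\boldsymbol\omega_0}(k)|\gtrsim1$ for $k\notin V\cup S_0$. At leading order $\boldsymbol\omega_0(0)$ is given by the linear dispersion, $\omega_{j}(0)=\frac{4\pi^2}{\gamma^2}m_j^3+\lambda n_j^2/m_j$. The symbol then becomes
\[
\tfrac{4\pi^2}{\gamma^2}\bigl(\textstyle\sum k_jm_j^3-\mu_k^3\bigr)+\bigl(\textstyle\sum k_jn_j^2/m_j-\nu_k^2/\mu_k\bigr).
\]
Because $4\pi^2/\gamma^2\notin\mathbb Q$ and the second bracket is rational, this can vanish only if both brackets vanish. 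Using $m_1n_2-m_2n_1\ne0$, we show this forces $k\in V$. For large $|k|$ the cubic term dominates (it grows like $|k|^3$ unless $\mu_k$ is small, in which case the $\nu_k^2/\mu_k$ term dominates). This gives a uniform lower bound on the small-divisor quantity, since only finitely many $k$ must be checked, and the bound is stable under $O(|\boldsymbol r|^2)$ changes of $\boldsymbol\omega$.

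The linearization of the nonlinear part around $u_0$ is $O(|\boldsymbol r|)$, and the loss of derivatives in $\partial_xQ_2(J^2u)$ is absorbed by the gain $|L(k)|\gtrsim|k|^3$ away from $\mu_k\approx0$. This is where we use the structure $Q_{2,1}(J^2_xu)+Q_{2,2}(J^1_yu)$: it prevents terms like $u_{yy}$ from appearing, and $\partial_y$ costs only $|\nu_k|\lesssim|L(k)|^{1/2}|\mu_k|^{1/2}$. The contraction mapping principle in $X_\sigma$ then gives $w=w(\boldsymbol r,\boldsymbol\delta)$, analytic, with $\|w\|_\sigma\lesssim|\boldsymbol r|^p$.

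\textbf{Bifurcation equations.} Projecting onto $e_j$ gives, after dividing by $\hat u_0(e_j)\sim r_j$, a system of the form
\[
\delta_j=F_j(\boldsymbol r,\boldsymbol\delta),\qquad F_j=O\bigl(|\boldsymbol r|^{p-1}\bigr).
\]
At leading order $F_j$ is the $e_j$-coefficient of $\partial_x(a_pu_0^p+b_pu_0^p+\cdots)$ divided by $r_j$. For odd $p$ the $u^p$ term contributes at order $r^{p-1}$. For even $p$, parity of mode counting kills that contribution, so the leading term comes from $u^{p+1}$ or quadratic cross-interactions, giving order $r^p$.

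This fixed-point system has solution $\boldsymbol\delta(\boldsymbol r)$ by the implicit function theorem, because the $\boldsymbol\delta$-dependence of $F$ is small. We restrict to $\boldsymbol r\in(\rho'/2,\rho')^2$ so that dividing by $r_j$ is uniform: $r_1$ and $r_2$ are then comparable, and the $O(|\boldsymbol r|_\infty)$ bounds apply to both components. Real analyticity follows from analytic dependence throughout.

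\textbf{Main obstacle.} The hardest step is the uniform nondegeneracy of the range operator, i.e.\ the lower bound on $L_{\boldsymbol\omega}(k)$ for all $k\notin V\cup S_0$, including the near-resonant region where $\mu_k$ is small and $|k|$ is large. Handling this rests on the irrationality of $4\pi^2/\gamma^2$ together with a careful Diophantine and size analysis of the region $|\mu_k|\ll|k|$.
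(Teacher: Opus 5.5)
Your overall architecture matches the paper's. You use a Lyapunov--Schmidt reduction with kernel $\{\pm(1,0),\pm(0,1)\}$ and fix the kernel coefficients to those of $u_0$. You solve the range equation by contraction and then a small fixed-point problem for $\boldsymbol\delta$ after dividing by $\hat u_0(e_j)\sim r_j$ on $(\rho'/2,\rho')^2$. The gap is in the range equation: the functional setting you propose does not close, for two reasons.

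First, the claimed gain $|L(k)|\gtrsim|k|^3$ away from $\mu_k\approx0$ is false. For $\lambda=1$ the divisor is $\approx\frac{4\pi^2}{\gamma^2}|\mu_k|^3+\nu_k^2/|\mu_k|$. On the shell $|k|\sim N$ this is only of size $N^{3/2}$ when $|\mu_k|\sim N^{1/2}$ (then $|\nu_k|\sim N$).

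Second, you measure the derivative losses at the output mode: $|\mu_k|^3$ for $\partial_xQ_{2,1}$ and $|\mu_k\nu_k|$ for $\partial_xQ_{2,2}$. But the inner derivatives in $Q_{2,1}(u,u_x,u_{xx})$ act on input modes. In an isotropic space like $X_\sigma$, even with an isotropic weight $\langle k\rangle^s$ added (a pure exponential $\ell^2$ weight is not even an algebra), the algebra estimate only places $Q_{2,1}(J^2_xu)$ in $H^{s-2}$. Then $L^{-1}\partial_xQ_{2,1}$ costs $|\mu_k|\langle k\rangle^2/|L(k)|\sim N$ in that region, which is unbounded. So the contraction is not established as soon as $Q_{2,1,>p+1}$ depends on $u_{xx}$.

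\emph{The missing idea} is an anisotropic space matched to an anisotropic divisor. The paper first divides the range equation by $m(k)$. This removes the outer $\partial_x$ from the nonlinearity, and the divisor becomes $\frac{k\cdot\boldsymbol\omega}{m(k)}-\frac{4\pi^2}{\gamma^2}m(k)^2-\frac{n(k)^2}{m(k)^2}$.
\begin{itemize}
\item For KP-I the last two terms have the same sign. With AM--GM, $\frac{4\pi^2}{\gamma^2}m^2+n^2/m^2\ge\frac{4\pi}{\gamma}|n|$, this gives a lower bound $\gtrsim m(k)^2+|n(k)|$ for large $|k|$.
\item For the finitely many remaining $k\notin S_0\cup S_2$, irrationality of $4\pi^2/\gamma^2$ supplies a positive gap.
\item One then works in $H^{\sigma',s_1,s_2}$ with weight $\langle\nu_2m(k)\rangle^{s_1}\langle\nu_2n(k)\rangle^{s_2}$, $s_1>5/2$, $s_2>3/2$, which is an algebra. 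In it $P_{2,1}$ maps into $H^{\sigma',s_1-2,s_2}$ and $P_{2,2}$ into $H^{\sigma',s_1,s_2-1}$, and $L^{-1}$ maps both back to $H^{\sigma',s_1,s_2}$.
\end{itemize}
This is the real reason for the splitting $Q_{2,1}(J^2_xu)+Q_{2,2}(J^1_yu)$. It is not only that $u_{yy}$ is absent: mixed terms such as $u_{xy}$ or $u_{xx}u_y$ would need a gain of $|mn|$ or $m^2|n|$, which the divisor does not supply.

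Relatedly, your ``main obstacle'' is misplaced. For KP-I there is no Diophantine issue at high modes at all; the bound is a pure size estimate. You should state explicitly that $\lambda=1$ is what rules out the cancellation along $\nu_k\approx\pm\frac{2\pi}{\gamma}\mu_k^2$ that occurs for KP-II. Your argument that ``one term dominates'' never uses this sign.

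Finally, in the bifurcation step, the claim that the $\boldsymbol\delta$-dependence of $F$ is small needs the estimate $\|\partial_{\boldsymbol\omega}\hat v\|\lesssim|\boldsymbol r|_\infty^2$, which follows from $\|P\hat u_0\|\lesssim|\boldsymbol r|_\infty^2$. With that estimate, your contraction in $\boldsymbol\delta$ on $\{|\boldsymbol\delta|_\infty\le C|\boldsymbol r|_\infty^{p-1}\}$ is an acceptable substitute for the paper's Newton iteration.
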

\begin{remark}
The restriction $\boldsymbol r\in(\rho'/2,\rho')^2$ in Theorem~\ref{thm:intro-two-gap} is only used to keep the two amplitude parameters comparable and can be removed by keeping track of the individual powers of $r_1$ and $r_2$.
With Lemma \ref{construction2}, we introduce the weighted spaces $H_{\mu}^{\sigma',s_1,s_2}$ with norm
\[
    \|\hat{u}\|_{\sigma',s_1,s_2,\mu}^2 = \sum\limits_{\boldsymbol k \in\mathbb{Z}^2} \left|\frac{\hat{u}(\boldsymbol k;\boldsymbol r)}{r_{\mu}(\boldsymbol k;\boldsymbol r)}\right|^2e^{2\sigma'\nu_2|(m(\boldsymbol k),n(\boldsymbol k))|_1}\langle \nu_2m(\boldsymbol k)\rangle^{2s_1}\langle \nu_2n(\boldsymbol k)\rangle^{2s_2},
\]
where $r_{\mu}(\boldsymbol k;\boldsymbol r) = r_1^{|k_1|}r_2^{|k_2|}|\boldsymbol r|_{\infty}^{\mu-|\boldsymbol k|_1}$. It is easy to obtain the following algebra property:
\[
    \|\hat{u}_1*\hat{u}_2\|_{\sigma',s_1,s_2,\mu+\nu} \leq C_{\sigma',s_1,s_2}\|\hat{u}_1\|_{\sigma',s_1,s_2,\mu}\|\hat{u}_2\|_{\sigma',s_1,s_2,\nu}.
\]
Moreover, $\|\hat{u}\|_{\sigma',s_1,s_2,\mu'} = |\boldsymbol r|_{\infty}^{\mu-\mu'} \|\hat{u}\|_{\sigma',s_1,s_2,\mu}$ for $\mu > \mu'$, whereas the small-divisor estimates for $L_0^{-1}$ preserve the
index $\mu$. Hence the range-equation estimates and the contraction argument of Section~4 carry over to these weighted spaces.
In particular,
\[
\widehat{P_2(u_0+v)}(1,0) = r_1\cdot O\!\left(\boldsymbol r^{p-1}\right), \qquad
\widehat{P_2(u_0+v)}(0,1) = r_2\cdot O\!\left(\boldsymbol r^{p-1}\right).
\]
Since $|\widehat u_0(1,0;\boldsymbol r)|\sim r_1,\ |\widehat u_0(0,1;\boldsymbol r)|\sim r_2$, division by the corresponding fundamental Fourier coefficients in
the bifurcation equations gives $O(\boldsymbol r^{p-1})$ uniformly for $\boldsymbol r \in (0,\rho')^2$.
\end{remark}
To the best of our knowledge, these results provide the first
persistence theorem for small-amplitude one-gap and two-gap solutions
of the KP equation under the class of Hamiltonian differential perturbations considered here.

A key point in solving the range equation is that, after the resonant modes have been separated,
the range equation does not exhibit an accumulating small-divisor
problem. Indeed, the specific structure of the KP dispersion relation
provides a coercive lower bound for the range divisors at high Fourier
modes, while the remaining finitely many modes are controlled by the
nonresonance assumptions in the statements (see also \cite{yuan2003}). Consequently, the range
equation can be solved directly by a contraction argument, without
any Cantor-type exclusion of the internal parameters. The persisting
solutions are therefore obtained for every sufficiently small
parameter in the prescribed domains.

\textbf{Notations.}

$a\lesssim b$ refers to $a\leq Cb$, for some constant which can be chosen depending on the context. $a\sim b$ refers to $b\lesssim a\lesssim b$.

For $x\in\mathbb{R}^d$, denote $|x|_1=\sum\limits_{j=1}^d|x_j|,|x|_{\infty} = \max\limits_{1\leq j\leq d} |x_j|, \langle x\rangle=\sqrt{1+|x|_1^2}$.

For a $d\times d$ matrix $A$, denote
$\|A\|_1=\max\limits_{1\leq j\leq d}\left(\sum\limits_{i=1}^d |A_{ij}|\right),\|A\|_{max}=\max\limits_{1\leq i,j\leq d} |A_{ij}|$.

Denote $\mathbb{Z}_+ = \{1,2,\cdots\}, \mathbb{N} = \{0,1,2,\cdots\}$. For $g\in \mathbb{Z}_+$, denote $\mathcal{G} = \{1,2,\cdots,g\}$.

For a fractional linear transformation $\sigma$, we say the matrix
$M_{\sigma}=\begin{pmatrix}
        \sigma_{11} &\sigma_{12}\\
        \sigma_{21} &\sigma_{22}
    \end{pmatrix}$
corresponds to $\sigma$ if $\sigma(z)=\dfrac{\sigma_{11}z+\sigma_{12}}{\sigma_{21}z+\sigma_{22}}$ and $\det M_{\sigma}=1$.

Let \(X\subset E\times F\). For each \(x\in E\), we denote the section of
\(X\) at \(x\) by $X(x):=\{y\in F:(x,y)\in X\}$.
Similarly, for each \(y\in F\), we set $X(y):=\{x\in E:(x,y)\in X\}$.

When $C_j$ and $C_j'$ denote a collection of circles in the complex plane, $\mathring{C_j}$ and $\mathring{C_j'}$ are understood to represent their respective open interiors, while their closures $\overline{C}_{j}$ and $\overline{C_j'}$ represent the corresponding closed disks.

A map $f=(f_1,\cdots,f_N): \mathcal{U}\subset\mathbb{R}^d \to \mathbb{R}^N$ is said to be real analytic on $\mathcal U$ if each component $f_j$ admits a holomorphic extension to a complex neighborhood $\tilde{\mathcal{U}} \subset \mathbb{C}^d$ that contains $\mathcal{U}$.

Set $\mathbb{T}_{\gamma}^d := (\mathbb{R}/\gamma\mathbb{Z})^d,\ \gamma>0,\ d\in\mathbb{Z}_+$. Denote $\hat{u}(\boldsymbol k) = \frac{1}{\gamma^d} \int_{\mathbb{T}_{\gamma}^d} u(\boldsymbol\zeta)e^{-\frac{2\pi i}{\gamma}\boldsymbol k\cdot\boldsymbol \zeta} d\boldsymbol\zeta$ and $\mathcal{F}u = \hat{u}$.

For Banach spaces $X,Y$, we denote by $\mathcal{L}(X,Y)$ the Banach space of bounded linear operators from $X$ to $Y$.
For a map $F$ from $X$ to $Y$, we denote its first and second Fr\'echet derivatives by $DF(x)[h],\ D^2F(x)[h_1,h_2]$, respectively.

Let $\mathcal{U}$ be a region in $\mathbb{R}^d$ or $\mathbb{C}^d$. For $k\in\mathbb{Z}_+,\ \boldsymbol r  \in \mathcal{U}$, we write the vector-valued function $f(\boldsymbol r) = O(\boldsymbol r^k)$ if $|f(\boldsymbol r)|_{\infty}\leq C|\boldsymbol r|_{\infty}^k$ for some constant $C>0$ independent of $\boldsymbol r \in \mathcal{U}$.

\textbf{Description of the paper.}
In Section~\ref{sec:parameterization}, we recall the theta-functional representation of finite-gap solutions of the KP equation and develop their small-amplitude parameterization by Schottky uniformization. We establish the real-analytic dependence on the degeneration parameters and prescribe the spatial wavenumber vectors by an implicit-function argument. In Section~\ref{sec:one-gap}, we prove the persistence of one-gap solutions for the KP-I and KP-II equations. We solve the range equation by a contraction argument and determine the perturbed traveling-wave frequency from the bifurcation equation. In Section~\ref{sec:two-gap}, we consider two-gap solutions of the KP-I equation. We analyze the resonant set and the anisotropic linear divisor, solve the infinite-dimensional range equation and the two-dimensional bifurcation system, and finally derive the leading asymptotic corrections to the two temporal frequencies.

\section{Parameterization of small-amplitude multi-periodic finite-gap solutions}\label{sec:parameterization}

In this section, we develop a parameterization of small-amplitude multi-periodic finite-gap solutions with prescribed spatial wavenumbers.
\subsection{Basic facts about finite-gap solutions of the KP equation}
For the reader's convenience, we briefly recall some basic facts about finite-gap solutions of the KP equation. For a more detailed account, we refer to \cite{belokolos1994algebro}.

Let \(X\) be a compact Riemann surface of genus \(g\), and set $\mathcal{G} = \{1, \dots, g\}$. Choose a canonical homology basis \(\{a_j,b_j\}_{j=1}^g\) and normalized holomorphic differentials \(d\omega_1,\cdots,d\omega_g\) satisfying
\[
\oint_{a_k}d\omega_j=2\pi i\delta_{jk},\qquad j,k\in\mathcal{G}.
\]
Define the period matrix $B = (B_{jk})_{j,k\in\mathcal{G}}$ by $B_{jk}=\oint_{b_k}d\omega_j$.
Then \(B\) is symmetric and its real part is negative definite.

Fix a point \(P_\infty\in X\), and let $k^{-1}$ be a local parameter near
\(P_\infty\), with \(k\to\infty\) as \(P\to P_\infty\). 
Let \(d\Omega_1,d\Omega_2,d\Omega_3\) be the normalized Abelian differentials of the second kind, each having its only pole at \(P_\infty\). They are characterized by the normalization conditions
\begin{equation*}
    \oint_{a_j}d\Omega_i=0,\qquad i=1,2,3,\ j\in \mathcal{G},
\end{equation*}
together with the asymptotic behavior of their Abelian integrals
\begin{equation*}
    \Omega_i(P)=k^{i}+o(1),\qquad i=1,2,3,\qquad P\to P_\infty.
\end{equation*}
For $j\in\mathcal{G}$, set
\[
U_j=\oint_{b_j}d\Omega_1,\qquad
V_j=\oint_{b_j}d\Omega_2,\qquad
W_j=\oint_{b_j}d\Omega_3.
\]

\begin{lemma}[Its--Matveev formula]
Let \(X\) be a compact Riemann surface of genus \(g\), let \(B\) be its period matrix, and let
\[
\boldsymbol U=(U_1,\cdots,U_g),\qquad
\boldsymbol V=(V_1,\cdots,V_g),\qquad
\boldsymbol W=(W_1,\cdots,W_g)
\]
be the vectors defined above.  Choose an arbitrary phase vector $\boldsymbol D\in \mathbb C^g$.
Then the KP-II equation
\begin{equation}
    \frac{\partial}{\partial x}
    \left(
u_t-\frac{1}{4}\bigl(6uu_x+u_{xxx}\bigr)
\right)
    -\frac{3}{4}u_{yy}=0
    \label{transformedKP2}
\end{equation}
admits the theta-functional expression
\begin{equation*}
    u(t,x,y)
    =
    2\frac{\partial^2}{\partial x^2}
    \log \theta\left(
    \boldsymbol U x+\boldsymbol V y+\boldsymbol W t+\boldsymbol D;B
    \right)
    +2c,
\end{equation*}
where \(c\) is the constant determined by the local expansion
\begin{equation*}
    \Omega_1(P)=k-\frac{c}{k}+O(k^{-2}),
    \quad P\to P_\infty.
\end{equation*}
Here, the Riemann theta function associated with the period matrix \(B\) is
defined by
\begin{equation*}
    \theta(\boldsymbol z;B)
    =
    \sum_{\boldsymbol n\in\mathbb Z^g}
    \exp\left(
    \frac{1}{2}\langle B \boldsymbol n,\boldsymbol n\rangle+\langle \boldsymbol z,\boldsymbol n\rangle
    \right),
    \qquad \boldsymbol z\in\mathbb C^g,
\end{equation*}
where \(\langle\cdot,\cdot\rangle\) denotes the standard bilinear form.
\end{lemma}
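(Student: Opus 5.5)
This is the classical Its--Matveev (Krichever) theorem, and the plan is to prove it by the Baker--Akhiezer route. The approach: construct the Baker--Akhiezer function $\psi(t,x,y;P)$ on $X$. It is meromorphic on $X\setminus\{P_\infty\}$ with $g$ poles at a nonspecial divisor $\mathcal D$, and it has the essential singularity
\[
\psi = e^{kx+k^2y+k^3t}\bigl(1+\xi_1k^{-1}+\xi_2k^{-2}+O(k^{-3})\bigr)
\]
at $P_\infty$. The Lax equations then follow from uniqueness, and the theta formula for $u$ is read off from the explicit expression of $\psi$.

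\textbf{Step 1 (explicit $\psi$).} Write
\[
\psi(P)=\frac{\theta(\mathcal A(P)+\boldsymbol U x+\boldsymbol V y+\boldsymbol W t+\boldsymbol D)\,\theta(\mathcal A(P_\infty)+\boldsymbol D)}{\theta(\mathcal A(P)+\boldsymbol D)\,\theta(\mathcal A(P_\infty)+\boldsymbol U x+\boldsymbol V y+\boldsymbol W t+\boldsymbol D)}\exp\Bigl(x\!\int^P\! d\Omega_1+y\!\int^P\! d\Omega_2+t\!\int^P\! d\Omega_3\Bigr),
\]
with $\mathcal A$ the Abel map (normalized by $d\omega_j$). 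First check single-valuedness. The $a$-periods of $d\Omega_i$ vanish. Around $b_j$, the theta quotient picks up a factor $\exp(-(U_jx+V_jy+W_jt))$ from the quasi-periodicity $\theta(\boldsymbol z+B e_j)=e^{-\frac12B_{jj}-z_j}\theta(\boldsymbol z)$, and this cancels the exponential's monodromy $\exp(U_jx+V_jy+W_jt)$. Riemann's vanishing theorem locates the poles, and uniqueness follows from Riemann--Roch, since $\ell(\mathcal D)=1$ for nonspecial $\mathcal D$.

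\textbf{Step 2 (Lax pair).} Set $L=\partial_x^2+u$ and $A=\partial_x^3+\tfrac32u\partial_x+w$, with $u=-2\partial_x\xi_1$ (up to the additive constant from the normalization of $\Omega_1$) and $w$ determined from the expansion coefficients. Then $(\partial_y-L)\psi$ and $(\partial_t-A)\psi$ are BA-type functions whose expansion has prefactor $O(k^{-1})$. By uniqueness they vanish identically. Compatibility $[\partial_y-L,\partial_t-A]=0$ gives KP-II in the normalized form \eqref{transformedKP2}. The coefficients $\tfrac14,\tfrac34$ come from the choice $k,k^2,k^3$ and the sign conventions; I expect a rescaling bookkeeping here rather than a real difficulty.

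\textbf{Step 3 (theta formula for $u$, main obstacle).} Expand $\psi e^{-kx-k^2y-k^3t}$ in $k^{-1}$ near $P_\infty$. The exponential contributes $x(\Omega_1-k)=-cx/k+\dots$. The theta quotient contributes $k^{-1}\partial_{\boldsymbol U}\log\theta(\boldsymbol Ux+\dots)$ up to $x$-independent terms, since $\frac{d}{dk^{-1}}\mathcal A(P)|_{P_\infty}=-\boldsymbol U$: the normalized differentials satisfy $d\omega_j=-U_j\,d(k^{-1})+\dots$ by the reciprocity relation $U_j=\oint_{b_j}d\Omega_1=\operatorname{res}(\Omega_1d\omega_j)$. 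Hence
\[
\xi_1=-\partial_x\log\theta(\boldsymbol Ux+\boldsymbol Vy+\boldsymbol Wt+\boldsymbol D)-cx+\text{const},
\]
and $u=-2\partial_x\xi_1=2\partial_x^2\log\theta+2c$. The delicate point is tracking signs and constants in this reciprocity and in the asymptotic $\Omega_1=k-c/k$, because the constant $2c$ arises precisely from the $O(k^{-1})$ term of $\Omega_1$.

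Finally, the formula is an identity in $\boldsymbol D$ on an open dense set (nonspecial divisors). Both sides are meromorphic in $\boldsymbol D$, so it extends to arbitrary $\boldsymbol D\in\mathbb C^g$ by analytic continuation.
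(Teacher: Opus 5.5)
The paper gives no proof of this lemma; it quotes it from \cite{belokolos1994algebro}, where it is proved by exactly the Baker--Akhiezer argument you outline, so your proposal is correct and matches the intended source. In fact $L=\partial_x^2+u$, $A=\partial_x^3+\tfrac32u\partial_x+w$ yield \eqref{transformedKP2} exactly, with no rescaling. Uniqueness of the BA function holds precisely for $(x,y,t)$ off the zero set of $\theta(\mathcal A(P_\infty)+\boldsymbol Ux+\boldsymbol Vy+\boldsymbol Wt+\boldsymbol D)$, which is where $u$ is defined, rather than merely from nonspeciality of $\mathcal D$.

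One sign must be repaired in Step~3. With $\oint_{a_k}d\omega_j=2\pi i\delta_{jk}$ and $\mathrm{Re}\,B<0$, the bilinear relation reads $U_j=\mathrm{res}_{P_\infty}(\omega_j\,d\Omega_1)=-\mathrm{res}_{P_\infty}(\Omega_1\,d\omega_j)$, and this is what gives $d\omega_j=-U_j\,d(k^{-1})+\dots$. The identity $U_j=\mathrm{res}(\Omega_1\,d\omega_j)$ as you wrote it would instead give $+U_j$ and flip the sign of the $\partial_x^2\log\theta$ term.
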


\begin{remark}\label{transformKP12}
    Let $u(t,x,y)$ be a solution of the KP-II equation \eqref{transformedKP2} that admits a holomorphic continuation to the required complex arguments.
    Then
    \begin{align*}
        \tilde{u}(t,x,y) = -u(it,-ix,iy)
    \end{align*}
    is a solution of the KP-I equation
    \begin{equation}\label{transformedKP1}
        \frac{\partial}{\partial x}
        \bigg(
        \tilde{u}_t-\frac{1}{4}(6\tilde{u}\tilde{u}_x+\tilde{u}_{xxx})
        \bigg)
        +\frac{3}{4}\tilde{u}_{yy}=0.
    \end{equation}
\end{remark}

\begin{corollary}\label{cor:finite-gap-KP}
Let \(X\) be a compact Riemann surface of genus \(g\), let \(B\) be its period matrix, and let
\[
\boldsymbol U=(U_1,\cdots,U_g),\qquad
\boldsymbol V=(V_1,\cdots,V_g),\qquad
\boldsymbol W=(W_1,\cdots,W_g)
\]
be the vectors defined above.  Choose an arbitrary phase vector $\boldsymbol D\in \mathbb C^g$.

The equation \eqref{kp} with $\lambda=-1$ admits the following theta-functional expression
\begin{equation*}
    \tilde u(t,x,y)
    =
    12\frac{\partial^2}{\partial x^2}
    \log\theta\left(
        \boldsymbol Ux+\sqrt3\boldsymbol Vy
        +\boldsymbol W't+\boldsymbol D;B
    \right),
\end{equation*}
where $\boldsymbol W'=-4\boldsymbol W+12c\,\boldsymbol U$.

Likewise, the equation \eqref{kp} with $\lambda = 1$ admits the following theta-functional expression
\begin{equation*}
    \tilde u(t,x,y)
    =
    12\frac{\partial^2}{\partial x^2}
    \log\theta\left(
        i\bigl(-\boldsymbol Ux+\sqrt3\boldsymbol Vy
        +\boldsymbol W't\bigr)+\boldsymbol D;B
    \right).
\end{equation*}
\end{corollary}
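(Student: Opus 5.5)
The plan is to deduce both formulas from the Its--Matveev lemma by an explicit scaling of the variables $(t,x,y)$, followed by a Galilean shift that absorbs the constant $2c$. Since the lemma gives the KP-II solution only in the differentiated form \eqref{transformedKP2}, I would work throughout with the $x$-differentiated form of \eqref{kp},
\[
w_{xt}+w_{xxxx}-\lambda w_{yy}+(ww_x)_x=0 .
\]
This sidesteps any interpretation of $\partial_x^{-1}$.

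\emph{Step 1 (scaling, $\lambda=-1$).} Let $u$ be the Its--Matveev solution, so that $u_{xt}=\tfrac32(uu_x)_x+\tfrac14u_{xxxx}+\tfrac34u_{yy}$. Set $w(t,x,y)=\alpha\,u(\beta t,x,\delta y)$ and substitute $u_{xt}$ into $w_{xt}+w_{xxxx}+w_{yy}+(ww_x)_x$. Matching coefficients gives the following conditions:
\begin{itemize}
\item the $u_{xxxx}$ terms cancel iff $\beta/4+1=0$, i.e.\ $\beta=-4$;
\item the $u_{yy}$ terms cancel iff $3\beta/4+\delta^2=0$, i.e.\ $\delta=\sqrt3$;
\item the nonlinear terms cancel iff $3\alpha\beta/2+\alpha^2=0$, i.e.\ $\alpha=6$.
\end{itemize}
Hence
\[
w=6u(-4t,x,\sqrt3y)=12\partial_x^2\log\theta\bigl(\boldsymbol Ux+\sqrt3\boldsymbol Vy-4\boldsymbol Wt+\boldsymbol D;B\bigr)+12c
\]
solves \eqref{kp} with $\lambda=-1$.

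\emph{Step 2 (Galilean shift).} Write $w=v+12c$. Then $v$ satisfies the same equation with an extra transport term $12c\,v_x$. Setting $\tilde u(t,x,y)=v(t,x+12ct,y)$ removes this term, because $\tilde u_t=v_t+12c\,v_x$, and all other terms are invariant under the shift. Substituting $x\mapsto x+12ct$ in the theta argument turns $-4\boldsymbol Wt$ into $(-4\boldsymbol W+12c\boldsymbol U)t=\boldsymbol W't$, which is exactly the first claimed formula.

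\emph{Step 3 (KP-I).} First apply Remark \ref{transformKP12} to get $-u(it,-ix,iy)$, a solution of \eqref{transformedKP1}. The one point needing care is the constant and the sign. Since $\partial_x^2[f(-ix)]=-f''(-ix)$, we have
\[
-u(it,-ix,iy)=2\partial_x^2\log\theta\bigl(-i\boldsymbol Ux+i\boldsymbol Vy+i\boldsymbol Wt+\boldsymbol D;B\bigr)-2c .
\]
Now repeat Step 1. Because the sign of the $u_{yy}$ term is flipped, the same choice $\beta=-4$, $\delta=\sqrt3$, $\alpha=6$ produces a solution of \eqref{kp} with $\lambda=1$, now carrying the constant $-12c$. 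The Galilean shift of Step 2 is then applied with $x\mapsto x-12ct$. This adds $(-i\boldsymbol U)(-12ct)=12ic\,\boldsymbol Ut$ to the argument, giving the total argument
\[
i\bigl(-\boldsymbol Ux+\sqrt3\boldsymbol Vy+(-4\boldsymbol W+12c\boldsymbol U)t\bigr)+\boldsymbol D ,
\]
as claimed.

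The main obstacle is purely bookkeeping: tracking the signs and factors of $i$ produced by the complex substitution, and confirming that the constant becomes $-12c$ (not $+12c$), so that the shift direction yields the same $\boldsymbol W'$ in both cases. The remaining verifications are direct coefficient comparisons.
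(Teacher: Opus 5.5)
Your proposal is correct and is essentially the paper's argument: the same scaling $6u(-4t,x,\sqrt3 y)$ obtained by matching coefficients, then the Galilean shift with $s=12c$ for KP-II, and the complex substitution of Remark~\ref{transformKP12} for KP-I; your KP-I bookkeeping (the constant $-12c$ and the shift $x\mapsto x-12ct$) checks out. A slightly shorter route, consistent with the paper's one-line KP-I treatment, is to note that $u\mapsto -u(it,-ix,iy)$ also maps \eqref{kp} with $\lambda=-1$ to $\lambda=1$, and to apply it directly to the first formula, which contains no additive constant.
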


\subsection{Parameterization via the classical Schottky uniformization}

In the KdV reduction, in addition to the normalization $\oint_{a_k} d\Omega_2 = 0$, one has $\boldsymbol V = \boldsymbol 0$. Hence $\Omega_2$ is a single-valued meromorphic function with a unique pole of second order at the point $P_{\infty}$, which implies that the corresponding spectral curve $X$ is hyperelliptic.
However, this reduction does not hold for the KP case since $\boldsymbol V\neq 0$.
In what follows, we use classical Schottky uniformization to parameterize a class of finite-gap solutions and extract the internal parameters needed in the Lyapunov-Schmidt scheme for the KP
equations.

We first recall the basic construction of the classical Schottky uniformization.
Let \(F\) be a domain in \(\mathbb{CP}^{1}\) bounded by \(2g\) mutually disjoint circles \(C_1,C_1',\cdots,C_g,C_g'\). For each \(j=1,\cdots,g\), let \(\sigma_{j}\) be a fractional linear transformation which maps the exterior of \(C_{j}\) onto the interior of \(C_{j}^{\prime }\), with \(\sigma_j(C_j)=C_j'\). The group $G=\langle \sigma_1,\cdots,\sigma_g\rangle$ is called a classical Schottky group. Its limit set \(\Lambda(G)\) is the closure of the fixed points of the nontrivial elements of \(G\), and its domain of discontinuity is $\Omega(G)=\mathbb{CP}^1\setminus\Lambda(G).$
Then, the quotient space $X=\Omega(G)/G$ is a compact Riemann surface of genus \(g\).

Note that the fractional linear transformation \(\sigma _{j}\) can be expressed in terms of its two fixed points \(A_j, B_j\) and its multiplier \(\mu _{j}\), where $A_j\neq B_j$ and \(0 < \vert{}\mu_j\vert{} < 1\), as
\begin{equation}
    \frac{\sigma_j(z)-B_j}{\sigma_j(z)-A_j}
    =
    \mu_j\frac{z-B_j}{z-A_j},\textup{ for }z\in\mathbb{CP}^1.\label{definitionoffractions}
\end{equation}
Thus, we aim to find internal parameters through fixed points $A_j,B_j$ and multiplier $\mu_j$.

Given $\mu_{j}, A_{j}, B_{j}$, let $\sigma_{j}$ be the fractional linear transformation defined in \eqref{definitionoffractions}. We can rewrite $\sigma_j$ into
\begin{equation*}
    \sigma_j(z)=\frac{\sigma_{j,11}z+\sigma_{j,12}}{\sigma_{j,21}z+\sigma_{j,22}},\textup{ for }z\in\mathbb{CP}^1
\end{equation*}
where 
\begin{equation*}
   \begin{pmatrix}
        \sigma_{j,11}&\sigma_{j,12}\\
        \sigma_{j,21}&\sigma_{j,22}
    \end{pmatrix}
    =
    \frac{1}{A_j-B_j}
    \begin{pmatrix}
        A_j\sqrt{\mu_j}-B_j/\sqrt{\mu_j}
        &
        A_jB_j(1/\sqrt{\mu_j}-\sqrt{\mu_j})\\
        \sqrt{\mu_j}-1/\sqrt{\mu_j}
        &
        A_j/\sqrt{\mu_j}-B_j\sqrt{\mu_j}
    \end{pmatrix},
\end{equation*}
where a square root of \(\mu_j\) has been chosen. The other choice
multiplies the entire matrix by \(-1\) and therefore determines the
same fractional linear transformation.

For computational convenience, let $C_j,C_j'$ be the isometric circles of $\sigma_j$ and $\sigma_j^{-1}$, respectively.
Recall that the isometric circle of a fractional linear transformation $f$ is the set on which $|f'(z)|=1$.
By calculation,  the centers of \(C_{j}\) and \(C_{j}'\) are
\[
    -\frac{\sigma_{j,22}}{\sigma_{j,21}}
    =
    \frac{B_j\sqrt{\mu_j}-A_j/\sqrt{\mu_j}}
         {\sqrt{\mu_j}-1/\sqrt{\mu_j}},
\]
and
\[
    \frac{\sigma_{j,11}}{\sigma_{j,21}}
    =
    \frac{A_j\sqrt{\mu_j}-B_j/\sqrt{\mu_j}}
         {\sqrt{\mu_j}-1/\sqrt{\mu_j}},
\]
respectively. Moreover, their common radius is
\begin{equation}\label{radius}
    \frac{1}{|\sigma_{j,21}|}
    =
    \left|
    \frac{\sqrt{\mu_j}(A_j-B_j)}{1-\mu_j}
    \right|.
\end{equation}
We see that the radii of \(C_j\) and \(C_j'\) tend to zero, and the circles
\(C_j\), \(C_j'\) shrink to points \(A_j\), \(B_j\) as \(|\mu_j|\to0\), respectively. 
Thus, we can ensure $C_1,C_1',\cdots,C_g,C_g'$ mutually disjoint if $\max\limits_{j\in \mathcal{G}}|\mu_j|$ is sufficiently small and $A_1,B_1,\cdots,A_g,B_g$ are distinct from each other.
We choose the canonical basis $a_j$ to coincide with $C_j'$, oriented in a positive direction. And we choose $b_j$ to run on $F$ between the points $z_j\in C_j$ and $\sigma_j(z_j)\in C_j'$, such that $b_j$ do not mutually intersect. Let $P_{\infty} = \infty$ and the local parameter $k^{-1} = z^{-1}$. Then, the normalized Abelian differentials of the second kind $d\Omega_1, d\Omega_2, d\Omega_3$ are uniquely determined.

Now we are in the position to describe the corresponding period matrix $B$, the vectors $\boldsymbol U, \boldsymbol V, \boldsymbol W$, and the constant $c$.

For each $j \in \mathcal{G}$, denote by $G_j = \langle \sigma_j \rangle$ the cyclic group generated by $\sigma_j$.
Notice that every non-identity element $\sigma\in G$ admits a unique representation $\sigma=\sigma_{i_1}^{l_1}\cdots\sigma_{i_n}^{l_n}\in G$, where $i_1, \cdots, i_n\in\mathcal{G}, l_1, \cdots, l_n\in \{\pm 1\}$, and $l_k=l_{k+1}$ when $i_k=i_{k+1}$. In the sequel, we call such an expression a reduced word of length $n$. We call $n$ the length of $\sigma$ and denote it by $\textup{len}(\sigma)$. We also set $\textup{len}(id)=0$.

We use the following systems of reduced-word representatives for the right, left, and double cosets:
\begin{align*}
    G / G_j := \{ id \} \cup \Big\{& \sigma_{i_1}^{l_1} \cdots \sigma_{i_k}^{l_k} \in G \;\Big|\; 
\sigma_{i_1}^{l_1} \cdots \sigma_{i_k}^{l_k} \text{ is a reduced word with } \\
&\sigma_{i_k} \neq \sigma_j, i_1, \dots, i_k \in \mathcal{G},\; l_1, \dots, l_k \in \{\pm 1\}, k\in\mathbb{Z}_{+} \Big\},\\
    G_i \setminus G := \{ id \} \cup \Big\{& \sigma_{i_1}^{l_1} \cdots \sigma_{i_k}^{l_k} \in G \;\Big|\; 
\sigma_{i_1}^{l_1} \cdots \sigma_{i_k}^{l_k} \text{ is a reduced word with } \\
&\sigma_{i_1} \neq \sigma_i, i_1, \dots, i_k \in \mathcal{G},\; l_1, \dots, l_k \in \{\pm 1\}, k\in\mathbb{Z}_{+} \Big\},\\
    G_j \setminus G / G_i := \{ id \} \cup \Big\{& \sigma_{i_1}^{l_1} \cdots \sigma_{i_k}^{l_k} \in G \;\Big|\; 
\sigma_{i_1}^{l_1} \cdots \sigma_{i_k}^{l_k} \text{ is a reduced word with } \\
&\sigma_{i_1} \neq \sigma_j, \sigma_{i_k} \neq \sigma_i, i_1, \dots, i_k \in \mathcal{G},\; l_1, \dots, l_k \in \{\pm 1\}, k\in\mathbb{Z}_{+} \Big\}.
\end{align*}

\begin{lemma}\label{shoulianyinli}
    If for every $j$, the series
    \begin{equation}\label{differential1}
    \sum_{\sigma\in G/G_j}\bigg(\frac{1}{z-\sigma (B_j)}-\frac{1}{z-\sigma (A_j)}\bigg) dz
    \end{equation}
    converges absolutely, then it defines a holomorphic differential $d\omega_j(z)$, and the period matrix is given by
    \begin{align}
    &B_{ij}=\log\{B_j,A_j, B_i, A_i\} + \sum_{\sigma\in G_j\setminus G/G_i,\sigma\neq id}\log\{B_j,A_j,\sigma (B_i),\sigma (A_i)\}, i\neq j, \label{pmatrix1}\\
    &B_{jj}=\log\mu_j+\sum_{\sigma\in G_j\setminus G/G_j,\sigma\neq id}\log\{B_j,A_j,\sigma (B_j),\sigma (A_j)\},\label{pmatrix2}
    \end{align}
    where
    \[
        \{a,b,c,d\} := \frac{(a-c)(b-d)}{(a-d)(b-c)}.
    \]
    Moreover, we have 
    \begin{align}
        &U_j=\sum_{\sigma\in G/G_j}\sigma(A_j)-\sigma(B_j),\label{parameter1}\\
        &V_j=\sum_{\sigma\in G/G_j}\sigma(A_j)^2-\sigma(B_j)^2,\label{parameter2}\\
        &W_j=\sum_{\sigma\in G/G_j}\sigma(A_j)^3-\sigma(B_j)^3,\notag\\
        &c=\sum\limits_{\sigma\in G,\sigma\neq id} \sigma_{21}^{-2}\notag.
    \end{align}
\end{lemma}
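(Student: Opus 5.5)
We will follow the classical Poincaré-series argument, as in Bobenko's treatment of Schottky uniformization. The first step is to check that the series \eqref{differential1} defines a $G$-invariant holomorphic differential on $\Omega(G)$. Write the terms as $\frac{(\sigma(B_j)-\sigma(A_j))\,dz}{(z-\sigma(B_j))(z-\sigma(A_j))}$. By the assumed absolute convergence, the series converges locally uniformly on $\Omega(G)\setminus\{\infty\}$. Each summand is regular at $\infty$, because it is $O(z^{-2})dz$, so the sum is holomorphic on $\Omega(G)$.

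For invariance, we use that for a Möbius map $\tau$,
\[
\Bigl(\frac{1}{\tau z-\tau a}-\frac{1}{\tau z-\tau b}\Bigr)d(\tau z)=\Bigl(\frac{1}{z-a}-\frac{1}{z-b}\Bigr)dz-\Bigl(\frac{1}{z-\tau^{-1}\infty}-\frac{1}{z-\tau^{-1}\infty}\Bigr)dz,
\]
so the correction terms cancel. Pulling the series back by $\sigma_k$ therefore permutes the cosets in $G/G_j$ and leaves $d\omega_j$ invariant. Hence $d\omega_j$ descends to a holomorphic differential on $X=\Omega(G)/G$.

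Next we compute the $a$-periods. The cycle $a_k=C_k'$ encloses the fixed point $B_k$ and the images $\sigma(A_j),\sigma(B_j)$ for those words $\sigma$ whose first letter is $\sigma_k^{+1}$, since these points lie inside $C_k'$. For $\sigma\in G/G_j$, both $\sigma(A_j)$ and $\sigma(B_j)$ lie inside $C_k'$, except in two cases:
\begin{itemize}
\item[(a)] $\sigma=id$ with $j=k$, where only $B_j$ lies inside $C_j'$;
\item[(b)] $\sigma$ ends in a power of $\sigma_j$. This is excluded by the coset representatives, which is exactly why the sum runs over $G/G_j$.
\end{itemize}
Pairs lying entirely inside $C_k'$ have zero total residue. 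Therefore $\oint_{a_k}d\omega_j=2\pi i\delta_{jk}$.

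For the $b$-periods, we integrate from $z_i$ to $\sigma_i(z_i)$:
\[
B_{ij}=\sum_{\sigma\in G/G_j}\log\frac{(\sigma_i z_i-\sigma B_j)(z_i-\sigma A_j)}{(\sigma_i z_i-\sigma A_j)(z_i-\sigma B_j)}.
\]
We then group the sum into left $G_i$-orbits $\{\sigma_i^n\sigma\}$ and use $\sigma_i$-invariance of cross-ratios to telescope each orbit. A full orbit with $\sigma\in G_i\setminus G/G_j$ collapses to $\log\{B_i,A_i,\sigma B_j,\sigma A_j\}$, because $\sigma_i^{\pm n}z\to B_i,A_i$. For $i=j$, the orbit of the identity contributes $\log\mu_j$ by \eqref{definitionoffractions}.

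Matching this with \eqref{pmatrix1}–\eqref{pmatrix2} requires two further steps. First, we swap the roles of $i$ and $j$ using the symmetry of $B$ together with the inversion $\sigma\mapsto\sigma^{-1}$ on double cosets. Second, we use the cross-ratio symmetry $\{a,b,c,d\}=\{c,d,a,b\}$.

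Finally, we derive $U,V,W,c$ through the reciprocity law. Pairing $d\omega_j$ with $d\Omega_i$ gives $\oint_{b_j}d\Omega_i=\operatorname{res}_{\infty}\Omega_i\,d\omega_j$ up to a normalization constant. We expand $d\omega_j$ at $z=\infty$ using
\[
\frac{1}{z-b}-\frac{1}{z-a}=\sum_{n\ge1}(b^{n-1}-a^{n-1})\,z^{-n}.
\]
Since $\Omega_i=z^i+o(1)$, the residue picks out the coefficient of $z^{-i-1}$. This gives $U_j,V_j,W_j$ as the sums of $\sigma(A_j)^i-\sigma(B_j)^i$, with the sign fixed by the orientation convention. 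The differential $d\Omega_1$ itself is the Poincaré series $\sum_{\sigma\in G}d(\sigma z)$, which is $a$-normalized. Its expansion gives
\[
\Omega_1=z+\sum_{\sigma\ne id}\Bigl(\frac{\sigma_{11}}{\sigma_{21}}-\frac{1}{\sigma_{21}^{2}z}\Bigr)+\cdots,
\]
so $c=\sum_{\sigma\ne id}\sigma_{21}^{-2}$; the additive constant can be dropped.

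The main obstacle is the bookkeeping in the telescoping step. We need to check that each orbit sum converges to the stated cross-ratio with the correct choice of logarithm branch. We also need to justify reordering the series, which requires absolute convergence of the regrouped sums, and to identify exactly which coset system appears after the swap of $i$ and $j$.
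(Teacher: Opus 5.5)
Your outline is the classical Poincaré-series argument from Bobenko's chapter of \cite{belokolos1994algebro}, which is all the paper gives for this lemma. The holomorphy, invariance, $a$-period and orbit-telescoping $b$-period steps are sound. The $i\leftrightarrow j$ swap is unnecessary: with the paper's convention $B_{ij}=\oint_{b_j}d\omega_i$, your computation with the indices renamed is literally \eqref{pmatrix1}.

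The step that does not go through as claimed is the sign of $U_j,V_j,W_j$. It is not a free convention. The direction of $b_j$ (from $z_j$ to $\sigma_j z_j$) is already forced by $B_{jj}=\log\mu_j+\cdots$, and $d\Omega_i$ is fixed by its normalization. You can see the outcome without the reciprocity law by writing $d\Omega_i=\sum_{\sigma\in G}d\bigl((\sigma z)^i\bigr)$. This works because every term is exact, so all $a$-periods vanish; the non-identity terms have their poles at $\sigma^{-1}(\infty)\notin\overline F$; and the principal part at $\infty$ is $d(z^i)$. The series converges under the same bound $\sum|\sigma_{21}|^{-2}<\infty$ you need for $c$. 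Writing $\sigma=\tau\sigma_j^n$ with $\tau\in G/G_j$, the $b_j$-period telescopes exactly as in your $B_{ij}$ computation:
\[
\oint_{b_j}d\Omega_i=\sum_{\tau\in G/G_j}\bigl(\tau(B_j)^i-\tau(A_j)^i\bigr).
\]
For example, in genus one $U=\lim_{M,N\to\infty}\bigl(\sigma^{M+1}z_1-\sigma^{-N}z_1\bigr)=B-A$, whereas the lemma gives $A-B$. Your residue computation agrees with this once the constant is tracked, since here $\oint_{b_j}d\Omega_i=-\mathrm{res}_\infty(\Omega_i\,d\omega_j)$, not $+\mathrm{res}_\infty$.

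So the stated $U,V,W$ hold only up to a common sign. This is harmless for the paper because $\theta$ is even (replace $\boldsymbol D$ by $-\boldsymbol D$), but your proof has to say so rather than appeal to orientation. Separately, in your expansion of $\Omega_1$ the constants $\sigma_{11}/\sigma_{21}=\sigma(\infty)$ do not form a convergent series. Integrate termwise as $\sigma(z)-\sigma(\infty)=-1/\bigl(\sigma_{21}(\sigma_{21}z+\sigma_{22})\bigr)$ instead; that series converges and gives $c=\sum_{\sigma\neq\mathrm{id}}\sigma_{21}^{-2}$.
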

(See \cite[Section 5.1,5.5]{belokolos1994algebro} for the proof.)

\bigskip

In order to circumvent issues related to the choice of branches of the square root, we regard $r_j$ as the basic parameter and set $\mu_j = r_j^2$.

To extract parameters, we shall apply the implicit function theorem to  equations involving \(\boldsymbol U\) and \(\boldsymbol V\). For this purpose, the functions in \eqref{parameter1} and \eqref{parameter2} need to be extended so as to be well-defined when some \(r_j\) vanish.
To extend the definition of the series, we first extend the generators $\sigma_{j}^{\pm 1}$ to the degenerate case.

Denote
\begin{align*}
    \Xi_{j,+}(z)=\begin{cases}
        A_j,\textup{ for }z=A_j,\\
        B_j,\textup{ otherwise }.
    \end{cases}\quad
    \Xi_{j,-}(z)=\begin{cases}
        B_j,\textup{ for }z=B_j,\\
        A_j,\textup{ otherwise},
    \end{cases}
\end{align*}
When $r_{j}=0$, define $\sigma_j(z) = \Xi_{j,+}(z), \sigma_j^{-1}(z) = \Xi_{j,-}(z)$. 
Note that the definitions of \(\sigma_j\) and \(\sigma_j^{-1}\) at \(r_j=0\) agree with the pointwise limits obtained from the case \(r_j\neq 0\). More precisely, as \(r_j\to 0\), one has
\[
\sigma_j(z) \to \Xi_{j,+}(z),
\qquad
\sigma_j^{-1}(z)\to \Xi_{j,-}(z),
\]
for all $z\in \mathbb{CP}^{1}$. Note that when $r_{j}=0$, $\sigma_{j}\sigma_{j}^{-1}\ne id$.
Allowing \(r_j=0\), compositions of the extended generators form a semigroup, which we denote by \(\widetilde G\). Notice that the symbols \(\sigma_j^{-1}\) are retained for notational convenience; when \(r_j=0\), they no longer denote the inverse maps of \(\sigma_j\).
When some of the parameters \(r_j\) vanish, the symbols
\[
    \widetilde G/\widetilde G_j,\qquad
    \widetilde G_i\setminus\widetilde G,\qquad
    \widetilde G_j\setminus\widetilde G/\widetilde G_i
\]
are not understood as coset spaces. More precisely, starting from the
fixed systems of reduced-word representatives of $G/G_j,\ G_i\setminus G,\ G_j\setminus G/G_i,$
we replace each generator by its extended counterpart and compose the
resulting maps in the same order. The above symbols denote the
corresponding families of composed maps. Each member of these families
is understood together with its inherited reduced-word representation,
so that its length and its first and last generators remain
well-defined in the degenerate case.
By abuse of notation, we retain the symbols $C_j$ and $C_j'$ for the degenerate limits of the corresponding isometric circles, namely $C_j=\{A_j\},\ C_j'=\{B_j\}$.

Now we are in the position to study the series
\begin{align}
    &f_{1,j}(\boldsymbol A,\boldsymbol B,\boldsymbol r)=\sum_{\sigma\in \tilde{G}/\tilde{G}_j} \sigma(A_j)-\sigma(B_j),\label{parameter1'}\\
    &f_{2,j}(\boldsymbol A,\boldsymbol B,\boldsymbol r)=\sum_{\sigma\in \tilde{G}/\tilde{G}_j}\sigma(A_j)^2-\sigma(B_j)^2,\label{parameter2'}\\
    &f_{3,j}(\boldsymbol A,\boldsymbol B,\boldsymbol r)=\sum_{\sigma\in \tilde{G}/\tilde{G}_j}\sigma(A_j)^3-\sigma(B_j)^3,\label{parameter3'}\\
    &f_{4,i,j}(\boldsymbol A,\boldsymbol B,\boldsymbol r)=\log\{B_j,A_j, B_i, A_i\} + \sum_{\sigma\in G_j\setminus G/G_i,\sigma\neq id}\log\{B_j,A_j,\sigma (B_i),\sigma (A_i)\},i\neq j,\label{parameter4'}\\
    &f_{5,j}(\boldsymbol A,\boldsymbol B,\boldsymbol r)=\log r_j^2+\sum_{\sigma\in G_j\setminus G/G_j,\sigma\neq id}\log\{B_j,A_j,\sigma (B_j),\sigma (A_j)\}.\label{parameter5'}
\end{align}

We now study the dependence of
\[
\sigma(A_j)-\sigma(B_j),\qquad
\sigma(A_j)^2-\sigma(B_j)^2,\qquad
\sigma(A_j)^3-\sigma(B_j)^3,
\]
on the parameters \(\boldsymbol r\), \(\boldsymbol A\), and \(\boldsymbol B\), and in particular establish their analyticity.

Recall that, for \(0<|r_j|<1\), we have
\begin{equation} 
\begin{pmatrix} 
\sigma_{j,11}&\sigma_{j,12}\\ 
\sigma_{j,21}&\sigma_{j,22} \end{pmatrix} 
= \frac{1}{A_j-B_j} 
\begin{pmatrix} 
A_j r_{j}-B_j/r_{j} & A_jB_j(1/r_{j}-r_{j})
\\ r_{j}-1/r_{j} & A_j/r_{j}-B_jr_{j} 
\end{pmatrix}, 
\label{jiexixingpanduan}
\end{equation}
When \(r_j=0\), by convention mentioned above, we have
\begin{align*} 
\sigma_j(z)=\begin{cases} A_j,\textup{ for }z=A_j,\\ 
B_j,\textup{ otherwise }, 
\end{cases} 
\end{align*} 
and
\begin{align*} 
\sigma_j^{-1}(z)=
\begin{cases} B_j,\textup{ for }z=B_j,\\ 
A_j,\textup{ otherwise}. 
\end{cases} 
\end{align*}

\begin{lemma}\label{lm6.4}
    Let \(\mathcal U\subset\mathbb C^{3g}\) be an open set such that the sets $C_1,C_1',\cdots,C_g,C_g'$ are pairwise disjoint for every \((\boldsymbol A,\boldsymbol B,\boldsymbol r)\in\mathcal U\).
    Then, for any $j\in\mathcal{G}$ and $\sigma\in \tilde{G}/\tilde{G}_{j}$, the maps 
    \begin{align*}
        &(\boldsymbol{A},\boldsymbol{B}, \boldsymbol{r})\mapsto \sigma(A_{j})-\sigma(B_{j}),\\
        &(\boldsymbol{A},\boldsymbol{B}, \boldsymbol{r}) \mapsto \sigma(A_{j})^{2}-\sigma(B_{j})^{2},\\
        &(\boldsymbol{A},\boldsymbol{B}, \boldsymbol{r})\mapsto \sigma(A_{j})^{3}-\sigma(B_{j})^{3},
    \end{align*}
    are holomorphic in $\mathcal{U}$.
\end{lemma}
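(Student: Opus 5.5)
Since $\sigma(A_j)^2-\sigma(B_j)^2=(\sigma(A_j)-\sigma(B_j))(\sigma(A_j)+\sigma(B_j))$, and similarly for cubes, it suffices to prove that $(\boldsymbol A,\boldsymbol B,\boldsymbol r)\mapsto\sigma(A_j)$ and $\sigma(B_j)$ are holomorphic on $\mathcal U$ for each $\sigma\in\tilde G/\tilde G_j$ (all points staying finite). I will argue by induction on $\textup{len}(\sigma)$, writing $\sigma=\sigma_{i_1}^{l_1}\tau$ with $\tau$ a shorter element of $\tilde G/\tilde G_j$ (or $\tau=id$), and prove a stronger statement. Write $\sigma=\sigma_{i_1}^{l_1}\cdots\sigma_{i_k}^{l_k}$ with $i_k\ne j$. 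Then $z:=\tau(A_j)$ lies in $\overline{C}_{i_2}'$ if $l_2=1$, or in $\overline C_{i_2}$ if $l_2=-1$; for $\tau=id$ it is $A_j$ or $B_j$. The stronger claim is that the image of a point lying in a closed disk $D$ attached to a generator different from $\sigma_{i_1}^{-l_1}$ is a holomorphic function, and it lies in the disk $\overline C_{i_1}'$ (resp.\ $\overline C_{i_1}$). The base case uses that $A_j\in\overline C_j$ and $B_j\in\overline C_j'$, which follows from the radius formula \eqref{radius} and the fact that the fixed points lie inside the isometric circles.

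The key step is the following single-generator lemma. Fix $i$ and $l=\pm1$, and let $w(\boldsymbol A,\boldsymbol B,\boldsymbol r)$ be holomorphic, with $w$ staying in the closed disks bounded by $C_k,C_k'$ for $(k,\pm)\neq(i,\mp l)$ — in particular, away from the pole $-\sigma_{i,22}/\sigma_{i,21}$, the center of $C_i$, when $l=1$. I then need to show that $\sigma_i^{l}(w)$ is holomorphic, including across $r_i=0$. From \eqref{jiexixingpanduan}, multiplying numerator and denominator by $r_i$ gives
\[
\sigma_i(w)=\frac{(A_ir_i^2-B_i)w+A_iB_i(1-r_i^2)}{(r_i^2-1)w+A_i-B_ir_i^2}.
\]
This is a rational function of $(A_i,B_i,r_i,w)$, and its denominator vanishes only at $w=\frac{A_i-B_ir_i^2}{1-r_i^2}$, the center of $C_i$ (which equals $A_i$ at $r_i=0$). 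At $r_i=0$ the expression reduces to $\frac{-B_iw+A_iB_i}{A_i-w}=B_i$ for $w\ne A_i$, which agrees with the convention $\Xi_{i,+}$. The formula for $\sigma_i^{-1}$ is analogous, with pole at the center of $C_i'$ and degenerate value $A_i$. Composing with the holomorphic map $w$ then gives holomorphy, provided the denominator stays nonzero on $\mathcal U$.

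The main obstacle is exactly this nonvanishing: I must show that $w$ never hits the center of $C_i$, including in the degenerate case where $C_i=\{A_i\}$. For this I will use the reduced-word condition. The input point lies in $\overline C'_{i_2}$ or $\overline C_{i_2}$, and the reduction rule ($l_1=l_2$ whenever $i_1=i_2$) guarantees that this disk is not $\overline C_i$ when $l_1=1$ (resp.\ not $\overline C_i'$ when $l_1=-1$). Pairwise disjointness of the closed sets in $\mathcal U$ then keeps the point away from the pole. For the mapping property, in the nondegenerate case $\sigma_i$ maps the exterior of $C_i$ into the closure of the interior of $C_i'$ by the isometric-circle property. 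In the degenerate case the image is the point $B_i$, which is $C_i'$ by convention, so the induction closes. In the base case $\tau=id$ with $i_1=j$ we would have $\sigma$ ending in $\sigma_j$, which is excluded by the definition of $G/G_j$, so the points $A_j,B_j$ only meet generators $\sigma_{i}$ with $i\neq j$ at the first step. This completes the induction and yields holomorphy of all three maps on $\mathcal U$.
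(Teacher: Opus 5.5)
Your proposal is correct, but it follows a different route from the paper.

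\textbf{The paper's route.} The paper treats only the differences and proves \emph{separate} holomorphy in each variable.
\begin{itemize}
\item Off the hypersurfaces $r_l=0$, holomorphy is read directly from \eqref{jiexixingpanduan}.
\item Across $r_i=0$, it notes that $\sigma(A_j)-\sigma(B_j)$ is bounded and tends to $0$, because $\sigma_i^{\nu}\widehat\sigma(A_j)$ and $\sigma_i^{\nu}\widehat\sigma(B_j)$ collapse to the same fixed point. It then applies Riemann's removable singularity theorem.
\item On hypersurfaces where a degenerate letter occurs in the word, the difference vanishes identically.
\item Joint holomorphy then follows from Hartogs' theorem.
\end{itemize}

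\textbf{Your route.} You clear the factor $1/r_i$, so that $\sigma_i^{\pm1}$ becomes a single rational map in $(A_i,B_i,r_i,w)$. Its only pole is the center of $C_i$ (resp.\ $C_i'$), and its value at $r_i=0$ is exactly the convention $\Xi_{i,\pm}$. You then run a ping-pong induction on word length. The reduced-word condition, together with disjointness of the closed disks, keeps the argument away from that pole.

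\textbf{What each buys.} Your approach is more elementary: it needs neither Hartogs nor removable singularities. It also proves more:
\begin{itemize}
\item each orbit point $\sigma(A_j)$, $\sigma(B_j)$ is itself jointly holomorphic;
\item each lies in the closed target disk of the first letter, which is the kind of localization behind the bounds in Lemma \ref{convergelm};
\item it makes explicit the joint continuity in $(r_i,w)$ that the paper's limit argument tacitly needs when the prefix $\widetilde\sigma$ again contains $\sigma_i^{\pm1}$.
\end{itemize}
The paper's approach works directly with the quantities that enter the series. It shows at once that they vanish whenever a degenerate generator appears, which is what Lemma \ref{convergelm}(ii) uses to discard $h^{(2)}_{1,n}$. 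In your framework this is immediate, since both points are sent to $B_i$ or $A_i$ and then follow the same path, but it would need to be stated separately.

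\textbf{A minor point.} Your base-case claim $A_j\in\overline{C_j}$, $B_j\in\overline{C_j'}$ relies on $|r_j|<1$: indeed $|A_j-c_j|$ equals $|r_j|$ times the radius. This is the paper's standing assumption. All the induction actually needs is $A_j,B_j\in\overline{C_j}\cup\overline{C_j'}$.
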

\begin{proof}
Let \(\sigma\in \widetilde{G}/\widetilde{G}_{j}\). We first prove the
analyticity of $\sigma(A_{j})-\sigma(B_{j})$.

Fix \(\boldsymbol A,\boldsymbol B\) and
\[
    \hat{\boldsymbol r}_{i}
    =(r_{1},\cdots,r_{i-1},r_{i+1},\cdots,r_{g}).
\]
We consider the dependence on \(r_i\). If the reduced representation of
\(\sigma\) contains neither \(\sigma_i\) nor \(\sigma_i^{-1}\), then
\(\sigma(A_j)-\sigma(B_j)\) is independent of \(r_i\), and there is nothing
to prove. Otherwise, we may write
\[
    \sigma=\widetilde{\sigma}\sigma_i^{\nu}\widehat{\sigma},
    \qquad \nu\in\{\pm1\},
\]
where \(\widehat{\sigma}\in \widetilde{G}/\widetilde{G}_{j}\) contains
neither \(\sigma_i\) nor \(\sigma_i^{-1}\).

We first assume that \(r_l\ne0\) for all \(l\ne i\). 

By
\eqref{jiexixingpanduan} and the assumptions, the function
\(\sigma(A_j)-\sigma(B_j)\) is holomorphic in
\[
    r_i\in \mathcal U(\boldsymbol A,\boldsymbol B,\hat{\boldsymbol r}_i),
    \qquad r_i\ne0.
\]
Moreover, since \(\widehat{\sigma}\) contains neither \(\sigma_i\) nor
\(\sigma_i^{-1}\), we have
\[
    \widehat{\sigma}(A_j),\ \widehat{\sigma}(B_j)
    \in \mathbb{CP}^{1}\setminus (\overline{C_i}\cup \overline{C_i'}).
\]
Hence, by the definition of \(\sigma_i\) and \(\sigma_i^{-1}\) at
\(r_i=0\), the two points
\(\sigma_i^{\nu}\widehat{\sigma}(A_j)\) and
\(\sigma_i^{\nu}\widehat{\sigma}(B_j)\) have the same limit as
\(r_i\to0\). Therefore,
\[
    \sigma(A_j)-\sigma(B_j)\longrightarrow 0,
    \qquad r_i\to0.
\]
Since this function $\sigma(A_j)-\sigma(B_j)$ is holomorphic for \(r_i\neq0\) and is locally
bounded near \(r_i=0\), the Riemann removable singularity theorem shows that its value at \(r_i=0\), defined to be zero, gives a holomorphic extension with respect to
$r_i\in \mathcal U(\boldsymbol A,\boldsymbol B,\hat{\boldsymbol r}_i)$.

Next, assume that \(r_l=0\) for some \(l\ne i\). If the reduced
representation of \(\sigma\) contains neither \(\sigma_l\) nor
\(\sigma_l^{-1}\) for every such \(l\), then the previous case applies.
Otherwise, we may write
\[
    \sigma=\widetilde{\sigma}\sigma_l^{\nu}\widehat{\sigma},
    \qquad \nu\in\{\pm1\},
\]
where \(r_l=0\) and \(\widehat{\sigma}\) contains neither \(\sigma_l\) nor
\(\sigma_l^{-1}\). Then
\[
    \widehat{\sigma}(A_j),\ \widehat{\sigma}(B_j)
    \in \mathbb{CP}^{1}\setminus (\overline{C_l}\cup \overline{C_l'}).
\]
By the definition of \(\sigma_l\) and \(\sigma_l^{-1}\) at \(r_l=0\), this
implies
\[
    \sigma(A_j)-\sigma(B_j)=0.
\]
In particular, \(\sigma(A_j)-\sigma(B_j)\) is holomorphic with respect to
\(r_i\). 

We now fix
\[
    \boldsymbol A_{\hat{i}}=(A_1,\cdots,A_{i-1},A_{i+1},\cdots,A_g),
    \qquad \boldsymbol B,\qquad \boldsymbol r,
\]
and consider the dependence on \(A_i\). If \(r_l\ne0\) for all
\(l\in\mathcal G\), then the analyticity with respect to \(A_i\) follows
directly from \eqref{jiexixingpanduan} and the assumptions.

If \(r_l=0\) for some \(l\in\mathcal G\), then either the reduced
representation of \(\sigma\) contains no such generator, in which case the
previous argument applies, or we can write
\[
    \sigma=\widetilde{\sigma}\sigma_l^{\nu}\widehat{\sigma},
    \qquad \nu\in\{\pm1\},
\]
where \(\widehat{\sigma}\) contains neither \(\sigma_l\) nor
\(\sigma_l^{-1}\). As before,
\[
    \widehat{\sigma}(A_j),\ \widehat{\sigma}(B_j)
    \in \mathbb{CP}^{1}\setminus (\overline{C_l}\cup \overline{C_l'}),
\]
and hence
\[
    \sigma(A_j)-\sigma(B_j)=0.
\]
Therefore \(\sigma(A_j)-\sigma(B_j)\) is holomorphic with respect to \(A_i\).

The analyticity with respect to \(B_i\) is obtained in exactly the same
way. Since the function $\sigma(A_j)-\sigma(B_j)$ is separately holomorphic in all variables,
Hartogs' theorem implies that it is jointly holomorphic with respect to $(\boldsymbol A,\boldsymbol B,\boldsymbol r)\in\mathcal U$.

Finally, the same argument applies to
\[
    \sigma(A_j)^2-\sigma(B_j)^2
    \qquad\text{and}\qquad
    \sigma(A_j)^3-\sigma(B_j)^3.
\]
Indeed, away from the hypersurfaces \(r_l=0\), the claim follows from
\eqref{jiexixingpanduan}; while if some generator with \(r_l=0\) appears in
the reduced representation of \(\sigma\), the two values
\(\sigma(A_j)\) and \(\sigma(B_j)\) coincide by the above argument. This
completes the proof.
\end{proof}

We now discuss the convergence of the series in \eqref{differential1}, \eqref{parameter1'}, \eqref{parameter2'}, \eqref{parameter3'}, \eqref{parameter4'}, \eqref{parameter5'}.

\begin{lemma}\label{convergelm}
    Suppose $\mathcal{U}_{1,1}, \mathcal{U}_{2,1}, \cdots, \mathcal{U}_{1,g}, \mathcal{U}_{2,g}\subset \mathbb{C}$ are $2g$ pairwise disjoint domains such that
    \begin{align*}
        m=\inf\limits_{\substack{i,i'=1,j,j'\in\mathcal{G}\\ (i,j)\neq (i',j')}} \inf\limits_{\substack{z\in \mathcal{U}_{i,j},\\ w\in\mathcal{U}_{i',j'}}} |z-w|>0,\quad
        M=\sup\limits_{\substack{i,i'=1,j,j'\in\mathcal{G}\\ (i,j)\neq (i',j')}}  \sup\limits_{\substack{z\in \mathcal{U}_{i,j},\\ w\in\mathcal{U}_{i',j'}}}|z-w|<\infty.
    \end{align*}
    Let $\rho_0<\min\bigg\{\dfrac{1}{8},\dfrac{m^2}{16gM^2},\dfrac{m}{16(2g-1)M},\dfrac{1}{M}\bigg\}$.
    Denote
    \begin{align*}
        &\boldsymbol A = (A_1,\cdots,A_g),\qquad \boldsymbol B = (B_1,\cdots,B_g),\\
        &\mathcal{U}_1=\mathcal{U}_{1,1}\times\mathcal{U}_{1,2}\times\cdots\times \mathcal{U}_{1,g},\qquad\mathcal{U}_2=\mathcal{U}_{2,1}\times\mathcal{U}_{2,2}\times\cdots\times \mathcal{U}_{2,g},\\
        &\mathcal{N}_{\rho_0}=\{\boldsymbol r: |r_j|\leq \rho_0\textup{ for }j\in\mathcal{G}\},\qquad \mathcal{N}^{\circ}_{\rho_0} = \{\boldsymbol r: 0<|r_j|\leq \rho_0\textup{ for }j\in\mathcal{G}\}.
    \end{align*}
    Then, we have:
    \begin{enumerate}[$(i)$]
        \item For $(\boldsymbol A,\boldsymbol B,\boldsymbol r)\in\mathcal{U}_1 \times \mathcal{U}_2 \times \mathcal{N}^{\circ}_{\rho_0}, z\in F$, the series \eqref{differential1} converge absolutely, which implies that the holomorphic differential $d\omega_j(z)$ is well defined.
        \item For $(\boldsymbol A,\boldsymbol B,\boldsymbol r)\in\mathcal{U}_1 \times \mathcal{U}_2 \times \mathcal{N}_{\rho_0}$, the circles $C_j,C_j'$ together with their degenerate limits are disjoint from each other. The series \eqref{parameter1'}, \eqref{parameter2'}, \eqref{parameter3'} converge absolutely and uniformly with respect to $(\boldsymbol A,\boldsymbol B,\boldsymbol r)\in\mathcal{U}_1 \times \mathcal{U}_2 \times \mathcal{N}_{\rho_0}$.
        \item For any $i,j\in \mathcal{G}$, the series
        \[
            \sum_{\sigma\in G_j\setminus G/G_i,\sigma\neq id}\log\{B_j,A_j,\sigma (B_i),\sigma (A_i)\}
        \]
        converge absolutely and locally uniformly on $\mathcal{U}_1 \times \mathcal{U}_2 \times \mathcal{N}^{\circ}_{\rho_0}$.
        Moreover,
        \[
            f_{4,i,j}(\boldsymbol A,\boldsymbol B,\boldsymbol r)= \log\{B_j,A_j, B_i, A_i\} + O(\boldsymbol r^2)
        \]
        for all $i,j\in\mathcal{G},i\neq j$, and $f_{5,j}(\boldsymbol A,\boldsymbol B,\boldsymbol r)=\log r_j^2+O(\boldsymbol r^2)$ for all $j\in\mathcal{G}$.
    \end{enumerate}

\end{lemma}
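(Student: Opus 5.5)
The proof rests on one quantitative contraction estimate for the extended generators. After that, absolute convergence of every series follows from counting reduced words. First I record the geometry. By \eqref{radius}, with $\mu_j=r_j^2$, the isometric circles $C_j,C_j'$ have radius $R_j=|r_j||A_j-B_j|/|1-r_j^2|\le 2\rho_0 M$. For $\rho_0<m/(16(2g-1)M)$ this radius is much smaller than $m$. The centres are $(B_jr_j^2-A_j)/(r_j^2-1)=A_j+O(r_j^2M)$ and similarly $B_j+O(r_j^2 M)$. Hence each closed disc $\overline{C_j}$ (resp.\ $\overline{C_j'}$) lies in the $m/8$-neighbourhood of $\mathcal U_{1,j}$ (resp.\ $\mathcal U_{2,j}$). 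All $2g$ discs are then pairwise separated by at least $m/2$. This holds also in the degenerate case $r_j=0$, where they are the points $A_j,B_j$. This gives the first part of (ii).

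Next is the key estimate. For $z$ outside $\overline{C_j}$ with $\operatorname{dist}(z,C_j)\ge m/2$, write $\sigma_j(z)-B_j=\mu_j(z-B_j)(\sigma_j(z)-A_j)/(z-A_j)$, which follows from \eqref{definitionoffractions}. Since $\sigma_j(z)\in \overline{C_j'}$, this gives $|\sigma_j(z)-B_j|\le |r_j|^2 M^2/m\cdot C$. For two such points $z,w$ the derivative bound $|\sigma_j'(\zeta)|=|\sigma_{j,21}\zeta+\sigma_{j,22}|^{-2}=R_j^2/|\zeta-c_j|^2\le 4R_j^2/m^2$ holds along the segment joining them, which stays away from $C_j$. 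It yields
\[
|\sigma_j^{\pm1}(z)-\sigma_j^{\pm1}(w)|\le \frac{16\rho_0^2M^2}{m^2}|z-w|\le \frac{1}{g}\,q\,|z-w|,\qquad q:=\frac{16 g\rho_0^2M^2}{m^2}<\rho_0 .
\]
The segment may pass near another disc, so I will instead use a path inside $F$; the constant $m/16(2g-1)$ is meant to absorb this. The condition $\rho_0<m^2/(16gM^2)$ was chosen so that the ratio $16 g\rho_0^2M^2/m^2$ is smaller than $\rho_0$. The degenerate case is trivially consistent, since both sides vanish.

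Now take a reduced word $\sigma=\sigma_{i_1}^{l_1}\cdots\sigma_{i_n}^{l_n}\in\widetilde G/\widetilde G_j$. Since $\sigma_{i_n}\ne\sigma_j$, the points $A_j,B_j$ lie outside the discs of the first applied generator. Reducedness ensures that each intermediate image lies in a disc of the previous letter, and that this disc is not the isometric circle of the next letter. Iterating therefore gives $|\sigma(A_j)-\sigma(B_j)|\le (q/g)^n M$. The number of reduced words of length $n$ is at most $2g(2g-1)^{n-1}$. The series \eqref{parameter1'} is thus dominated by $M\sum_n 2g(2g-1)^{n-1}(q/g)^n<\infty$, uniformly in $(\boldsymbol A,\boldsymbol B,\boldsymbol r)$. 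The squares and cubes follow from the factorisations $a^2-b^2=(a-b)(a+b)$ and $a^3-b^3=(a-b)(a^2+ab+b^2)$, together with a bounded range for the images. This completes (ii).

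For (i), I apply the same bound to $\frac{1}{z-\sigma(B_j)}-\frac{1}{z-\sigma(A_j)}=\frac{\sigma(B_j)-\sigma(A_j)}{(z-\sigma(A_j))(z-\sigma(B_j))}$. For $z\in F$ the denominator is bounded below locally uniformly, because $\sigma(A_j),\sigma(B_j)$ lie in the discs, away from $F$.

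For (iii), I use the elementary identity
\[
\log\{B_j,A_j,\sigma(B_i),\sigma(A_i)\}=\log\Bigl(1+\frac{(A_j-B_j)(\sigma(A_i)-\sigma(B_i))}{(B_j-\sigma(A_i))(A_j-\sigma(B_i))}\Bigr).
\]
For $\sigma\in G_j\setminus G/G_i$, $\sigma\neq id$, the first letter of $\sigma$ is not $\sigma_j^{\pm1}$. Hence $\sigma(A_i),\sigma(B_i)$ lie in discs distinct from $C_j,C_j'$, and the denominators are at least $(m/2)^2$. The argument of $\log(1+\cdot)$ is $O((q/g)^{n}M^2/m^2)$ with $n=\operatorname{len}(\sigma)\ge1$. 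Each such term is bounded by a constant times $(q/g)^n$, and $q\lesssim|\boldsymbol r|_\infty^2$ once we use the actual $|r_l|$ instead of $\rho_0$. The series therefore converges absolutely and locally uniformly. Summing over $n\ge1$ gives a total of $O(\boldsymbol r^2)$, which yields the stated expansions of $f_{4,i,j}$ and $f_{5,j}$.

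The main obstacle is the contraction step: making the derivative bound rigorous along chains. One must verify that each intermediate image stays at distance at least $m/2$ from the next isometric circle, and the connecting paths must avoid the other discs. The explicit constants in $\rho_0$ are tuned for exactly this step.
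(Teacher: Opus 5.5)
Your proposal is correct in substance, but it gets the key decay by a different mechanism than the paper.

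\textbf{The paper's route.} It follows Burnside and tracks the lower-left entry of the determinant-one matrix along a reduced word, showing $|\sigma_{n+1,21}|>(c'/|r_j|)\,|\sigma_{n,21}|$. This yields the Poincar\'e-series bound $\sum_{\sigma\neq id}|\sigma_{21}|^{-2}\le 4g\max_j|r_j(A_j-B_j)|^2$. The paper then bounds $\sigma(A_j)-\sigma(B_j)=(A_j-B_j)/\bigl((\sigma_{21}A_j+\sigma_{22})(\sigma_{21}B_j+\sigma_{22})\bigr)$ via lower bounds on $|A_j+\sigma_{22}/\sigma_{21}|$. It separately discards words containing a degenerate generator, since they contribute zero. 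Part (i) comes from rewriting the series as a sum over $G_j\setminus G$ weighted by $\sigma'(z)$.

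\textbf{Your route.} You prove a one-step contraction estimate and iterate it through the ping-pong nesting that reducedness guarantees. This is more elementary and more uniform:
\begin{itemize}
\item degenerate generators are absorbed automatically, with Lipschitz constant $0$;
\item (i) follows from a direct termwise bound, without the rewriting identity;
\item (iii) is then the same cross-ratio argument as in the paper.
\end{itemize}
What the paper's route gives in addition is the explicit bound on $\sum|\sigma_{21}|^{-2}$. That bound is reused later for the convergence and analyticity of $c=\sum_{\sigma\neq id}\sigma_{21}^{-2}$; with your method it would need a short extra chain-rule argument.

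\textbf{The step to repair.} The derivative-along-a-segment argument needs fixing, and your proposed patch (a path in $F$, with the constants absorbing nearby discs) targets the wrong issue. First, $A_j,B_j$ do not lie in $F$. Second, the other discs are irrelevant, because $|\tau'(\zeta)|=R_k^2/|\zeta-c_\tau|^2$ depends only on the distance to the centre $c_\tau=-\tau_{22}/\tau_{21}$ of the isometric circle of $\tau=\sigma_k^{\pm1}$. The real danger is only at the first step, where the segment from $A_j$ to $B_j$ may pass close to $c_\tau$.

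No path is needed. For any determinant-one M\"obius map,
\[
\tau(z)-\tau(w)=\frac{z-w}{(\tau_{21}z+\tau_{22})(\tau_{21}w+\tau_{22})},\qquad\text{hence}\qquad |\tau(z)-\tau(w)|=\frac{R_k^2\,|z-w|}{|z-c_\tau|\,|w-c_\tau|}.
\]
This requires only that $z$ and $w$ each lie at distance at least $m/2$ from $c_\tau$. Your reducedness argument supplies exactly that at every step, including $z=A_j$, $w=B_j$ at the first step, where $i_n\neq j$.

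So each letter contributes the factor $R_k^2/(m/2)^2\le 16|r_k|^2M^2/m^2$. With the word count $2g(2g-1)^{n-1}$, the hypotheses on $\rho_0$ make the series geometric with ratio below $\rho_0M/m<1$. This is the same identity the paper uses in (ii).
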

\smallskip
\begin{proof}
    $(i)$ Given a sequence of reduced words $\{\sigma^{(n)}\}_{n=1}^{\infty}$, we now estimate $\bigg|\dfrac{\sigma_{n+1,21}}{\sigma_{n,21}}\bigg|$ for $n\ge 1$.   Denote $\sigma^{(0)}=id$.
Suppose
\[
\sigma^{(n+1)}=\sigma^{(n)}\sigma_j^{l_j},\quad l_j=\pm1.
\]
Then, we have
\[
M_{\sigma^{(n+1)}}=M_{\sigma^{(n)}} M_{\sigma_j}^{l_j}.
\]
A simple calculation yields
\begin{equation*}
    \frac{\sigma_{n+1,21}}{\sigma_{n,21}}=\dfrac{1}{r_j}\frac{B_j+\sigma_{n,22}/\sigma_{n,21}}{B_j-A_j}-r_j\frac{A_j+\sigma_{n,22}/\sigma_{n,21}}{B_j-A_j},
\end{equation*}
when $l_j=1$,
and
\begin{equation*}
    \frac{\sigma_{n+1,21}}{\sigma_{n,21}}=\dfrac{1}{r_j}\frac{A_j+\sigma_{n,22}/\sigma_{n,21}}{A_j-B_j}-r_j\frac{B_j+\sigma_{n,22}/\sigma_{n,21}}{A_j-B_j},
\end{equation*}
when $l_j=-1$.

Consider the case $l_{j}=1$.

Suppose $(\sigma^{(n)})^{-1}=\sigma_i^{-1}(\sigma^{(n-1)})^{-1}$. Then, $-\sigma_{n,22}/\sigma_{n,21}\in \mathring{C}_i$ since $(\sigma^{(n)})^{-1}$ maps $\infty$ to $-\sigma_{n,22}/\sigma_{n,21}$.

If $i=j$, with \eqref{radius}, we have
\begin{align}
    \bigg|\frac{\sigma_{n+1,21}}{\sigma_{n,21}}\bigg|
    &=\bigg|\bigg(\dfrac{1}{r_j}-r_j\bigg)\frac{B_j+\sigma_{n,22}/\sigma_{n,21}}{B_j-A_j}+r_j\bigg|\notag\\
    &>\bigg|\bigg(\dfrac{1}{r_j}-r_j\bigg)\bigg(1-\bigg|\frac{A_{j}+\sigma_{n,22}/\sigma_{n,21}}{B_j-A_j}\bigg|\bigg)\bigg|-\left|r_j\right|\notag\\
    &>\bigg|\bigg(\dfrac{1}{r_j}-r_j\bigg)\bigg(1-\bigg|\frac{2r_j}{1-r_j^2}\bigg|\bigg)\bigg|-\left|r_j\right|\label{yuandetaolun}\\
    &>\frac{1}{8|r_j|},\label{yuandetaolun1}
\end{align}
for $|r_j|<\dfrac{1}{8}$,
where the inequality \eqref{yuandetaolun} uses the facts that $A_{j}, -\sigma_{n,22}/\sigma_{n,21}\in \mathring{C}_i$.

If $i\neq j$, we have
\begin{align}
    \bigg|\frac{\sigma_{n+1,21}}{\sigma_{n,21}}\bigg|&=\bigg|\bigg(\dfrac{1}{r_j}-r_j\bigg)\frac{B_j+\sigma_{n,22}/\sigma_{n,21}}{B_j-A_j}+r_j\bigg|\notag\\
    &>\bigg|\bigg(\dfrac{1}{r_j}-r_j\bigg)\bigg(\bigg|\frac{B_j-A_i}{B_j-A_j}\bigg|-\bigg|\frac{2r_i(B_i-A_i)}{(1-r_i^2)(B_j-A_j)}\bigg|\bigg)\bigg|-|r_j|\notag\\
    &>\bigg(\frac{1}{|r_j|}-|r_j|\bigg)(\frac{m}{M}-\frac{4|r_i|M}{m})>\frac{c'}{|r_j|}\label{yuandetaolun2},
\end{align}
for $|r_i|,|r_j|< \dfrac{m^2}{16M^2}$, where $c'=\min \left\{\dfrac{m}{4M},\dfrac{1}{8} \right\}$.

Suppose $(\sigma^{(n)})^{-1}=\sigma_i(\sigma^{(n-1)})^{-1}$. Then, $-\sigma_{n,22}/\sigma_{n,21}\in \mathring{C}_i'$ since $(\sigma^{(n)})^{-1}$ maps $\infty$ to $-\sigma_{n,22}/\sigma_{n,21}$. 
The requirement for $\sigma^{(n+1)}$ to be of length $n+1$ implies that $i\neq j$. Thus,
\begin{align}
    \bigg|\frac{\sigma_{n+1,21}}{\sigma_{n,21}}\bigg|
    &=\bigg|\dfrac{1}{r_j}\bigg(\frac{B_j-B_i}{B_j-A_j}+\frac{B_i+\sigma_{n,22}/\sigma_{n,21}}{B_j-A_j}\bigg)-r_j\bigg(\frac{A_j-B_i}{B_j-A_j}+\frac{B_i+\sigma_{n,22}/\sigma_{n,21}}{B_j-A_j}\bigg)\bigg| \notag\\
    &>\dfrac{1}{|r_j|}\bigg(\bigg|\frac{B_j-B_i}{B_j-A_j}\bigg|-\bigg|\frac{2r_i(B_i-A_i)}{(1-r_i^2)(B_j-A_j)}\bigg|\bigg)-|r_j|\bigg(\bigg|\frac{A_j-B_i}{B_j-A_j}\bigg|+\bigg|\frac{2r_i(B_i-A_i)}{(1-r_i^2)(B_j-A_j)}\bigg|\bigg) \notag\\
    &>\dfrac{1}{|r_j|}\bigg(\frac{m}{M}-\frac{4|r_i|M}{m}\bigg)-|r_j|\bigg(\frac{M}{m}+\frac{4|r_i|M}{m}\bigg) >\frac{c'}{|r_j|},\label{yuandetaolun3}
\end{align}
for $|r_i|,|r_j|< \dfrac{m^2}{16M^2}$.
The discussion for $l_j=-1$  is similar.

Denote $\mathfrak{c}_{*}=\min\{|\sigma_{j,21}|:j\in\mathcal{G}\},\ r_* = \max\limits_{j\in \mathcal{G}}|r_j|$.
Since there are at most $2g-1$ choices of $\sigma_{j},\sigma_{j}^{-1}$ from $\sigma^{(n)}$ to $\sigma^{(n+1)}$ at every step $n$, we obtain the convergence of $\sum\limits_{\sigma\in G,\sigma \neq id} \dfrac{1}{|\sigma_{21}|^2}$ if $\dfrac{(2g-1)r_* }{c'} \leq \dfrac{1}{2}$. Moreover, we obtain the estimate
\begin{align}
    \sum\limits_{\sigma\in G,\sigma \neq id} \dfrac{1}{|\sigma_{21}|^2}&\leq \mathfrak{c}_{*}^{-2}\bigg(2g+2g(2g-1)\bigg(\frac{r_*}{c'}\bigg)^2+2g(2g-1)^2\bigg(\frac{r_*}{c'}\bigg)^4+\cdots\bigg)\label{poincareseries1}\\
    &\leq \frac{4}{3}\cdot \left(\frac{8}{7}\right)^2\cdot 2g \cdot \max_{j\in\mathcal{G}}|r_j(A_j-B_j)|^2 \leq 4g \cdot \max_{j\in\mathcal{G}}|r_j(A_j-B_j)|^2\label{poincareseries2}.
\end{align}

Note that for any $\sigma = \sigma_{i_1}^{l_1} \cdots \sigma_{i_k}^{l_k}$ with $k \ge 2$, the complex number $-\dfrac{\sigma_{22}}{\sigma_{21}}$ associated with $\sigma$ lies in the interior of at least one of the disks $\sigma_{i_k}( \overline{C_{i_1}} )$, $\sigma_{i_k}( \overline{C_{i_1}'} )$, $\sigma_{i_k}^{-1}( \overline{C_{i_1}} )$ and $\sigma_{i_k}^{-1}( \overline{C_{i_1}'} )$, which are contained in $\mathring{C}_{i_k}$ or $\mathring{C}_{i_k}'$. 
It follows that the quantity $\bigl| z + \frac{\sigma_{22}}{\sigma_{21}} \bigr|$  has a positive lower bound for all non-identity $\sigma \in G,$ and $z\in F$. 
Hence, for $(\boldsymbol A,\boldsymbol B,\boldsymbol r)\in\mathcal{U}_1 \times \mathcal{U}_2 \times \mathcal{N}^{\circ}_{\rho_0}$,
\[
    \tilde{m}(\boldsymbol A,\boldsymbol B,\boldsymbol r) := \inf_{\sigma \in G,\sigma \neq id, z\in F} \Bigl| z + \frac{\sigma_{22}}{\sigma_{21}} \Bigr| > 0.
\]

Thus, we have
\begin{equation}
    \sum\limits_{\sigma \in G,\sigma \neq id} \dfrac{1}{|\sigma_{21}z+\sigma_{22}|^2}\leq \tilde{m}(\boldsymbol A,\boldsymbol B,\boldsymbol r)^{-2} \sum\limits_{\sigma \in G,\sigma \neq id} \dfrac{1}{|\sigma_{21}|^2}.
    \label{panduan1}
\end{equation}
According to \cite[Section 5.2]{ablowitz1991solitons},
\begin{equation*}
    \sum_{\sigma\in G/G_j}\bigg(\frac{1}{z-\sigma (B_j)}-\frac{1}{z-\sigma (A_j)}\bigg) dz=\sum_{\sigma\in G_j\setminus G}\bigg(\frac{1}{\sigma z- (B_j)}-\frac{1}{\sigma z- (A_j)}\bigg) \sigma'(z)\, dz,
\end{equation*}
where $\sigma'(z)=\dfrac{1}{(\sigma_{21} z+ \sigma_{22})^2}$. The function $f(z)=\dfrac{1}{z- B_j}-\dfrac{1}{z- A_j}$ is bounded on the fundamental region. By the convergence of $\sum\limits_{\sigma \in G,\sigma \neq id} \dfrac{1}{|\sigma_{21}|^2}$ and \eqref{panduan1}, the series \eqref{differential1} converge absolutely, which implies that
$d\omega_j$ is well defined.

\bigskip
$(ii)$ 
Fix $j\in\mathcal{G}$. Denote
\begin{align*}
    h_{1,n}(\boldsymbol A,\boldsymbol B,\boldsymbol r) = \sum\limits_{\substack{\sigma\in \tilde{G}/\tilde{G}_j,\sigma \neq id,\\ \textup{len} (\sigma)=n }} |\sigma(A_j)-\sigma(B_j)|.
\end{align*}
Fix $\boldsymbol A\in \mathcal{U}_1, \boldsymbol B\in\mathcal{U}_2, \boldsymbol r\in \mathcal{N}_{\rho_0}$.
Decompose $\mathcal{G}=L(\boldsymbol r)\cup L^c(\boldsymbol r)$, where $L$ denotes the set of index $l$ such that $r_l\neq 0$, and $ L^c$ denotes  index $l$ such that $r_{l}= 0$. In the sequel, let 
\begin{align*}
    &\tilde{L}_j(\boldsymbol r)=\{\sigma=\sigma_{i_1}^{l_1}\cdots\sigma_{i_k}^{l_k}:\sigma\in \tilde{G}/\tilde{G}_j,\sigma\neq id, i_1,\cdots,i_k\in L(\boldsymbol r)\},\\
    &\tilde{L}^c_j(\boldsymbol r)=\{ \sigma=\sigma_{i_1}^{l_1}\cdots\sigma_{i_k}^{l_k}:\sigma\in \tilde{G}/\tilde{G}_j,\sigma\neq id,  \exists\  l\in\{1,\cdots,k\} \textup{ such that } i_l\in L^c(\boldsymbol r)\}.
\end{align*}
Denote 
\begin{align*}
    h_{1,n}^{(1)}(\boldsymbol A,\boldsymbol B, \boldsymbol r)=\sum\limits_{\sigma\in \tilde{L}_j(\boldsymbol r)\textup{ with length n}} |\sigma(A_j)-\sigma(B_j)|,\\
    h_{1,n}^{(2)}(\boldsymbol A,\boldsymbol B, \boldsymbol r)=\sum\limits_{\sigma\in \tilde{L}_j^c(\boldsymbol r)\textup{ with length n}} |\sigma(A_j)-\sigma(B_j)|.
\end{align*}
Then $h_{1,n}$ can be rewritten as $h_{1,n}^{(1)}+h_{1,n}^{(2)}$.
Notice that any term in $h_{1,n}^{(2)}$ equals $0$. As a consequence, the convergence of $\sum_{n}h_{1,n}$ is equivalent to the convergence of $\sum_{n}h_{1,n}^{(1)}$.

As discussed before, for $\sigma = \sigma_{j_1}^{l_1} \cdots \sigma_{j_n}^{l_n}$ belonging to $h_{1,n}^{(1)}$ with $j_n\neq j$, the complex number $-\dfrac{\sigma_{22}}{\sigma_{21}}$ lies in $\mathring{C}_{j_n}$ or $\mathring{C}_{j_n}'$. Set
\begin{align*}
    M':=\sup\limits_{\substack{i=1,2,\\ j\in\mathcal{G}}} \sup\limits_{z\in \mathcal{U}_{i,j}}|z|,\quad \bar{m}:= m-4M\rho_0,\quad
    \bar{M}:= M'+4M\rho_0.
\end{align*}
Then, for every nondegenerate reduced word contributing to \(h_{1,n}^{(1)}\),
\[
    \left| A_j+\frac{\sigma_{22}}{\sigma_{21}} \right| \geq\bar m,\qquad
    \left| B_j+\frac{\sigma_{22}}{\sigma_{21}} \right| \geq\bar m,
\]
and
\[
    |\sigma(A_j)|,\ |\sigma(B_j)|\leq\bar M.
\]
Notice that
\begin{align*}
    \sigma(A_j)-\sigma(B_j)=\frac{A_j-B_j}{(\sigma_{21} A_j +\sigma_{22})(\sigma_{21} B_j +\sigma_{22})}.
\end{align*}
Then the estimates \eqref{yuandetaolun1}, \eqref{yuandetaolun2}, \eqref{yuandetaolun3} provide that for any $\varepsilon>0$, there exists $N$ such that 
\begin{equation}\label{tobeused1}
    \sum_{n\geq N} h_{1,n}^{(1)}\lesssim \frac{2g (2g-1)^{N-1} M}{\bar{m}^2}\left(\frac{\rho_0M}{c'}\right)^{2N}<\varepsilon.
\end{equation}
Moreover, the estimate is independent of the choice of $\boldsymbol A \in \mathcal{U}_1, \boldsymbol B\in \mathcal{U}_2, \boldsymbol r \in \mathcal{N}_{\rho_0}$, which implies that \eqref{parameter1'} converge absolutely and uniformly with respect to $(\boldsymbol A,\boldsymbol B,\boldsymbol r)\in\mathcal{U}_1 \times \mathcal{U}_2 \times \mathcal{N}_{\rho_0}$. The discussions for \eqref{parameter2'} and \eqref{parameter3'} are similar, with slightly different estimates
\begin{align}
    &\sum_{n\geq N}h_{2,n}^{(1)}\lesssim \frac{4g (2g-1)^{N-1} M\bar{M}}{\bar{m}^2}\left(\frac{\rho_0M}{c'}\right)^{2N},\label{tobeused2}\\
    &\sum_{n\geq N}h_{3,n}^{(1)}\lesssim \frac{6g (2g-1)^{N-1} M\bar{M}^2}{\bar{m}^2}\left(\frac{\rho_0M}{c'}\right)^{2N} \notag,
\end{align}
for $h_{2,n}^{(1)}, h_{3,n}^{(1)}$ defined similarly to $h_{1,n}^{(1)}$.

\bigskip
$(iii)$
Fix $i,j\in\mathcal{G}$ with $i\neq j$. Denote 
\begin{align*}
    h_{4,n}(\boldsymbol A,\boldsymbol B,\boldsymbol r) &= \sum\limits_{\substack{\sigma\in G_j\setminus G/G_i,\sigma\neq id,\\ \textup{len} (\sigma)=n }} |\log\{B_j,A_j,\sigma(B_i),\sigma(A_i)\}|.
\end{align*}
Fix $\boldsymbol A\in \mathcal{U}_1, \boldsymbol B\in\mathcal{U}_2, \boldsymbol r\in \mathcal{N}^{\circ}_{\rho_0}$.
Notice that
\begin{align*}
    \{B_j,A_j,\sigma(B_i),\sigma(A_i)\}-1=\frac{(A_j-B_j)(\sigma(A_i)-\sigma(B_i))}{(B_j-\sigma(A_i))(A_j-\sigma(B_i))}.
\end{align*}
Thus, with $\rho_0 \leq \frac{7m^2}{64gM^2} \leq \frac{7\bar{m}^2}{16gM^2}$, we have
\begin{align}
    \left| \frac{(A_j-B_j)(\sigma(A_i)-\sigma(B_i))}{(B_j-\sigma(A_i))(A_j-\sigma(B_i))} \right| &\leq \frac{M}{\bar{m}^2} |\sigma(A_i)-\sigma(B_i)| \leq \frac{M^2}{\bar{m}^4}|\sigma_{21}|^{-2}\label{logbranch1}\\
    &\leq \frac{M^2}{\bar{m}^4}\cdot 2g\cdot \left(\frac{8}{7}M\rho_0\right)^2 \leq \frac{1}{2}.\label{logbranch2}
\end{align}
By \eqref{logbranch1}--\eqref{logbranch2}, every non-identity
cross-ratio lies in the disk \(|z-1|\leq1/2\). We use the holomorphic
branch of the logarithm on this disk normalized by \(\log1=0\). This branch satisfies
$\overline{\log(z)} = \log(\overline{z})$.
Hence, we obtain
\begin{align*}
    \sum_{\sigma\in G_j\setminus G/G_i,\sigma\neq id} |\log\{B_j,A_j,\sigma (B_i),\sigma (A_i)\} |&\leq \sum_{\sigma\in G_j\setminus G/G_i,\sigma\neq id} 2\left|\frac{(A_j-B_j)(\sigma(A_i)-\sigma(B_i))}{(B_j-\sigma(A_i))(A_j-\sigma(B_i))}\right|\\
    &\leq \frac{2M}{\bar{m}^2}\sum_{\sigma\in G_j\setminus G/G_i,\sigma\neq id} |\sigma(A_i)-\sigma(B_i)|.
\end{align*}
Thus, as discussed before, for any $\varepsilon>0$, there exists $N$ such that 
\begin{align*}
    \sum_{n\geq N} h_{4,n} \lesssim \frac{2g (2g-1)^{N-1} M^2}{\bar{m}^4}\left(\frac{\rho_0M}{c'}\right)^{2N}<\varepsilon.
\end{align*}
Moreover, the estimate is independent of the choice of $\boldsymbol A \in \mathcal{U}_1, \boldsymbol B\in \mathcal{U}_2, \boldsymbol r \in \mathcal{N}^{\circ}_{\rho_0}$, which implies that \eqref{parameter4'} converge absolutely and locally uniformly on $\mathcal{U}_1 \times \mathcal{U}_2 \times \mathcal{N}^{\circ}_{\rho_0}$. Meanwhile, the summation estimate \eqref{poincareseries1}--\eqref{poincareseries2} implies that $|f_{4,i,j} - \log\{B_j,A_j, B_i, A_i\}|\lesssim |\boldsymbol r|_{\infty}^2$. The proof for \eqref{parameter5'} is omitted as it follows a similar argument.
\end{proof}

\smallskip

\begin{remark}
    The main scheme of the proof follows \cite{burnside1891glass}, wherein it was proved that the Poincaré series
    \[
        \sum_{\sigma\in G, \sigma\neq id}
        \frac{1}{|\sigma_{21}z+\sigma_{22}|^2},
        \qquad
        \sigma(z)=
        \frac{\sigma_{11}z+\sigma_{12}}
        {\sigma_{21}z+\sigma_{22}},
    \]
    converges absolutely if the pairs of Schottky circles are sufficiently
    distant from one another and the multipliers are sufficiently small.
    Since we are also interested in the uniform convergence of the
    parameter series, the qualitative
    result of Burnside cannot be used directly. We need to investigate the
    dependence of the convergence estimates on \(A_j,B_j\) and \(r_j\).

    As a result, this argument establishes not only the absolute convergence of the Poincaré series, but also the uniform convergence of the series \eqref{parameter1'}--\eqref{parameter5'} with quantitative estimates.
\end{remark}

\smallskip

\begin{lemma}\label{reallm}
Assume that the parameters belong to the domains of
Lemma~\ref{convergelm}, with \(\rho_0\) sufficiently small that
\eqref{logbranch1}--\eqref{logbranch2} hold.

\begin{itemize}
    \item[$(i)$]
    Suppose that
    \[
        A_j=\overline{B_j}, \qquad r_j\in(-1,1), \qquad j\in\mathcal G.
    \]
    Then
    \[
        f_{1,j},f_{2,j},f_{3,j}\in i\mathbb R.
    \]
    If, in addition, \(r_j\neq0\) for every \(j\), then
    \[
        f_{4,i,j},f_{5,j},c\in\mathbb R, \qquad i\neq j.
    \]
    \item[$(ii)$]
    Suppose that
    \[
        A_j,B_j,r_j\in\mathbb R, \qquad r_j\in(-1,1), \qquad j\in\mathcal G.
    \]
    Then
    \[
        f_{1,j},f_{2,j},f_{3,j}\in\mathbb R.
    \]
    If \(r_j\neq0\) for every \(j\), then
    \[
        f_{5,j},c\in\mathbb R.
    \]
    Moreover, for \(i\neq j\), the quantity $\{B_j,A_j,B_i,A_i\} \in \mathbb{R}\setminus\{0\}$,
    and
    \begin{equation*}
        \sum_{\substack{ \sigma\in G_j\setminus G/G_i\\ \sigma\neq\mathrm{id} }}\log\{B_j,A_j,\sigma(B_i),\sigma(A_i)\} \in \mathbb{R},
    \end{equation*}
    in the equation \eqref{parameter4'}.
\end{itemize}
\end{lemma}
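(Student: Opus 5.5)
The proof will rest on a symmetry argument: in each case an antiholomorphic (or real) involution of $\mathbb{CP}^1$ conjugates the generators among themselves, so it permutes the summation index sets. The relevant involutions are $\tau(z)=\bar z$ in case $(ii)$ and $\tau(z)=-\bar z$ composed with a swap in case $(i)$.

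For case $(i)$ the first step is to compute how complex conjugation acts on the generator. With $A_j=\overline{B_j}$ and $r_j$ real, conjugating the defining relation \eqref{definitionoffractions} gives
\[
\frac{\overline{\sigma_j(z)}-A_j}{\overline{\sigma_j(z)}-B_j}=r_j^2\,\frac{\bar z-A_j}{\bar z-B_j}.
\]
Hence $\overline{\sigma_j(z)}=\sigma_j^{-1}(\bar z)$, i.e. $\bar\cdot\circ\sigma_j\circ\bar\cdot=\sigma_j^{-1}$. The same identity holds for the degenerate maps $\Xi_{j,\pm}$ at $r_j=0$, since conjugation interchanges $A_j$ and $B_j$.

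Consequently, for a reduced word $\sigma=\sigma_{i_1}^{l_1}\cdots\sigma_{i_k}^{l_k}$ we get $\overline{\sigma(w)}=\sigma^*(\bar w)$, where $\sigma^*$ is obtained by flipping every exponent. The map $\sigma\mapsto\sigma^*$ is a length-preserving bijection of $\tilde G/\tilde G_j$, because the condition on the last letter is unaffected by flipping exponents. It is also a bijection of $G_j\setminus G/G_i$ and of $G$. Therefore
\[
\overline{\sigma(A_j)^k-\sigma(B_j)^k}=\sigma^*(B_j)^k-\sigma^*(A_j)^k .
\]
Summing and using absolute convergence from Lemma~\ref{convergelm}$(ii)$ to reorder gives $\overline{f_{k,j}}=-f_{k,j}$, so $f_{k,j}\in i\mathbb R$.

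For the cross-ratios, conjugation sends $\{B_j,A_j,\sigma(B_i),\sigma(A_i)\}$ to $\{A_j,B_j,\sigma^*(A_i),\sigma^*(B_i)\}$. A direct check from the definition shows $\{b,a,d,c\}=\{a,b,c,d\}$. So conjugation of each term equals the $\sigma^*$-term. The branch property $\overline{\log z}=\log\bar z$ on $|z-1|\le 1/2$ from \eqref{logbranch1}--\eqref{logbranch2} then makes the series real. For the leading term of $f_{4,i,j}$, $\{B_j,\bar B_j,B_i,\bar B_i\}=|B_j-B_i|^2/|B_j-\bar B_i|^2>0$, so its logarithm is real. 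For $f_{5,j}$ one uses $\log r_j^2\in\mathbb R$. For $c$ one uses $\overline{\sigma_{21}}=\pm\sigma^*_{21}$ from the matrix \eqref{jiexixingpanduan}; the sign is harmless since only $\sigma_{21}^{-2}$ enters.

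Case $(ii)$ is simpler: when all data are real, each $\sigma_j^{\pm1}$ has real matrix entries (with $r_j$ real), and so does each degenerate map. Hence every $\sigma(A_j),\sigma(B_j),\sigma_{21}$ is real and each term of $f_{1,j},f_{2,j},f_{3,j},c$ is real. Each non-identity cross-ratio is real and lies in $[1/2,3/2]$, so the branch of $\log$ gives a real value. The leading cross-ratio $\{B_j,A_j,B_i,A_i\}$ is real and nonzero because the points are distinct; it may be negative, which is why only the tail series is claimed real.

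I expect the main obstacle to be bookkeeping rather than analysis. One must verify that $\sigma\mapsto\sigma^*$ preserves the reduced-word coset representatives, including the first- and last-letter conditions, and that it is compatible with the degenerate semigroup convention when some $r_l=0$. One must also check that the needed cross-ratio identity holds exactly, not merely up to inversion, so that no sign flip of the logarithm occurs.
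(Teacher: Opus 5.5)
Your proposal is correct and takes essentially the same route as the paper. It uses the identity $\overline{\sigma_j(z)}=\sigma_j^{-1}(\bar z)$ (including the degenerate maps) and the exponent-flipping bijection $\sigma\mapsto\sigma^*$ of the reduced-word representatives. In case $(i)$ it combines these with the cross-ratio identity $\{a,b,c,d\}=\{b,a,d,c\}$ and $\overline{\log z}=\log\bar z$ on $|z-1|\le 1/2$; in case $(ii)$ it uses realness of the generators.

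The only slip is your opening remark that case $(i)$ uses $\tau(z)=-\bar z$ composed with a swap. The involution your argument actually (and correctly) uses is plain conjugation $z\mapsto\bar z$, which itself interchanges $A_j$ and $B_j$.
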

\begin{proof}
$(i)$
Notice that when $A_j=\bar{B}_j,r_j\in (-1,0)\cup(0,1)$, we have $\overline{\sigma_j(\bar{z})}=\sigma_j^{-1}(z)$.
More precisely, recall
\begin{equation*}
    \sigma_j(z)=\frac{\sigma_{j,11}z+\sigma_{j,12}}{\sigma_{j,21}z+\sigma_{j,22}},\textup{ for }z\in\mathbb{CP}^1
\end{equation*}
where 
\begin{equation*}
   \begin{pmatrix}
        \sigma_{j,11}&\sigma_{j,12}\\
        \sigma_{j,21}&\sigma_{j,22}
    \end{pmatrix}
    =
    \frac{1}{A_j-B_j}
    \begin{pmatrix}
        A_jr_j-B_j/r_j
        &
        A_jB_j(1/r_j-r_j)\\
        r_j-1/r_j
        &
        A_j/r_j-B_jr_j
    \end{pmatrix},
\end{equation*}
Thus we have
\begin{equation*}
    \overline{\sigma_j(\bar{z})}=\frac{\overline{\sigma_{j,11}}z+\overline{\sigma_{j,12}}}{\overline{\sigma_{j,21}}z+\overline{\sigma_{j,22}}},\textup{ for }z\in\mathbb{CP}^1
\end{equation*}
where 
\begin{equation*}
   \begin{pmatrix}
        \overline{\sigma_{j,11}} & \overline{\sigma_{j,12}}\\
        \overline{\sigma_{j,21}} & \overline{\sigma_{j,22}}
    \end{pmatrix}
    =
    \frac{1}{B_j-A_j}
    \begin{pmatrix}
        B_jr_j-A_j/r_j
        &
        A_jB_j(1/r_j-r_j)\\
        r_j-1/r_j
        &
        B_j/r_j-A_jr_j
    \end{pmatrix}.
\end{equation*}
Notice that the right hand side of the preceding equation is $M_{\sigma_j^{-1}}$, which implies that $\overline{\sigma_j(\bar{z})}=\sigma_j^{-1}(z)$. When $r_j = 0$, the definition of $\sigma_j$ also implies that $\overline{\sigma_j(\bar{z})}=\sigma_j^{-1}(z)$.

Denote $\sigma=\sigma_{i_1}^{l_1}\cdots\sigma_{i_k}^{l_k},\iota=\sigma_{i_1}^{-l_1}\cdots\sigma_{i_k}^{-l_k}$. This correspondence is a bijection of the chosen reduced-word representative set. Thus, for $r_j \in (-1,1)$, we have
\begin{align}
    &\bar{f}_{1,j}=\sum_{\iota\in \tilde{G}/\tilde{G}_j}\iota(B_j)-\iota(A_j)=-f_{1,j}\in i\mathbb{R},\label{implicitcondition1}\\
    &\bar{f}_{2,j}=\sum_{\iota\in \tilde{G}/\tilde{G}_j}\iota(B_j)^2-\iota(A_j)^2=-f_{2,j}\in i\mathbb{R},\label{implicitcondition2}\\
    &\bar{f}_{3,j}=\sum_{\iota\in \tilde{G}/\tilde{G}_j}\iota(B_j)^3-\iota(A_j)^3=-f_{3,j}\in i\mathbb{R} \notag.
\end{align}
Moreover,
\[
    \{B_j,A_j,B_i,A_i\} = \frac{|A_j-A_i|^2}{|A_j-B_i|^2}  >0, \qquad i\neq j.
\]
Thus, we have
\begin{align}
    &\bar{f}_{4,i,j}=\log\{B_j,A_j, B_i, A_i\} + \sum_{\iota\in G_j\setminus G/G_i,\iota\neq id}\log\{A_j,B_j,\iota A_i,\iota B_i\} = f_{4,i,j} \in \mathbb{R}, i\neq j, \label{realmatrix1}\\
    &\bar{f}_{5,j}=\log r_j^2+\sum_{\iota\in G_j\setminus G/G_j,\iota\neq id}\log\{A_j,B_j,\iota A_j,\iota B_j\} = f_{5,j} \in \mathbb{R}, \label{realmatrix2}
\end{align}
with the cross-ratio identity
\[
    \{A,B,C,D\} = \{B,A,D,C\}.
\]
All fractional linear transformations are represented by the determinant-one matrices fixed above. Changing the representative by a sign does not affect $\sigma_{21}^{-2}$. Similarly, we have
\begin{equation}
    \bar{c} = \sum_{\sigma \in G,\sigma\neq id} \overline{\sigma_{21}^{-2}} = \sum_{\iota \in G,\iota\neq id} \iota_{21}^{-2} = c\in \mathbb{R} \label{implicitcondition3}.
\end{equation}
\smallskip
$(ii)$
When $A_j,B_j\in\mathbb{R},r_j\in(-1,1)$, we have $\overline{\sigma_j(\bar{z})}=\sigma_j(z)$. Thus, for $r_j \in (-1,1)$, we have

\begin{align}
    &\bar{f}_{1,j}=\sum_{\sigma\in \tilde{G}/\tilde{G}_j}\sigma(A_j)-\sigma(B_j)=f_{1,j}\in\mathbb{R}, \label{implicitcondition4}\\
    &\bar{f}_{2,j}=\sum_{\sigma\in \tilde{G}/\tilde{G}_j}\sigma(A_j)^2-\sigma(B_j)^2=f_{2,j}\in\mathbb{R}, \label{implicitcondition5}\\
    &\bar{f}_{3,j}=\sum_{\sigma\in \tilde{G}/\tilde{G}_j}\sigma(A_j)^3-\sigma(B_j)^3=f_{3,j}\in\mathbb{R} \notag.
\end{align}
In addition, if $r_j \neq 0$ for all $j\in \mathcal{G}$, we have
\begin{align*}
    \sum_{\sigma\in G_j\setminus G/G_i,\sigma\neq id} &\overline{\log\{B_j,A_j,\sigma (B_i),\sigma (A_i)\}} = \sum_{\sigma\in G_j\setminus G/G_i,\sigma\neq id}\log\{B_j,A_j,\sigma (B_i),\sigma (A_i)\} \in \mathbb{R}, i\neq j,\\
    &\bar{f}_{5,j} =\log r_j^2+\sum_{\sigma\in G_j\setminus G/G_j,\sigma\neq id}\log\{B_j,A_j,\sigma (B_j),\sigma (A_j)\} = f_{5,j} \in \mathbb{R}.
\end{align*}
All fractional linear transformations are represented by the determinant-one matrices fixed above. Changing the representative by a sign does not affect $\sigma_{21}^{-2}$. Thus
\begin{equation}\label{implicitcondition6}
    \bar{c}=\sum_{\sigma \in G,\sigma\neq id} \sigma_{21}^{-2} = c\in \mathbb{R}.
\end{equation}
Moreover, since $A_1,B_1,\cdots,A_g,B_g$ belong to disjoint domains, $\{B_j,A_j, B_i, A_i\} \in \mathbb{R}\setminus \{0\}$.
\end{proof}
For KP-II, condition $(ii)$ in Lemma \ref{reallm} does not yield time-multi-periodic solutions considered here, thus we impose condition $(i)$. Conversely, for the KP-I equation, condition $(i)$ in Lemma \ref{reallm} does not yield time-multi-periodic solutions considered here, and we instead impose condition $(ii)$.

\begin{lemma}\label{solveuv}
    If $g>1$, suppose $\dfrac{\sqrt{3}\pi}{\gamma}\in\mathbb{R}\setminus\mathbb{Q}$. 
    Let $\boldsymbol m = (m_1,\cdots,m_g),\ \boldsymbol n = (n_1,\cdots,n_g)\in\mathbb{Z}^g$ satisfy $m_j\neq 0$ and $m_j n_k-m_k n_j\neq 0, \textup{ for }j,k\in\mathcal{G},\ j\neq k.$
Let
\begin{align*}
    &A_{j,0}^{(1)}=-\dfrac{n_j}{2\sqrt{3} m_j}-\dfrac{\pi}{\gamma}m_j,\quad B_{j,0}^{(1)}=-\dfrac{n_j}{2\sqrt{3} m_j}+\dfrac{\pi}{\gamma}m_j,\\ 
    &A_{j,0}^{(2)}=\dfrac{n_j}{2\sqrt{3}m_j}+i\dfrac{\pi}{\gamma}m_j,\quad B_{j,0}^{(2)}=\dfrac{n_j}{2\sqrt{3}m_j}-i\dfrac{\pi}{\gamma}m_j,\quad j\in\mathcal{G}.
\end{align*}
Denote
\[
    \boldsymbol A_{0}^{(\iota)} = (A_{1,0}^{(\iota)},\cdots,A_{g,0}^{(\iota)}), \boldsymbol B_{0}^{(\iota)} = (B_{1,0}^{(\iota)},\cdots,B_{g,0}^{(\iota)}),\quad \iota =1,2.
\]
\begin{itemize}
    \item[$(i)$] There exist real-analytic functions $\boldsymbol A^{(1)}(\boldsymbol r),\ \boldsymbol B^{(1)}(\boldsymbol r)$ and a neighborhood $\mathcal{U}_{\boldsymbol 0}^{(1)} \subset \mathbb{R}^g$ of $\boldsymbol 0$ such that for $\boldsymbol{r}\in \mathcal{U}^{(1)}_{\boldsymbol 0}$,
    \begin{equation}
        \boldsymbol A^{(1)}(\boldsymbol r) = \boldsymbol A_{0}^{(1)} + O(\boldsymbol r^2),\quad \boldsymbol B^{(1)}(\boldsymbol r) = \boldsymbol B_{0}^{(1)} + O(\boldsymbol r^2),\label{distancewithAj0Bj02}
    \end{equation}
    and
    \begin{equation}\label{restriction1}
        -f_{1,j}(\boldsymbol A^{(1)}(\boldsymbol{r}),\boldsymbol B^{(1)}(\boldsymbol{r}),\boldsymbol r)=\dfrac{2\pi}{\gamma}m_j,\quad  \sqrt{3}f_{2,j}(\boldsymbol A^{(1)}(\boldsymbol{r}) ,\boldsymbol B^{(1)}(\boldsymbol{r}), \boldsymbol r)=\frac{2\pi}{\gamma}n_j,\quad j\in \mathcal{G}.
    \end{equation}
    
    \item[$(ii)$] There exist real-analytic functions $\boldsymbol\alpha^{(2)}(\boldsymbol r),\boldsymbol\beta^{(2)}(\boldsymbol r)$ well defined on a neighborhood $ \mathcal{U}_{\boldsymbol 0}^{(2)} \allowbreak \subset \mathbb{R}^g$ of $\boldsymbol 0$. Denote $\boldsymbol A^{(2)}(\boldsymbol r) = \boldsymbol\alpha^{(2)}(\boldsymbol r) + i\boldsymbol\beta^{(2)}(\boldsymbol r),\ \boldsymbol B^{(2)}(\boldsymbol r) = \boldsymbol\alpha^{(2)}(\boldsymbol r) - i\boldsymbol\beta^{(2)}(\boldsymbol r)$. For $\boldsymbol{r}\in \mathcal{U}^{(2)}_{\boldsymbol 0}$,
    \begin{equation}
        \boldsymbol A^{(2)}(\boldsymbol r) = \boldsymbol A_{0}^{(2)} + O(\boldsymbol r^2),\quad \boldsymbol B^{(2)}(\boldsymbol r) = \boldsymbol B_{0}^{(2)} + O(\boldsymbol r^2),\label{distancewithAj0Bj0}
    \end{equation}
    and
    \begin{equation}\label{restriction2}
        f_{1,j}(\boldsymbol A^{(2)}(\boldsymbol{r}),\boldsymbol B^{(2)}(\boldsymbol{r}),\boldsymbol r)=\dfrac{2\pi i}{\gamma}m_j, \quad \sqrt{3}f_{2,j}(\boldsymbol A^{(2)}(\boldsymbol{r}),\boldsymbol B^{(2)}(\boldsymbol{r}),\boldsymbol r)=\dfrac{2\pi i}{\gamma}n_j,\quad j\in\mathcal{G}.
    \end{equation}
\end{itemize}
\end{lemma}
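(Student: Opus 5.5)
We apply the real-analytic implicit function theorem to the map
\[
F(\boldsymbol A,\boldsymbol B,\boldsymbol r)=\bigl(f_{1,j}(\boldsymbol A,\boldsymbol B,\boldsymbol r),\ f_{2,j}(\boldsymbol A,\boldsymbol B,\boldsymbol r)\bigr)_{j\in\mathcal G},
\]
viewed near $(\boldsymbol A_0^{(\iota)},\boldsymbol B_0^{(\iota)},\boldsymbol 0)$. Choose small disjoint discs $\mathcal U_{1,j}\ni A_{j,0}^{(\iota)}$ and $\mathcal U_{2,j}\ni B_{j,0}^{(\iota)}$. These points are pairwise distinct: $A_{j,0}\neq B_{j,0}$ because $m_j\neq0$. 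For $j\ne k$, the condition $A_{j,0}=A_{k,0}$ (or $A_{j,0}=B_{k,0}$, and similarly for the other pairs) can be written as $\frac{n_j}{2\sqrt3 m_j}-\frac{n_k}{2\sqrt3 m_k}=\pm\frac{\pi}{\gamma}(m_j\mp m_k)$. In case $(i)$, multiplying by $2\sqrt3 m_jm_k$ shows that this forces $\sqrt3\pi/\gamma\in\mathbb Q$ unless $m_j\mp m_k=0$, in which case it forces $m_jn_k-m_kn_j=0$; both are excluded. In case $(ii)$ the real and imaginary parts separate, so coincidence would require $m_j=\pm m_k$ together with $n_j/m_j=n_k/m_k$, which again contradicts $m_jn_k\ne m_kn_j$. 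With these discs, Lemma~\ref{convergelm}$(ii)$ gives uniform convergence of \eqref{parameter1'}--\eqref{parameter2'} on $\mathcal U_1\times\mathcal U_2\times\mathcal N_{\rho_0}$, and Lemma~\ref{lm6.4} gives holomorphy of each term. By Weierstrass, $F$ is holomorphic, including on the degenerate locus $r_l=0$.

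Next we compute $F$ and its derivative at $\boldsymbol r=0$. There every nontrivial word $\sigma\in\tilde G/\tilde G_j$ contains a generator with $r_l=0$, so $\sigma(A_j)=\sigma(B_j)$. Hence $f_{1,j}=A_j-B_j$ and $f_{2,j}=A_j^2-B_j^2$, and the system decouples into $g$ independent $2\times2$ systems. The Jacobian in $(A_j,B_j)$ is $\begin{pmatrix}1&-1\\2A_j&-2B_j\end{pmatrix}$, with determinant $2(A_j-B_j)\ne0$. Solving the equations at $\boldsymbol r=0$ reproduces the stated base points. In case $(i)$, $-(A_j-B_j)=2\pi m_j/\gamma$ and $\sqrt3(A_j-B_j)(A_j+B_j)=2\pi n_j/\gamma$ give $A_j+B_j=-n_j/(\sqrt3 m_j)$, which matches $A_{j,0}^{(1)},B_{j,0}^{(1)}$; case $(ii)$ is analogous. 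The holomorphic implicit function theorem then yields holomorphic $\boldsymbol A(\boldsymbol r),\boldsymbol B(\boldsymbol r)$ near $\boldsymbol 0\in\mathbb C^g$.

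We then check the $O(\boldsymbol r^2)$ claim. Every word of length $\ge1$ contributes a term of size $\lesssim|\sigma_{21}|^{-2}\lesssim|\boldsymbol r|_\infty^2$ by \eqref{poincareseries1}--\eqref{poincareseries2}; the terms vanish when the corresponding $r_l=0$, and are even in $\boldsymbol r$ since $\sigma$ depends on $r_l$ only through $\mu_l=r_l^2$ up to sign of the matrix. Thus $F(\boldsymbol A,\boldsymbol B,\boldsymbol r)-F(\boldsymbol A,\boldsymbol B,\boldsymbol 0)=O(\boldsymbol r^2)$ uniformly, and combining this with the invertible Jacobian gives \eqref{distancewithAj0Bj02} and \eqref{distancewithAj0Bj0}.

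The main obstacle is the reality of the solution, that is, showing that for real $\boldsymbol r$ the implicit solution lies in the appropriate real slice. In case $(i)$ we restrict $F$ to real $\boldsymbol A,\boldsymbol B,\boldsymbol r$. By Lemma~\ref{reallm}$(ii)$, $F$ maps this slice into $\mathbb R^{2g}$, and since the Jacobian is real and invertible, the real implicit function theorem produces a real solution. By uniqueness this coincides with the holomorphic one, which is therefore real-analytic. In case $(ii)$ we parametrize $A_j=\alpha_j+i\beta_j$ and $B_j=\alpha_j-i\beta_j$ with $\alpha,\beta$ real. By Lemma~\ref{reallm}$(i)$, $-if_{1,j}$ and $-if_{2,j}$ are real, so we solve the real system $(-if_{1,j},-i\sqrt3 f_{2,j})=(2\pi m_j/\gamma,2\pi n_j/\gamma)$ in $(\alpha_j,\beta_j)$. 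At $\boldsymbol r=0$ this reads $2\beta_j=2\pi m_j/\gamma$ and $4\sqrt3\alpha_j\beta_j=2\pi n_j/\gamma$, with Jacobian determinant $-4\sqrt3\beta_j\ne0$. The real implicit function theorem then gives real-analytic $\boldsymbol\alpha^{(2)},\boldsymbol\beta^{(2)}$ satisfying \eqref{restriction2}.
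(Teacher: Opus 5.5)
Your proposal is correct and follows essentially the same route as the paper. The paper uses the same ingredients: holomorphy of $f_{1,j},f_{2,j}$ from Lemma~\ref{lm6.4} and Lemma~\ref{convergelm}, the decoupled block-diagonal Jacobian at $\boldsymbol r=\boldsymbol 0$, reality from Lemma~\ref{reallm} (the real slice for $(i)$, the substitution $A=\alpha+i\beta$, $B=\alpha-i\beta$ for $(ii)$), and the $O(\boldsymbol r^2)$ bound from $F(\cdot,\boldsymbol r)=F(\cdot,\boldsymbol 0)+O(\boldsymbol r^2)$ together with local invertibility; the only difference is that the paper applies the real-analytic implicit function theorem directly to the real system rather than going through the holomorphic solution and uniqueness.

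One small arithmetic slip: in case $(ii)$ the block $\begin{pmatrix}0&2\\4\sqrt3\beta_j&4\sqrt3\alpha_j\end{pmatrix}$ has determinant $-8\sqrt3\beta_j$, not $-4\sqrt3\beta_j$. It is still nonzero, so nothing changes.
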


\begin{proof}
    $(i)$ 
    Denote 
    \begin{align*}
         &\tilde{m}=\min\{ |P-Q|: P,Q\in \{A_{1,0}^{(1)}, B_{1,0}^{(1)}, \cdots, A_{g,0}^{(1)}, B_{g,0}^{(1)}\},P\neq Q\},\\ 
         &\tilde{M}=\max\{ |P-Q|: P,Q\in \{A_{1,0}^{(1)}, B_{1,0}^{(1)}, \cdots, A_{g,0}^{(1)}, B_{g,0}^{(1)}\},P\neq Q\},\\
         &\tilde{M}'=\max\{ |P|: P\in \{A_{1,0}^{(1)}, B_{1,0}^{(1)}, \cdots, A_{g,0}^{(1)}, B_{g,0}^{(1)}\}\}.
    \end{align*}
    Since $\frac{\sqrt{3}\pi}{\gamma} \notin \mathbb{Q}$ for $g>1$ and $\frac{n_j}{m_j} \neq \frac{n_k}{m_k}$ for $j\neq k$, the points $A_{1,0}^{(1)}, B_{1,0}^{(1)}, \cdots, A_{g,0}^{(1)}, B_{g,0}^{(1)}$ are pairwise distinct. Hence the constant $\tilde{m}>0$.

    Denote $\boldsymbol A = (A_{1},\cdots,A_{g})\in \mathbb{C}^g, \boldsymbol B = (B_{1},\cdots,B_{g})\in \mathbb{C}^g$.
    Now we introduce the neighborhoods for $\boldsymbol A,\boldsymbol B$ as follows:
    \begin{align*}
        &\mathcal{U}_{\boldsymbol A_0}^{(1)}=\{\boldsymbol A:|A_j-A_{j,0}^{(1)}|<\frac{\tilde{m}}{3},\textup{for every } j\in\mathcal{G} \}, \\
        &\mathcal{U}_{\boldsymbol B_0}^{(1)}=\{\boldsymbol B:|B_j-B_{j,0}^{(1)}|<\frac{\tilde{m}}{3},\textup{for every } j\in\mathcal{G}  \},
    \end{align*}
    Then for $(\boldsymbol A,\boldsymbol B)\in \mathcal{U}_{\boldsymbol A_0}^{(1)}\times \mathcal{U}_{\boldsymbol B_0}^{(1)} $, the corresponding constants in Lemma \ref{convergelm} are
    \begin{align*}
        m=\dfrac{\tilde{m}}{3},\quad M=\tilde{M}+\dfrac{2\tilde{m}}{3},\quad M'=\tilde{M}'+\dfrac{\tilde{m}}{3},\quad
        \rho_0^{(1)}<\min\bigg\{\dfrac{1}{8},\dfrac{m^2}{16gM^2},\dfrac{m}{16(2g-1)M},\dfrac{1}{M}\bigg\}.
    \end{align*}
    And $\bar{m},\bar{M}$ are controlled by the same estimates with respect to $m,M,\rho_0^{(1)}$.
    
    Lemma \ref{lm6.4} implies that, for $k=1,2,3$, $\sigma(A_j)^k-\sigma(B_j)^k$ is holomorphic with respect to $(\boldsymbol A, \boldsymbol B,\boldsymbol r)\in \mathcal{U}_{\boldsymbol A_0}^{(1)} \times \mathcal{U}_{\boldsymbol B_0}^{(1)} \times \mathcal{N}_{\rho_0^{(1)}}$. Since the series \eqref{parameter1'}, \eqref{parameter2'}, \eqref{parameter3'} converge absolutely and locally uniformly in $\mathcal{U}_{\boldsymbol A_0}^{(1)} \times \mathcal{U}_{\boldsymbol B_0}^{(1)} \times \mathcal{N}_{\rho_0^{(1)}}$, with the Weierstrass theorem, functions $f_{1,j},f_{2,j},f_{3,j}$ are holomorphic in $\mathcal{U}_{\boldsymbol A_0}^{(1)} \times \mathcal{U}_{\boldsymbol B_0}^{(1)} \times \mathcal{N}_{\rho_0^{(1)}}$.

    Moreover, with \eqref{poincareseries1},\eqref{poincareseries2}, \eqref{tobeused1}, \eqref{tobeused2}, we have
    \begin{align*}
        &|f_{1,j}(\boldsymbol A,\boldsymbol B,\boldsymbol r)-(A_j-B_j)|\lesssim {\bar{m}}^{-2}(\tilde{M}+\frac{2\tilde{m}}{3})^3|\boldsymbol r|_{\infty}^2,\\
        &|f_{2,j}(\boldsymbol A,\boldsymbol B,\boldsymbol r)-(A_j^2-B_j^2)| \lesssim \frac{2\bar{M}(\tilde{M}+\frac{2\tilde{m}}{3})^3}{\bar{m}^2}|\boldsymbol r|_{\infty}^2,
    \end{align*}
    for $(\boldsymbol A,\boldsymbol B,\boldsymbol r)\in \mathcal{U}_{\boldsymbol A_0}^{(1)} \times \mathcal{U}_{\boldsymbol B_0}^{(1)} \times \mathcal{N}_{\rho_0^{(1)}}$.
    Thus, we have
    \begin{equation}\label{donotneediteration}
        f_{1,j}(\boldsymbol A,\boldsymbol B,\boldsymbol r) = A_j-B_j+O(\boldsymbol r^2),\quad f_{2,j}(\boldsymbol A,\boldsymbol B,\boldsymbol r) = A_j^2-B_j^2+O(\boldsymbol r^2).
    \end{equation}
    Evaluating the expansions at $\boldsymbol r = \boldsymbol 0$, for $j,k\in\mathcal{G}$, we have
    \begin{align*}
        &\dfrac{\partial f_{1,j}}{\partial A_k}(\boldsymbol A_{0}^{(1)},\boldsymbol B_0^{(1)},\boldsymbol 0) = \delta_{j,k},\quad
        &\dfrac{\partial f_{2,j}}{\partial A_k}(\boldsymbol A_{0}^{(1)},\boldsymbol B_0^{(1)},\boldsymbol 0) = 2A_{j,0}^{(1)}\delta_{j,k},\\
        &\dfrac{\partial f_{1,j}}{\partial  B_k}(\boldsymbol A_{0}^{(1)},\boldsymbol B_0^{(1)},\boldsymbol 0) = -\delta_{j,k},\quad
        &\dfrac{\partial f_{2,j}}{\partial  B_k}(\boldsymbol A_{0}^{(1)},\boldsymbol B_0^{(1)},\boldsymbol 0) = -2B_{j,0}^{(1)}\delta_{j,k}.    
    \end{align*}
    By restricting the variables to the real space, i.e., \(\boldsymbol A,\boldsymbol  B,\boldsymbol r\in \mathbb{R}^{g}\), equations \eqref{implicitcondition4} and \eqref{implicitcondition5} imply that \(f_{1,j},f_{2,j}\in \mathbb{R}\).
    Thus the equation 
    \begin{equation*}
        -f_{1,j}(\boldsymbol A,\boldsymbol B,\boldsymbol r)=\dfrac{2\pi}{\gamma}m_j,\  \sqrt{3}f_{2,j}(\boldsymbol A ,\boldsymbol B, \boldsymbol r)=\frac{2\pi}{\gamma}n_j,\quad j=1,\cdots,g,
    \end{equation*}
    reduces to
    \begin{equation}\label{reducedrelation2}
        -\textup{Re }f_{1,j}(\boldsymbol A,\boldsymbol B,\boldsymbol r)=\frac{2\pi}{\gamma}m_j,\quad
        \sqrt{3}\textup{Re }f_{2,j}(\boldsymbol A,\boldsymbol B,\boldsymbol r)=\frac{2\pi}{\gamma}n_j,\quad j=1,\cdots,g.
    \end{equation}
    Notice that $\boldsymbol A_{0}^{(1)},\boldsymbol B_{0}^{(1)},\boldsymbol r=0$ is a solution for \eqref{reducedrelation2}, and
    \begin{align*}
        &-\dfrac{\partial \textup{ Re}f_{1,j}}{\partial A_k}(\boldsymbol A_{0}^{(1)},\boldsymbol B_0^{(1)},\boldsymbol 0) = -\delta_{j,k},
        \quad \sqrt{3} \dfrac{\partial \textup{ Re}f_{2,j}}{\partial A_k}(\boldsymbol A_{0}^{(1)},\boldsymbol B_0^{(1)},\boldsymbol 0) = 2\sqrt{3}  A_{j,0}^{(1)}\delta_{j,k} ,\\
        &-\dfrac{\partial \textup{ Re}f_{1,j}}{\partial  B_k}(\boldsymbol A_{0}^{(1)},\boldsymbol B_0^{(1)},\boldsymbol 0) = \delta_{j,k},
        \quad \sqrt{3}\dfrac{\partial \textup{ Re}f_{2,j}}{\partial  B_k}(\boldsymbol A_{0}^{(1)},\boldsymbol B_0^{(1)},\boldsymbol 0) = -2\sqrt{3}B_{j,0}^{(1)}\delta_{j,k}.
    \end{align*}
    Denote
    \[
        \mathcal F_1(\boldsymbol A,\boldsymbol B,\boldsymbol r)= \left( -\textup{Re} f_{1,1}-\frac{2\pi m_1}{\gamma}, \sqrt3\,\textup{Re} f_{2,1}-\frac{2\pi n_1}{\gamma},\cdots, -\textup{Re} f_{1,g}-\frac{2\pi m_g}{\gamma}, \sqrt3\,\textup{Re} f_{2,g}-\frac{2\pi n_g}{\gamma}\right).
    \]
    After ordering the variables in the form $(A_1, B_1, \cdots, A_g, B_g)$, the Jacobian matrix of $\mathcal F_1$ at $(\boldsymbol A_{0}^{(1)}, \boldsymbol B_0^{(1)}, \boldsymbol 0)$ is block diagonal. Its \(j\)-th diagonal block has determinant $ 2\sqrt{3}(B_{j,0}^{(1)}-A_{j,0}^{(1)}) \neq0$. Hence the Jacobian matrix is invertible.
    
    By the real-analytic implicit function theorem (see, e.g., \cite[Chapter 2]{krantz2002implicit}), there exist real-analytic functions $\boldsymbol A^{(1)}(\boldsymbol r), \boldsymbol B^{(1)}(\boldsymbol r)\in \mathbb{R}^g$ defined for \(\boldsymbol r\) in a neighborhood \(\mathcal U_{\boldsymbol0}^{(1)}\subset \mathcal{N}_{\rho_0^{(1)}} \cap \mathbb{R}^g\), which solve \eqref{restriction1}.
    
    Set $x_1(\boldsymbol r):=\bigl(\boldsymbol A^{(1)}(\boldsymbol r), \boldsymbol B^{(1)}(\boldsymbol r) \bigr)$ and $\Phi_1(\boldsymbol A,\boldsymbol B) := \mathcal F_1( \boldsymbol A,\boldsymbol B,\boldsymbol0 )$.
    The estimates in \eqref{donotneediteration} give, locally uniformly,
    \[
        \mathcal F_1(x,\boldsymbol r) = \Phi_1(x) + O(\boldsymbol r^2).
    \]
    Since $\mathcal F_1(x_1(\boldsymbol r),\boldsymbol r) = \mathcal F_1(x_1(\boldsymbol0),\boldsymbol0)$, we have
    \[
        |\Phi_1(x_1(\boldsymbol r)) - \Phi_1(x_1(\boldsymbol0))|_{\infty} \lesssim |\boldsymbol r|_{\infty}^2.
    \]
    Since the Jacobian matrix \(\partial_x\Phi_1(x_1(\boldsymbol0))\) is invertible, \(\Phi_1\) has a locally Lipschitz inverse near \(x_1(\boldsymbol0)\). Consequently,
    \[
        |x_1(\boldsymbol r)-x_1(\boldsymbol0)|_{\infty} \lesssim |\boldsymbol r|_{\infty}^2,
    \]
    which implies that
    \begin{equation*}
        A_j^{(1)}(\boldsymbol r) = A_{j,0}^{(1)} + O(\boldsymbol r^2),\quad B_j^{(1)}(\boldsymbol r) = B_{j,0}^{(1)} + O(\boldsymbol r^2).
    \end{equation*}
    
    \bigskip
    $(ii)$
    Denote 
    \begin{align*}
         &\tilde{m}=\min\{ |P-Q|: P,Q\in \{A_{1,0}^{(2)}, B_{1,0}^{(2)}, \cdots, A_{g,0}^{(2)}, B_{g,0}^{(2)}\},P\neq Q\},\\ 
         &\tilde{M}=\max\{ |P-Q|: P,Q\in \{A_{1,0}^{(2)}, B_{1,0}^{(2)}, \cdots, A_{g,0}^{(2)}, B_{g,0}^{(2)}\},P\neq Q\},\\
         &\tilde{M}'=\max\{ |P|: P\in \{A_{1,0}^{(2)}, B_{1,0}^{(2)}, \cdots, A_{g,0}^{(2)}, B_{g,0}^{(2)}\}\}.
    \end{align*}
    Since $m_j\neq 0$ and $\frac{n_j}{m_j} \neq \frac{n_k}{m_k}$ for $j\neq k$, the points $A_{1,0}^{(2)}, B_{1,0}^{(2)}, \cdots, A_{g,0}^{(2)}, B_{g,0}^{(2)}$ are pairwise distinct. Hence the constant $\tilde{m}>0$. Define $\mathcal{U}_{\boldsymbol A_0}^{(2)}, \mathcal{U}_{\boldsymbol B_0}^{(2)}, \mathcal{N}_{\rho_0^{(2)}}$ as above.
    Similarly, $f_{1,j},f_{2,j}$ are holomorphic with respect to $(\boldsymbol A,\boldsymbol  B,\boldsymbol r)\in \mathcal{U}_{\boldsymbol  A_0}^{(2)} \times \mathcal{U}_{\boldsymbol  B_0}^{(2)} \times \mathcal{N}_{\rho_0^{(2)}}$.

    Now we apply a variable transformation
    \begin{equation}\label{transform}
        \boldsymbol A=\boldsymbol\alpha+i\boldsymbol\beta,\quad
        \boldsymbol B=\boldsymbol\alpha-i\boldsymbol\beta.
    \end{equation}
    Denote $\boldsymbol \alpha_0^{(2)}= \frac{\boldsymbol A_{0}^{(2)}+\boldsymbol B_{0}^{(2)}}{2},\allowbreak \boldsymbol \beta_0^{(2)}= \frac{\boldsymbol A_{0}^{(2)}-\boldsymbol B_{0}^{(2)}}{2i}$. Then 
    \begin{align*}
        f_{1,j}(\boldsymbol\alpha,\boldsymbol \beta,\boldsymbol r) = 2i\beta_j + O(\boldsymbol r^2),\quad
        f_{2,j}(\boldsymbol\alpha,\boldsymbol \beta,\boldsymbol r) = 4i\alpha_j\beta_j + O(\boldsymbol r^2)
    \end{align*}    
    are holomorphic with respect to $(\boldsymbol\alpha,\boldsymbol \beta,\boldsymbol r)$.
    
    Evaluating the expansions at $\boldsymbol r = \boldsymbol 0$, for $j,k\in\mathcal{G}$, we have
    \begin{align*}
        &\dfrac{\partial f_{1,j}}{\partial \alpha_k}(\boldsymbol \alpha_{0}^{(2)},\boldsymbol\beta_0^{(2)},\boldsymbol 0) =0,\quad
        &\dfrac{\partial f_{2,j}}{\partial \alpha_k}(\boldsymbol \alpha_{0}^{(2)},\boldsymbol\beta_0^{(2)},\boldsymbol 0) = 4i\beta_{j,0}^{(2)}\delta_{j,k},\\
        &\dfrac{\partial f_{1,j}}{\partial \beta_k}(\boldsymbol \alpha_{0}^{(2)},\boldsymbol\beta_0^{(2)},\boldsymbol 0) = 2i\delta_{j,k},\quad
        &\dfrac{\partial f_{2,j}}{\partial \beta_k}(\boldsymbol \alpha_{0}^{(2)},\boldsymbol\beta_0^{(2)},\boldsymbol 0) = 4i\alpha_{j,0}^{(2)}\delta_{j,k}.    
    \end{align*}
    By restricting the variables to the real space, i.e., \(\boldsymbol\alpha,\boldsymbol \beta,\boldsymbol r\in \mathbb{R}^{g}\), equations \eqref{implicitcondition1} and \eqref{implicitcondition2} imply that \(f_{1,j},f_{2,j}\in i\mathbb{R}\). Thus, the system 
    \begin{equation*}
        f_{1,j}(\boldsymbol A,\boldsymbol B,\boldsymbol r)=\dfrac{2\pi i}{\gamma}m_j, \qquad \sqrt{3}f_{2,j}(\boldsymbol A,\boldsymbol B,\boldsymbol r)=\dfrac{2\pi i}{\gamma}n_j ,\quad j=1,\cdots,g,
    \end{equation*}
    reduces to
    \begin{equation}\label{reducedrelation1}
        \textup{Im}f_{1,j}(\boldsymbol\alpha,\boldsymbol\beta,\boldsymbol r)=\frac{2\pi}{\gamma}m_j,\quad
        \sqrt{3}\textup{ Im}f_{2,j}(\boldsymbol \alpha,\boldsymbol\beta,\boldsymbol r)=\frac{2\pi}{\gamma}n_j,\quad j=1,\cdots,g.
    \end{equation}
    Notice that $\boldsymbol \alpha_{0}^{(2)},\boldsymbol \beta_{0}^{(2)},\boldsymbol r=0$ is a solution for \eqref{reducedrelation1}, and
    \begin{align*}
        &\dfrac{\partial \textup{ Im}f_{1,j}}{\partial \alpha_k}(\boldsymbol \alpha_{0}^{(2)},\boldsymbol\beta_0^{(2)},\boldsymbol 0) =0,
        \quad \sqrt{3} \dfrac{\partial \textup{ Im}f_{2,j}}{\partial \alpha_k}(\boldsymbol \alpha_{0}^{(2)},\boldsymbol\beta_0^{(2)},\boldsymbol 0) = 4\sqrt{3} \beta_{j,0}^{(2)}\delta_{j,k},\\
        &\dfrac{\partial \textup{ Im}f_{1,j}}{\partial \beta_k}(\boldsymbol \alpha_{0}^{(2)},\boldsymbol\beta_0^{(2)},\boldsymbol 0) = 2\delta_{j,k},
        \quad \sqrt{3}\dfrac{\partial \textup{ Im}f_{2,j}}{\partial \beta_k}(\boldsymbol \alpha_{0}^{(2)},\boldsymbol\beta_0^{(2)},\boldsymbol 0) = 4\sqrt{3}\alpha_{j,0}^{(2)}\delta_{j,k}.    
    \end{align*}
    Denote
    \[
        \mathcal F_2(\boldsymbol\alpha,\boldsymbol\beta,\boldsymbol r)= \left( \textup{Im} f_{1,1}-\frac{2\pi m_1}{\gamma}, \sqrt3\,\textup{Im} f_{2,1}-\frac{2\pi n_1}{\gamma},\cdots, \textup{Im} f_{1,g}-\frac{2\pi m_g}{\gamma}, \sqrt3\,\textup{Im} f_{2,g}-\frac{2\pi n_g}{\gamma}\right).
    \]
    After ordering the variables in the form $(\alpha_1, \beta_1, \cdots, \alpha_g, \beta_g)$, the Jacobian matrix of $\mathcal F_2$ at $(\boldsymbol \alpha_{0}^{(2)}, \boldsymbol\beta_0^{(2)}, \boldsymbol 0)$ is block diagonal. Its \(j\)-th diagonal block has determinant $ -8\sqrt{3}\,\beta_{j,0}^{(2)} = -\frac{8\sqrt{3}\pi}{\gamma}m_j \neq0$. Hence the Jacobian matrix is invertible.
    
    By the real-analytic implicit function theorem, there exist real-analytic functions $\boldsymbol\alpha^{(2)}(\boldsymbol r), \boldsymbol\beta^{(2)}(\boldsymbol r)$ defined for \(\boldsymbol r\) in a neighborhood \(\mathcal U_{\boldsymbol0}^{(2)}\subset \mathcal{N}_{\rho_0^{(2)}} \cap \mathbb{R}^g\), which solve \eqref{restriction2}.
    
    Set $x_2(\boldsymbol r):=\bigl(\boldsymbol\alpha^{(2)}(\boldsymbol r), \boldsymbol\beta^{(2)}(\boldsymbol r) \bigr)$ and $\Phi_2(\boldsymbol\alpha,\boldsymbol\beta) := \mathcal F_2( \boldsymbol\alpha,\boldsymbol\beta,\boldsymbol0 )$.
    The estimates in \eqref{donotneediteration} give, locally uniformly, 
    \[
        \mathcal F_2(x,\boldsymbol r) = \Phi_2(x) + O(\boldsymbol r^2).
    \]
    Since $\mathcal F_2(x_2(\boldsymbol r),\boldsymbol r) = \mathcal F_2(x_2(\boldsymbol0),\boldsymbol0) $, we have
    \[
        |\Phi_2(x_2(\boldsymbol r)) - \Phi_2(x_2(\boldsymbol0))|_{\infty} \lesssim |\boldsymbol r|_{\infty}^2.
    \]
    Because \(D\Phi_2(x_2(\boldsymbol0))\) is invertible, \(\Phi_2\) has a
    locally Lipschitz inverse near \(x_2(\boldsymbol0)\). Consequently,
    \[
        |x_2(\boldsymbol r)-x_2(\boldsymbol0)|_{\infty} \lesssim |\boldsymbol r|_{\infty}^2.
    \]
    After the variable transformation \eqref{transform}, the functions $A_j^{(2)}(\boldsymbol r), B_j^{(2)}(\boldsymbol r) \in \mathbb{C}$ defined on $\boldsymbol r\in \mathcal{U}_{\boldsymbol 0}^{(2)}\subset \mathcal{N}_{\rho_0^{(2)}} \cap \mathbb{R}^g$ satisfy \eqref{restriction2} and
    \begin{equation*}
        A_j^{(2)}(\boldsymbol r) = \overline{B_j^{(2)}(\boldsymbol r)},\quad A_j^{(2)}(\boldsymbol r) = A_{j,0}^{(2)} + O(\boldsymbol r^2),\quad B_j^{(2)}(\boldsymbol r) = B_{j,0}^{(2)} + O(\boldsymbol r^2).
    \end{equation*}
\end{proof}
For \(\iota=1,2\), define the nondegenerate parameter region
\[
    \mathcal{N}_{\boldsymbol 0}^{(\iota)} := \left\{ \boldsymbol r\in\mathcal{U}_{\boldsymbol 0}^{(\iota)}: r_j \neq0 \textup{ for every } j\in\mathcal G \right\}.
\]
Then \(\mathcal N_{\boldsymbol0}^{(\iota)}\) is disconnected, open in \(\mathbb R^g\), and its closure contains \(\boldsymbol0\).
For each $\boldsymbol\delta = (\delta_1,\ldots,\delta_g)  \in \{\pm1\}^g$,
define the corresponding connected component
\[
    \mathcal N_{\boldsymbol0,\boldsymbol\delta}^{(\iota)} := \left\{ \boldsymbol r\in\mathcal N_{\boldsymbol0}^{(\iota)} : \delta_jr_j>0 \text{ for every }j\in\mathcal G \right\}.
\]
For $\iota=1,2$, let
\[
    \boldsymbol A^{(\iota)}(\boldsymbol r) = \bigl(
    A_1^{(\iota)}(\boldsymbol r), \cdots, A_g^{(\iota)}(\boldsymbol r) \bigr), \qquad \boldsymbol B^{(\iota)}(\boldsymbol r) = \bigl( B_1^{(\iota)}(\boldsymbol r), \cdots, B_g^{(\iota)}(\boldsymbol r) \bigr)
\]
be the functions obtained in
Lemma~\ref{solveuv}. For \(\boldsymbol r\in\mathcal N_{\boldsymbol0}^{(\iota)}\), define the parameterized period matrix
\[
    B^{(\iota)}(\boldsymbol r) := \bigg(B_{ij}\left( \boldsymbol A^{(\iota)}(\boldsymbol r), \boldsymbol B^{(\iota)}(\boldsymbol r), \boldsymbol r \right)\bigg)_{i,j\in\mathcal{G}},
\]
where its entries are defined by the extensions of \eqref{pmatrix1}, \eqref{pmatrix2} given in \eqref{parameter4'}, \eqref{parameter5'}. We also define
\[
    W_j^{(\iota)}(\boldsymbol r) := f_{3,j}\left( \boldsymbol A^{(\iota)}(\boldsymbol r), \boldsymbol B^{(\iota)}(\boldsymbol r), \boldsymbol r \right),\quad
    c^{(\iota)}(\boldsymbol r):=c\left( \boldsymbol A^{(\iota)}(\boldsymbol r), \boldsymbol B^{(\iota)}(\boldsymbol r), \boldsymbol r \right),\quad j\in\mathcal G.
\]
Set
\[
    \boldsymbol W^{(\iota)}(\boldsymbol r):=\bigl( W_1^{(\iota)}(\boldsymbol r), \cdots, W_g^{(\iota)}(\boldsymbol r) \bigr).
\]
Fix $\boldsymbol\delta\in \{\pm1\}^g$. Let
\[
    \mathscr D_+ := \mathbb C\setminus(-\infty,0], \qquad \mathscr D_- := \mathbb C\setminus[0,\infty),\qquad  \mathscr D_{\boldsymbol\delta} := \mathscr D_{\delta_1} \times\cdots\times \mathscr D_{\delta_g}.
\]
On each $\mathscr D_{\delta_j}$, one may choose a
holomorphic branch of \(\log r_j^2\) that agrees with the real-valued
function \(\log r_j^2\) on the half-axis \(\delta_jr_j>0\). Consequently, the diagonal logarithmic terms in \eqref{pmatrix2} admit single-valued holomorphic branches on a complex neighborhood $\mathcal V_{\boldsymbol\delta}^{(\iota)} \subset \mathscr D_{\boldsymbol\delta}$ of \(\mathcal N_{\boldsymbol0,\boldsymbol\delta}^{(\iota)}\).

After shrinking this complex neighborhood if necessary, the
real-analytic functions \(\boldsymbol A^{(\iota)}\) and \(\boldsymbol B^{(\iota)}\) admit holomorphic extensions to it, and the leading cross-ratios in \eqref{pmatrix1} remain nonzero.
For \(i\neq j\), define
\[
    \kappa_{ij}^{(\iota)}(\boldsymbol r) := \left\{ B_j^{(\iota)}(\boldsymbol r), A_j^{(\iota)}(\boldsymbol r), B_i^{(\iota)}(\boldsymbol r), A_i^{(\iota)}(\boldsymbol r) \right\},\quad
    \kappa_{ij,0}^{(\iota)} := \left\{ B_{j,0}^{(\iota)}, A_{j,0}^{(\iota)}, B_{i,0}^{(\iota)}, A_{i,0}^{(\iota)} \right\}.
\]
The base points in Lemma~\ref{solveuv} are pairwise distinct, hence $\kappa_{ij,0}^{(\iota)}\neq0$.
Moreover, by \eqref{distancewithAj0Bj02}, \eqref{distancewithAj0Bj0},
\[
    \kappa_{ij}^{(\iota)}(\boldsymbol r) = \kappa_{ij,0}^{(\iota)} + O\bigl(\boldsymbol r^2\bigr).
\]
Therefore, after reducing the parameter neighborhood once more, we
may assume that
\[
\kappa_{ij}^{(\iota)}
\bigl(
\mathcal V_{\boldsymbol\delta}^{(\iota)}
\bigr)
\subset
\left\{
z\in\mathbb C:
|z-\kappa_{ij,0}^{(\iota)}|
<
\frac12|\kappa_{ij,0}^{(\iota)}|
\right\}.
\]
This disk is simply connected and does not contain \(0\).
Consequently, a single-valued holomorphic branch may be chosen on it, and the leading logarithmic term in \eqref{pmatrix1} is holomorphic in \(\mathcal V_{\boldsymbol\delta}^{(\iota)}\). In particular, the same branch is used in the definition of the period matrix and in the constants introduced below.

The branches of the logarithms in the non-identity terms are those fixed in the proof of Lemma~\ref{convergelm}. Hence, by the locally uniform convergence established in that lemma and the Weierstrass theorem, each entry of \(B^{(\iota)}(\boldsymbol r)\) admits a holomorphic extension on \(\mathcal V_{\boldsymbol\delta}^{(\iota)}\).

We first consider the case $\iota=1$. 
For real parameters, Lemma~\ref{reallm} implies all non-identity logarithmic terms in \eqref{pmatrix1} are real valued. Therefore, the imaginary part of \(B_{ij}^{(1)}(\boldsymbol r)\) comes entirely from the leading term $\log \bigl( \kappa_{ij}^{(1)}(\boldsymbol r) \bigr)$.

Since $\kappa_{ij}^{(1)}(\boldsymbol r) = \kappa_{ij,0}^{(1)} + O\bigl(\boldsymbol r^2\bigr)$ and \(\kappa_{ij,0}^{(1)}\neq0\), the sign of \(\kappa_{ij}^{(1)}(\boldsymbol r)\) is constant after shrinking \(\mathcal U_{\boldsymbol0}^{(1)}\). Hence
$\textup{Im}\log \bigl( \kappa_{ij}^{(1)}(\boldsymbol r) \bigr)$ is constant on the real parameter region. Define
\[
    \epsilon_{jj}:=0, \qquad \epsilon_{ij} := \textup{Im}\log \kappa_{ij,0}^{(1)} ,\quad  E := \bigl( \epsilon_{ij} \bigr)_{i,j\in\mathcal G}.
\]
and
\[
    B^{(1)}_{real}(\boldsymbol r) := B^{(1)}(\boldsymbol r)-iE.
\]
Then $B^{(1)}_{real}(\boldsymbol r)$ is a negative definite matrix whose entries are real analytic on \(\mathcal N_{\boldsymbol0}^{(1)}\). Moreover,
\[
    B^{(1)}_{real,ij}(\boldsymbol r) = \log | \kappa_{ij,0}^{(1)} | + O\bigl(\boldsymbol r^2\bigr), \quad i\neq j,\quad 
    B_{real,jj}^{(1)}(\boldsymbol r) = \log r_j^2 + O(\boldsymbol r^2)\in \mathbb{R}.
\]
Similarly, the series \eqref{parameter3'} defining \(\boldsymbol{W}^{(1)}\) converges locally uniformly by Lemma~\ref{convergelm}. The same estimates apply to the series \eqref{implicitcondition6} defining \(c^{(1)}\). Hence the Weierstrass theorem, together with Lemma~\ref{reallm} and Lemma \ref{solveuv}, implies that $\boldsymbol W^{(1)}(\boldsymbol r), \ c^{(1)}(\boldsymbol r)$ are real analytic on  \(\boldsymbol r\in\mathcal N_{\boldsymbol0}^{(1)}\).

We next consider the case \(\iota=2\).
By \eqref{realmatrix1}--\eqref{realmatrix2}, \(B^{(2)}(\boldsymbol r)\) is real valued for real \(\boldsymbol r\). Set $B_{real}^{(2)}(\boldsymbol r) := B^{(2)}(\boldsymbol r)$. Since \(\boldsymbol\delta\in\{\pm1\}^g\) was arbitrary, the entries of \(B_{real}^{(2)}(\boldsymbol r)\) are real analytic on \(\mathcal N_{\boldsymbol0}^{(2)}\). Since \(B_{real}^{(2)}(\boldsymbol r)\) is a period matrix, it is symmetric and negative definite. Moreover, using \eqref{distancewithAj0Bj0} and the estimates in Lemma~\ref{convergelm}, we obtain
\[
    B_{real,ij}^{(2)}(\boldsymbol r)=\log\left\{B_{j,0}^{(2)}, A_{j,0}^{(2)}, B_{i,0}^{(2)}, A_{i,0}^{(2)} \right\} + O(\boldsymbol r^2), \quad i\neq j, \quad
    B_{real,jj}^{(2)}(\boldsymbol r) = \log r_j^2 + O(\boldsymbol r^2).
\]
Similarly, Lemma~\ref{reallm} and Lemma \ref{solveuv} yield that \( i\boldsymbol W^{(2)}(\boldsymbol r)\) and \(c^{(2)}(\boldsymbol r)\) are real analytic on \(\mathcal N_{\boldsymbol0}^{(2)}\).

Combining the restrictions \eqref{restriction1},\eqref{restriction2}, the definitions and
properties established in the preceding argument, and the theta-functional formulas in Corollary~\ref{cor:finite-gap-KP}, we obtain the following parameterization theorem.

\begin{theorem}\label{thm6.1}
Fix $g\in \mathbb{Z}_+$. If $g>1$, suppose that $\sqrt{3}\pi/\gamma\in\mathbb{R}\setminus\mathbb{Q}$. Set $\boldsymbol m = (m_1,\cdots, m_g), \boldsymbol n= (n_1,\cdots, n_g) \in \mathbb{Z}^g$, satisfying $m_1\cdots m_g\neq 0$ and $m_j n_k-m_k n_j\neq 0$, for $j,k\in\mathcal{G}$ and $j\neq k$.

Let $\kappa_{ij,0}^{(\iota)}, E, \ B^{(\iota)}_{real}(\boldsymbol r), \ B^{(\iota)}(\boldsymbol r),\ \boldsymbol W^{(\iota)}(\boldsymbol r),\ c^{(\iota)}(\boldsymbol r)$ be as defined in the preceding argument.
Set
\[
    \boldsymbol\omega_{0}^{(1)}(\boldsymbol r) = -12c^{(1)}(\boldsymbol r)\cdot\boldsymbol m - \frac{2\gamma}{\pi}\boldsymbol W^{(1)}(\boldsymbol r),\quad \boldsymbol\omega_{0}^{(2)}(\boldsymbol r) = 12c^{(2)}(\boldsymbol r)\cdot \boldsymbol m + \frac{2i\gamma}{\pi}\boldsymbol W^{(2)}(\boldsymbol r).
\]
For $\iota = 1,2$ and $\boldsymbol d \in \mathbb{R}^g$, denote $\boldsymbol\zeta^{(\iota)} = \boldsymbol m x+\boldsymbol ny + \boldsymbol \omega_0^{(\iota)}(\boldsymbol r)t + \boldsymbol d \in \mathbb{R}^g$.

$(i)$ There exists an open set $\mathcal{N}_{\boldsymbol 0}^{(1)}$ whose closure contains $\boldsymbol 0$ such that
\[
    B_{real}^{(1)}(\boldsymbol r)\in \mathbb{R}^{g\times g},
    \qquad
    \boldsymbol\omega_{0}^{(1)}(\boldsymbol r) \in \mathbb{R}^g
\]
depend real analytically on $\boldsymbol r\in \mathcal{N}_{\boldsymbol 0}^{(1)}$.
For every $\boldsymbol d\in \mathbb{R}^g$, the theta-functional expression is
\begin{equation}\label{onegap1}
    u_0^{(1)}(t,x,y ; \boldsymbol r)
    =
    12\frac{\partial^2}{\partial x^2}
    \log \theta\left(
        \frac{2\pi i}{\gamma}\boldsymbol\zeta^{(1)}; B^{(1)}_{real}(\boldsymbol r) + iE
    \right).
\end{equation}
Wherever it is well defined, it is a finite-gap solution of the KP-I equation \eqref{kp}.
Moreover, $B^{(1)}_{real}(\boldsymbol r)$ is negative definite, and for $i,j \in \mathcal{G}$,
\begin{equation}\label{B1property}
    B_{real,jj}^{(1)}(\boldsymbol r) = \log r_j^2 + O(\boldsymbol r^2),\quad B^{(1)}_{real,ij}(\boldsymbol r) = \log | \kappa_{ij,0}^{(1)} | + O\bigl(\boldsymbol r^2\bigr), \quad i\neq j.
\end{equation}

$(ii)$ There exists an open set $\mathcal{N}_{\boldsymbol 0}^{(2)}$ whose closure contains $\boldsymbol 0$ such that
\[
    B_{real}^{(2)}(\boldsymbol r)\in \mathbb{R}^{g\times g},
    \qquad
    \boldsymbol \omega_{0}^{(2)}(\boldsymbol r)\in \mathbb{R}^g
\]
depend real analytically on $\boldsymbol r\in \mathcal{N}_{\boldsymbol 0}^{(2)}$.
For every $\boldsymbol d\in \mathbb{R}^g$, the theta-functional expression is
\begin{equation}\label{onegap2}
    u_0^{(2)}(t,x,y ; \boldsymbol r)
    =
    12\frac{\partial^2}{\partial x^2}
    \log \theta\left(
        \frac{2\pi i}{\gamma}\boldsymbol\zeta^{(2)}; B_{real}^{(2)}(\boldsymbol r)
    \right).
\end{equation}
Wherever it is well defined, it is a finite-gap solution of the KP-II equation \eqref{kp}.
Moreover, $B_{real}^{(2)}(\boldsymbol r)$ is negative definite, and for $i,j \in \mathcal{G}$,
\begin{equation}\label{B2property}
    B^{(2)}_{real,jj}(\boldsymbol r)=\log r_j^2+O(\boldsymbol r^2),\quad B^{(2)}_{real,ij}(\boldsymbol r)=\log\kappa_{ij,0}^{(2)} + O(\boldsymbol r^2),\quad i\neq j.
\end{equation}
\end{theorem}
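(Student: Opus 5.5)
The proof assembles the objects built above and substitutes them into Corollary~\ref{cor:finite-gap-KP}; the real work is checking that the arguments of the theta function match the prescribed form. I will treat the two cases separately. In each case I take $\mathcal N_{\boldsymbol0}^{(\iota)}$ to be the nondegenerate region defined after Lemma~\ref{solveuv}, shrunk if necessary. This set is open, and its closure contains $\boldsymbol0$.

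For every $\boldsymbol r\in\mathcal N_{\boldsymbol0}^{(\iota)}$ all $r_j\neq0$, and Lemma~\ref{convergelm} gives disjoint Schottky circles. So the group $G$ is a genuine classical Schottky group and $X=\Omega(G)/G$ is a compact surface of genus $g$. On the complement of the degenerate hyperplanes the extended series $f_{1,j},f_{2,j},f_{3,j},f_{4,i,j},f_{5,j}$ agree with the quantities of Lemma~\ref{shoulianyinli}. Hence $\boldsymbol U=\boldsymbol f_1$, $\boldsymbol V=\boldsymbol f_2$, $\boldsymbol W=\boldsymbol W^{(\iota)}(\boldsymbol r)$, $c=c^{(\iota)}(\boldsymbol r)$, and the period matrix is $B^{(\iota)}(\boldsymbol r)$. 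Changing the branch of a logarithm in an entry of $B$ changes that entry by a multiple of $2\pi i$, and $\theta$ is invariant under $B\mapsto B+2\pi i N$ for symmetric integer $N$ with even diagonal. The diagonal terms are real, since each $\log r_j^2$ is taken real on $\delta_j r_j>0$. So the branch choices do not affect the theta function.

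For KP-II ($\iota=2$), the constraint \eqref{restriction2} gives $\boldsymbol U=\frac{2\pi i}{\gamma}\boldsymbol m$ and $\sqrt3\boldsymbol V=\frac{2\pi i}{\gamma}\boldsymbol n$. The corollary's argument is $\boldsymbol Ux+\sqrt3\boldsymbol Vy+(-4\boldsymbol W+12c\boldsymbol U)t+\boldsymbol D$. Its $t$-coefficient is $\frac{2\pi i}{\gamma}\bigl(12c\boldsymbol m+\frac{2i\gamma}{\pi}\boldsymbol W\bigr)=\frac{2\pi i}{\gamma}\boldsymbol\omega_0^{(2)}$, because $-4\boldsymbol W=\frac{2\pi i}{\gamma}\cdot\frac{2i\gamma}{\pi}\boldsymbol W$. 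Choosing $\boldsymbol D=\frac{2\pi i}{\gamma}\boldsymbol d$ then gives \eqref{onegap2}. Reality of $\boldsymbol\omega_0^{(2)}$ follows from Lemma~\ref{reallm}(i): $\boldsymbol W^{(2)}\in i\mathbb R^g$ and $c^{(2)}\in\mathbb R$. Real analyticity is the statement established just before the theorem.

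For KP-I ($\iota=1$), \eqref{restriction1} gives $-\boldsymbol U=\frac{2\pi}{\gamma}\boldsymbol m$ and $\sqrt3\boldsymbol V=\frac{2\pi}{\gamma}\boldsymbol n$, both real. The corollary's argument is $i(-\boldsymbol Ux+\sqrt3\boldsymbol Vy+\boldsymbol W't)$, and its $t$-coefficient is $i(-4\boldsymbol W+12c\boldsymbol U)=\frac{2\pi i}{\gamma}\bigl(-\frac{2\gamma}{\pi}\boldsymbol W-12c\boldsymbol m\bigr)=\frac{2\pi i}{\gamma}\boldsymbol\omega_0^{(1)}$. This coefficient is real by Lemma~\ref{reallm}(ii). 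Writing $B^{(1)}=B^{(1)}_{real}+iE$ then yields \eqref{onegap1}.

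Negative definiteness is the step I expect to need the most care. For $\iota=2$ it is automatic, since $B^{(2)}$ is a real period matrix and its real part is negative definite. For $\iota=1$ the real part of $B^{(1)}$ is exactly $B^{(1)}_{real}$, because the non-identity logarithms are real and $\operatorname{Im}$ of the leading logarithm is the constant $\epsilon_{ij}\in\{0,\pi\}$. So $B^{(1)}_{real}=\operatorname{Re}B^{(1)}$ is negative definite by the Riemann bilinear relations. As a direct check, one can also use \eqref{B1property}: the diagonal entries $\log r_j^2\to-\infty$ dominate the bounded off-diagonal entries for small $\boldsymbol r$, so Gershgorin's theorem applies after shrinking $\mathcal N^{(1)}_{\boldsymbol0}$.

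The asymptotics \eqref{B1property} and \eqref{B2property} come from Lemma~\ref{convergelm}(iii) combined with the estimates $\boldsymbol A^{(\iota)},\boldsymbol B^{(\iota)}=\boldsymbol A_0^{(\iota)},\boldsymbol B_0^{(\iota)}+O(\boldsymbol r^2)$. These estimates give $\kappa_{ij}^{(\iota)}=\kappa_{ij,0}^{(\iota)}+O(\boldsymbol r^2)$, and since $\log$ is Lipschitz on the disk chosen above, the same $O(\boldsymbol r^2)$ error passes to the logarithms.

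The phrase "wherever it is well defined" covers the zeros of $\theta$. For real $\boldsymbol\zeta$ and a negative-definite real part of $B$, these zeros do not occur for small $\boldsymbol r$, but the theorem does not require ruling them out.
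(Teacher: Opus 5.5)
Your proposal is correct and follows the paper's own route. The paper obtains the theorem by combining \eqref{restriction1}--\eqref{restriction2} (which fix $\boldsymbol U$ and $\sqrt3\boldsymbol V$), the reality, real-analyticity and asymptotic properties established just before the statement, and Corollary~\ref{cor:finite-gap-KP}. Your explicit matching of the $t$-coefficients with $\boldsymbol\omega_0^{(\iota)}$, and your justification of negative definiteness through $B^{(1)}_{real}=\operatorname{Re}B^{(1)}$ (which the paper only asserts), spell out steps the paper leaves implicit.
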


\begin{corollary}\label{thm6.1corollary}
    Assume the hypotheses of Theorem \ref{thm6.1}. Denote $\delta_+ = (1,\cdots,1) \in \mathbb{Z}^g$.
    For sufficiently small $\tilde\rho_0\in(0,1)$, the rectangle $(0,\tilde\rho_0)^g \subset \mathcal N_{\boldsymbol 0,\delta_+}^{(\iota)},\ \iota=1,2$.
    Let $\mathcal U_{\tilde\rho_0}^{(\iota)}$ be a symmetric complex
    neighborhood of $(0,\tilde\rho_0)^g$ such that
    \[
         \boldsymbol r \in \mathcal{U}_{\tilde\rho_0}^{(\iota)} \Leftrightarrow \overline{\boldsymbol r} \in \mathcal{U}_{\tilde\rho_0}^{(\iota)},\quad
         \mathcal U_{\tilde\rho_0}^{(\iota)} \subset \mathcal V_{\delta_+}^{(\iota)} \cap \left\{ \boldsymbol r = (r_1,\cdots,r_g) \in\mathbb C^g: 0<|r_j|<\tilde\rho_0,\ j\in\mathcal G \right\}.
    \]
    Hence functions $B_{real,ij}^{(\iota)}(\boldsymbol r),\ B_{ij}^{(\iota)}(\boldsymbol r),\ \omega_{j,0}^{(\iota)}(\boldsymbol r),\ i,j\in\mathcal{G}$ defined in Theorem \ref{thm6.1} are holomorphic in $\mathcal{U}_{\tilde\rho_0}^{(\iota)}$. For $\rho \in (0,\tilde\rho_0)$, denote $\mathcal{U}_{\rho} := \mathcal{U}_{\tilde\rho_0} \cap \{\boldsymbol r:0<|r_j|< \rho,j \in \mathcal{G}\}$.
    
    $(i)$ Fix $g = 1$. Then $\boldsymbol m, \boldsymbol n, \boldsymbol r, \boldsymbol\omega_{0}(\boldsymbol r), B^{(\iota)}(\boldsymbol r)$ are scalar-valued and we denote them by $m_0, n_0, r, \omega_{0}(r), B^{(\iota)}(r)$, respectively. Define $q^{(\iota)}(r):=\exp{\frac{B^{(\iota)}(r)}{2}},\ r \in \mathcal{U}_{\tilde\rho_0}^{(\iota)}$. Then $\omega_{0}^{(\iota)}(r), q^{(\iota)}(r)$ are holomorphic functions in $\mathcal{U}_{\tilde\rho_0}^{(\iota)}$ with real-analytic restrictions to $(0,\tilde\rho_0)$. Moreover, $q^{(\iota)}(r) = r + O(r^3)$ for $r \in \mathcal{U}_{\tilde\rho_0}^{(\iota)}$. Hence, there exists a constant $\rho_0 \in (0,\tilde \rho_0)$ such that $|q^{(\iota)}(r)| \in (0,1)$ for $r \in \mathcal{U}_{\rho_0}^{(\iota)}$.

    As a consequence, for $\iota = 1,2$, the theta-functional expression in Theorem \ref{thm6.1} is well defined and admits the following Fourier expansion on $(0,\rho_0):$
    \begin{align}
        u_{0}^{(\iota)}(t,x,y;r)= \tilde{u}_{0}^{(\iota)}(\zeta^{(\iota)};r)
        = \sum_{k\in\mathbb{Z}\setminus\{0\}} \hat{\tilde{u}}_{0}^{(\iota)}(k;r)e^{\frac{2\pi i}{\gamma}k\zeta^{(\iota)}}. \label{finite-gap}
    \end{align}
    For any $k \in \mathbb{Z}_+$, $\hat{\tilde{u}}_{0}^{(\iota)}(k;r)$ is real analytic on $(0,\rho_0)$ and
    \begin{equation*}
        \hat{\tilde{u}}_{0}^{(\iota)}(-k;r)=\hat{\tilde{u}}_{0}^{(\iota)}(k;r)=\frac{48m_0^2\pi^2}{\gamma^2}\cdot \frac{(-1)^{k}k(q^{(\iota)}(r))^{k}}{1-(q^{(\iota)}(r))^{2k}}.
    \end{equation*}
    \smallskip
    
    $(ii)$ Fix $g>1$.
    For $\boldsymbol k \in \mathbb{Z}^g \setminus \{0\}$, define
    \begin{align*}
        &q_j^{(\iota)}(\boldsymbol r) := \exp\left(\frac{1}{2}B_{real,jj}^{(\iota)}(\boldsymbol r)\right),\quad \Lambda^{(\iota)}(\boldsymbol k;\boldsymbol r) :=
        \exp{\left(\frac{1}{2} \boldsymbol k^{T} B^{(\iota)}(\boldsymbol r) \boldsymbol k \right)}, \quad \boldsymbol r\in \mathcal{U}_{\tilde\rho_0}^{(\iota)}.
    \end{align*}
    Hence $\omega_{j,0}^{(\iota)}(\boldsymbol r), q_j^{(\iota)}(\boldsymbol r), \Lambda^{(\iota)}(\boldsymbol k;\boldsymbol r)$ are holomorphic functions on $\mathcal{U}_{\tilde\rho_0}^{(\iota)}$ with real-analytic restrictions to $(0,\tilde\rho_0)^g$.
    There exist constants $\eta\in(0,1)$ and $\rho_0\in(0,\tilde\rho_0)$
    such that
    \begin{itemize}
        \item $|q_j^{(\iota)}(\boldsymbol r)| \leq \eta$ and $q_j^{(\iota)}(\boldsymbol r) = r_j + O(\boldsymbol r^3)$, for $\boldsymbol r\in \mathcal{U}_{\rho_0}^{(\iota)} := \mathcal{U}_{\tilde\rho_0}^{(\iota)} \cap \{\boldsymbol r:0<|r_j|< \rho_0, j\in\mathcal{G}\}$.
        \item For every
            $\boldsymbol k=(k_1,\cdots,k_g)\in\mathbb Z^g\setminus\{\boldsymbol0\}$,
            the series
            \begin{equation}
                \mathcal{L}_{\boldsymbol k}^{(\iota)}(\boldsymbol r) := \left(q_1^{(\iota)}\right)^{-|k_1|}\cdots \left(q_g^{(\iota)}\right)^{-|k_g|}\sum_{l=1}^{\infty} \frac{(-1)^{l}}{l} \left( \sum_{\substack{\boldsymbol n^{(1)} + \dots + \boldsymbol n^{(l)} = \boldsymbol k \\ \boldsymbol n^{(i)} \in \mathbb{Z}^g \setminus \{0\}}} \prod_{i=1}^l \Lambda^{(\iota)}(\boldsymbol n^{(i)};\boldsymbol r) \right), \label{mathcalLdefinition}
            \end{equation}
            converges absolutely and locally uniformly on $\mathcal{U}_{\rho_0}^{(\iota)}$, and determines a holomorphic function satisfying
            \[
                |\mathcal{L}_{\boldsymbol k}^{(\iota)}(\boldsymbol r)|
                \leq
                C_\eta\eta^{-|\boldsymbol k|_1},
                \qquad
                \boldsymbol r\in \mathcal{U}_{\rho_0}^{(\iota)},
            \]
            where $C_\eta$ is independent of $\boldsymbol k$ and
            $\boldsymbol r$. The restriction of $\mathcal{L}_{\boldsymbol k}^{(\iota)}(\boldsymbol r)$ to $(0,\rho_0)^g$ is real analytic.
    \end{itemize}
    As a consequence, for $\iota = 1,2$, the theta-functional expression in Theorem \ref{thm6.1} is well defined and admits the following Fourier expansion on $(0,\rho_0)^g:$
    \begin{equation*}
        u_0^{(\iota)}(t,x,y;\boldsymbol r)
        =
        \tilde {u}_0^{(\iota)}(\boldsymbol\zeta^{(\iota)};\boldsymbol r)
        =
        \sum_{\boldsymbol k\in\mathbb Z^g\setminus\{\boldsymbol0\}}
        \widehat{\widetilde u}_0^{(\iota)}(\boldsymbol k;\boldsymbol r)
        e^{\frac{2\pi i}{\gamma}
           \boldsymbol k\cdot \boldsymbol\zeta^{(\iota)}}.
    \end{equation*}
    For any $\boldsymbol k=(k_1,\cdots,k_g) \in \mathbb{Z}^g$, $\widehat{\widetilde u}_0^{(\iota)}(\boldsymbol k;\boldsymbol r)$ is real analytic on $(0,\rho_0)^g$ and
    \begin{equation*}
        \widehat{\widetilde u}_0^{(\iota)}(-\boldsymbol k;\boldsymbol r) = \widehat{\widetilde u}_0^{(\iota)}(\boldsymbol k;\boldsymbol r)  =  \frac{48\pi^2}{\gamma^2} (m_1k_1+\cdots + m_gk_g)^2
        \left(q_1^{(\iota)}(\boldsymbol r)\right)^{|k_1|}\cdots  \left(q_g^{(\iota)}(\boldsymbol r)\right)^{|k_g|} \mathcal{L}_{\boldsymbol k}^{(\iota)}(\boldsymbol r).
    \end{equation*}
\end{corollary}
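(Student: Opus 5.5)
The corollary combines three ingredients: holomorphy of the parameterized data from Theorem~\ref{thm6.1}, the asymptotics \eqref{B1property}--\eqref{B2property}, and an explicit Fourier expansion of $\partial_x^2\log\theta$. I will treat these in that order.

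\textbf{Step 1: the rectangle and holomorphy.} Since $\mathcal N^{(\iota)}_{\boldsymbol0,\delta_+}$ contains $\mathcal U^{(\iota)}_{\boldsymbol0}\cap(0,\infty)^g$ and $\mathcal U^{(\iota)}_{\boldsymbol0}$ is a neighborhood of $\boldsymbol0$, a small $\tilde\rho_0$ gives $(0,\tilde\rho_0)^g\subset\mathcal N^{(\iota)}_{\boldsymbol0,\delta_+}$. The complex neighborhood $\mathcal V^{(\iota)}_{\delta_+}$ was built so that all entries of $B^{(\iota)}$, $\boldsymbol W^{(\iota)}$ and $c^{(\iota)}$ are holomorphic there. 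Its intersection with its complex conjugate is again such a neighborhood, which gives the symmetric set $\mathcal U^{(\iota)}_{\tilde\rho_0}$. The reality statements for $\boldsymbol\omega_0^{(\iota)}$ follow from Lemma~\ref{reallm} together with the definitions in Theorem~\ref{thm6.1}.

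\textbf{Step 2: the nome.} Set $q_j=\exp(B_{real,jj}/2)$. By Lemma~\ref{convergelm}$(iii)$, $f_{5,j}=\log r_j^2+O(\boldsymbol r^2)$, and the branch of $\log r_j^2$ on $\mathscr D_+$ agrees with the real logarithm. Hence
\[
q_j=r_j\exp(O(\boldsymbol r^2))=r_j+O(\boldsymbol r^3),
\]
and in particular $|q_j|\le\eta<1$ once $\rho_0$ is small. When $g=1$, $E=0$, so $B=B_{real}$ and the same computation gives $q=r+O(r^3)$.

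\textbf{Step 3: the one-gap expansion ($g=1$).} Write $\theta(z;B)=\sum_n q^{n^2}e^{nz}$ with $z=\frac{2\pi i}{\gamma}\zeta$ and $\partial_x=m_0\partial_\zeta$. Use the Jacobi triple product
\[
\theta=\prod_{k\ge1}(1-q^{2k})(1+q^{2k-1}e^{z})(1+q^{2k-1}e^{-z}).
\]
Taking the logarithm and expanding each factor $\log(1+w)$ in powers of $w$, then resumming the geometric series in $q$, yields
\[
\log\theta=\mathrm{const}+\sum_{k\ge1}\frac{(-1)^{k+1}}{k}\,\frac{q^k}{1-q^{2k}}\bigl(e^{kz}+e^{-kz}\bigr).
\]
Applying $12\,m_0^2\bigl(\tfrac{2\pi i}{\gamma}\bigr)^2k^2$ to each mode gives the stated coefficients, including the sign $(-1)^k$. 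All series converge absolutely because $|q|<1$, and each coefficient is real analytic since $q$ is.

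\textbf{Step 4: the general case ($g>1$), which I expect to be the main obstacle.} Write
\[
\theta=1+\sum_{\boldsymbol n\neq0}\Lambda(\boldsymbol n)e^{\boldsymbol n\cdot\boldsymbol z}.
\]
Expanding $\log(1+X)$ and collecting the coefficient of $e^{\boldsymbol k\cdot\boldsymbol z}$ produces exactly the inner sum in \eqref{mathcalLdefinition}. The term $iE$ does not change $\Lambda$, because $\epsilon_{ij}\in\{0,\pi\}$ and $\boldsymbol n^TE\boldsymbol n=2\sum_{i<j}\epsilon_{ij}n_in_j\in 2\pi\mathbb Z$, so $e^{\frac12\boldsymbol n^T(iE)\boldsymbol n}=1$.

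The hard part is a uniform bound on $\mathcal L_{\boldsymbol k}$. Using Theorem~\ref{thm6.1}, write
\[
\tfrac12\boldsymbol n^TB_{real}\boldsymbol n=\sum_j n_j^2\log q_j+\sum_{i<j}n_in_jB_{real,ij},
\]
where the off-diagonal entries are bounded. This gives
\[
|\Lambda(\boldsymbol n)|\le\prod_j|q_j|^{n_j^2}e^{C|\boldsymbol n|^2}\le\prod_j|q_j|^{|n_j|}\,\eta'^{\,|\boldsymbol n|_1}
\]
for small $\rho_0$, by absorbing $e^{C|\boldsymbol n|^2}$ with the help of $|q_j|^{n_j^2-|n_j|}$.

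For a decomposition $\boldsymbol n^{(1)}+\cdots+\boldsymbol n^{(l)}=\boldsymbol k$, we have $\sum_i|n^{(i)}_j|\ge|k_j|$. So each product contains the factor $\prod_j|q_j|^{|k_j|}$, and the remaining factors form a summable series once $\eta'$ is small. Counting decompositions of length $l$ gives at most a $C^l$ growth, which gives convergence and the bound $C_\eta\eta^{-|\boldsymbol k|_1}$.

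\textbf{Conclusion.} Holomorphy of $\mathcal L_{\boldsymbol k}$ follows from the Weierstrass theorem. The symmetry under $\boldsymbol k\mapsto-\boldsymbol k$ comes from $\Lambda(-\boldsymbol n)=\Lambda(\boldsymbol n)$. Reality on $(0,\rho_0)^g$ follows because $\Lambda$ is real there.
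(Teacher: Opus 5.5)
Steps 1--3 are fine and agree with the paper. In particular, your one-gap computation is exactly the paper's Jacobi triple product argument.

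The gap is in Step~4, in the inequality
\[
|\Lambda(\boldsymbol n)|\le\prod_j|q_j|^{|n_j|}\,\eta'^{\,|\boldsymbol n|_1}\qquad(\eta'\ \text{small}).
\]
The factor $|q_j|^{n_j^2-|n_j|}$ that you use for the absorption equals $1$ whenever $\boldsymbol n\in\{-1,0,1\}^g$. For these vectors nothing absorbs $e^{\sum_{i<j}\mathrm{Re}\,B_{ij}n_in_j}$. For example, $\boldsymbol n=(1,1,0,\dots,0)$ gives $|\Lambda(\boldsymbol n)|=|q_1||q_2|e^{\mathrm{Re}\,B_{12}}$ with $e^{\mathrm{Re}\,B_{12}}\to|\kappa_{12,0}|$. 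This is a fixed number that need not be $\le\eta'^2$, and shrinking $\rho_0$ does not help.

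This is not cosmetic. Combined with $\sum_\nu|n^{(\nu)}_j|\ge|k_j|$, your inequality would bound $\mathcal L_{\boldsymbol k}$ uniformly in $\boldsymbol k$, which is false in general. Take $g=2$ and $\kappa=\lim e^{B_{12}}$. As $\boldsymbol r\to\boldsymbol 0$, $\mathcal L_{(N,N)}$ tends to minus the coefficient of $x^Ny^N$ in $\log(1+x+y+\kappa xy)$. This equals $\frac{(-1)^N}{N}$ times the coefficient of $x^N$ in $\bigl(\frac{1+\kappa x}{1+x}\bigr)^N$, which grows exponentially in $N$ when $\kappa>1$.

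So the closing claim $C_\eta\eta^{-|\boldsymbol k|_1}$ is not produced by anything in your argument. Also, ``counting decompositions of length $l$'' has no meaning as stated, since $\boldsymbol k$ has infinitely many decompositions into $l$ nonzero vectors.

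The missing idea is to put $\eta^{|\boldsymbol n|_1}$ into the summation weight rather than into the bound for $\Lambda$. Let $M$ bound the off-diagonal $|B_{real,ij}|$, and set
\[
w_{\boldsymbol n}=\eta^{|\boldsymbol n|_1}\prod_j|q_j|^{n_j^2-|n_j|}e^{M\sum_{i<j}|n_in_j|}.
\]
The vectors in $\{-1,0,1\}^g\setminus\{\boldsymbol 0\}$ contribute $\sum_{k=1}^g\binom gk(2\eta)^ke^{Mk(k-1)/2}$, which is small for small $\eta$ independently of $\boldsymbol r$. Every other term has some $|n_j|\ge2$, hence $n_j^2-|n_j|\ge\frac12n_j^2$, and their total sum tends to $0$ as $\boldsymbol r\to\boldsymbol 0$ by Gaussian domination. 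Hence $\sum_{\boldsymbol n\neq\boldsymbol0}w_{\boldsymbol n}\le\varrho_*<1$, and one also arranges $|q_j|\le\eta$.

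Write $S_j=\sum_\nu|n^{(\nu)}_j|\ge|k_j|$. Every decomposition of $\boldsymbol k$ then satisfies
\[
\prod_\nu|\Lambda(\boldsymbol n^{(\nu)})|\le\prod_j|q_j|^{|k_j|}\,\eta^{-\sum_jS_j}\prod_j|q_j|^{S_j-|k_j|}\prod_\nu w_{\boldsymbol n^{(\nu)}}\le\prod_j|q_j|^{|k_j|}\,\eta^{-|\boldsymbol k|_1}\prod_\nu w_{\boldsymbol n^{(\nu)}}.
\]
The surplus powers of $q_j$ pay for the surplus powers of $\eta^{-1}$, and exactly $\eta^{-|\boldsymbol k|_1}$ remains. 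Summing over all decompositions of length $l$ gives at most $\varrho_*^l$, hence $|\mathcal L_{\boldsymbol k}|\le-\log(1-\varrho_*)\,\eta^{-|\boldsymbol k|_1}$. Since $|\Lambda(\boldsymbol n)|\le w_{\boldsymbol n}$, the same estimate gives $|X|<1$, which your expansion of $\log(1+X)$ needs but does not check. This is the paper's argument.

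A smaller error: $\frac12\boldsymbol n^T(iE)\boldsymbol n=i\sum_{i<j}\epsilon_{ij}n_in_j\in i\pi\mathbb Z$, so $e^{\frac12\boldsymbol n^T(iE)\boldsymbol n}=\pm1$, not $1$. This does not affect moduli or reality. However, the Fourier coefficients built from $B_{real}+iE$ genuinely differ from those built from $B_{real}$, so $B$ must stay inside $\Lambda$, as in the statement.
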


\begin{proof}
$(i)$
The existence of symmetric complex neighborhood $\mathcal{U}_{\tilde\rho_0}^{(\iota)}$ and functions $q^{(\iota)}(r), \omega_{0}^{(\iota)}(r)$ with the stated properties follows from the complex-neighborhood construction preceding Theorem~\ref{thm6.1} and Theorem~\ref{thm6.1}.
Therefore, we only need to derive the form \eqref{finite-gap}.

Consider the Jacobi theta function for $z=\frac{2\pi i}{\gamma}\zeta^{(\iota)}$,
\begin{equation*}
    \theta(z;B^{(\iota)})=\sum_{n\in\mathbb{Z}}e^{\frac{B(r)}{2} n^2+ nz}=\sum_{n\in\mathbb{Z}}\left(q^{(\iota)}(r)\right)^{n^2}e^{nz},z\in i\mathbb{R}.
\end{equation*}
By \eqref{B1property}--\eqref{B2property}, $q^{(\iota)}(r) = r + O(r^3)$ on $\mathcal{U}_{\tilde\rho_0}^{(\iota)}$. Thus, there exists a constant $\rho_0 \in (0,\tilde\rho_0)$ such that $\left|q^{(\iota)}(r)\right| <1$ for $r \in \mathcal{U}_{\rho_0}^{(\iota)}$.
According to the Jacobi triple product identity,
\begin{equation*}
\theta(z; B^{(\iota)}) = \prod_{n=1}^{\infty} \left(1-\left(q^{(\iota)}\right)^{2n} \right) \left(1+\left(q^{(\iota)}\right)^{2n-1}e^{z} \right) \left( 1+\left(q^{(\iota)}\right)^{2n-1}e^{-z} \right).
\end{equation*}
Thus
\begin{equation*}
\log \theta(z; B^{(\iota)}) = \sum_{n=1}^{\infty} \log \left(1-\left(q^{(\iota)}\right)^{2n}\right) + \sum_{n=1}^{\infty} \log\left(1 + \left(q^{(\iota)}\right)^{2n-1}e^{z} \right) + \sum_{n=1}^{\infty} \log \left(1 + \left(q^{(\iota)}\right)^{2n-1}e^{-z} \right).
\end{equation*}
With
\begin{equation*}
    \log(1+p)=\sum_{k=1}^{\infty} \frac{(-1)^{k-1}}{k}p^k, |p|<1,
\end{equation*}
a simple calculation yields
\begin{align*}
\log \theta(z; B^{(\iota)}) &= \sum_{n=1}^{\infty} \log \left(1-\left(q^{(\iota)}\right)^{2n} \right) + 2 \sum_{k=1}^{\infty} \frac{(-1)^{k-1}}{k} \left( \sum_{n=1}^{\infty} \left(q^{(\iota)}\right)^{k(2n-1)} \right) \cos\left(\frac{2\pi i}{\gamma}k\zeta^{(\iota)}\right)\\
&=\sum_{n=1}^{\infty} \log \left(1-\left(q^{(\iota)}\right)^{2n} \right) + 2 \sum_{k=1}^{\infty} \frac{(-1)^{k-1}}{k} \frac{\left(q^{(\iota)}\right)^k}{1-\left(q^{(\iota)}\right)^{2k}} \cos\left(\frac{2\pi i}{\gamma}k\zeta^{(\iota)}\right).
\end{align*}
Recalling \eqref{onegap1},\eqref{onegap2} and differentiating term by term, we obtain
\begin{align*}
u_0^{(\iota)}(t,x,y;r) &= 12\left(\frac{2\pi}{\gamma}m_0\right)^2 \cdot 2 \sum_{k=1}^{\infty} \frac{(-1)^{k-1}}{k} \frac{\left(q^{(\iota)}\right)^k}{1-\left(q^{(\iota)}\right)^{2k}} \left( -k^2 \cos\left(\frac{2\pi i}{\gamma}k\zeta^{(\iota)}\right) \right)\\
&= \frac{96m_0^2\pi^2}{\gamma^2} \sum_{k=1}^{\infty} \frac{(-1)^{k} k \left(q^{(\iota)}\right)^k}{1-\left(q^{(\iota)}\right)^{2k}} \cos\left(\frac{2\pi i}{\gamma}k\zeta^{(\iota)}\right)
=\sum_{k\in\mathbb{Z}\setminus\{0\}} \hat{\tilde{u}}_{0}^{(\iota)}(k;r)e^{\frac{2\pi i}{\gamma} k \zeta^{(\iota)}}.
\end{align*}
These coefficients are real and even in $k$, and they decay exponentially as \(|k|\to\infty\).
For every fixed \(k\neq0\), the function $\hat{\tilde{u}}_{0}^{(\iota)}(k;r)$ is therefore holomorphic in \(\mathcal U_{\rho_0}^{(\iota)}\), and its restriction to \((0,\rho_0)\) is real analytic.

\bigskip

$(ii)$
The existence of $\mathcal{U}_{\tilde\rho_0}^{(\iota)}$ and holomorphicity of $q_j^{(\iota)}(\boldsymbol r), \Lambda^{(\iota)}(\boldsymbol k;\boldsymbol r)$ on $\mathcal{U}_{\tilde\rho_0}^{(\iota)}$ also follows from Theorem \ref{thm6.1}. By \eqref{B1property}--\eqref{B2property}, we have
\begin{equation}\label{qjiotaestimates}
    q_j^{(\iota)}(\boldsymbol r)
    =
    r_j\left(1+O(\boldsymbol r^2)\right).
\end{equation}

Set $M > \sup_{i< j}\sup_{\boldsymbol r\in \overline{\mathcal U}_{\tilde\rho_0}} |B_{real,ij}^{(\iota)}(\boldsymbol r)|$ for $\iota = 1,2$, and $\eta \in(0,e^{-\frac{M(g-1)}{2}})$ such that
\begin{equation}
    \sum_{k=1}^{g} \binom{g}{k}e^{\frac{M(k-1)k}{2}} (2\eta)^k < \frac12.\label{eta-small-normalizedg}
\end{equation}
Then, there exists a constant $\rho_0$ such that $|q_j^{(\iota)}(\boldsymbol r)|<\eta<e^{-\frac{M(g-1)}{2}},\ \boldsymbol r \in \mathcal{U}_{\rho_0}^{(\iota)}$. Denote $Q:=\sup_{\boldsymbol r\in \overline{\mathcal U}_{\rho_0}}\max\{|q_1^{(\iota)}(\boldsymbol r)|,\cdots,|q_g^{(\iota)}(\boldsymbol r)|\}$, which satisfies $Q<e^{-\frac{M(g-1)}{2}}$.

Notice that $\Lambda^{(\iota)}(\boldsymbol n;\boldsymbol r) = \left( q_1^{(\iota)}(\boldsymbol r) \right)^{n_1^2}\cdots \left( q_g^{(\iota)}(\boldsymbol r) \right)^{n_g^2} e^{\sum_{i<j}B_{ij}^{(\iota)}(\boldsymbol r)n_in_j}$.
Let
\[
    w_{\boldsymbol n}^{(\iota)}(\boldsymbol r)
    :=
    \eta^{|\boldsymbol n|_1}
    |q_1^{(\iota)}|^{n_1^2-|n_1|}\cdots
    |q_g^{(\iota)}|^{n_g^2-|n_g|}
    e^{M\sum_{i< j} |n_in_j|},\quad
    \mathfrak R_\eta^{(\iota)}(\boldsymbol r)
    :=
    \sum_{\boldsymbol n\neq\boldsymbol0}
    w_{\boldsymbol n}^{(\iota)}(\boldsymbol r).
\]
Since
\[
    n_j^2-|n_j| \geq \frac12n_j^2-\frac12,\qquad
    \sum_{i< j}|n_in_j| \leq \sum_{i< j}\frac12(n_i^2+n_j^2) \leq \frac{g-1}{2}\sum_{j\in\mathcal{G}}|n_j|^2,
\]
we obtain
\[
\begin{aligned}
    w_{\boldsymbol n}^{(\iota)}(\boldsymbol r) &\leq \eta^{|\boldsymbol n|_1} Q^{n_1^2-|n_1|+\cdots+n_g^2-|n_g|} e^{M\sum_{i< j}|n_in_j|}\\
    &\leq Q^{-g/2} \exp\left( -\frac{-\log Q-\frac{M(g-1)}{2}}{2} (n_1^2+\cdots+n_g^2) \right).
\end{aligned}
\]
The right-hand side is a summable Gaussian sequence independent of
$\boldsymbol r$. Hence $\mathfrak R_\eta^{(\iota)}$ converges absolutely and locally uniformly on
$\mathcal U_{\rho_0}^{(\iota)}$.
Notice that all terms with $\max\{|n_1|,\cdots,|n_g|\}\geq2$ tend to zero as $\boldsymbol r\to\boldsymbol0$. Thus,
\[
    \lim_{\boldsymbol r\to\boldsymbol0} \mathfrak R_\eta^{(\iota)}(\boldsymbol r) = \sum_{k=1}^{g} \binom{g}{k}e^{\frac{M(k-1)k}{2}} (2\eta)^k.
\]
It follows from \eqref{qjiotaestimates}--\eqref{eta-small-normalizedg} that, after decreasing
$\rho_0$, there exists $\varrho_*\in(0,1)$ such that
\begin{equation}
\label{R-eta}
    |\mathfrak R_\eta^{(\iota)}(\boldsymbol r)| \leq \varrho_*<1,
\end{equation}

Fix $\boldsymbol k=(k_1,k_2,\cdots,k_g)\in\mathbb Z^g\setminus\{\boldsymbol0\}$. For every decomposition $\boldsymbol n^{(1)}+\cdots+\boldsymbol n^{(l)} = \boldsymbol k$, define
\[
    N_j := \sum_{\nu=1}^l \left(n_j^{(\nu)}\right)^2-|k_j|,\quad
    A_j := \sum_{\nu=1}^l\left(n_j^{(\nu)}\right)^2,\quad
    S_j := \sum_{\nu=1}^l|n_j^{(\nu)}|, \quad j\in\mathcal{G},
\]
Then, we have
\begin{align*}
    T^{(\iota)} &:= \left|\left(q_1^{(\iota)}\right)^{N_1}\cdots \left(q_g^{(\iota)}\right)^{N_g}
        \exp\left( \sum_{i< j}B_{ij}^{(\iota)}(\boldsymbol r) \sum_{\nu=1}^l n_i^{(\nu)}n_j^{(\nu)} \right)\right|\\
        &\leq |q_1^{(\iota)}|^{A_1-|k_1|}\cdots |q_g^{(\iota)}|^{A_g-|k_g|} e^{M\sum_{i<j}\sum_\nu|n_i^{(\nu)}n_j^{(\nu)}|},\\
    \prod_{\nu=1}^l w_{\boldsymbol n^{(\nu)}}^{(\iota)} &= \eta^{S_1+\cdots+S_g}|q_1^{(\iota)}|^{A_1-S_1}\cdots|q_g^{(\iota)}|^{A_g-S_g}e^{M\sum_{i<j}\sum_\nu|n_i^{(\nu)}n_j^{(\nu)}|},
\end{align*}
and
\[
    \frac{T^{(\iota)}}{\prod_{\nu=1}^l w_{\boldsymbol n^{(\nu)}}^{(\iota)}} \leq \eta^{-(S_1+\cdots +S_g)}|q_1^{(\iota)}|^{S_1-|k_1|}\cdots |q_g^{(\iota)}|^{S_g-|k_g|} \leq \eta^{-|\boldsymbol k|_1}.
\]
Consequently,
\begin{align*}
\sum_{\substack{
    \boldsymbol n^{(1)}+\cdots+\boldsymbol n^{(l)}
    =\boldsymbol k\\
    \boldsymbol n^{(\nu)}\neq\boldsymbol0
}}
\left|
    \left( q_1^{(\iota)}\right)^{N_1}\cdots \left(q_g^{(\iota)}\right)^{N_g}
    e^{\sum_{i<j}B_{ij}^{(\iota)}\sum_\nu
       n_i^{(\nu)}n_j^{(\nu)}}
\right|
&\leq
\eta^{-|\boldsymbol k|_1}
\sum_{\substack{
    \boldsymbol n^{(1)}+\cdots+\boldsymbol n^{(l)}
    =\boldsymbol k\\
    \boldsymbol n^{(\nu)}\neq\boldsymbol0
}}
\prod_{\nu=1}^l
w_{\boldsymbol n^{(\nu)}}^{(\iota)}\\
&\leq
\eta^{-|\boldsymbol k|_1}
\left(
    \sum_{\boldsymbol n\neq\boldsymbol0}
    w_{\boldsymbol n}^{(\iota)}
\right)^l
\leq
\eta^{-|\boldsymbol k|_1}\varrho_*^l.
\end{align*}
Therefore the series \eqref{mathcalLdefinition} converges
absolutely and locally uniformly on $\mathcal U_{\rho_0}^{(\iota)}$, and
\begin{equation*}
    |\mathcal{L}_{\boldsymbol k}^{(\iota)}(\boldsymbol r)|
    \leq
    \eta^{-|\boldsymbol k|_1}
    \sum_{l=1}^\infty\frac{\varrho_*^l}{l}
    =
    -\log(1-\varrho_*)
    \eta^{-|\boldsymbol k|_1}.
\end{equation*}
Notice that every summand in \eqref{mathcalLdefinition} is holomorphic, since $N_1,N_2,\cdots,N_g$ are nonnegative integers. Hence, the Weierstrass theorem implies that $\mathcal L_{\boldsymbol k}^{(\iota)}(\boldsymbol r)$ is holomorphic in $\mathcal U_{\rho_0}^{(\iota)}$ and its restriction to $(0,\rho_0)^g$ is real analytic.

Denote $\boldsymbol z=\frac{2\pi i}{\gamma}\boldsymbol\zeta\in i\mathbb{T}_{\gamma}^g$. The Riemann theta function appearing in Theorem \ref{thm6.1} is
\begin{align*}
    \theta(\boldsymbol z; B^{(\iota)}(\boldsymbol r)) &= \sum_{\boldsymbol k \in \mathbb{Z}^g} \exp \left( \frac{1}{2} \boldsymbol k^TB^{(\iota)}(\boldsymbol r)\boldsymbol k + \frac{2\pi i}{\gamma} \boldsymbol k\cdot \boldsymbol\zeta \right)\\
    &= 1+\sum_{\boldsymbol k \in \mathbb{Z}^g\setminus\{0\}} \Lambda^{(\iota)}(\boldsymbol k;\boldsymbol r) e^{\frac{2\pi i}{\gamma}\boldsymbol k\cdot\boldsymbol\zeta}.
\end{align*}
With \eqref{R-eta}, we have
\begin{align*}
    \log \theta(\boldsymbol z; B^{(\iota)}(\boldsymbol r)) &= \sum_{l=1}^{\infty} \frac{(-1)^{l-1}}{l} \left(\sum_{\boldsymbol k \in\mathbb{Z}^g\setminus\{0\}}\Lambda^{(\iota)}(\boldsymbol k;\boldsymbol r)e^{\frac{2\pi i}{\gamma}\boldsymbol k\cdot\boldsymbol\zeta}\right)^l,\\
    &= -\sum_{\boldsymbol k = (k_1,\cdots,k_g) \in \mathbb{Z}^g \setminus \{0\}} \left(q_1^{(\iota)}\right)^{|k_1|}\cdots \left(q_g^{(\iota)}\right)^{|k_g|}\mathcal{L}_{\boldsymbol k}^{(\iota)}(\boldsymbol r) e^{\frac{2\pi i}{\gamma}\boldsymbol k\cdot\boldsymbol \zeta}.
\end{align*}
Applying the operator $12\partial_x^2$ where $\partial_x= m_1\partial_{\zeta_1} + \cdots +m_g\partial_{\zeta_g}$, we have
\begin{equation*}
u_0^{(\iota)}(t,x,y ; \boldsymbol r) = \sum_{(k_1, \cdots, k_g) \in \mathbb{Z}^g \setminus \{0\}} \frac{48\pi^2}{\gamma^2} (k_1 m_1 + \cdots + k_g m_g)^2 \left(q_1^{(\iota)}\right)^{|k_1|} \cdots \left(q_g^{(\iota)}\right)^{|k_g|} \mathcal{L}_{\boldsymbol k}^{(\iota)}(\boldsymbol r) e^{\frac{2\pi i}{\gamma}\boldsymbol k\cdot\boldsymbol \zeta}.
\end{equation*}
The change $\boldsymbol n^{(\nu)} \mapsto -\boldsymbol n^{(\nu)}$ shows that
\[
    \mathcal L_{-\boldsymbol k}^{(\iota)}(\boldsymbol r) = \mathcal L_{\boldsymbol k}^{(\iota)}(\boldsymbol r),\quad
    \widehat{\widetilde u}_0^{(\iota)}(-\boldsymbol k;\boldsymbol r) = \widehat{\widetilde u}_0^{(\iota)}(\boldsymbol k;\boldsymbol r).
\]
Set $\delta_{ij} := \dfrac{\epsilon_{ij}}{\pi} = \begin{cases}
    0,\quad \kappa_{ij,0}^{(1)} > 0,\\
    1,\quad \kappa_{ij,0}^{(1)} < 0.
\end{cases}$
Moreover, for real $\boldsymbol r$,
\[
    e^{B_{ij}^{(1)}n_in_j} = (-1)^{\delta_{ij} n_in_j} e^{B_{real,ij}^{(1)}(\boldsymbol r)n_in_j} \in\mathbb R,\quad e^{B_{ij}^{(2)}n_in_j} = e^{B_{real,ij}^{(2)}(\boldsymbol r)n_in_j} \in\mathbb R.
\]
Thus $\mathcal L_{\boldsymbol k}^{(\iota)}(\boldsymbol r)$ and $\widehat{\widetilde u}_0^{(\iota)}(\boldsymbol k;\boldsymbol r)$ are real analytic on $(0,\rho_0)^g$.
\end{proof}

\section{Persistence of one-gap solutions}\label{sec:one-gap}

In this section, we apply the parameterization obtained in the preceding section to study the persistence of one-gap solutions under Hamiltonian perturbations. We use the one-gap solutions obtained in Theorem~\ref{thm6.1} and Corollary~\ref{thm6.1corollary}.

\subsection{Lyapunov-Schmidt reduction}
\begin{definition}
    We denote by $\ell_e^2(\mathbb{T}_{\gamma})$ the complex Hilbert space of zero-$x$-mean even Fourier coefficient sequences
    \[
        \ell_e^2(\mathbb{T}_\gamma) := \left\{ \widehat u = \left(\hat{u}(k) \right)_{k \in \mathbb{Z}}: \widehat u(0)=0,\ \widehat u(-k)=\widehat u(k) \text{ for every }k\in\mathbb Z,\ \sum_{k\in\mathbb Z} |\widehat u(k)|^2<\infty \right\}.
    \]
     We also define its real subspace by $\ell_{re}^2(\mathbb{T}_\gamma) := \left\{ \widehat u\in\ell_e^2(\mathbb{T}_\gamma): \widehat u(k)\in\mathbb R \text{ for every }k\in\mathbb Z \right\}$.
\end{definition}
Notice that when $r \in (0,\rho_0)$, $\hat{\tilde{u}}_{0}^{(\iota)}(-k) = \hat{\tilde{u}}_{0}^{(\iota)}(k) \in \mathbb{R}$ and $\hat{\tilde{u}}_{0}^{(\iota)}(0) = 0$, which implies that $\hat{\tilde{u}}_0^{(\iota)} \in \ell_{re}^2(\mathbb{T}_{\gamma})$. We consequently seek solutions to the perturbed KP equation in the form $u^{(\iota)}(t,x,y) = \tilde{u}^{(\iota)}(\omega^{(\iota)} t+ m_0x + n_0y + d^{(\iota)})$, where $\hat{\tilde{u}}^{(\iota)} \in \ell_{re}^2(\mathbb{T}_{\gamma})$. For the analytic arguments below, we complexify this problem and regard \(\hat{\tilde u}^{(\iota)}\) as an element of \(\ell_e^2(\mathbb{T}_\gamma)\). To simplify notation, we abuse notation and let $u_0,u,\hat{u}_0,\hat{u}$ also denote $\tilde{u}_0,\tilde{u},\hat{\tilde{u}}_0,\hat{\tilde{u}}$ in the sequel.

By \eqref{eq:intro-P1}, let
\[
    P_1(u)
    :=
    Q_1(J^2u)\big|_{\partial_x=m_0\partial_{\zeta},\,
    \partial_y=n_0\partial_{\zeta}}.
\]
Then
\[
    P_1(u)
    =
    u^p+u^{p+1}+P_{1,>p+1}(u),
\]
where
\[
    P_{1,>p+1}(u)
    :=
    Q_{1,>p+1}
    \left(
        u,m_0u_{\zeta},n_0u_{\zeta},
        m_0^2u_{\zeta\zeta},
        m_0n_0u_{\zeta\zeta},
        n_0^2u_{\zeta\zeta}
    \right).
\]
Consequently, $\mathcal{P}_1[u]=m_0\partial_{\zeta}P_1(u)$, and the perturbed KP equation reduces to

\begin{equation}\label{finitegap-kp}
    \omega u_{\zeta}+ m_0^3 \partial_{\zeta}^3 u-\lambda_{\iota}\frac{n_0^2}{m_0}u_{\zeta} + m_0 uu_{\zeta}+ m_0\partial_{\zeta}P_1(u) =0.
\end{equation}
Passing to the Fourier coefficients, equation \eqref{finitegap-kp} is equivalent to
\begin{equation}\label{Fouriereq1}
    \frac{2\pi i}{\gamma}\left(k\omega-\frac{4\pi^2}{\gamma^2}m_0^3k^3-\frac{\lambda_{\iota} n_0^2}{m_0}k\right)\hat{u}(k)+\frac{2\pi i}{\gamma}\frac{m_0 k}{2}\widehat{u^2}(k)+ \frac{2\pi im_0k}{\gamma}\widehat{P_1(u)}(k)=0,k\in\mathbb{Z}\setminus\{0\}.
\end{equation}
Notice that the parity condition \eqref{eq:intro-P1-parity} together with the reality of the Taylor coefficients, imply that $\widehat{P_1(u)}$ depends real analytically on $\hat{u}$ and preserves the real subspace. Thus, if $(\omega,\hat{u}) \in \mathbb{C}\times \ell_{re}^2(\mathbb{T}_{\gamma})$ solves \eqref{Fouriereq1} and $\hat{u}(1) \neq 0$, then $\omega\in\mathbb{R}$.

Now we derive the leading-order expansion of $\omega_0^{(\iota)}(r)$.
Denote $\widehat{u_{0}^{(\iota)}(r)} = \left( \widehat{u_{0}^{(\iota)}}(k;r) \right)_{k \in \mathbb{Z}\setminus\{0\}}$.
For every $r\in (0,\rho_0)$, Corollary~\ref{thm6.1corollary} implies that $\widehat{u_0^{(\iota)}(r)}, \omega_0(r)$ satisfy the Fourier form of the unperturbed KP equation
\begin{equation}
    \frac{2\pi i}{\gamma} \left(k\omega-\frac{4\pi^2}{\gamma^2}m_0^3k^3-\frac{\lambda_{\iota} n_0^2}{m_0}k\right)\hat{u}(k)+\frac{2\pi i}{\gamma}\frac{m_0 k}{2}\sum_{k_1+k_2 = k}\hat{u}(k_1)\hat{u}(k_2)=0, |k|\geq 1. \label{kp-fourier1}
\end{equation}
Denote the left hand side of \eqref{kp-fourier1} for $k\in \mathbb{Z}\setminus\{0\}$ by $F_k^{(\iota)}(\hat{u},\omega)$.
The calculation at $|k|=1$ yields $\omega_0^{(\iota)}(r) \in \mathbb{R}$, and for $r \in (0,\rho_0)$,
\begin{equation}\label{omega0jianjin}
\omega_0^{(\iota)}(r) = \frac{4\pi^2}{\gamma^2}m_0^3 + (-1)^{\iota-1}\frac{n_0^2}{m_0} - \frac{96m_0^3\pi^2}{\gamma^2} r^2+ O(r^4).
\end{equation}
Notice that $\widehat{u_{0}^{(\iota)}}(k;r)$ is a well-defined holomorphic function on $\mathcal{U}_{\rho_0}^{(\iota)}$, and the series $\sum_{k_1+k_2 = k}\allowbreak \widehat{u_0^{(\iota)}}(k_1;r)\widehat{u_0^{(\iota)}}(k_2;r)$ converges absolutely and locally uniformly on $\mathcal{U}_{\rho_0}^{(\iota)}$.
Thus, the functions $F_k^{(\iota)}(\widehat{u_{0}^{(\iota)}(r)}, \omega_0^{(\iota)}(r))$ admit well-defined holomorphic extensions on the uniform complex neighborhood $\mathcal{U}_{\rho_0}^{(\iota)}$ for any $k\in\mathbb{Z}\setminus \{0\}$.
As a consequence, $\widehat{u_0^{(\iota)}(r)}, \omega_0^{(\iota)}(r)$ also satisfy the unperturbed KP equation \eqref{kp-fourier1} on $\mathcal{U}_{\rho_0}^{(\iota)}$.
Thus, a similar calculation implies that \eqref{omega0jianjin} hold on $\mathcal{U}_{\rho_0}^{(\iota)}$, and there exists a constant $C_0$ such that
\begin{equation}\label{omega0}
    \left|\omega_0^{(\iota)}(r)- \frac{4\pi^2}{\gamma^2}m_0^3 - (-1)^{\iota-1}\frac{n_0^2}{m_0}\right|\leq C_0|r|^2,\quad r \in \mathcal{U}_{\rho_0}^{(\iota)}
\end{equation}
We set the resonant set of \eqref{Fouriereq1} as
\begin{align*}
    S_1 &= \left\{ k\in \mathbb{Z}\setminus \{0\}:k\left(\frac{4\pi^2}{\gamma^2}m_0^3+(-1)^{\iota-1} \frac{n_0^2}{m_0}\right)=\frac{4\pi^2}{\gamma^2}k^3m_0^3+(-1)^{\iota-1} \frac{n_0^2}{m_0}k\right\}= \{1,-1\}.
\end{align*}
By restricting the perturbed KP equation \eqref{Fouriereq1} to $S_1$ and $\mathbb{Z}\setminus (S_1\cup \{0\})$, we obtain the bifurcation equation and the range equation as follows:
\begin{align}
    &\frac{2\pi i}{\gamma} \left(k\omega-\frac{4\pi^2}{\gamma^2}m_0^3k^3-\frac{\lambda_{\iota} n_0^2}{m_0}k\right)\hat{u}(k)+ \frac{2\pi i}{\gamma}\frac{m_0 k}{2}\widehat{u^2}(k)+ \frac{2\pi i m_0k}{\gamma} \widehat{P_1(u)}(k)=0,|k|=1,\label{qfangcheng1}\\
    &\frac{2\pi i}{\gamma} \left(k\omega-\frac{4\pi^2}{\gamma^2}m_0^3k^3-\frac{\lambda_{\iota} n_0^2}{m_0}k\right)\hat{u}(k)+ \frac{2\pi i}{\gamma} \frac{m_0 k}{2}\widehat{u^2}(k)+ \frac{2\pi i m_0 k}{\gamma} \widehat{P_1(u)}(k)=0,|k|\geq 2\label{pfangcheng1}.
\end{align}

\subsection{Solving the range equation}

In this section, we solve the range equations associated with both KP-I and KP-II under the Hamiltonian perturbation $\mathcal{P}_1$.
To start with, we introduce the following spaces.

\begin{definition}
    Let $\sigma>0, s,s_1,s_2>\frac{1}{2}$. We define the following spaces:
    \begin{itemize}
        \item Denote $\nu_1 = \frac{2\pi}{\gamma}(|m_0|+|n_0|)$. Let the isotropic analytic Sobolev space $H^{\sigma,s,\nu_1} := \{\hat{u}\in \ell_e^2(\mathbb{T}_{\gamma}): \hat{u}(0) = 0, \|\hat{u}\|_{\sigma,s,\nu_1} < \infty\}$, where 
        \begin{equation*}
            \|\hat{u}\|_{\sigma,s,\nu_1}^2 = \sum_{k\in \mathbb{Z}} |\hat{u}(k)|^2 e^{2\sigma \nu_1|k|} \langle \nu_1 k\rangle^{2s}.
        \end{equation*}
        Equipped with this norm, $H^{\sigma,s,\nu_1}$ is a complex Banach space.
        \item 
        Let $P_{\mathbb Z\setminus(S_1\cup\{0\})}$ be the Fourier projection defined by 
        \[ 
            (P_{\mathbb Z\setminus(S_1\cup\{0\})}\hat u)(k) = 
            \begin{cases} 
                \hat u(k), & k\in\mathbb Z\setminus(S_1\cup\{0\}),\\ 
                0, & k\in S_1\cup\{0\}.
            \end{cases}
        \]
        We set
        \[ 
            P\ell_e^2(\mathbb T_\gamma) := \{P_{\mathbb Z\setminus(S_1\cup\{0\})}\hat u:\hat u\in\ell_e^2(\mathbb T_\gamma)\}, \qquad PH^{\sigma,s,\nu_1} := \{P_{\mathbb Z\setminus(S_1\cup\{0\})}\hat u:\hat u\in H^{\sigma,s,\nu_1}\}.
        \]
        For every $\hat u\in H^{\sigma,s,\nu_1}$, define the projected seminorm
        \[ 
            \|\hat u\|_{\sigma,s,\nu_1,P} := \|P_{\mathbb Z\setminus(S_1\cup\{0\})}\hat u\|_{\sigma,s,\nu_1}. 
        \]
        Its restriction to $PH^{\sigma,s,\nu_1}$ is a norm and satisfies $\|\hat u\|_{\sigma,s,\nu_1,P} = \|\hat u\|_{\sigma,s,\nu_1},\ \hat u\in PH^{\sigma,s,\nu_1}.$ Since $P_{\mathbb Z\setminus(S_1\cup\{0\})}$ is a bounded projection, $PH^{\sigma,s,\nu_1}$ is a closed subspace of $H^{\sigma,s,\nu_1}$ and hence is a Banach space with respect to this norm.
        We also define its real subspace by
        \[
            PH_{\mathrm{re}}^{\sigma,s,\nu_1} := PH^{\sigma,s,\nu_1} \cap  \ell_{\mathrm{re}}^2(\mathbb T_\gamma).
        \]
        \item Let the bounded linear operator space $\mathcal{L}(PH^{\sigma,s_1,\nu_1}, PH^{\sigma,s_2,\nu_1})$ be the Banach space with norm
        \begin{equation*}
            \|T\|_{\sigma,s_1,s_2,\nu_1,P} = \sup_{0\neq \hat{u} \in PH^{\sigma,s_1,\nu_1}} \frac{\| T\hat{u}\|_{\sigma,s_2,\nu_1,P}}{\|\hat{u}\|_{\sigma,s_1,\nu_1,P}}.
        \end{equation*}
    \end{itemize}
\end{definition}
\begin{remark}
    Fix $s,\nu_1,\rho_0$ and $\sigma >0$. Since $q^{(\iota)}(r) = r + O(r^3)$, after decreasing $\rho_0$ if necessary, $2(2\nu_1)^{s}e^{\sigma\nu_1}|q^{(\iota)}(r)| \leq 4(2\nu_1)^{s}e^{\sigma\nu_1}\rho_0 <1$ for $r \in \mathcal{U}_{\rho_0}^{(\iota)}$.
    Thus, for every $r \in \mathcal{U}_{\rho_0}^{(\iota)}$,
    \begin{align}
        &\|\hat{u}_0^{(\iota)}(r)\|_{\sigma,s,\nu_1}^2 = \sum_{|k|\geq 1} |\hat{u}_0^{(\iota)}(k;r)|^2 e^{2\sigma\nu_1|k|}\langle \nu_1 k \rangle^{2s} \lesssim |r|^2,\label{u0estimate}\\ 
        &\|\hat{u}_0^{(\iota)}(r)\|_{\sigma,s,\nu_1,P}^2 = \sum_{|k|\geq 2} |\hat{u}_0^{(\iota)}(k;r)|^2 e^{2\sigma\nu_1|k|}\langle \nu_1 k \rangle^{2s} \lesssim |r|^4.\label{u0estimateP}
    \end{align}
\end{remark}
Once $m_0, n_0, \gamma$ are given, $\nu_1$ is a fixed constant.
To simplify notation, we suppress the dependence on $\nu_1$ in the sequel.

We illustrate the persistence of one-gap solutions to the KP-II equation as our primary example. Theorem \ref{thm:intro-one-gap} and its proof for the KP-I equation follow analogously from the arguments presented for the KP-II equation. 
For notational simplicity, we denote $u_0=u_0^{(2)}, q=q^{(2)}, \omega_0 = \omega_0^{(2)}, \mathcal{U}_{\rho_0} = \mathcal{U}_{\rho_0}^{(2)}$ in the following argument.
Let $\hat{u} = \hat{u}_0 + \hat{v}, \hat{v}\in PH^{\sigma,s}$. Denote
\begin{align*}
    \tilde{\Omega}_{\rho,C} = \{(\omega,r)\in \mathbb{C}^2: |\omega-\omega_0(r)| < C|r|^{p-1}, r\in \mathcal{U}_{\rho} \},\quad
    \Omega_{\rho,C} = \tilde{\Omega}_{\rho,C} \cap \mathbb{R}^2
\end{align*}
for fixed constants $0< \rho \leq \rho_0$ and $C=C(\mathcal{P}_1,\gamma,m_0)>0$. The constant $C$ will be chosen in subsection \ref{sec:one-gap-2}.

Since $\hat{u}_0$ satisfies \eqref{kp-fourier1}, the equations \eqref{pfangcheng1} for KP-II reduce to
\begin{align}
    \bigg(&\omega-\frac{4\pi^2}{\gamma^2}m_0^3k^2+\frac{n_0^2}{m_0}\bigg)\hat{v}(k)+ \frac{m_0 }{2}\widehat{[2u_0 v+v^2]}(k)\label{Pfangcheng1.1}\\
    &+(\omega-\omega_0)\hat{u}_0(k)+ m_0 \widehat{P_1(u_0+v)}(k)=0,|k|\geq 2\label{Pfangcheng1.2}.
\end{align}
Define $L(\omega,r)=L_0(\omega)+L_1(r), G(\omega,r): PH^{\sigma,s} \to P\ell_e^2(\mathbb{T}_{\gamma})$ for $(\omega,r) \in \tilde{\Omega}_{\rho_0,C}$ as follows:
\begin{align*}
    &L_0(\omega)(\hat{v}) = P_{\mathbb{Z}\setminus (S_1\cup \{0\})} \left( \left\{ \left(\omega-\frac{4\pi^2}{\gamma^2}m_0^3k^2+\frac{ n_0^2}{m_0}\right)\hat{v}(k) \right\}_{k \in \mathbb{Z}} \right), \\
    &L_1(r)(\hat{v}) = P_{\mathbb{Z}\setminus (S_1\cup \{0\})} \left(\left\{ m_0\ \widehat{u_0v}(k) \right\}_{k \in \mathbb{Z}} \right), \\
    &G(\omega,r)(\hat{v}) = P_{\mathbb{Z}\setminus (S_1\cup \{0\})}\left( \bigg\{ (\omega-\omega_0(r))\hat{u}_0(k) + \frac{m_0 }{2}\widehat{v^2}(k) + m_0\widehat{P_1(u_0+v)}(k) \bigg\}_{k \in \mathbb{Z}} \right).
\end{align*}
Then \eqref{Pfangcheng1.1}--\eqref{Pfangcheng1.2} are equivalent to
\begin{equation}
    L(\omega,r)(\hat{v})+G(\omega,r)(\hat{v}) = 0.\label{banachform}
\end{equation}
We also denote the entries of the Fourier matrix representation of $L_0(\omega), L_1(r), L(\omega,r)$ at $(k,k')$ by $L_0(\omega)(k,k'),\allowbreak L_1(r)(k,k'), L(\omega,r)(k,k')$ in the following subsections.

To describe the Taylor expansion of the reduced perturbation, let
\[
    \mathcal{I} = \{(0,0), (1,0), (0,1), (2,0), (1,1), (0,2)\}
\]
For \(\alpha=(\alpha_1,\alpha_2)\in \mathcal{I}\), define $D^\alpha := m_0^{\alpha_1} n_0^{\alpha_2} \partial_\zeta^{|\alpha|}$. Thus,
\[
    \left\{ D^\alpha:\alpha\in \mathcal{I} \right\} = \left\{ id, m_0\partial_\zeta, n_0\partial_\zeta, m_0^2\partial_{\zeta\zeta}, m_0n_0\partial_{\zeta\zeta}, n_0^2\partial_{\zeta\zeta} \right\}.
\]
For $\beta = (\beta_{\alpha})_{\alpha \in \mathcal{I}} \in \mathbb{N}^{6}$, set $\boldsymbol{u}^{\beta}=\prod_{\alpha\in \mathcal{I}}(D^{\alpha}u)^{\beta_\alpha}$.
Since $P_1(u)$ is real analytic, we expand it into a Taylor series
\begin{equation*}
    P_1(u)=\sum_{l=p}^{\infty} P_{1,l}(u),\quad
    P_{1,l}(u)=\sum_{\beta=(\beta_\alpha)_{\alpha\in I},|\beta|_1=l} p_{\beta}\boldsymbol{u}^{\beta}, \quad p_{\beta} \in \mathbb{R}.
\end{equation*}
In particular, $P_{1,p}(u) = u^{p}, P_{1,p+1}(u) = u^{p+1}$.

We define the scalar majorant
\[
    f_1(z) := z^p + z^{p+1} + f_{1,>p+1}(z),\quad 
    f_{1,>p+1}(z) := \sum_{l=p+2}^{\infty} \sum_{|\beta|_1=l} |p_{\beta}|\cdot z^{l}, \quad z\geq 0.
\]
Since $Q_{1,>p+1}(z_0,\cdots, z_5)$ is real analytic on $|(z_0,\cdots,z_5)|_\infty< R_1$, the series defining $f_{1,>p+1}(z)$ converges for $0<z< R_1$.

\begin{lemma}\label{lm4.2}
    If $s>\frac{1}{2}$ and $\hat{u}_1,\hat{u}_2 \in H^{\sigma,s}$, there exists a constant $C_{\sigma, s}>0$ such that
    \begin{equation*}
        \|\hat{u}_1* \hat{u}_2\|_{\sigma,s}\leq C_{\sigma, s}\|\hat{u}_1\|_{\sigma,s}\|\hat{u}_2\|_{\sigma,s}.
    \end{equation*}
\end{lemma}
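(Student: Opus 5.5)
The plan is to prove the algebra property directly on the Fourier side, using the convolution formula $(\hat u_1*\hat u_2)(k)=\sum_{k_1+k_2=k}\hat u_1(k_1)\hat u_2(k_2)$ together with two pointwise inequalities for the weights. The first is the triangle inequality $|k|\le |k_1|+|k_2|$, which gives
\[
e^{\sigma\nu_1|k|}\le e^{\sigma\nu_1|k_1|}e^{\sigma\nu_1|k_2|}.
\]
The second is the elementary bound
\[
\langle \nu_1 k\rangle^{s}\le 2^{s}\bigl(\langle \nu_1 k_1\rangle^{s}+\langle \nu_1 k_2\rangle^{s}\bigr),
\]
which follows from $\langle\nu_1 k\rangle\le 2\max\{\langle\nu_1 k_1\rangle,\langle\nu_1 k_2\rangle\}$. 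With these, set $a_j(k):=|\hat u_j(k)|e^{\sigma\nu_1|k|}$. The weighted norm of the convolution is then dominated by two terms: $\|(\langle\nu_1\cdot\rangle^{s}a_1)*a_2\|_{\ell^2}$ and the symmetric one with the roles of $a_1,a_2$ exchanged.

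Each term is estimated by Young's inequality $\ell^2*\ell^1\subset\ell^2$, which gives
\[
\|(\langle\nu_1\cdot\rangle^{s}a_1)*a_2\|_{\ell^2}\le \|\langle\nu_1\cdot\rangle^{s}a_1\|_{\ell^2}\,\|a_2\|_{\ell^1}.
\]
The first factor on the right is exactly $\|\hat u_1\|_{\sigma,s}$. For the $\ell^1$ factor we use Cauchy--Schwarz:
\[
\|a_2\|_{\ell^1}\le \Bigl(\sum_{k}\langle\nu_1 k\rangle^{-2s}\Bigr)^{1/2}\|\hat u_2\|_{\sigma,s}.
\]
The sum $\sum_{k}\langle\nu_1 k\rangle^{-2s}$ is finite precisely because $s>\tfrac12$; this is the only point where the hypothesis enters. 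Adding the two symmetric contributions gives the claim with
\[
C_{\sigma,s}=2^{s+1}\Bigl(\sum_{k\in\mathbb Z}\langle \nu_1 k\rangle^{-2s}\Bigr)^{1/2},
\]
and this constant is in fact independent of $\sigma$.

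No step is really an obstacle, since this is the standard Banach algebra property of analytic Sobolev spaces in one dimension. The one check worth making is that the space is closed under convolution: evenness is preserved, since $(\hat u_1*\hat u_2)(-k)=(\hat u_1*\hat u_2)(k)$ by the change of variables $k_j\mapsto -k_j$. The zero-mode condition $\hat u(0)=0$ is not preserved by convolution, but the estimate itself is stated in the $\ell_e^2$-type weighted norm, which includes $k=0$ with weight $1$, so the inequality holds as stated. Where the lemma is later applied, the projection $P_{\mathbb Z\setminus(S_1\cup\{0\})}$ removes that mode anyway.
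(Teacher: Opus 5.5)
Your proof is correct and complete. The paper gives no proof of this lemma and quotes it as the standard algebra property, so there is no competing argument to compare against; yours is the standard proof it relies on. The steps all check out: subadditivity of $e^{\sigma\nu_1|k|}$, the splitting $\langle\nu_1 k\rangle^{s}\le 2^{s}\bigl(\langle\nu_1 k_1\rangle^{s}+\langle\nu_1 k_2\rangle^{s}\bigr)$, Young's inequality $\ell^2*\ell^1\subset\ell^2$, and Cauchy--Schwarz, where $s>\frac12$ is used. Your remark that $(\hat u_1*\hat u_2)(0)$ need not vanish, while the weighted norm is still well defined, is also correct. The resulting constant depends only on $s$ and $\nu_1$, which is consistent with the paper's $C_{\sigma,s}$ and in fact slightly stronger.
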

Now we estimate the nonlinear operator $G$.
\begin{lemma}\label{lm3.6}    
    Denote $\hat{u}=\hat{u}_0+\hat{v}, \hat{u}_1=\hat{u}_0+\hat{v}_1, \hat{u}_2=\hat{u}_0+\hat{v}_2$. Suppose $(\omega,r) \in \tilde{\Omega}_{\rho_0,C}$ for some constant $C$, $s>\frac{5}{2}$, and $\hat{v}\in PH^{\sigma,s}$ such that $C_{\sigma, s}\|\hat{u}\|_{\sigma,s}< R_1$. Then
    \begin{equation}\label{nonlinearestimate1}
        \|G(\omega,r)(\hat{v})\|_{\sigma,s-2,P}\leq C|r|^{p-1} \|\hat{u}_0\|_{\sigma,s,P}+ \frac{|m_0|}{2}C_{\sigma, s}\|\hat{v}\|_{\sigma,s,P}^2+ |m_0| f_1(C_{\sigma, s}\|\hat{u}_0+ \hat{v} \|_{\sigma,s}).
    \end{equation}
    Moreover, if $\max(\|\hat{v}_1\|_{\sigma,s,P},\|\hat{v}_2\|_{\sigma,s,P})\leq M_v$, we have
    \begin{equation}\label{Lipchitz1}
        \|G(\omega,r)(\hat{v}_2)-G(\omega,r)(\hat{v}_1)\|_{\sigma,s-2,P}\leq |m_0| \left( C_{\sigma, s} M_v + f_1'(C_{\sigma, s}(\| \hat{ u }_0\|_{\sigma,s}+M_v))\right) \|\hat{v}_2-\hat{v}_1\|_{\sigma,s,P}.
    \end{equation}
\end{lemma}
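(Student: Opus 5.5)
The plan is to estimate the three pieces of $G(\omega,r)(\hat v)$ separately, using the triangle inequality in $\|\cdot\|_{\sigma,s-2,P}$ and the trivial bound $\|\cdot\|_{\sigma,s-2,P}\le\|\cdot\|_{\sigma,s,P}\le\|\cdot\|_{\sigma,s}$.

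\emph{The linear term.} For $(\omega,r)\in\tilde\Omega_{\rho_0,C}$ we have $|\omega-\omega_0(r)|<C|r|^{p-1}$, so
\[
\|(\omega-\omega_0)P\hat u_0\|_{\sigma,s-2}\le C|r|^{p-1}\|\hat u_0\|_{\sigma,s,P}.
\]

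\emph{The quadratic term.} Since $\hat v\in PH^{\sigma,s}$, we have $\|\hat v\|_{\sigma,s}=\|\hat v\|_{\sigma,s,P}$. Lemma~\ref{lm4.2} then gives
\[
\tfrac{|m_0|}{2}\|\hat v*\hat v\|_{\sigma,s-2}\le \tfrac{|m_0|}{2}C_{\sigma,s}\|\hat v\|_{\sigma,s,P}^2.
\]

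\emph{The perturbation term.} This is the main step. I expand $P_1(u)=\sum_{l\ge p}\sum_{|\beta|_1=l}p_\beta\boldsymbol u^\beta$ and bound each monomial in $H^{\sigma,s-2}$. Each factor $D^\alpha u$ with $|\alpha|\le 2$ has Fourier multiplier $(\tfrac{2\pi i}{\gamma}k)^{|\alpha|}m_0^{\alpha_1}n_0^{\alpha_2}$, of modulus at most $\langle\nu_1k\rangle^{2}$ since $\nu_1=\tfrac{2\pi}{\gamma}(|m_0|+|n_0|)$. Hence $\|D^\alpha u\|_{\sigma,s-2}\le\|\hat u\|_{\sigma,s}$.

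Iterating Lemma~\ref{lm4.2} in $H^{\sigma,s-2}$ requires $s-2>\frac12$, which is exactly where the hypothesis $s>\frac52$ is used. It gives
\[
\|\widehat{\boldsymbol u^\beta}\|_{\sigma,s-2}\le C_{\sigma,s}^{l-1}\|\hat u\|_{\sigma,s}^l\le (C_{\sigma,s}\|\hat u\|_{\sigma,s})^l,
\]
where we take $C_{\sigma,s}\ge1$, enlarging the constant so that it also serves for $s-2$. Summing over $\beta$ with weights $|p_\beta|$ yields exactly $f_1(C_{\sigma,s}\|\hat u\|_{\sigma,s})$, because the leading coefficients are $1$. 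The sum converges because $C_{\sigma,s}\|\hat u\|_{\sigma,s}<R_1$ lies in the convergence region of the majorant, and this also justifies exchanging the series with the Fourier transform and the projection. Multiplying by $|m_0|$ gives the third term of \eqref{nonlinearestimate1}.

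\emph{The Lipschitz estimate \eqref{Lipchitz1}.} The $\omega$-term cancels in the difference. For the quadratic term, $\hat v_2^{*2}-\hat v_1^{*2}=(\hat v_2+\hat v_1)*(\hat v_2-\hat v_1)$, so Lemma~\ref{lm4.2} bounds it by $\tfrac{|m_0|}{2}C_{\sigma,s}\cdot2M_v\|\hat v_2-\hat v_1\|$.

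For the perturbation, I use the telescoping identity
\[
\boldsymbol u_2^\beta-\boldsymbol u_1^\beta=\sum\bigl(\text{products of }l-1\text{ factors}\bigr)\cdot D^\alpha(v_2-v_1),
\]
which has $l$ terms. Each term is bounded by $C_{\sigma,s}^{\,l}\bigl(\|\hat u_0\|+M_v\bigr)^{l-1}\|\hat v_2-\hat v_1\|$, and the extra factor of $C_{\sigma,s}$ is absorbed into the convention $C_{\sigma,s}\ge1$ as above. Summing these bounds gives $\sum_l l\,|p|\,(C_{\sigma,s}(\|\hat u_0\|+M_v))^{l-1}$, which is $f_1'(C_{\sigma,s}(\|\hat u_0\|_{\sigma,s}+M_v))$ up to the harmless factor bookkeeping just described.

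The only genuine obstacles are tracking the loss of two derivatives, which explains why the estimate is posed from $H^{\sigma,s}$ into $H^{\sigma,s-2}$, and keeping the constants consistent so that the majorant appears exactly.
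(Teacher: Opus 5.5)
Your plan is essentially the paper's proof: the triangle inequality on the three pieces of $G$, Lemma~\ref{lm4.2} applied in $H^{\sigma,s-2}$, and the majorant $f_1$. In the first step you also make explicit what the paper uses tacitly, namely that $s>\frac52$ is needed and that the algebra constant should be taken $\ge 1$ and valid for $s-2$. For the Lipschitz part you telescope the monomials where the paper uses the integral form of the remainder, which is only a cosmetic difference.

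One step is mis-justified. In the Lipschitz estimate you cannot absorb an extra factor $C_{\sigma,s}$ by using $C_{\sigma,s}\ge1$, because that inequality runs the wrong way: $C_{\sigma,s}^{l}\ge C_{\sigma,s}^{l-1}$.

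Fortunately no extra factor actually arises. Each telescoped term is a product of $l$ factors: $l-1$ of the form $D^{\alpha'}u_i$ and one $D^{\alpha}(v_2-v_1)$. So only $l-1$ applications of Lemma~\ref{lm4.2} are needed, exactly as in your first estimate. This gives the per-term bound $\bigl(C_{\sigma,s}(\|\hat u_0\|_{\sigma,s}+M_v)\bigr)^{l-1}\|\hat v_2-\hat v_1\|_{\sigma,s,P}$, and summing over the $l$ terms and over $\beta$ yields exactly $f_1'\bigl(C_{\sigma,s}(\|\hat u_0\|_{\sigma,s}+M_v)\bigr)$ with no bookkeeping loss.
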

\begin{proof}    
    With Lemma \ref{lm4.2}, for any $\beta$ with $|\beta|_1=l$, we have
    \begin{equation*}
        \|\widehat{ p_{\beta} \boldsymbol{u}^{\beta} }\|_{\sigma, s-2} \leq  | p_{\beta} | C_{\sigma, s}^l \|\hat{u}\|_{\sigma,s}^{l},
    \end{equation*}
    which implies that
    \begin{align*}
        \|\widehat{ P_1(u) }\|_{\sigma,s-2}\leq \sum_{l=p}^{\infty} \sum_{|\beta|_1=l} | p_{\beta} |(C_{\sigma, s}\|\hat{u}\|_{\sigma,s})^{l}.
    \end{align*}
    If $C_{\sigma, s}\| \hat{u} \|_{\sigma,s}< R_1$, we have
    \begin{align*}
        \|G(\omega,r)(\hat{v})\|_{\sigma,s-2,P}
        \leq& \|(\omega-\omega_0)\hat{ u }_0\|_{\sigma,s-2}+ \frac{m_0}{2}\| \widehat{v^2}\|_{\sigma,s-2,P}+ m_0\|\widehat{ P_1(u_0+v) }\|_{\sigma,s-2,P}\\
        \leq& C|r|^{p-1} \|\hat{ u }_0\|_{\sigma,s,P} + \frac{|m_0|}{2}C_{\sigma, s}\|\hat{v}\|_{\sigma,s,P}^2 + |m_0| f_1(C_{\sigma, s}\|\hat{ u }_0+\hat{v}\|_{\sigma,s})<\infty.
    \end{align*}
    For the Lipschitz estimate, we use the integral form of the remainder:
    \begin{align*}
        G(\omega,r)(\hat{v}_2)-G(\omega,r)(\hat{v}_1)
        &= \mathcal{F} \bigg( m_0(v_2-v_1)\cdot\int_0^1 (v_1+t(v_2-v_1))dt\\
        &+m_0\sum_{l=p}^{\infty}\sum_{\alpha\in \mathcal{I}}D^{\alpha}(v_2-v_1)\int_0^1\frac{\partial P_{1,l}}{\partial (D^{\alpha}v)}(u_1+ t(u_2-u_1)) dt \bigg).
    \end{align*}
    The first term can be estimated as
    \begin{align*}
        \bigg\|\mathcal{F}\bigg( m_0(v_2-v_1)\int_0^1 (v_1+t(v_2-v_1))dt \bigg) \bigg\|_{\sigma,s-2,P}\leq &C_{\sigma, s}|m_0|\max(\|\hat{v}_1\|_{\sigma,s,P},\|\hat{v}_2\|_{\sigma,s,P})\\
        &\cdot\|\hat{v}_2-\hat{v}_1\|_{\sigma,s,P}.
    \end{align*}
    For the second term, with Lemma \ref{lm4.2} and the coefficient-wise estimates of $P_1$, we have
    \begin{align*}
        &\sum_{l=p}^{\infty}\left\| \mathcal{F}\left( \sum_{\alpha\in \mathcal{I}}D^{\alpha}(v_2-v_1)\int_0^1\frac{\partial P_{1,l}}{\partial (D^{\alpha}u)}(u_1+ t(u_2-u_1)) dt \right)\right\|_{\sigma,s-2}\\
        \leq &  \sum_{l=p}^{\infty} \sum_{\alpha\in I}\|\widehat{ D^{\alpha}(v_2-v_1) } \|_{\sigma,s-2} \cdot \sum_{|\beta|_1=l}\left\| \mathcal{F}\left( \int_0^1 \beta_\alpha p_{\beta}\boldsymbol{\tilde{u}}(t)^{\beta-e_\alpha} dt \right) \right\|_{\sigma,s-2}\\
        \leq & \sum_{l=p}^{\infty} \sum_{|\beta|_1=l} | p_{\beta} |\,l\,(C_{\sigma, s} \max_{t\in [0,1]}\|\widehat{ \tilde{u}(t) }\|_{\sigma,s})^{l-1}\cdot\|\hat{v}_2-\hat{v}_1\|_{\sigma,s}\\
        = & f_1'(C_{\sigma, s}\max(\| \hat{u}_1\|_{\sigma,s},\|\hat{u}_2\|_{\sigma,s}))\cdot\|\hat{v}_2-\hat{v}_1\|_{\sigma,s,P},
    \end{align*}
    where $\tilde{u}(t) = u_1+t(u_2-u_1)$, $\boldsymbol{\tilde{u}}(t)^{\beta} = \prod_{\alpha\in \mathcal{I}}\left(D^{\alpha}\tilde{u}(t)\right)^{\beta_\alpha}$, and $e_{\alpha} = (\delta_{\alpha,\alpha'})_{\alpha'\in I}$.

    In conclusion, we have
    \begin{align*}
        \|G(\omega,r)(\hat{v}_2)-G(\omega,r)(\hat{v}_1)\|_{\sigma,s-2,P} \leq |m_0| \left( C_{\sigma, s}M_v + f_1'(C_{\sigma, s}(\|\hat{ u }_0\|_{\sigma,s}+M_v))\right) \|\hat{v}_2-\hat{v}_1\|_{\sigma,s,P}.
    \end{align*}
\end{proof}
Choose $\rho_1 \in (0,\rho_0]$ such that
\begin{align*}
    C\rho_1^{p-1} \leq \dfrac{\pi^2}{\gamma^2}|m_0|^3,\quad C_0\rho_1^2\leq \dfrac{\pi^2}{\gamma^2}|m_0|^3, \quad C_{\sigma, s}\|\hat{ u }_0(r)\|_{\sigma,s} < R_1, \textup{ for } r\in \mathcal{U}_{\rho_1}.
\end{align*}
\begin{lemma}\label{lm3.7}
    Suppose $s>\frac{5}{2}$. Then, there exist constants $C_2>0$, and $\rho_2 \in (0,\rho_1]$ such that $L^{-1}(\omega,r)$ is a bounded operator from $PH^{\sigma,s-2}$ to $PH^{\sigma,s}$ for $(\omega,r) \in \tilde{\Omega}_{\rho_2,C}$ with bound less than $2C_2$.
\end{lemma}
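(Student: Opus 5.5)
We treat $L(\omega,r)=L_0(\omega)+L_1(r)$ as a perturbation of the diagonal operator $L_0(\omega)$. We first invert $L_0$ and then absorb $L_1$ by a Neumann series.

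\textbf{Inverting $L_0$.} On $PH^{\sigma,s-2}$, the operator $L_0(\omega)$ acts diagonally on modes $|k|\ge 2$ with symbol
\[
d_k(\omega)=\omega-\frac{4\pi^2}{\gamma^2}m_0^3k^2+\frac{n_0^2}{m_0}.
\]
We write $\omega=\omega_0(r)+(\omega-\omega_0(r))$ and use \eqref{omega0} with $\iota=2$. This gives
\[
d_k(\omega)=\frac{4\pi^2}{\gamma^2}m_0^3(1-k^2)+O(C_0|r|^2)+O(C|r|^{p-1}).
\]
For $|k|\ge2$ the main term has modulus at least $\frac{4\pi^2}{\gamma^2}|m_0|^3(k^2-1)\ge \frac{3\pi^2}{\gamma^2}|m_0|^3k^2$. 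By the choice of $\rho_1$, the two error terms together are at most $\frac{2\pi^2}{\gamma^2}|m_0|^3\le \frac{\pi^2}{2\gamma^2}|m_0|^3k^2$. Hence
\[
|d_k(\omega)|\ge c_1k^2, \qquad c_1=\frac{5\pi^2}{2\gamma^2}|m_0|^3,
\]
uniformly for $(\omega,r)\in\tilde\Omega_{\rho_1,C}$ and all $|k|\ge2$. There is no small divisor here: the KP-II dispersion $k^3$ minus the linear term is coercive off $S_1=\{\pm1\}$.

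Because $\langle\nu_1k\rangle^2\lesssim k^2$ for $|k|\ge 1$, the bound $|d_k|\ge c_1k^2$ gives $\|L_0(\omega)^{-1}\|_{\mathcal L(PH^{\sigma,s-2},PH^{\sigma,s})}\le C_2$ with $C_2$ depending only on $\nu_1,m_0,\gamma$. The map $L_0^{-1}$ also preserves evenness and the vanishing of the modes in $S_1\cup\{0\}$.

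\textbf{Absorbing $L_1$.} We estimate
\[
\|L_1(r)\hat v\|_{\sigma,s-2,P}\le |m_0|\,\|\widehat{u_0v}\|_{\sigma,s}\le |m_0|\,C_{\sigma,s}\|\hat u_0(r)\|_{\sigma,s}\|\hat v\|_{\sigma,s,P},
\]
using Lemma~\ref{lm4.2} and the fact that the $(s-2)$-norm is dominated by the $s$-norm. By \eqref{u0estimate}, $\|\hat u_0(r)\|_{\sigma,s}\lesssim|r|$. So
\[
\|L_0^{-1}L_1\|_{\mathcal L(PH^{\sigma,s},PH^{\sigma,s})}\le C_2|m_0|C_{\sigma,s}C'|r|.
\]
We choose $\rho_2\le\rho_1$ so that this is at most $\tfrac12$ on $\mathcal U_{\rho_2}$. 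Then $L=L_0(I+L_0^{-1}L_1)$, and the Neumann series gives
\[
\|L^{-1}\|\le \|(I+L_0^{-1}L_1)^{-1}\|\,\|L_0^{-1}\|\le 2C_2 .
\]
Any $r$-dependence of the constants is handled by the uniform bounds on $\mathcal U_{\rho_0}$.

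\textbf{Main obstacle.} The only delicate point is the uniform lower bound on $|d_k|$ for complex $(\omega,r)$. It relies on the complex-neighborhood estimate \eqref{omega0} and on the constraint defining $\tilde\Omega$, which is exactly why $\rho_1$ was chosen as it was. The mode $|k|=2$ is the worst case, since there $k^2-1$ is smallest relative to $k^2$. We check that the factor $3/4$ leaves enough room for both error terms, each bounded by $\frac{\pi^2}{\gamma^2}|m_0|^3$. Everything else is routine.
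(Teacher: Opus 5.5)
Your proof is correct and is essentially the paper's argument. You derive a uniform lower bound $|d_k(\omega)|\gtrsim k^2$ on $|k|\ge 2$ from \eqref{omega0} and the choice of $\rho_1$, invert $L_0$ diagonally from $PH^{\sigma,s-2}$ to $PH^{\sigma,s}$, and absorb $L_1$ by a Neumann series for $I+L_0^{-1}L_1$, using Lemma~\ref{lm4.2} and the smallness of $\|\hat u_0(r)\|_{\sigma,s}$. Your constant $\frac{5\pi^2}{2\gamma^2}|m_0|^3$ is slightly sharper than the paper's $\frac{\pi^2}{\gamma^2}|m_0|^3$, which is immaterial.
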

\begin{proof}
    Let $\rho_2 \leq \rho_1$. Since $(\omega,r) \in \tilde{\Omega}_{\rho_2,C}$, the estimate \eqref{omega0} for $\omega_0(r)$ leads to
    \begin{equation*}
        \left|\omega - \left(\frac{4\pi^2}{\gamma^2}m_0^3 - \frac{n_0^2}{m_0}\right)\right| \leq C_0\rho_1^2 + C\rho_1^{p-1}.
    \end{equation*}
    Suppose $k\notin S_1\cup\{0\}$. Since $|k|^2\geq 4$, the denominator
    \begin{align*}
        \bigg| \omega -\frac{4\pi^2}{\gamma^2}m_0^3k^2+\frac{n_0^2}{m_0} \bigg| \geq \bigg| \frac{4\pi^2}{\gamma^2}m_0^3(k^2-1) \bigg| -C_0\rho_1^2 - C \rho_1^{p-1}
        \geq \bigg| \frac{\pi^2}{\gamma^2}m_0^3 k^2 \bigg|.
    \end{align*}
    Thus, there exists a constant $C_2 = C_2(m_0,n_0,\gamma,\rho_1)$ such that
    \begin{align*}
        \|L_0^{-1}(\omega)\hat{v}\|_{\sigma,s,P}^2
        &=\sum_{k\in \mathbb{Z}\setminus (S_1\cup\{0\})}\frac{|\hat{v}(k)|^2(1+\nu_1^2k^2)^{2}}{\left|\omega -\frac{4\pi^2}{\gamma^2}m_0^3k^2+\frac{n_0^2}{m_0}\right|^2} e^{2\sigma\nu_1 |k|}\langle \nu_1 k \rangle^{2(s-2)}\notag\\
        &\leq\sum_{k\in \mathbb{Z}\setminus (S_1\cup\{0\})}\frac{|\hat{v}(k)|^2(1+ \nu_1^2k^2)^{2}}{\left|\frac{\pi^2}{\gamma^2}m_0^3 k^2\right|^2} e^{2\sigma\nu_1 |k|}\langle \nu_1 k \rangle^{2(s-2)}\notag\\
        &\leq C_2^2\|\hat{v}\|_{\sigma,s-2,P}^2<\infty.
    \end{align*}
    Since $L_0^{-1}(\omega)$ is a bounded operator from $PH^{\sigma,s-2}$ to $PH^{\sigma,s}$, we rewrite $L^{-1}(\omega,r)=(I+L_0^{-1}(\omega)L_1(r))^{-1}L_0^{-1}(\omega)$. By the Neumann-series argument, if $\|L_0^{-1}(\omega)L_1(r)\|_{\sigma,s,s,P}\leq\frac{1}{2}$, we have $\|L^{-1}(\omega,r)\|_{\sigma,s-2,s,P} \leq 2C_2$. In fact, we have
    \begin{align*}
        \|L_0^{-1}(\omega)L_1(r) \hat{v}\|_{\sigma,s,P}&\leq \|L_0^{-1}(\omega)\|_{\sigma,s-2,s,P}\|L_1(r) \hat{v}\|_{\sigma,s-2,P}\\
        &\leq C_2|m_0|\cdot\|\hat{ u }_0*\hat{v}\|_{\sigma,s-2,P}\\
        &\leq C_2C_{\sigma, s}|m_0|\cdot\|\hat{ u }_0\|_{\sigma,s}\|\hat{v}\|_{\sigma,s,P}.
    \end{align*}
    Thus there exist $\rho_2\in (0,\rho_1]$ such that $\|L_0^{-1}(\omega)L_1(r)\|_{\sigma,s,s,P}\leq C_2 C_{\sigma, s}\allowbreak |m_0| \cdot\|\hat{ u }_0\|_{\sigma,s} \allowbreak \leq \frac{1}{2}$ for $|r|\leq \rho_2$.
\end{proof}

\begin{lemma}\label{123}
    Suppose $s>\frac{5}{2}$ and $p \geq 3$. Then, there exist positive constants $M_0>0$ and $\rho_{3} \in (0,\rho_2]$ such that for any $(\omega,r) \in \tilde{\Omega}_{\rho_3,C}$, the functional $-L^{-1}G(\omega, r)$ maps 
    \begin{equation*}
        \mathcal{B}_{r} :=\{\hat{v} \in PH^{\sigma,s} : \|\hat{v}\|_{\sigma,s,P} \leq M_0 |r|^p\}
    \end{equation*}
    into itself. Moreover, $-L^{-1}G(\omega,r)$ is a uniform contraction on $\mathcal{B}_{r}$ for $(\omega,r) \in \tilde{\Omega}_{\rho_3,C}$, satisfying
    \begin{equation*}
        \|L^{-1}G(\omega,r)(\hat{v}_1) - L^{-1}G(\omega,r)(\hat{v}_2)\|_{\sigma,s,P} \leq |r|^{p-2}\cdot \|\hat{v}_1 - \hat{v}_2\|_{\sigma,s,P}
    \end{equation*}
    for any $\hat{v}_1, \hat{v}_2 \in \mathcal{B}_{r}$.
\end{lemma}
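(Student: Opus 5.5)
We prove both claims by combining the resolvent bound of Lemma~\ref{lm3.7} with the nonlinear estimates of Lemma~\ref{lm3.6}. Throughout we take $(\omega,r)\in\tilde{\Omega}_{\rho_3,C}$ and $\hat v\in\mathcal B_r$, with $\rho_3\le\rho_2$ chosen small. Since $\hat v$ is supported off $S_1\cup\{0\}$, we have $\|\hat v\|_{\sigma,s}=\|\hat v\|_{\sigma,s,P}$. Together with \eqref{u0estimate} this gives
\[
\|\hat u_0+\hat v\|_{\sigma,s}\le K|r|+M_0|r|^p\le 2K|r|
\]
for small $|r|$. The condition $C_{\sigma,s}\|\hat u\|_{\sigma,s}<R_1$ of Lemma~\ref{lm3.6} therefore holds once $\rho_3$ is small.

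\textbf{Self-mapping.} We estimate the three terms of \eqref{nonlinearestimate1} separately.

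\begin{enumerate}
\item The first term uses the sharper projected bound \eqref{u0estimateP}, $\|\hat u_0\|_{\sigma,s,P}\lesssim|r|^2$. It therefore contributes $C|r|^{p-1}\cdot K'|r|^2=CK'|r|^{p+1}$.
\item The quadratic term contributes $\tfrac{|m_0|}{2}C_{\sigma,s}M_0^2|r|^{2p}$.
\item The perturbative term is $|m_0|f_1(C_{\sigma,s}\|\hat u\|_{\sigma,s})$. Since $f_1(z)=z^p+O(z^{p+1})$ as $z\to0$, it is at most $2|m_0|(2C_{\sigma,s}K)^p|r|^p$.
\end{enumerate}

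Applying $\|L^{-1}\|\le 2C_2$ from Lemma~\ref{lm3.7}, we get
\[
\|L^{-1}G(\hat v)\|_{\sigma,s,P}\le 2C_2\Big(4|m_0|(2C_{\sigma,s}K)^p|r|^p+CK'|r|^{p+1}+\tfrac{|m_0|}{2}C_{\sigma,s}M_0^2|r|^{2p}\Big).
\]
We fix $M_0:=8C_2|m_0|(2C_{\sigma,s}K)^p+1$. We then shrink $\rho_3$ so that the last two terms are at most $\tfrac12 M_0|r|^p$. This uses $p+1>p$ and $2p>p$, and note that $M_0$ does not depend on $\rho_3$. The result is $\|L^{-1}G(\hat v)\|_{\sigma,s,P}\le M_0|r|^p$.

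\textbf{Contraction.} We use \eqref{Lipchitz1} with $M_v=M_0|r|^p$. The Lipschitz constant of $G$ is then bounded by
\[
|m_0|\big(C_{\sigma,s}M_0|r|^p+f_1'(2C_{\sigma,s}K|r|)\big).
\]
Since $f_1'(z)=pz^{p-1}+O(z^p)$, this is $\lesssim |r|^{p-1}$. Multiplying by $2C_2$ gives a Lipschitz constant $\le C'|r|^{p-1}\le|r|^{p-2}$, provided $C'\rho_3\le1$. Because $p\ge3$, we have $|r|^{p-2}\le\rho_3<1$, so the contraction is uniform on $\tilde{\Omega}_{\rho_3,C}$.

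\textbf{Main obstacle.} The delicate point is the first term of $G$, namely $(\omega-\omega_0)\hat u_0$ projected away from $S_1$. With the crude bound $\|\hat u_0\|_{\sigma,s}\lesssim|r|$ it would only be $O(|r|^p)$ with a constant proportional to $C$. Since $C$ is chosen later, in Subsection~\ref{sec:one-gap-2}, this would create a circular dependence of $M_0$ on $C$. We avoid this by using the projected estimate \eqref{u0estimateP}: the fundamental modes $k=\pm1$ are removed by the projection, so the term is $O(|r|^{p+1})$. It can then be absorbed by smallness of $\rho_3$ (which may depend on $C$), while $M_0$ is fixed independently of $C$.

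A secondary check is the loss of two derivatives from $s$ to $s-2$ in $G$. This loss is compensated exactly by $L^{-1}$ through Lemma~\ref{lm3.7}, and the assumption $s>5/2$ makes the algebra property of Lemma~\ref{lm4.2} available at level $s-2>1/2$.
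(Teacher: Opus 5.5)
Your proposal is correct and takes essentially the paper's route. The paper likewise reduces the self-mapping and contraction claims to the inequalities \eqref{ball1}--\eqref{ball3}, and checks them using Lemmas~\ref{lm3.6}--\ref{lm3.7}, the vanishing orders $f_1(z)\lesssim z^p$ and $f_1'(z)\lesssim z^{p-1}$, and \eqref{u0estimate}--\eqref{u0estimateP}. In particular, it also uses the projected bound $\|\hat u_0\|_{\sigma,s,P}\lesssim|r|^2$ in the $C|r|^{p-1}$ term, exactly as you do.

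The paper does let $M_0$ depend on $C$, but this creates no circularity. In Theorem~\ref{thm:solveomega}, the resulting $O(|r|^p)$ remainder fits inside the $C|r|^{p-1}$ window once $\rho_3$ is shrunk, so the obstacle you describe is not a real one.

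There is one bookkeeping slip. With $M_0=8C_2|m_0|(2C_{\sigma,s}K)^p+1$, you should shrink $\rho_3$ so that the two remaining terms are at most $|r|^p$, using the ``$+1$'' slack. Bounding them by $\tfrac12 M_0|r|^p$ instead only gives the total bound $(\tfrac32 M_0-1)|r|^p$, which exceeds $M_0|r|^p$ when $M_0>2$.
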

\begin{proof}
    According to Lemmas \ref{lm3.6} and \ref{lm3.7}, the mapping $-L^{-1}G(\omega, r)$ defines a self-mapping and a uniform contraction on $\mathcal{B}_r$ provided that the following three conditions are met uniformly for $(\omega, r) \in \tilde{\Omega}_{\rho_3,C}$ with $\rho_3 \leq \rho_2$:
    \begin{align}
         C_{\sigma, s} &\big( \|\hat{ u }_0(r)\|_{\sigma,s} + M_0 |r|^p \big) < R_1, \label{ball1}\\
         C |r|^{p-1} \|\hat{ u }_0(r)\|_{\sigma,s,P} &+ \frac{|m_0|}{2}C_{\sigma, s} (M_0 |r|^p)^2 \label{ball2.1}\\
        &+ |m_0| f_1\left( C_{\sigma, s}(\|\hat{ u }_0(r)\|_{\sigma,s}+M_0 |r|^p)\right) \leq \frac{M_0}{2C_2} |r|^p, \label{ball2.2} \\
         2C_2 |m_0|\big( 2C_{\sigma, s} M_0 |r|^p &+ f_1'\big( C_{\sigma, s} (\|\hat{ u }_0(r)\|_{\sigma,s} + 2M_0 |r|^p) \big) \big) \leq |r|^{p-2}< 1 \label{ball3}.
    \end{align}
    
    With the high-order vanishing property of $f_1$ and $f_1'$ at the origin, there exist positive constants $C_{f_1}$ and $C_{f_1}'$ such that $f_1(z) \leq C_{f_1} |z|^p$ and $f_1'(z) \leq C_{f_1}' |z|^{p-1}$ for $p \geq 3$.
    
    Since \eqref{u0estimate}--\eqref{u0estimateP} hold, there exist constants $M_0 = M_0(P_1, C, C_2, C_{\sigma, s},R_1, m_0, \| \hat{ u }_0 \|_{\sigma,s},\rho_2) \allowbreak >0$ and $\rho_{3} = \rho_{3}(P_1, C_2, C_{\sigma, s},R_1, m_0, \|\hat{ u }_0\|_{\sigma,s},M_0) >0$ such that the inequalities \eqref{ball1}--\eqref{ball3} hold.
\end{proof}

With Lemma \ref{123}, we obtain the following theorem.

\begin{theorem}\label{Banachfixed1}
    Suppose $s>\frac{5}{2}$ and $p\geq 3$. For $(\omega, r) \in \tilde{\Omega}_{\rho_3,C}$, the range equation \eqref{banachform} admits a unique solution $\hat{v}(\omega,r)$ in $\mathcal{B}_r$. In particular, for every $(\omega, r) \in \tilde{\Omega}_{\rho_3,C}$,
    \[
        \| \hat{v}(\omega,r) \|_{\sigma,s,P} \leq M_0 |r|^{p}.
    \]
    The solution $\hat{v}(\omega,r)$ is holomorphic in $\tilde{\Omega}_{\rho_3,C}$. Moreover, for every $(\omega,r) \in \Omega_{\rho_3,C}$, we have $\hat{v}(\omega,r)\in PH_{\mathrm{re}}^{\sigma,s}$. Consequently, the restriction of $\hat{v}(\omega,r)$ to $\Omega_{\rho_3,C}$ is real analytic.

    Finally, for every $(\omega, r) \in \tilde{\Omega}_{\rho_3,C}$, we have the following estimates:
    \begin{equation}\label{qiudaoguji}
        \left\| \frac{\partial \hat{v}}{\partial \omega} \right\|_{\sigma,s,P}  \lesssim |r|^2,\qquad \left\| \frac{\partial^2 \hat{v}}{\partial \omega^2} \right\|_{\sigma,s,P}  \lesssim |r|^2.
    \end{equation}
\end{theorem}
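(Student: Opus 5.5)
The argument splits into four steps: a fixed point, holomorphy, reality, and derivative bounds.

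\emph{Existence and uniqueness.} By Lemma~\ref{123}, for each $(\omega,r)\in\tilde\Omega_{\rho_3,C}$ the map $T(\omega,r):=-L^{-1}(\omega,r)G(\omega,r)$ sends the closed ball $\mathcal B_r\subset PH^{\sigma,s}$ into itself. It is a contraction there with constant $|r|^{p-2}<1$. Since $PH^{\sigma,s}$ is a Banach space, the Banach fixed point theorem gives a unique fixed point $\hat v(\omega,r)\in\mathcal B_r$. By construction $\|\hat v(\omega,r)\|_{\sigma,s,P}\le M_0|r|^p$, and $\hat v$ solves \eqref{banachform}, because $L$ is invertible by Lemma~\ref{lm3.7}.

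\emph{Holomorphy.} I would write $\hat v$ as the locally uniform limit of the Picard iterates $\hat v_0=0$, $\hat v_{n+1}=T(\omega,r)\hat v_n$. Each iterate is a holomorphic $PH^{\sigma,s}$-valued map of $(\omega,r)$, for three reasons. First, $L_0(\omega)$ is affine in $\omega$. Second, $L_1(r)$ and $G$ depend holomorphically on $r$ through $\hat u_0(r)$ and $\omega_0(r)$, which are holomorphic on $\mathcal U_{\rho_0}$ by Corollary~\ref{thm6.1corollary}. Third, $\widehat{P_1(u)}$ is given by a convergent power series dominated by $f_1$, so it is holomorphic. Inversion then preserves holomorphy, since $L^{-1}=(I+L_0^{-1}L_1)^{-1}L_0^{-1}$ is a Neumann series with uniform bound $2C_2$. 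The contraction constant is uniform on compact subsets of $\tilde\Omega_{\rho_3,C}$, so the iterates converge uniformly there and the limit is holomorphic by the Weierstrass theorem for Banach-valued maps. Equivalently, one could apply the holomorphic implicit function theorem to $\Phi(\omega,r,\hat v)=\hat v-T(\omega,r)\hat v$, whose $\hat v$-derivative $I-DT$ is invertible because $\|DT\|\le|r|^{p-2}$.

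\emph{Reality.} For real $(\omega,r)\in\Omega_{\rho_3,C}$, the coefficients $\hat u_0(k;r)$ are real and even, $\omega_0(r)$ is real, and $P_1$ has real Taylor coefficients. The parity condition \eqref{eq:intro-P1-parity} ensures that $\widehat{P_1(u)}$ preserves evenness and reality: odd derivatives enter only in even total order, matching the $k\mapsto -k$ symmetry. Hence $T$ maps $\mathcal B_r\cap PH^{\sigma,s}_{\mathrm{re}}$, which is closed, into itself. Uniqueness then forces $\hat v\in PH^{\sigma,s}_{\mathrm{re}}$, and real analyticity of the restriction follows.

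\emph{Derivative estimates.} Differentiating $\hat v=T(\omega,r)\hat v$ in $\omega$ gives
\[
(I-D_{\hat v}T)\,\partial_\omega\hat v=\partial_\omega T(\omega,r)\hat v .
\]
Here $\partial_\omega T=L^{-1}(\partial_\omega L_0)L^{-1}G-L^{-1}P\hat u_0$. The second term has norm $\lesssim\|\hat u_0\|_{\sigma,s,P}\lesssim|r|^2$ by \eqref{u0estimateP}, which I expect to be the dominant contribution. The first term is smaller: $\partial_\omega L_0=I$, and since $\hat v$ solves \eqref{banachform}, $L^{-1}G=-\hat v=O(|r|^p)$. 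A slight regularity issue arises because $\partial_\omega L_0$ maps $PH^{\sigma,s}$ into $PH^{\sigma,s-2}$ only with a loss of derivatives. This is harmless, since $L^{-1}$ gains two derivatives and the identity embeds $PH^{\sigma,s}$ into $PH^{\sigma,s-2}$. Inverting $I-D_{\hat v}T$ via the contraction bound yields $\|\partial_\omega\hat v\|\lesssim|r|^2$. For the second derivative I would differentiate once more. The main obstacle is bookkeeping that no term is worse than $|r|^2$. The new terms involve $\partial^2_\omega L^{-1}$, which is bounded, acting on $G$ or $\hat u_0$; cross terms with $\partial_\omega\hat v$ weighted by $D^2_{\hat v}T$, which is $O(1)$; and $(\partial_\omega\hat v)^2$, which is $O(|r|^4)$. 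All of these are $\lesssim|r|^2$. Alternatively, Cauchy estimates in $\omega$ on a disk of radius $\sim|r|^{p-1}$ would lose too much, so I would use the direct differentiation.
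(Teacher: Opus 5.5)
Your proposal is correct and follows essentially the paper's route: the contraction of Lemma~\ref{123}, holomorphy of the Picard limit via the Weierstrass theorem, and the derivative bounds obtained by differentiating the fixed-point identity and inverting $I+L^{-1}DG(\hat v)$ by a Neumann series. The only real variation is in the reality step, where you use invariance of the closed set $\mathcal B_r\cap PH^{\sigma,s}_{\mathrm{re}}$ plus uniqueness and the paper uses the reflection $\overline{\hat v(\bar\omega,\bar r)}$ with induction on the iterates; both work.

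Two small points to tighten. First, the bound $\|L^{-1}DG(\hat v)\|\le|r|^{p-2}$ at the fixed point rests on the Lipschitz estimate on the ball of radius $2M_0|r|^p$, which is what \eqref{ball3} encodes, not on the contraction bound on $\mathcal B_r$ alone. Second, the second-order cross term is $2\,\partial_\omega(D_{\hat v}T)[\partial_\omega\hat v]=2L^{-2}DG(\hat v)[\partial_\omega\hat v]$ rather than a $D^2_{\hat v}T$ term, though it is still $\lesssim|r|^2$.
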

\begin{proof}
    \textit{Step 1}.
    We initiate the standard Picard iteration scheme for the range equation:
    \begin{equation*}
        \hat{v}_0(\omega, r) = 0, \quad \hat{v}_{n+1}(\omega, r) = -L^{-1}(\omega, r)G(\omega, r)(\hat{v}_n(\omega, r)), \quad n \geq 0.
    \end{equation*}
    With Lemma \ref{123}, it is easy to see that $\| \hat{v}_1 \|_{\sigma,s,P} \leq M_0\rho_3^p$ uniformly on $\tilde{\Omega}_{\rho_3,C}$.
    Since $\hat{u}_0(k;r)$ and $\omega_0(r)$ are holomorphic functions, the operators $L^{-1}(\omega, r)$ and $G(\omega, r)$ are holomorphic in $\tilde{\Omega}_{\rho_3,C}$. Since $\hat{v}_0 = 0$ is holomorphic, by finite composition rules, each iteration layer $\hat{v}_n(\omega, r)$ defines a well-defined holomorphic function of $(\omega, r)$ on $\tilde{\Omega}_{\rho_3,C}$.

    With the contraction property in Lemma \ref{123}, we have
    \begin{align*}
        \|\hat{v}_{n+1} - \hat{v}_n\|_{\sigma,s,P} = \|L^{-1}(\omega, r)G(\omega, r)(\hat{v}_n) - L^{-1}(\omega, r)G(\omega, r)(\hat{v}_{n-1})\|_{\sigma,s,P} \leq \mathfrak q \|\hat{v}_n - \hat{v}_{n-1}\|_{\sigma,s,P},
    \end{align*}
    uniformly on $\tilde{\Omega}_{\rho_3,C}$, where $\mathfrak q := \rho_3^{p-2} \in (0,1)$. A geometric telescoping expansion reveals that for any shift $k \geq 1$:
    \begin{align*}
        \|\hat{v}_{n+k} - \hat{v}_n\|_{\sigma,s,P} \leq \sum_{j=n}^{n+k-1} \mathfrak q^j \|\hat{v}_1 - \hat{v}_0\|_{\sigma,s,P} \leq \frac{\mathfrak q^n}{1-\mathfrak q} M_0 \rho_3^p.
    \end{align*}
    Since $\mathfrak q < 1$ is uniform, $\{\hat{v}_n(\omega, r)\}_{n=0}^{\infty}$ forms a Cauchy sequence in $PH^{\sigma,s}$ that converges locally uniformly on $\tilde{\Omega}_{\rho_3,C}$. By Weierstrass's theorem for holomorphic functions in Banach spaces, the uniform limit $\hat{v}(\omega, r) \in PH^{\sigma,s}$ is holomorphic in $\tilde{\Omega}_{\rho_3,C}$.
    Passing to the limit in the Picard iteration shows that $\hat{v}(\omega, r)$ solves the range equation \eqref{banachform}.
    Moreover, Lemma~\ref{123} implies that, for every fixed $(\omega,r)\in\tilde\Omega_{\rho_3,C}$ and $n\geq0,\ \hat{v}_n(\omega,r)\in\mathcal B_r$. Since $\mathcal B_r$ is closed in $PH^{\sigma,s}$, the limit also belongs to $\mathcal B_r$. Therefore, $\|\hat{v}(\omega,r)\|_{\sigma,s,P} \leq M_0 |r|^p$.

    \textit{Step 2}.
    We next show that $\hat{v}(\omega, r) \in PH_{\mathrm{re}}^{\sigma,s}$ for $(\omega , r) \in \Omega_{\rho_3,C}$.
    For a $PH^{\sigma,s}$-valued function $\hat v: \Omega_{\rho_3,C}\to PH^{\sigma,s}$, define its reflection $\mathcal{R}$ by
    \begin{equation*}
        (\mathcal{R}\hat{v})(\omega, r) := \overline{\hat{v}(\overline{\omega}, \overline{r})}.
    \end{equation*}
    Since $\widehat{u_0(r)},\omega_0(r)$ are holomorphic in $\mathcal{U}_{\rho_3}$ with real-analytic restriction to $(0,\rho_0)$, by the Schwarz reflection principle,
    \[
        \omega_0(\overline{r}) = \overline{\omega_0(r)},\quad \widehat{u_0(\overline{r})} = \overline{\widehat{ u_0(r) }}.
    \]
    Together with the real analyticity and the parity properties of $Q_1$, we have
    \[
        \mathcal{R}(L^{-1}(\omega, r)G(\omega, r)(\hat{v})) = L^{-1}(\omega, r)G(\omega, r)(\mathcal{R}\hat{v}).
    \]
    
    Since the base approximation $\hat{v}_0 = 0$ satisfies $\mathcal{R}\hat{v}_0 = \hat{v}_0$, mathematical induction guarantees that $\mathcal{R}\hat{v}_n = \hat{v}_n$ holds identically for all $n \geq 0$ across $\tilde{\Omega}_{\rho_3,C}$. By the uniqueness of the fixed point guaranteed by Lemma \ref{123}, the uniform limit must inherit the exact symmetry, yielding $\mathcal{R}\hat{v} = \hat{v}$, which means $\overline{\hat{v}(\overline{\omega}, \overline{r})} = \hat{v}(\omega, r)$. 
    Restricting $(\omega,r)$ to $\Omega_{\rho_3,C}$, the relation implies that $\overline{\widehat{v(\omega, r)}(k)} = \widehat{v(\omega,r)}(k) \in \mathbb{R}$. Hence the solution is real-valued and real analytic on $\Omega_{\rho_3,C}$.

    \textit{Step 3}.
    For $(\omega,r)\in\tilde\Omega_{\rho_3,C}$ and any $\hat{h} \in PH^{\sigma,s}$, there exists sufficiently small $\varepsilon>0$ such that $\|\hat{v}+\varepsilon\hat{h}\|_{\sigma,s,P} \leq  2M_0 |r|^{p}$. Hence, the definition of the Fr\'echet derivative combined with \eqref{ball3} implies
    \begin{align*}
        \| L^{-1}(\omega,r) DG(\omega,r)(\hat{v})[\hat{h}]\|_{\sigma,s,P}
        \leq &2C_2 \|DG(\omega,r)(\hat{v})[\hat{h}]\|_{\sigma,s-2,P} \\
        \leq &2C_2 \lim_{\epsilon \to 0} \frac{\|G(\omega,r)(\hat{v}+\epsilon \hat{h}) - G(\omega,r)(\hat{v})\|_{\sigma,s-2,P}}{\epsilon} \\
        \leq &\mathfrak q\ \|\hat{h}\|_{\sigma,s,P}.
    \end{align*}
    This leads to the operator norm bound $\|L^{-1}(\omega,r)DG(\omega,r)(\hat{v})\|_{\sigma,s,s,P} \leq \mathfrak q < 1$. Differentiating the fixed-point identity $\hat{v} = -L^{-1}(\omega,r)G(\omega,r)(\hat{v})$ with respect to $\omega$ via the chain rule provides:
    \begin{align*}
        \frac{\partial \hat{v}}{\partial \omega} = -\bigg[\frac{\partial L^{-1}(\omega,r)}{\partial \omega} \bigg] G(\omega,r)(\hat{v}) - L^{-1}(\omega,r) DG(\omega,r)(\hat{v})\left[ \frac{\partial \hat{v}}{\partial \omega} \right] - L^{-1}(\omega,r) \frac{\partial G(\omega,r)}{\partial \omega}(\hat{v}),
    \end{align*}
    which can be rearranged into:
    \begin{equation}
        \frac{\partial \hat{v}}{\partial \omega} = -\big[ I + L^{-1}(\omega,r) DG(\omega,r)(\hat{v}) \big]^{-1}\bigg( \frac{\partial L^{-1}(\omega,r)}{\partial \omega} G(\omega,r)(\hat{v}) + L^{-1}(\omega,r) \frac{\partial G(\omega,r)}{\partial \omega}(\hat{v}) \bigg). \label{partialformula}
    \end{equation}
    Since $\|L^{-1}(\omega,r) DG(\omega,r)(\hat{v})\|_{\sigma,s,s,P} \leq \mathfrak q < 1$, the Neumann series expansion is valid, guaranteeing
    \begin{equation*}
        \| [ I + L^{-1}(\omega,r) DG(\omega,r)(\hat{v}) ]^{-1} \|_{\sigma,s,s,P} \leq \frac{1}{1-\mathfrak q}.
    \end{equation*}
    Since $L^{-1}(\omega,r) \in \mathcal{L} (PH^{\sigma,s-2},PH^{\sigma,s}) , \dfrac{\partial L(\omega,r)}{\partial \omega} = id \in \mathcal{L} (PH^{\sigma,s},PH^{\sigma,s-2})$, the operator
    \begin{equation*}
        \dfrac{\partial L^{-1}(\omega,r)}{\partial\omega} = -L^{-1}(\omega,r) \dfrac{\partial L(\omega,r)}{\partial\omega} L^{-1}(\omega,r)
    \end{equation*}
    is bounded from $PH^{\sigma,s-2}$ to $PH^{\sigma,s}$.
    Similarly, we have
    \begin{align*}
        \left\| L^{-1}(\omega,r) \frac{\partial G(\omega,r)}{\partial \omega}(\hat{v}) \right\|_{\sigma,s,P} = \|L^{-1}(\omega,r) P_{\mathbb{Z}\setminus (S_1\cup \{0\})}\hat{ u }_0\|_{\sigma,s,P}\leq 2C_2\|\hat{ u }_0\|_{\sigma,s,P}\lesssim |r|^2.
    \end{align*}
    For $(\omega,r)\in\tilde\Omega_{\rho_3,C}$, taking the norm on both sides of \eqref{partialformula} yields
    \begin{align*}
        \left\| \frac{\partial \hat{v}(\omega,r)}{\partial \omega} \right\|_{\sigma,s,P} 
        \leq& \frac{1}{1-\mathfrak q} \left\| \frac{\partial L^{-1} (\omega,r)}{\partial \omega} \right\|_{\sigma,s-2,s,P} \|G(\omega,r)(\hat{v})\|_{\sigma,s-2,P} \\
        & +  \frac{1}{1-\mathfrak q} \left\|L^{-1}(\omega,r) \frac{\partial G(\omega,r)}{\partial \omega}  (\hat{v}) \right\|_{\sigma,s,P}\\
        \lesssim& |r|^2.
    \end{align*}
    In addition, differentiating the range equation $L(\omega,r)\hat{v} + G(\omega,r)(\hat{v}) = 0$ twice yields
    \begin{equation*}
        \frac{\partial^2 \hat{v}}{\partial \omega^2} = -\big[ L(\omega,r) + DG(\omega,r)(\hat{v}) \big]^{-1}\bigg( 2\frac{\partial L(\omega,r)}{\partial \omega} \frac{\partial \hat{v}}{\partial \omega} + D^2G(\omega,r)(\hat{v})\left[\frac{\partial \hat{v}}{\partial \omega},\frac{\partial \hat{v}}{\partial \omega}\right] \bigg).
    \end{equation*}
    Similarly, for $(\omega,r)\in\tilde\Omega_{\rho_3,C}$ and $\hat{h}_1,\hat{h}_2 \in PH^{\sigma,s}$, the definition of the second Fr\'echet derivative and the definition of $G$ implies that
    \begin{align*}
        &\| L^{-1}(\omega,r) D^2G(\omega,r)(\hat{v})[\hat{h}_1,\hat{h}_2] \|_{\sigma,s,P} \\
        \leq &2C_2 \| D^2G(\omega,r)(\hat{v})[\hat{h}_1,\hat{h}_2] \|_{\sigma,s-2,P} \\
        \leq &2C_2|m_0|\cdot \|\widehat{h_1h_2} + \widehat{ D^2P_1(u_0+v)[h_1,h_2]} \|_{\sigma,s-2,P}\\
        \leq &2C_2|m_0|\cdot [1 +f_1''(C_{\sigma, s}(\|\hat{u}_0\|_{\sigma,s}+\|\hat{v}\|_{\sigma,s,P})]\cdot C_{\sigma, s}\|\hat{h}_1\|_{\sigma,s,P}\|\hat{h}_2\|_{\sigma,s,P}
    \end{align*}
    Notice that $f_1''(z)\lesssim z^{p-2}$ for $p\geq 3$. Taking $\hat{h}_1 = \hat{h}_2 = \frac{\partial \hat{v}}{\partial \omega}$, we obtain
    \[
        \| L^{-1}(\omega,r) D^2G(\omega,r)(\hat{v})[\hat{h}_1,\hat{h}_2] \|_{\sigma,s,P} \lesssim |r|^4.
    \]
    Thus, we have
    \begin{align*}
        \left\|\frac{\partial^2 \hat{v}}{\partial \omega^2}\right\|_{\sigma,s,P} \leq \frac{1}{1-\mathfrak q} \bigg( 4C_2 \left\| \frac{\partial \hat{v}}{\partial \omega}\right\|_{\sigma,s,P} + \left\| L^{-1}(\omega,r) D^2G(\omega,r)(\hat{v})\left[\frac{\partial \hat{v}}{\partial \omega},\frac{\partial \hat{v}}{\partial \omega}\right]\right\|_{\sigma,s,P} \bigg)
        \lesssim |r|^2.
    \end{align*}
    This proves \eqref{qiudaoguji} and completes the proof.
\end{proof}

\subsection{Solving the bifurcation equation}\label{sec:one-gap-2}

We solve the bifurcation equation and calculate the perturbed frequency by a perturbative argument.
Recall the bifurcation equation \eqref{qfangcheng1} is equivalent to
\begin{equation}\label{solvebifurcation}
    \left(\omega-\frac{4\pi^2}{\gamma^2}m_0^3k^2-\frac{\lambda_{\iota} n_0^2}{m_0}\right)\hat{u}(k)+ \frac{m_0}{2}\widehat{u^2}(k)+ m_0 \widehat{P_1(u)}(k)=0,|k|=1.
\end{equation}
For notational simplicity, we denote the Fourier coefficients $\hat{u}_0(k;r)$ by $\hat{u}_0(k)$.
\begin{theorem}\label{thm:solveomega}
    After reducing $\rho_3$, if necessary, there exists a real-analytic function $\omega: (0,\rho_3) \to \mathbb{R}$ such that $\hat{u} = \hat{u}_0 + \widehat{v(\omega(r),r)}$ satisfies the bifurcation equation \eqref{solvebifurcation}. Moreover, when $p$ is odd, we have
    \begin{equation*}
        \omega(r) =\omega_0(r) - m_0 \binom{p}{(p-1)/2}\hat{u}_0(1)^{(p-1)/2}\hat{u}_0(-1)^{(p-1)/2} +O(r^{p}),
    \end{equation*}
    and when $p$ is even, we have
    \begin{align*}
        &\omega(r) =\omega_0(r) - \frac{\gamma^2 \binom{p}{p/2-1}}{12\pi^2m_0}\hat{u}_0(-1)^{p/2}\hat{u}_0(1)^{p/2} - m_0 \hat{u}_0(1)^{p/2-2}\hat{u}_0(-1)^{p/2-2}\\
        & \cdot \left(p\ \binom{p-1}{p/2-1}\hat{u}_0(2)\hat{u}_0(-1)^2 + p\binom{p-1}{p/2+1}\hat{u}_0(-2)\hat{u}_0(1)^2 + \binom{p+1}{p/2} \hat{u}_0(1)^2  \hat{u}_0(-1)^2 \right)+O(r^{p+1}).
    \end{align*}
    Thus, $(\omega(r), r) \in \Omega_{\rho_3,C}$ for sufficiently large $C$.
\end{theorem}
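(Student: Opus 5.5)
We plan to reduce the bifurcation equation \eqref{solvebifurcation} to a single scalar equation for $\omega$ and solve it with the holomorphic implicit function theorem in a rescaled frequency variable. Since $\hat u$ is even, the equations at $k=1$ and $k=-1$ coincide, so it suffices to treat $k=1$. Insert $\hat u=\hat u_0+\hat v(\omega,r)$ from Theorem~\ref{Banachfixed1}, recall that $\hat v$ vanishes on $S_1$, and subtract the unperturbed identity \eqref{kp-fourier1} at $k=1$, which holds for $(\hat u_0,\omega_0)$ on $\mathcal U_{\rho_0}$. Then divide by $\hat u_0(1)=-\frac{48m_0^2\pi^2}{\gamma^2}\frac{q}{1-q^2}\sim r$, which is nonzero on $\mathcal U_{\rho_3}$. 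The bifurcation equation becomes
\[
\omega-\omega_0(r)=\Phi(\omega,r):=-\frac{m_0}{\hat u_0(1)}\Big(\widehat{u_0v}(1)+\tfrac12\widehat{v^2}(1)+\widehat{P_1(u_0+v)}(1)\Big),
\]
where $\Phi$ is holomorphic on $\tilde\Omega_{\rho_3,C}$.

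\emph{Size of $\Phi$.} The main term is $\widehat{P_1(u_0)}(1)$, which is $O(r^p)$ because $\|\hat u_0\|\lesssim|r|$. By Theorem~\ref{Banachfixed1}, $\|\hat v\|\le M_0|r|^p$, so $\widehat{u_0v}(1)=O(r^{p+1})$ and the corrections to $P_1$ coming from $v$ are $O(r^{2p-1})$. Hence $\Phi=O(r^{p-1})$. The parity of $p$ enters through a frequency count. For $p$ odd, the term $u_0^p$ at mode $1$ receives $\binom{p}{(p-1)/2}\hat u_0(1)^{(p+1)/2}\hat u_0(-1)^{(p-1)/2}$, and dividing by $\hat u_0(1)$ gives the stated leading term. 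Every other contribution is at least $O(r^p)$: terms involving $\hat u_0(\pm2)$ or $u^{p+1}$, and the derivative monomials of $Q_{1,>p+1}$, which are of degree at least $p+2$ anyway.

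For $p$ even, $u_0^p$ with only $\pm1$ modes cannot produce mode $1$. The leading contributions are then:
\begin{itemize}
\item[$(a)$] $u_0^p$ with exactly one factor $\hat u_0(\pm2)$;
\item[$(b)$] $u^{p+1}$ with only $\pm1$ modes;
\item[$(c)$] the term $\widehat{u_0v}(1)$, where at leading order $\hat v(\pm2)\approx -L_0^{-1}\,m_0\widehat{u_0^p}(\pm2)$. Here the divisor $\omega_0-\frac{16\pi^2}{\gamma^2}m_0^3+\frac{n_0^2}{m_0}\approx -\frac{12\pi^2}{\gamma^2}m_0^3$, which produces the factor $\frac{\gamma^2}{12\pi^2 m_0}$.
\end{itemize}
All of these are of order $r^{p}$ after division, and the remainder is $O(r^{p+1})$. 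I would carry out this bookkeeping with the convolution estimates (Lemma~\ref{lm4.2}) and the bound $|\hat u_0(k)|\lesssim |r|^{|k|}$. I expect this explicit expansion, in particular the $p$ even case with the $\hat v$ feedback, to be the most laborious step.

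\emph{Solving for $\omega$.} Set $\omega=\omega_0(r)+r^{p-1}w$ with $|w|<C$. By \eqref{qiudaoguji}, together with the fact that $\partial_\omega\Phi$ involves only $\partial_\omega\hat v=O(r^2)$ multiplied by factors of size $O(r^{p-2})$ after division, we get $|\partial_\omega\Phi|\lesssim |r|^{2}$, which is small. So $w\mapsto r^{1-p}\Phi(\omega_0+r^{p-1}w,r)$ is a contraction of the disc $|w|\le C$, provided $C$ exceeds twice the bound on $r^{1-p}\Phi$ and $\rho_3$ is reduced. The fixed point $w(r)$ is holomorphic on $\mathcal U_{\rho_3}$, by uniform limits of holomorphic iterates or by the implicit function theorem.

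\emph{Reality.} By the reflection argument of Theorem~\ref{Banachfixed1}, $\Phi(\bar\omega,\bar r)=\overline{\Phi(\omega,r)}$, so uniqueness of the fixed point gives $\omega(\bar r)=\overline{\omega(r)}$. Hence $\omega$ is real analytic and real on $(0,\rho_3)$, which also follows from the earlier remark that $\hat u(1)\neq0$ forces $\omega\in\mathbb R$. The expansions then follow by substituting $\omega(r)$ into the leading terms computed above, and $(\omega(r),r)\in\Omega_{\rho_3,C}$ holds by construction.
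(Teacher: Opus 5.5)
Your proposal is correct. Up to the final step it is the paper's argument: reduce to $k=1$, subtract \eqref{kp-fourier1}, divide by $\hat u_0(1)\sim r$, and extract the leading terms. For even $p$ this includes the feedback term $\hat u_0(-1)\hat v(2)$, with $\hat v(2)$ taken from the first Picard iterate and the divisor $L_0(\omega)(2,2)\approx-\frac{12\pi^2}{\gamma^2}m_0^3$.

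The difference is in how the scalar equation is solved. The paper starts from the explicit approximate root $\omega_{0,\mathrm{odd}}$ (resp.\ $\omega_{0,\mathrm{even}}$) and runs a Newton iteration. It controls the residuals quadratically, $E_l\le\mathscr E_0|r|^{2^lp}$, and checks that the iterates stay in $I_C(r)$. This needs $\partial_\omega F=1+O(r^2)$ together with $|\partial_\omega^2F|\lesssim|r|^2$, which is why the second-derivative bound in \eqref{qiudaoguji} is proved. You rescale $\omega=\omega_0+r^{p-1}w$ and apply the contraction principle to $w\mapsto r^{1-p}\Phi(\omega_0+r^{p-1}w,r)$. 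Your route:
\begin{itemize}
\item needs only $\partial_\omega\Phi=O(r^2)$;
\item handles both parities with one argument, since parity enters only when you read off the expansion from $\omega-\omega_0=\Phi(\omega,r)$ and the expansion of $\Phi$, which is uniform on $I_C(r)$;
\item gives $\omega(\bar r)=\overline{\omega(r)}$ directly from uniqueness of the fixed point.
\end{itemize}
For the theorem as stated your route is more economical. The quadratic convergence of Newton's scheme is not needed for existence, analyticity, or the stated remainders.

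Some small corrections:
\begin{itemize}
\item The dominant part of $\partial_\omega\Phi$ is $\widehat{u_0\,\partial_\omega v}(1)/\hat u_0(1)$. Its prefactor is $O(1)$, not $O(r^{p-2})$, so $O(r^2)$ is the actual size of $\partial_\omega\Phi$. This still suffices for the contraction.
\item Choose $C$ against the $C$-independent leading coefficient (the paper's $C_{\mathrm{odd}}$), and only then shrink $\rho_3$. Otherwise the argument is circular, because $M_0$ and the other range-equation constants depend on $C$.
\item Contract on a closed disc of radius slightly less than $C$, since $\tilde\Omega_{\rho_3,C}$ is open.
\item Drop the alternative reality argument via the remark that $\hat u(1)\neq0$ forces $\omega\in\mathbb R$. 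That remark presupposes $\hat u\in\ell_{re}^2(\mathbb T_\gamma)$, which is known only for real $\omega$, so using it here is circular. The reflection-and-uniqueness argument is the one you need.
\end{itemize}
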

\begin{proof}    
    Fix $C>0$ sufficiently large, which will be determined later. Denote
    \begin{equation*}
        I_{C}(r) := \{\omega \in \mathbb{C}, |\omega - \omega_0(r)| < C|r|^{p-1} \}, \textup{ for }r\in \mathcal{U}_{\rho_3}.
    \end{equation*}
    Throughout the proof, we seek a solution satisfying the a priori restriction
    \begin{equation}
       \omega\in I_{C}(r),\textup{ for }r\in \mathcal{U}_{\rho_3}.\label{xianyanguji}
    \end{equation}
    If $(\omega,r) \in \Omega_{\rho_3,C}$, Theorem \ref{Banachfixed1} implies that $\widehat{u_0(r)}+ \widehat{v(\omega,r)} \in \ell_{re}^2(\mathbb{T}_{\gamma})$. Hence the bifurcation equation \eqref{solvebifurcation} for $k=1$ and $k=-1$ is the same equation.
    
    Since $\widehat{u_0(r)}$ satisfies \eqref{kp-fourier1} for $r \in \mathcal{U}_{\rho_3}$, with $\hat{u} = \widehat{u_0(r)} + \widehat{v(\omega,r)}$, the bifurcation equation \eqref{solvebifurcation} for $k=1$ becomes
    \begin{equation}\label{solvebifurcationcomplex}
        (\omega-\omega_0)\hat{u}_0(1)+\frac{ m_0 }{2}\widehat{[2u_0 v(\omega,r)+v(\omega,r)^2]}(1)+m_0\widehat{P_1(u_0+v(\omega,r))}(1)=0.
    \end{equation}
    Denote the left hand side of \eqref{solvebifurcationcomplex} by $\tilde{F}_{1}(\omega, r)$.
    Then, $\tilde{F}_{1}(\omega, r)$ is a holomorphic function on $\tilde{\Omega}_{\rho_3,C}$, whose restriction to $\Omega_{\rho_3,C}$ is real analytic. As a consequence, to obtain a real-analytic solution on $(0,\rho_3)$ to the bifurcation equation \eqref{solvebifurcation}, we only need to find a holomorphic function $\omega(r) :\mathcal{U}_{\rho_3} \to \mathbb{C}$ such that $\tilde{F}_{1}(\omega(r), r) = 0$ and $\omega(\overline r)=\overline{\omega(r)}$.
    
    Now we analyze the bifurcation equation.
    
    The leading-order term of $\widehat{P_1(u_0+v(\omega,r))}(1)$ depends on whether $p$ is odd or even. 
    To start with, with Theorem \ref{Banachfixed1} and \eqref{Lipchitz1}, we have
    \begin{equation*}
        \|\widehat{ P_1(u_0) }- \widehat{ P_1(u_0 + v(\omega,r) })\|_{\sigma,s-2}\leq  f_1'(C_{\sigma, s}(\| \hat{ u }_0\|_{\sigma,s}+ M_0|r|^p ))\cdot M_0|r|^p.
    \end{equation*}
    Moreover, with \eqref{nonlinearestimate1}, we have
    \begin{equation*}
        \|\widehat{ P_{1,>p+1}(u_0 + v(\omega,r) })\|_{\sigma,s-2}\leq f_{1,>p+1}(C_{\sigma, s}\|\hat{u}_0\|_{\sigma,s} + M_0|r|^p).
    \end{equation*}
    Thus, when $p$ is odd, it can be rewritten into
    \begin{align*}
        \widehat{P_1(u_0+v)}(1)=&\widehat{P_1(u_0)}(1)+O(r^{2p-1})\\
        =& \sum_{k_1 +\cdots + k_p = 1} \hat{u}_0(k_1)\cdots \hat{u}_0(k_p) + \widehat{u_0^{p+1}}(1) + O(r^{p+2})\\
        =& \binom{p}{(p-1)/2}\hat{u}_0(1)^{(p+1)/2}\hat{u}_0(-1)^{(p-1)/2}\\
        &+\sum_{l \geq p+2}\sum_{\substack{k_1 +\cdots + k_p = 1\\ |k_1| +\cdots + |k_p| = l}} \hat{u}_0(k_1)\cdots \hat{u}_0(k_p) + O(r^{p+1})\\
        =& \binom{p}{(p-1)/2}\hat{u}_0(1)^{(p+1)/2}\hat{u}_0(-1)^{(p-1)/2}+O(r^{p+1}),
    \end{align*}
    whose leading order is $p$.
    When $p$ is even, it can be rewritten into
    \begin{align*}
        &\widehat{P_1(u_0+v)}(1)\\
        =&\widehat{P_1(u_0)}(1)+O(r^{2p-1})\\
        =& \sum_{k_1 +\cdots + k_p = 1} \hat{u}_0(k_1)\cdots \hat{u}_0(k_p) + \sum_{k_1 +\cdots + k_{p+1} = 1} \hat{u}_0(k_1)\cdots \hat{u}_0(k_{p+1}) + O(r^{p+2})\\
        =& p\binom{p-1}{p/2-1}\hat{u}_0(2)\hat{u}_0(1)^{p/2-1}\hat{u}_0(-1)^{p/2} + p\binom{p-1}{p/2+1}\hat{u}_0(-2)\hat{u}_0(1)^{p/2+1}\hat{u}_0(-1)^{p/2-2} \\
        &+ \sum_{l \geq p+3 }\sum_{\substack{k_1 +\cdots + k_p = 1\\ |k_1| +\cdots + |k_p| = l}} \hat{u}_0(k_1)\cdots \hat{u}_0(k_p) \\
        &+ \binom{p+1}{p/2}\hat{u}_0(1)^{p/2+1}\hat{u}_0(-1)^{p/2}
        + \sum_{l\geq p+3}\sum_{\substack{k_1 +\cdots + k_{p+1} = 1\\ |k_1| +\cdots + |k_{p+1}| = l}} \hat{u}_0(k_1)\cdots \hat{u}_0(k_{p+1}) + O(r^{p+2})\\
        =& p\binom{p-1}{p/2-1}\hat{u}_0(2)\hat{u}_0(1)^{p/2-1}\hat{u}_0(-1)^{p/2} + p\binom{p-1}{p/2+1}\hat{u}_0(-2)\hat{u}_0(1)^{p/2+1}\hat{u}_0(-1)^{p/2-2} \\
        &+ \binom{p+1}{p/2}\hat{u}_0(1)^{p/2+1}\hat{u}_0(-1)^{p/2} + O(r^{p+2}),
    \end{align*}
    whose leading order is $p+1$.

    When $p$ is odd, a simple calculation implies that
    \begin{align*}
        |\widehat{u_0v}(1)| &\leq \sum_{k_1+k_2=1} |\hat{u}_0(k_1)\hat{v}(k_2) |
        \lesssim \sum_{ k_1\in \mathbb{Z}\setminus\{0\} } |r|^{|k_1|}\cdot |r|^p
        \lesssim |r|^{p+1},\\
        |\widehat{v^{2}}(1)| &\leq \sum_{k_1+k_2=1} |\hat{v}(k_1)\hat{v}(k_2) |
        \lesssim \sum_{ k_1\in \mathbb{Z}\setminus\{0\} } e^{-\sigma \nu_1|k_1|}\cdot |r|^{2p} \lesssim |r|^{2p}.
    \end{align*}
    Thus, we can obtain the bifurcation equation when $p$ is odd as follows:
    \begin{equation*}
        (\omega-\omega_0)\hat{u}_0(1) + m_0\binom{p}{(p-1)/2}\hat{u}_0(1)^{(p+1)/2}\hat{u}_0(-1)^{(p-1)/2} + O(r^{p+1}) = 0.
    \end{equation*}
    
    When $p$ is even, we have
    \begin{align*}
        &\left|\widehat{u_{0}v}(1) - \hat{u}_0(-1)\hat{v}(2)\right| \leq \sum_{\substack{k_1+k_2=1,\\ k_1\neq -1}} \left|\hat{u}_0(k_1)\hat{v}(k_2) \right| \lesssim |r|^{p+2},\\
        &|\widehat{v^{2}}(1)| \leq \sum_{k_1+k_2=1} \left|\hat{v}(k_1)\hat{v}(k_2) \right| \lesssim \sum_{ k_1\in \mathbb{Z}\setminus\{0\} } e^{-\sigma \nu_1|k_1|}\cdot |r|^{2p} \lesssim |r|^{2p}.
    \end{align*}
    We therefore need to calculate the leading-order term of $\hat{v}(2)$ when $p$ is even.
    
    Recall the Picard iteration sequence for the range equation \eqref{banachform}:
    \begin{equation*}
        \hat{v}_0(\omega, r)=0,\hat{v}_{n+1}(\omega, r)= -L^{-1}(\omega, r)G(\omega, r)(\hat{v}_n(\omega, r)),n\geq 0.
    \end{equation*}
    The first iterate is
    \begin{equation*}
        \hat{v}_1(\omega, r) = \sum_{l\geq 0} (-L_0^{-1}(\omega)L_1(r))^l(-L_0^{-1}(\omega)G(\omega, r)(0)).
    \end{equation*}
    With \eqref{omega0},\eqref{xianyanguji}, the calculation yields
    \begin{align*}
        L_0(\omega)(2, 2) &= \omega-\frac{4\pi^2}{\gamma^2}4m_0^3+\frac{n_0^2}{m_0}\\
        &= (\omega_0-\frac{4\pi^2}{\gamma^2}4m_0^3+\frac{n_0^2}{m_0})+O(r^{p-1})\\
        &= -\frac{12\pi^2}{\gamma^2}m_0^3+O(r^{2}).
    \end{align*}
    Notice that $\left|(\omega-\omega_0)\hat{u}_0(2)\right| \lesssim |r|^{p+1}$, we obtain
    \begin{align*}
        \left(G(\omega, r)(0)\right)(2) &= m_0\binom{p}{p/2-1}\hat{u}_0(-1)^{p/2-1}\hat{u}_0(1)^{p/2+1}+O(r^{p+1}).
    \end{align*}
    Thus, we have
    \begin{align*}
        -L_0^{-1}(\omega) \left( G(\omega, r)(0)\right)(2)
        &= \frac{\gamma^2 \binom{p}{p/2-1}}{12\pi^2m_0^2}\hat{u}_0(-1)^{p/2-1}\hat{u}_0(1)^{p/2+1}+O(r^{p+1}),
    \end{align*}
    and for any $l>0$,
    \begin{align*}
        \left|\left[(-L_0^{-1}(\omega) L_1(r))^l\circ (-L_0^{-1}(\omega, r) G(\omega, r)(0))\right](2)\right| &\lesssim |r|^{p+l}.
    \end{align*}
    Thus
    \begin{equation*}
        \hat{v}_1(2) = \frac{\gamma^2 \binom{p}{p/2-1}}{12\pi^2m_0^2}\hat{u}_0(-1)^{p/2-1}\hat{u}_0(1)^{p/2+1}+O(r^{p+1}).
    \end{equation*}
    With Theorem \ref{Banachfixed1}, we have
    \begin{align*}
        \|\hat{v}_{n} - \hat{v}_1\|_{\sigma,s,P}& \leq \sum_{j=1}^{n-1} |r|^{(p-2)j}  \|\hat{v}_1 - \hat{v}_{0}\|_{\sigma,s,P} \leq \frac{M_0|r|^{2p-2}}{1-|r|^{p-2}},
    \end{align*}
    which implies that
    \begin{equation*}
        \| \widehat{v(\omega,r)}-\hat{v}_1 \|_{\sigma,s,P} \lesssim |r|^{2p-2}.
    \end{equation*}
    Thus,
    \begin{align*}
        \widehat{v(\omega,r)}(2) = \frac{\gamma^2 \binom{p}{p/2-1}}{12\pi^2m_0^2}\hat{u}_0(-1)^{p/2-1}\hat{u}_0(1)^{p/2+1}+O(r^{p+1}).
    \end{align*}
    
    With the above calculations, we can obtain the bifurcation equation when $p$ is even as follows:
    \begin{align*}
        &(\omega-\omega_0)\hat{u}_0(1) + O(r^{p+2}) + \frac{\gamma^2 \binom{p}{p/2-1}}{12\pi^2m_0}\hat{u}_0(-1)^{p/2}\hat{u}_0(1)^{p/2+1} + m_0 \hat{u}_0(1)^{p/2-1}\hat{u}_0(-1)^{p/2-2} \notag\\
        &\cdot \left(p\ \binom{p-1}{p/2-1}\hat{u}_0(2)\hat{u}_0(-1)^2 + p\binom{p-1}{p/2+1}\hat{u}_0(-2)\hat{u}_0(1)^2 + \binom{p+1}{p/2} \hat{u}_0(1)^2  \hat{u}_0(-1)^2 \right) = 0.
    \end{align*}
    
    Notice that $|\hat{u}_0(1)| \sim |r|$. Thus, we can rewrite the bifurcation equation into the following form:
    When $p$ is odd,
    \begin{equation}\label{reducedbifurcation1}
        (\omega-\omega_0) + m_0 \binom{p}{(p-1)/2}\hat{u}_0(1)^{(p-1)/2}\hat{u}_0(-1)^{(p-1)/2} + \mathscr{R}_{odd}(\omega,r) = 0,
    \end{equation}
    where $\mathscr{R}_{odd}(\omega,r) = O(r^p)$.
    When $p$ is even,
    \begin{align}\label{reducedbifurcation2}
        &(\omega-\omega_0) + \mathscr{R}_{even}(\omega,r) + \frac{\gamma^2 \binom{p}{p/2-1}}{12\pi^2m_0}\hat{u}_0(-1)^{p/2}\hat{u}_0(1)^{p/2} + m_0 \hat{u}_0(1)^{p/2-2}\hat{u}_0(-1)^{p/2-2} \notag\\
        &\cdot \left(p\binom{p-1}{p/2-1}\hat{u}_0(2)\hat{u}_0(-1)^2 + p\binom{p-1}{p/2+1}\hat{u}_0(-2)\hat{u}_0(1)^2 + \binom{p+1}{p/2} \hat{u}_0(1)^2  \hat{u}_0(-1)^2 \right) = 0,
    \end{align}
    where $\mathscr{R}_{even}(\omega,r) = O(r^{p+1})$.
    
    Denote the left hand side of \eqref{reducedbifurcation1},\eqref{reducedbifurcation2} by $F_{odd}(\omega,r)$ and $F_{even}(\omega,r)$, respectively.
    Let
    \begin{align}
        \omega_{0,odd}(r) = &\omega_0(r) - m_0 \binom{p}{(p-1)/2}\hat{u}_0(1)^{(p-1)/2}\hat{u}_0(-1)^{(p-1)/2},\label{zhubuoddbeq}\\
        \omega_{0,even}(r) = &\omega_0(r) - \frac{\gamma^2 \binom{p}{p/2-1}}{12\pi^2m_0}\hat{u}_0(-1)^{p/2}\hat{u}_0(1)^{p/2} - m_0 \hat{u}_0(1)^{p/2-2}\hat{u}_0(-1)^{p/2-2}\label{zhubuevenbeq}\\
        \cdot \bigg(p\binom{p-1}{p/2-1}&\hat{u}_0(2)\hat{u}_0(-1)^2 + p\binom{p-1}{p/2+1}\hat{u}_0(-2)\hat{u}_0(1)^2 + \binom{p+1}{p/2} \hat{u}_0(1)^2  \hat{u}_0(-1)^2 \bigg).\label{zhubuevenbeq2}
    \end{align}
    Then $F_{odd}(\omega_{0,odd}(r), r) = O(r^{p}), F_{even}(\omega_{0,even}(r), r) = O(r^{p+1})$. We may suppose
    \[
        |F_{odd}(\omega_{0,odd}(r), r)| \leq \mathscr E_0|r|^p
    \]
    for a constant $\mathscr E_0>0$.
    With \eqref{Lipchitz1} and \eqref{qiudaoguji}, we have
    \begin{align}
        \dfrac{\partial F_{\cdot}}{\partial \omega}(\omega,r) &= 1 + \frac{m_0}{\hat{u}_0(1)} \left[ \hat{u}_0*\left(\frac{\partial \hat{v}}{\partial \omega}\right)(1) + \hat{v}*\left(\frac{\partial \hat{v}}{\partial \omega}\right)(1) \right] + \frac{m_0}{\hat{u}_0(1)} \widehat{(DP_1(u_0+v)[\partial_{\omega}v])}(1)\notag \\
        &= 1 + O(r^2)+O(r^{p})\neq 0,\label{setm}
    \end{align}
    and
    \begin{align*}
        \left| \dfrac{\partial^2 F_{\cdot}}{\partial \omega^2}(\omega,r) \right| &= \bigg| \frac{m_0}{\hat{u}_0(1)} \left[ \hat{u}_0*\left(\frac{\partial^2 \hat{v}}{\partial \omega^2}\right)(1) + \left(\frac{\partial \hat{v}}{\partial \omega}\right)*\left(\frac{\partial \hat{v}}{\partial \omega}\right)(1) + \hat{v}*\left(\frac{\partial^2 \hat{v}}{\partial \omega^2}\right)(1) \right] \\
        &+ \frac{m_0}{\hat{u}_0(1)} \bigg[ \widehat{\left(D^2P_1(u_0+v)[\partial_{\omega}v,\partial_{\omega}v]\right)}(1) + \widehat{\left(DP_1(u_0+v)[\partial_{\omega}^2v]\right)}(1)\bigg] \bigg|  \\
        &\lesssim |r|^2.
    \end{align*}
    for $\cdot = \textup{odd, even}$.
    With the uniform estimate \eqref{setm}, we set $m:= \inf_{(\omega,r)\in\overline{\tilde\Omega}_{\rho_3,C}} \left| \partial_\omega F_{odd} (\omega,r) \right| >0$.
    Moreover, by the preceding estimate for the second derivative, there
    exists a constant $M>0$ such that
    \[
        \left| \partial_\omega^2F_{odd}(\omega,r) \right| \leq M|r|^2, \qquad (\omega,r)\in\overline{\tilde \Omega}_{\rho_3,C}.
    \]
    Notice that with \eqref{zhubuoddbeq},\eqref{zhubuevenbeq} and \eqref{zhubuevenbeq2}, there exists a constant $C_{\mathrm{odd}}>0$ such that
    \[
        \left| \omega_{0,\mathrm{odd}}(r) - \omega_0(r) \right| \leq C_{\mathrm{odd}}|r|^{p-1}.
    \]
    Choose $C>2C_{\mathrm{odd}}$. After reducing $\rho_3$ if necessary, we may
    assume that
    \[
        \frac{\mathscr E_0}{m} \sum_{l\geq1} |r|^{2^{l-1}p} \leq \left( \frac C2-C_{\mathrm{odd}} \right) |r|^{p-1}.
    \]
    For $l\geq 1$, let
    \begin{align*}
        E_{l-1}(r) &:= |F_{odd}(\omega_{l-1,odd}(r),r)|,\\
        \Delta_{l,odd}(r) &:= -\partial _{\omega} F_{odd}(\omega_{l-1,odd}(r),r)^{-1} F_{odd}(\omega_{l-1,odd}(r),r),\\
        \omega_{l,odd}(r) &:= \omega_{0,odd}(r)+\sum_{l'=1}^{l}\Delta_{l',odd}(r).
    \end{align*}
    Fix $l=1$. Notice that $\omega_{0,odd}(r) \in I_C(r)$. Since $\omega_{0,odd}(r),F_{odd}(\omega,r)$ are real analytic on $\Omega_{\rho_3,C}$ and holomorphic on $\tilde\Omega_{\rho_3,C}$, we have
    \begin{align*}
        &\omega_{0,odd}(\overline r) = \overline{\omega_{0,odd}(r)},\\
        &\Delta_{1,odd}(\overline r) = -\partial _{\omega} F_{odd}(\overline{\omega_{0,odd}(r)},\overline r)^{-1} F_{odd}(\overline{\omega_{0,odd}(r)},\overline r) = \overline{\Delta_{1,odd}(r)}.
    \end{align*}
    Hence $\Delta_{1,odd}(r)$ is holomorphic in $\mathcal{U}_{\rho_3}$ and $|\Delta_{1,odd}(r)| \leq m^{-1}E_0(r)$. Moreover, since
    \[
        \left| \omega_{0,\mathrm{odd}}(r) - \omega_0(r) \right| + |\Delta_{1,odd}(r)| < C|r|^{p-1},
    \]
    the function $\omega_{0,\mathrm{odd}}(r)+ t\Delta_{1,odd}(r) \in I_C(r)$ for $t\in [0,1]$.
    This implies that
    \begin{align*}
        &|F_{odd}(\omega_{0,odd}(r)+\Delta_{1,odd}(r),r)| \\
        \leq &|F_{odd}(\omega_{0,odd}(r),r)+\partial _{\omega} F_{odd}(\omega_{0,odd}(r),r)\Delta_{1,odd}(r)|\\
        &+ |\Delta_{1,odd}(r)^2\int_0^1 \partial_{\omega}^2F_{odd}(\omega_{0,odd}(r)+t\Delta_{1,odd}(r),r)(1-t) dt| \\
        \leq &M|r|^2\cdot |\Delta_{1,odd}(r)|^2 \leq \frac{M|r|^2}{m^2}E_0(r)^2.
    \end{align*}
    Suppose the preceding results hold for $l-1$.
    Then, $\Delta_{l,odd}(r)$ is holomorphic in $\mathcal{U}_{\rho_3}$ and $|\Delta_{l,odd}(r)|\leq m^{-1}E_{l-1}(r)$.
    If $\frac{M\mathscr E_0}{m^2}\rho_3^2 < 1$, we claim that $E_{l'}(r) \leq \mathscr E_0|r|^{2^{l'}p}$ for $l'<l$. It is easy to see that $E_0(r)$ satisfies the estimate. If it holds for $E_{l'-1}(r)$, then
    \begin{align*}
        E_{l'}(r) &\leq M|r|^2|\Delta_{l'}(r)|^2 \leq \frac{M|r|^2}{m^2}E_{l'-1}(r)^2 \\
        &\leq \frac{M\mathscr E_0 |r|^2}{m^2}\cdot \mathscr E_0|r|^{2^{l'}p}\leq \mathscr E_0 |r|^{2^{l'}p}.
    \end{align*}
    Thus, for $t \in [0,1]$,
    \begin{align*}
        \left| \omega_{l-1,\mathrm{odd}}(r) + t\Delta_{l,odd}(r) - \omega_0(r) \right| &\leq \left| \omega_{0,\mathrm{odd}}(r) - \omega_0(r) \right| + \sum_{l'=1}^{l}|\Delta_{l',odd}(r)|\\
        &\leq C_{\mathrm{odd}}|r|^{p-1} + m^{-1}\mathscr E_0\sum_{l\geq 1} |r|^{2^{l-1}p} < C|r|^{p-1},
    \end{align*}
    which implies that $\omega_{l-1,\mathrm{odd}}(r) + t\Delta_{l,odd}(r) \in I_C(r)$.
    As a consequence,
    \begin{align*}
        &|F_{odd}(\omega_{l-1,odd}(r)+\Delta_{l,odd}(r),r)| \\
        = &\left|\Delta_{l,odd}(r)^2\int_0^1 \partial_{\omega}^2F_{odd}(\omega_{l-1,odd}(r)+t\Delta_{l,odd}(r),r)(1-t) dt\right|\\
        \leq &\frac{M|r|^2}{m^2}E_{l-1}(r)^2 \leq \frac{M\mathscr E_0|r|^2}{m^2} \cdot \mathscr E_0|r|^{2^lp} \leq \mathscr E_0|r|^{2^lp}.
    \end{align*}
    It also follows inductively that
    \[
        \omega_{l,odd}(\overline r) = \overline{\omega_{l,odd}(r)}, \quad \Delta_{l,odd}(\overline r) = \overline{\Delta_{l,odd}(r)}.
    \]
    
    Thus, the series
    \begin{align*}
        \sum_{l\geq 1} |\Delta_{l,odd}(r)| \leq m^{-1}\mathscr E_0\sum_{l\geq 1} |r|^{2^{l-1}p} \lesssim |r|^p 
    \end{align*}
    converges absolutely and locally uniformly on $\mathcal{U}_{\rho_3}$. Hence we obtain a holomorphic function
    \begin{equation}
        \omega_{odd}(r) = \omega_{0,odd}(r) + \sum_{l=1}^{\infty}\Delta_{l,odd}(r).\label{jieoddbeq}
    \end{equation}
    Since $F_{odd}$ is continuous, $F_{odd}(\omega_{odd}(r),r) = \lim_{l\to\infty}F_{\mathrm{odd}}(\omega_{l,odd}(r),r) =0$.
    Thus, $\omega_{odd}(r)$ is a holomorphic solution of the equation \eqref{reducedbifurcation1} on $\mathcal{U}_{\rho_3}$.
    Moreover, since $\omega_{l,odd}(\overline r) = \overline{\omega_{l,odd}(r)}$ hold for any $l\geq 1$, the solution satisfies $\omega_{odd}(\overline r) = \overline{\omega_{odd}(r)}$.
    Similarly, we can obtain a real-analytic solution
    \begin{equation}
        \omega_{even}(r) = \omega_{0,even}(r) + \sum_{l=1}^{\infty}\Delta_{l,even}(r) \label{jieevenbeq}
    \end{equation}
    of the equation \eqref{reducedbifurcation2} on $(0,\rho_3)$.
    
    Therefore, after first choosing $C$ sufficiently large and then reducing $\rho_3$, if necessary, the a priori restriction \eqref{xianyanguji} is satisfied. Thus, the functions \eqref{jieoddbeq}, \eqref{jieevenbeq} are real-analytic solutions to the bifurcation equation \eqref{solvebifurcation} on $(0,\rho_3)$.
\end{proof}

The proofs of Theorem \ref{Banachfixed1} and Theorem \ref{thm:solveomega} are also valid for the KP-I equation with $u_0=u_0^{(1)}, q=q^{(1)}, \mathcal{U}_{\rho_0} = \mathcal{U}_{\rho_0}^{(1)}$, and we do not provide a detailed description here. 

\section{Persistence of bi-periodic two-gap solutions}\label{sec:two-gap}

As in the one-gap case, we begin by recording the two-gap specialization of the parameterization and Fourier expansion established in Theorem~\ref{thm6.1} and Corollary~\ref{thm6.1corollary}.
\begin{corollary}\label{construction2}
Suppose $\dfrac{4\pi^2}{\gamma^2}\notin\mathbb{Q}$, and let $(m_j,n_j)\in\mathbb{Z}^2$, $j=1,2$, satisfy $m_1m_2\neq 0$ and $m_1 n_2-m_2 n_1\neq 0$. 
Set $\boldsymbol m=(m_1,m_2),\  \boldsymbol n=(n_1,n_2)$, and
\[
    \kappa := \frac{\gamma^2(m_2n_1-m_1n_2)^2 - 12\pi^2m_1^2m_2^2(m_1-m_2)^2}{\gamma^2(m_2n_1-m_1n_2)^2 - 12\pi^2m_1^2m_2^2(m_1+m_2)^2} \neq 0,\quad
    E_{12}= \pi\epsilon_{\kappa} = 
        \begin{cases}
            \pi, \textup{when }\kappa<0,\\
            0, \textup{when }\kappa>0.
        \end{cases}
\]

$(i)$
Then, there exists a constant $\tilde\rho_0 \in (0,1)$, a symmetric complex neighborhood $\mathcal{U}_{\tilde\rho_0}$ of the rectangle $(0,\tilde\rho_0)^2$, and holomorphic functions $B_{11}(\boldsymbol r), B_{22}(\boldsymbol r), B_{real,12}(\boldsymbol r), \omega_{1,0}(\boldsymbol r),\omega_{2,0}(\boldsymbol r)$ on $\mathcal{U}_{\tilde\rho_0}$ with real-analytic restrictions to $(0,\tilde\rho_0)^2$ such that
\begin{align}
    &\boldsymbol r \in \mathcal{U}_{2\tilde\rho_0} \Leftrightarrow \overline{\boldsymbol r} \in \mathcal{U}_{2\tilde\rho_0},\quad B_{real,12}(\boldsymbol r)= \log |\kappa| +O(\boldsymbol r^2),\label{B12}\\
    &B_{11}(\boldsymbol r)=2\log r_1+O(\boldsymbol r^2),\quad
    B_{22}(\boldsymbol r)=2\log r_2+O(\boldsymbol r^2),\label{Bjj}
\end{align}
where $\log$ denotes the holomorphic branch of the logarithm on the right half-plane.

Denote $B_{12}(\boldsymbol r) := B_{real,12}(\boldsymbol r) + iE_{12},\ B( \boldsymbol r) = \begin{pmatrix}
    B_{11}(\boldsymbol r) &B_{12}(\boldsymbol r)\\
    B_{12}(\boldsymbol r) &B_{22}(\boldsymbol r)
\end{pmatrix}$, $\boldsymbol \omega_{0}(\boldsymbol r) = (\omega_{1,0}(\boldsymbol r), \omega_{2,0}(\boldsymbol r))$. For every $\boldsymbol d=(d_1, d_2)\in \mathbb{R}^2$ and $\boldsymbol r \in (0,\tilde\rho_0)^2$,
\begin{align}
    u_{0}(t,x,y;\boldsymbol r)=12\frac{\partial^2}{\partial x^2}
    \log \theta\left(
        \dfrac{2\pi i}{\gamma} \left(\boldsymbol\omega_{0}(\boldsymbol r)t + \boldsymbol m x + \boldsymbol n y
        + \boldsymbol d\right);
        B( \boldsymbol r)
    \right)\label{two-gap1}
\end{align}
is a bi-periodic two-gap solution to the KP-I case of the equation \eqref{kp}.

$(ii)$
Let $q_j(\boldsymbol r)$ and $\mathcal L_{\boldsymbol k}(\boldsymbol r)$, $j=1,2,\ \boldsymbol k\in\mathbb Z^2\setminus{\boldsymbol0}$, be the functions obtained from Corollary \ref{thm6.1corollary} for $g=2$. Then, there exist constants $\eta\in(0,1)$ and $\rho_0\in(0,\tilde\rho_0)$ such that for $\boldsymbol r\in \mathcal{U}_{\rho_0}$,
\[
    |q_j(\boldsymbol r)| \leq \eta,\quad |q_j(\boldsymbol r)| = |r_j| + O(\boldsymbol r^3),\quad 
    |\mathcal{L}_{\boldsymbol k}(\boldsymbol r)| \leq C_\eta\eta^{-|\boldsymbol k|_1},
\]
where $C_\eta$ is independent of $\boldsymbol k$ and $\boldsymbol r$.

For $j=1,2$, denote $\zeta_j = \omega_{j,0}(\boldsymbol r)t + m_jx + n_jy + d_j$.
As a consequence, the solution \eqref{two-gap1} admits the following Fourier expansion on $(0,\rho_0)^2:$
\begin{equation*}
    u_0(t,x,y;\boldsymbol r)
    =
    \tilde {u}_0(\zeta_1,\zeta_2;\boldsymbol r)
    =
    \sum_{\boldsymbol k\in\mathbb Z^2\setminus\{\boldsymbol0\}}
    \widehat{\widetilde u}_0(\boldsymbol k;\boldsymbol r)
    e^{\frac{2\pi i}{\gamma}
       (k_1\zeta_1+k_2\zeta_2)}.
\end{equation*}
Here, for any $\boldsymbol k=(k_1,k_2) \in \mathbb{Z}^2$, $\widehat{\widetilde u}_0(\boldsymbol k;\boldsymbol r)$ is real analytic on $(0,\rho_0)^2$ and
\begin{equation*}
    \widehat{\widetilde u}_0(-\boldsymbol k;\boldsymbol r) = \widehat{\widetilde u}_0(\boldsymbol k;\boldsymbol r)  =  \frac{48\pi^2}{\gamma^2} (m_1k_1+m_2k_2)^2
    q_1(\boldsymbol r)^{|k_1|}  q_2(\boldsymbol r)^{|k_2|} \mathcal{L}_{\boldsymbol k}(\boldsymbol r).
\end{equation*}
\end{corollary}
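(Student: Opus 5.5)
This corollary is essentially the specialization $g=2$, $\iota=1$ of Theorem~\ref{thm6.1} and Corollary~\ref{thm6.1corollary}. The plan is to verify that the hypotheses match and then compute the cross-ratio $\kappa_{12,0}^{(1)}$ explicitly.

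\textbf{Hypotheses.} For $g=2$, Theorem~\ref{thm6.1} requires $\sqrt3\pi/\gamma\notin\mathbb Q$. The stated assumption $4\pi^2/\gamma^2\notin\mathbb Q$ implies this: if $\sqrt3\pi/\gamma=a\in\mathbb Q$, then $4\pi^2/\gamma^2=4a^2/3\in\mathbb Q$. The conditions $m_1m_2\neq0$ and $m_1n_2-m_2n_1\neq0$ are exactly those of Theorem~\ref{thm6.1}. Part $(i)$ of that theorem then gives real-analytic $B^{(1)}_{real}$ and $\boldsymbol\omega_0^{(1)}$ on $\mathcal N^{(1)}_{\boldsymbol0}$, together with the expansions \eqref{B1property}. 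By Corollary~\ref{thm6.1corollary}, this yields the symmetric complex neighborhood $\mathcal U_{\tilde\rho_0}$ of $(0,\tilde\rho_0)^2$ on which $B_{11},B_{22},B_{real,12},\omega_{j,0}$ are holomorphic, with real-analytic restrictions. Writing $\log r_j^2=2\log r_j$ with the principal branch on $\mathscr D_+$ (the right half-plane contains a small neighborhood of the positive axis, after shrinking) gives \eqref{Bjj}. Formula \eqref{two-gap1} is then \eqref{onegap1} with $B=B_{real}^{(1)}+iE$, where $E_{12}=\epsilon_{12}$. Part $(ii)$ is verbatim Corollary~\ref{thm6.1corollary}$(ii)$ with $g=2$; the expression $|q_j|=|r_j|+O(\boldsymbol r^3)$ follows from $q_j=r_j(1+O(\boldsymbol r^2))$.

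\textbf{Identifying $\kappa$.} The main computation is to show $\kappa_{12,0}^{(1)}=\kappa$, so that $\log|\kappa_{12,0}^{(1)}|=\log|\kappa|$ and the sign condition defining $\epsilon_{12}$ becomes the sign of $\kappa$. Set $c_j=-n_j/(2\sqrt3m_j)$ and $h_j=\pi m_j/\gamma$, so that $A_{j,0}=c_j-h_j$ and $B_{j,0}=c_j+h_j$. Then
\[
\{B_2,A_2,B_1,A_1\}=\frac{(B_2-B_1)(A_2-A_1)}{(B_2-A_1)(A_2-B_1)}
=\frac{(c_2-c_1)^2-(h_2-h_1)^2}{(c_2-c_1)^2-(h_2+h_1)^2}.
\]
Substituting $c_2-c_1=(m_2n_1-m_1n_2)/(2\sqrt3m_1m_2)$ and multiplying numerator and denominator by $12m_1^2m_2^2\gamma^2$ gives exactly $\kappa$. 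We need $\kappa\neq0$ and finite. The numerator vanishes iff $\gamma^2(m_2n_1-m_1n_2)^2=12\pi^2m_1^2m_2^2(m_1-m_2)^2$. If $m_1\neq m_2$, this would make $\pi^2/\gamma^2$ rational, contradicting the assumption; if $m_1=m_2$, it would force $m_2n_1-m_1n_2=0$, which is excluded. The denominator is handled the same way; here $m_1+m_2=0$ is also excluded by $m_1n_2-m_2n_1\neq0$ in the same fashion. This is equivalent to the pairwise distinctness of the base points already established in Lemma~\ref{solveuv}.

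\textbf{Expected obstacle.} The only delicate point is bookkeeping. One must keep the index ordering $\kappa_{ij}$ versus $\kappa_{ji}$ consistent; these agree here, since $\{B_1,A_1,B_2,A_2\}$ gives the same ratio by symmetry. One must also confirm that the branch choices (the fixed branch for $\kappa_{12}$ with imaginary part $\epsilon_{12}$, and the right-half-plane logarithm for $r_j$) are compatible with the neighborhood $\mathcal U_{\tilde\rho_0}$ used in Corollary~\ref{thm6.1corollary}. This is achieved by shrinking $\tilde\rho_0$ as needed.
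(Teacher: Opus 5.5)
Your proposal is correct and follows the paper's route. The paper obtains this corollary simply as the $g=2$, $\iota=1$ specialization of Theorem~\ref{thm6.1} and Corollary~\ref{thm6.1corollary}, using the same observation that $4\pi^2/\gamma^2\notin\mathbb Q$ implies $\sqrt3\pi/\gamma\notin\mathbb Q$; your explicit evaluation of $\kappa_{12,0}^{(1)}=\{B_{2,0}^{(1)},A_{2,0}^{(1)},B_{1,0}^{(1)},A_{1,0}^{(1)}\}=\kappa$, and the proof that $\kappa\neq0$ via distinctness of the base points, is precisely the computation the paper leaves implicit.
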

\begin{remark}
    Since $\frac{4\pi^2}{\gamma^2} = \frac{4}{3}\left(\frac{\sqrt{3}\pi}{\gamma} \right)^2$, the restriction $\frac{\sqrt{3}\pi}{\gamma}\notin\mathbb{Q}$ in Theorem \ref{thm6.1} follows from the assumption $\frac{4\pi^2}{\gamma^2}\notin\mathbb{Q}$.
\end{remark}

\subsection{Lyapunov-Schmidt reduction}
\begin{definition}
    We denote by $\ell_e^2(\mathbb{T}_{\gamma}^2)$ the complex Hilbert space of zero-$x$-mean even Fourier coefficient sequences
    \[
        \ell_e^2(\mathbb{T}_\gamma^2) := \left\{ \widehat u = \left(\hat{u}(\boldsymbol k) \right)_{k \in \mathbb{Z}^2}: \text{for }\boldsymbol k\in S_0,\widehat u(\boldsymbol k)=0,\ \text{for }\boldsymbol k\in\mathbb Z^2,\widehat u(-\boldsymbol k)=\widehat u(\boldsymbol k),\ \sum_{\boldsymbol k\in\mathbb Z^2} |\widehat u(\boldsymbol k)|^2<\infty \right\}.
    \]
     We also define its real subspace by $\ell_{re}^2(\mathbb{T}_\gamma^2) := \left\{ \widehat u\in\ell_e^2(\mathbb{T}_\gamma^2): \widehat u(\boldsymbol k)\in\mathbb R \text{ for every }\boldsymbol k\in\mathbb Z^2 \right\}$.
\end{definition}
Notice that $\hat{\tilde{u}}_{0}(\boldsymbol k) = 0$ for $\boldsymbol k \in S_0$, and $\hat{\tilde{u}}_{0}(-\boldsymbol k) = \hat{\tilde{u}}_{0}(\boldsymbol k) \in \mathbb{R}$, which implies that $\hat{\tilde{u}}_0 \in \ell_{re}^2(\mathbb{T}_{\gamma}^2)$. We consequently seek solutions to the perturbed KP-I equation in the form $u(t,x,y) = \tilde{u}(\omega_1 t+ m_1x + n_1y + d_1, \omega_2 t+ m_2x + n_2y + d_2)$, where $\hat{\tilde{u}} \in \ell_{re}^2(\mathbb{T}_{\gamma}^2)$. To simplify notation, we abuse notation and let $u_0,u,\hat{u}_0,\hat{u}$ also denote $\tilde{u}_0, \tilde{u}, \hat{\tilde{u}}_0, \hat{\tilde{u}}$ in the following subsections.

With $\zeta_j := \omega_jt+m_jx+n_jy+d_j, j=1,2$,
we have $\partial_x=D_m, \partial_y=D_n,$
where
\[
    D_m :=m_1\partial_{\zeta_1}+m_2\partial_{\zeta_2},
    \qquad
    D_n :=n_1\partial_{\zeta_1}+n_2\partial_{\zeta_2}.
\]
In accordance with \eqref{eq:intro-P21}--\eqref{eq:intro-P22}, define
\[
    P_{2}(u) := P_{2,1}(u) + P_{2,2}(u), \quad
    P_{2,1}(u) := Q_{2,1}(J_x^2u)\big|_{\partial_x=D_m}, \quad 
    P_{2,2}(u) := Q_{2,2}(J_y^1u)\big|_{\partial_y=D_n}.
\]
Then
\begin{align*}
    &P_{2,1}(u) = a_p u^p + a_{p+1} u^{p+1} + P_{2,1,>p+1}(u),\\
    &P_{2,2}(u) = b_p u^p + b_{p+1} u^{p+1} + P_{2,2,>p+1}(u),
\end{align*}
where
\[
    P_{2,1,>p+1}(u) := Q_{2,1,>p+1}(J_x^2u)\big|_{\partial_x=D_m},\quad
    P_{2,2,>p+1}(u) := Q_{2,2,>p+1}(J_y^1u)\big|_{\partial_y=D_n}.
\]
Consequently, $\mathcal{P}_{2}[u] = D_m P_{2}(u)$, and the perturbed KP-I equation is transformed into
\begin{equation}\label{finitegap-kp1}
    \omega_1u_{\zeta_1} + \omega_2u_{\zeta_2} + \left(D_m^3- D_m^{-1}D_n^2\right)u + u D_m u+ D_m P_2(u) = 0.
\end{equation}
Denote $m(\boldsymbol k) = m_1k_1+m_2k_2,\ n(\boldsymbol k) = n_1k_1+n_2k_2$ for $\boldsymbol k = (k_1,k_2) \in \mathbb{Z}^2$.
Passing to the Fourier coefficients, equation \eqref{finitegap-kp1} is equivalent to
\begin{equation}\label{Fouriereq2}
    \frac{2\pi i}{\gamma} \left(k_1\omega_1+k_2\omega_2-\frac{4\pi^2}{\gamma^2}m(\boldsymbol k)^3-\frac{n(\boldsymbol k)^2}{m(\boldsymbol k)}\right)\hat{u}(\boldsymbol k)
    +\frac{2\pi i}{\gamma} \frac{m(\boldsymbol k)}{2} (\hat{u}*\hat{u})(\boldsymbol k)
    + \frac{2\pi i}{\gamma}m(\boldsymbol k)\widehat{P_2(u)}(\boldsymbol k)=0,
\end{equation}
for $\boldsymbol k = (k_1,k_2)\in \mathbb{Z}^2\setminus S_0$.

Notice that the parity conditions \eqref{eq:intro-P21-parity}--\eqref{eq:intro-P22-parity} together with the reality of the Taylor coefficients, imply that $\widehat{P_2(u)}$ depends real analytically on $\hat{u}$ and preserves the real subspace. Thus, if $(\omega_1,\omega_2,\hat{u}) \in \mathbb{C}^2 \times \ell_{re}^2(\mathbb{T}^2_{\gamma})$ solves \eqref{Fouriereq2} and $\hat{u}(1,0),\hat{u}(0,1) \neq 0$, then $\omega_1,\omega_2\in\mathbb{R}$.

Now we derive the leading-order expansions of $\omega_{1,0}(\boldsymbol r),\omega_{2,0}(\boldsymbol r)$.
Denote $\widehat{u_{0}(\boldsymbol r)} = \left( \hat{u}_{0}(\boldsymbol k;\boldsymbol r) \right)_{\boldsymbol k \in \mathbb{Z}^2}$.
For every $\boldsymbol r\in (0,\rho_0)^2$, Lemma \ref{construction2} implies that $\widehat{u_0(\boldsymbol r)},\omega_{1,0}(\boldsymbol r),\omega_{2,0}(\boldsymbol r)$ satisfy the Fourier form of the unperturbed KP-I equations
\begin{equation}\label{kp-fourier2}
    \frac{2\pi i}{\gamma}\left(k_1\omega_1+k_2\omega_2-\frac{4\pi^2}{\gamma^2}m(\boldsymbol k)^3-\frac{n(\boldsymbol k)^2}{m(\boldsymbol k)}\right)\hat{u}(\boldsymbol k) + \frac{2\pi i}{\gamma}\frac{m(\boldsymbol k)}{2} (\hat{u}*\hat{u})(\boldsymbol k)=0,\boldsymbol k\in\mathbb{Z}^2\setminus S_0.
\end{equation}
Denote the left hand side of \eqref{kp-fourier2} for $\boldsymbol k \in \mathbb{Z}^2 \setminus S_0$ by $F_{\boldsymbol k}(\hat{u},\omega_1, \omega_2)$.
The calculation at $\boldsymbol k=(1,0)$ and $\boldsymbol k=(0,1)$ yields $\omega_{1,0}(\boldsymbol r), \omega_{2,0}(\boldsymbol r) \in \mathbb{R}$ and
\begin{align}
    \omega_{1,0}(\boldsymbol r) &= \frac{4\pi^2}{\gamma^2}m_1^3 + \frac{n_1^2}{m_1}  - \frac{192\pi^2m_1^3}{\gamma^2} \frac{\mathcal{L}_{(-1,0)}\mathcal{L}_{(2,0)}}{\mathcal{L}_{(1,0)}}r_1^2\label{omega10line1}\\
    &- \frac{48\pi^2m_2^2}{\gamma^2m_1\mathcal{L}_{(1,0)}}  \left((m_1+m_2)^2\mathcal{L}_{(1,1)}\mathcal{L}_{(0,-1)} + (m_1-m_2)^2\mathcal{L}_{(1,-1)}\mathcal{L}_{(0,1)} \right)r_2^2  + O(\boldsymbol r^{4}),\label{omega10line2}\\
    \omega_{2,0}(\boldsymbol r) &= \frac{4\pi^2}{\gamma^2}m_2^3 + \frac{n_2^2}{m_2} - \frac{192\pi^2 m_2^3}{\gamma^2} \frac{\mathcal{L}_{(0,-1)}\mathcal{L}_{(0,2)}}{\mathcal{L}_{(0,1)}} r_2^2\label{omega20line1}\\
    &- \frac{48\pi^2m_1^2}{\gamma^2m_2\mathcal{L}_{(0,1)}}  \left((m_1+m_2)^2\mathcal{L}_{(1,1)}\mathcal{L}_{(-1,0)} + (m_2-m_1)^2\mathcal{L}_{(-1,1)}\mathcal{L}_{(1,0)} \right) r_1^2+ O(\boldsymbol r^{4})\label{omega20line2}.
\end{align}
on $(0,\rho_0)^2$.
Notice that $\hat{u}_0(\boldsymbol k;\boldsymbol r)$ is a well-defined holomorphic function on $\mathcal{U}_{\rho_0}$, and the series $\sum_{\boldsymbol k_1+\boldsymbol k_2 = \boldsymbol k}\hat{u}_0(\boldsymbol k_1;\boldsymbol r)\hat{u}_0(\boldsymbol k_2;\boldsymbol r)$ converges absolutely and locally uniformly on $\mathcal{U}_{\rho_0}$. Thus, the functions  $F_{\boldsymbol k}(\widehat{u_{0}(\boldsymbol r)}, \omega_{1,0}(\boldsymbol r), \omega_{2,0}(\boldsymbol r))$ admit well-defined holomorphic extensions on the uniform complex neighborhood $\mathcal{U}_{\rho_0}$ for any $\boldsymbol k \in \mathbb{Z}^2\setminus S_0$. As a consequence, $\widehat{u_0(\boldsymbol r)}, \omega_{1,0}(\boldsymbol r), \omega_{2,0}(\boldsymbol r)$ also satisfy the unperturbed KP equation \eqref{kp-fourier2} on $\mathcal{U}_{\rho_0}$. A similar calculation implies that \eqref{omega10line1}--\eqref{omega20line2} hold on $\mathcal{U}_{\rho_0}$, and there exists a constant $C_0$ such that for $j=1,2$,
\begin{equation}
    \left| \omega_{j,0}(\boldsymbol r)- \left(\frac{4\pi^2}{\gamma^2}m_j^3 + \frac{n_j^2}{m_j} \right) \right|\leq C_0 |\boldsymbol r|_{\infty}^{2},\quad \boldsymbol r \in \mathcal{U}_{\rho_0}.\label{omegaj0}
\end{equation}
Thus
\[
    \omega_{j,0}(\boldsymbol 0) = \lim_{\boldsymbol r\to \boldsymbol 0} \omega_{j,0}(\boldsymbol r) = \frac{4\pi^2}{\gamma^2}m_j^3 + \frac{n_j^2}{m_j}, \quad j=1,2.
\]
We set the resonant set of \eqref{Fouriereq2} as
\begin{equation*}
    S_2=\left\{ \boldsymbol k=(k_1,k_2)\in \mathbb{Z}^2\setminus S_0:\ k_1\omega_{1,0}(\boldsymbol 0)+k_2\omega_{2,0}(\boldsymbol 0)=\frac{4\pi^2}{\gamma^2}m(\boldsymbol k)^3+\frac{n(\boldsymbol k)^2}{m(\boldsymbol k)} \right\}.
\end{equation*}
Suppose $\dfrac{4\pi^2}{\gamma^2}$ is irrational and $(m_1,n_1), (m_2,n_2)$ are two linearly independent modes with $m_1m_2\neq 0$. The resonant equation for $k_1,k_2$ becomes
\begin{align*}
    k_1m_1^3+k_2m_2^3 &=(k_1m_1+k_2m_2)^3,\\
    k_1\frac{n_1^2}{m_1}+k_2\frac{n_2^2}{m_2} &=\frac{(k_1n_1+k_2n_2)^2}{k_1m_1+k_2m_2}.
\end{align*}
which is equivalent to 
\begin{align} 
    &k_1m_1^3+k_2m_2^3 =(k_1m_1+k_2m_2)^3,\label{algebra1}\\
    &k_1k_2\left(\frac{n_1^2m_2}{m_1}+\frac{n_2^2m_1}{m_2}-2n_1n_2\right)=0.\label{algebra2}
\end{align}
If $k_1k_2\neq 0$, \eqref{algebra2} leads to
\begin{equation*}
    \frac{(n_1m_2-n_2m_1)^2}{m_1m_2}=0,
\end{equation*}
which contradicts the assumed linear independence.
It follows that $k_1k_2 = 0$, and setting $k_1=0$ or $k_2=0$ in \eqref{algebra1} leads to $S_2=\{(\pm1,0),(0,\pm1)\}$. 

By restricting the perturbed KP equation \eqref{Fouriereq2} to $S_2$ and $\mathbb{Z}^2\setminus (S_0\cup S_2)$, we obtain the bifurcation equation and the range equation as follows:
\begin{align}
    \frac{2\pi i}{\gamma} \left(k_1\omega_1+k_2\omega_2-\frac{4\pi^2}{\gamma^2}m(\boldsymbol k)^3-\frac{n(\boldsymbol k)^2}{m(\boldsymbol k)}\right)\hat{u}(\boldsymbol k)
    +\frac{2\pi i}{\gamma} \frac{m(\boldsymbol k)}{2} (\hat{u}*\hat{u})(\boldsymbol k)& \notag\\
    + \frac{2\pi i}{\gamma}m(\boldsymbol k)\widehat{P_2(u)}(\boldsymbol k)=0,\quad \boldsymbol k=(k_1,k_2)\in S_2&,\label{qfangcheng2}\\
    \frac{2\pi i}{\gamma} \left(k_1\omega_1+k_2\omega_2-\frac{4\pi^2}{\gamma^2}m(\boldsymbol k)^3-\frac{n(\boldsymbol k)^2}{m(\boldsymbol k)}\right)\hat{u}(\boldsymbol k)
    +\frac{2\pi i}{\gamma} \frac{m(\boldsymbol k)}{2} (\hat{u}*\hat{u})(\boldsymbol k)& \notag\\
    + \frac{2\pi i}{\gamma}m(\boldsymbol k)\widehat{P_2(u)}(\boldsymbol k)=0,\quad\boldsymbol k=(k_1,k_2)\in \mathbb{Z}^2\setminus (S_0\cup S_2)&.\label{pfangcheng2}
\end{align}

\subsection{Solving the range equation}

In this subsection, we solve the range equation of the KP-I equation under the Hamiltonian perturbation $\mathcal{P}_2$.
To start with, we introduce the following spaces.

\begin{definition}
    Let $\sigma'>0, s_1>\frac{1}{2}, s_2>\frac{1}{2}$. We define the following spaces:
    \begin{itemize}
        \item Denote $\nu_2 = \frac{2\pi}{\gamma}$. Let the anisotropic analytic Sobolev space $H^{\sigma',s_1,s_2,\nu_2} := \{ \hat{u} \in \ell^2_e (\mathbb{T}_{\gamma}^2): \hat{u}(\boldsymbol k) = 0,\textup{ for }\boldsymbol k\in S_0,\ \|\hat{u}\|_{\sigma',s_1,s_2,\nu_2}<\infty\}$, where
        \begin{equation*}
            \|\hat{u}\|_{\sigma',s_1,s_2,\nu_2}^2 = \sum_{\boldsymbol k=(k_1,k_2)\in \mathbb{Z}^2 } |\hat{u}(\boldsymbol k)|^2 e^{2\sigma'\nu_2|(m(\boldsymbol k),n(\boldsymbol k))|_1} \langle \nu_2 m(\boldsymbol k) \rangle^{2s_1} \langle \nu_2 n(\boldsymbol k)\rangle^{2s_2} < \infty.
        \end{equation*}
        Equipped with this norm, $H^{\sigma',s_1,s_2,\nu_2}$ is a complex Banach space.
        \item Let $P_{\mathbb{Z}^2\setminus (S_0\cup S_2)}$ be the Fourier projection defined by
        \[
            \left(P_{\mathbb{Z}^2\setminus (S_0\cup S_2)}\hat{u}\right)(\boldsymbol k) = 
            \begin{cases}
                \hat{u}(\boldsymbol k), &\boldsymbol k\in \mathbb{Z}^2\setminus (S_0\cup S_2),\\
                0, &\boldsymbol k\in S_0\cup S_2.
            \end{cases}
        \]
        We set
        \[ 
            P\ell_e^2(\mathbb T_\gamma^2) := \{P_{\mathbb{Z}^2\setminus (S_0\cup S_2)}\hat u:\hat u\in\ell_e^2(\mathbb T_\gamma^2)\}, \quad PH^{\sigma',s_1,s_2,\nu_2} := \{P_{\mathbb{Z}^2\setminus (S_0\cup S_2)}\hat u:\hat u\in H^{\sigma',s_1,s_2,\nu_2}\}.
        \]
        For every $\hat u\in H^{\sigma',s_1,s_2,\nu_2}$, define the projected seminorm
        \[ 
            \|\hat u\|_{\sigma',s_1,s_2,\nu_2,P} := \|P_{\mathbb{Z}^2\setminus (S_0\cup S_2)}\hat u\|_{\sigma',s_1,s_2,\nu_2}. 
        \]
        Its restriction to $PH^{\sigma',s_1,s_2,\nu_2}$ is a norm and satisfies $\|\hat u\|_{\sigma',s_1,s_2,\nu_2,P} = \|\hat u\|_{\sigma',s_1,s_2,\nu_2},\ \hat u\in PH^{\sigma',s_1,s_2,\nu_2}.$ Since $P_{\mathbb{Z}^2\setminus (S_0\cup S_2)}$ is a bounded projection, $PH^{\sigma',s_1,s_2,\nu_2}$ is a closed subspace of $H^{\sigma',s_1,s_2,\nu_2}$ and hence is a Banach space with respect to this norm.
        We also define its real subspace by
        \[
            PH_{\mathrm{re}}^{\sigma',s_1,s_2,\nu_2} := PH^{\sigma',s_1,s_2,\nu_2} \cap  \ell_{\mathrm{re}}^2(\mathbb T_\gamma^2).
        \]
        \item Set $t_1>\frac{1}{2}, t_2>\frac{1}{2}$. Let the bounded linear operator space $\mathcal{L}(PH^{\sigma',s_1,s_2,\nu_2}, PH^{\sigma',t_1,t_2,\nu_2})$ be the Banach space with norm
        \begin{equation*}
            \|T\|_{\sigma',(s_1,s_2),(t_1,t_2),\nu_2,P} = \sup_{0\neq \hat{u} \in PH^{\sigma',s_1,s_2,\nu_2}} \frac{\| T\hat{u}\|_{\sigma',t_1,t_2,\nu_2,P}}{\|\hat{u}\|_{\sigma',s_1,s_2,\nu_2,P}}.
        \end{equation*}
    \end{itemize}
\end{definition}
\begin{remark}
    Denote $\mathcal{M}=\begin{pmatrix}
            m_1 &m_2\\
            n_1 &n_2
        \end{pmatrix}$.
    Notice that
    \begin{equation*}
        \dfrac{1}{\|\mathcal{M}^{-1}\|_1}|\boldsymbol k|_1\leq |(m(\boldsymbol k),n(\boldsymbol k))|_1\leq \|\mathcal{M}\|_1 |\boldsymbol k|_1, \qquad \|\mathcal{M}^{-1}\|_1 \leq \dfrac{4\|\mathcal{M}\|_{max}}{|\det \mathcal{M}|}.
    \end{equation*}
    Thus, we set $\sigma=\dfrac{|\det \mathcal{M}| }{4\|\mathcal{M}\|_{max}}\sigma'$ in Theorem \ref{thm:intro-two-gap}.
\end{remark}
\begin{remark}
    Fix $s_1,s_2,\nu_2,\rho_0$ and $\sigma'>0$. Since $q_j(\boldsymbol r) = r_j + O(\boldsymbol r^3)$, after decreasing $\rho_0$,
    \[
        \|\mathcal{M}\|_{max}\cdot (\|\mathcal{M}\|_{max}\nu_2)^{s_1+s_2}e^{\sigma'\nu_2\|\mathcal{M}\|_1}C_{\eta}\eta^{-1}\max\{|q_1(\boldsymbol r)|,|q_2(\boldsymbol r)|\}<1,\quad \boldsymbol r\in \mathcal{U}_{\rho_0}.
    \]
    Thus, for $\boldsymbol r \in \mathcal{U}_{\rho_0}$, we have
    \begin{align}
        &\|\hat{u}_0(\boldsymbol r)\|_{\sigma',s_1,s_2,\nu_2}^2 = \sum_{\boldsymbol k\in \mathbb{Z}^2} |\hat{u}_0(\boldsymbol k;\boldsymbol r)|^2 e^{2\sigma'\nu_2|(m(\boldsymbol k),n(\boldsymbol k))|_1}\langle \nu_2 m(\boldsymbol k) \rangle^{2s_1} \langle \nu_2 n(\boldsymbol k) \rangle^{2s_2} \lesssim |\boldsymbol r|_{\infty}^2,\label{twogapu0estimate}\\
        &\|\hat{u}_0(\boldsymbol r)\|_{\sigma',s_1,s_2,\nu_2,P}^2 = \sum_{\boldsymbol k\in \mathbb{Z}^2\setminus S_2} |\hat{u}_0(\boldsymbol k;\boldsymbol r)|^2 e^{2\sigma'\nu_2|(m(\boldsymbol k),n(\boldsymbol k))|_1}\langle \nu_2 m(\boldsymbol k) \rangle^{2s_1} \langle \nu_2 n(\boldsymbol k) \rangle^{2s_2} \lesssim |\boldsymbol r|_{\infty}^4\label{twogapu0estimateP}.
    \end{align}
\end{remark}
Once $\gamma$ is given, $\nu_2$ is a fixed constant.
To simplify notation, we suppress the dependence on $\nu_2$ in the sequel.
Let $\hat{u} = \widehat{u_0(\boldsymbol r)} + \hat{v}, \hat{v}\in PH^{\sigma',s_1,s_2}$.
Denote
\begin{align*}
    &\tilde{\Omega}_{\rho,C} = \{(\omega_1,\omega_2,r_1,r_2)\in \mathbb{C}^4:\boldsymbol r = (r_1,r_2) \in \mathcal{U}_{\rho},
        |\omega_j-\omega_{j,0}(\boldsymbol r)| < C|\boldsymbol r|_{\infty}^{p-1},\ j=1,2\},\\
    &\Omega_{\rho,C} = \tilde{\Omega}_{\rho,C} \cap \mathbb{R}^4,
\end{align*}
for fixed constants $0<\rho \leq \rho_0$ and $C = C(\mathcal{P}_2,\gamma,m_1,m_2,n_1,n_2)$. The constant $C$ will be chosen in subsection \ref{sec:two-gap-2}.

Since $u_0$ satisfies \eqref{kp-fourier2}, the equations \eqref{pfangcheng2} reduce to
\begin{align}
    \left(\frac{k_1\omega_1 +k_2\omega_2}{m(\boldsymbol k)}-\frac{4\pi^2}{\gamma^2}m(\boldsymbol k)^2-\frac{n(\boldsymbol k)^2}{m(\boldsymbol k)^2}\right) \hat{v}(\boldsymbol k) + \frac{k_1(\omega_1-\omega_{1,0})+k_2(\omega_2-\omega_{2,0})}{m(\boldsymbol k)}\hat{u}_0(\boldsymbol k)& \notag\\
    + \frac{1}{2} \widehat{[2u_0v+v^2]}(\boldsymbol k)
    + \widehat{P_2(u_0+v)}(\boldsymbol k) = 0, \textup{ for } \boldsymbol k =(k_1,k_2)\in\mathbb{Z}^2\setminus (S_0\cup S_2)&.\label{Pfangcheng2}
\end{align}
Define $L(\boldsymbol \omega, \boldsymbol r)=L_0(\boldsymbol \omega)+L_1(\boldsymbol r), G(\boldsymbol \omega, \boldsymbol r)=G_1(\boldsymbol r) + G_2(\boldsymbol \omega, \boldsymbol r): PH^{\sigma',s_1,s_2} \to P\ell_e^2(\mathbb{T}_{\gamma}^2)$ for $(\boldsymbol \omega, \boldsymbol r) \in \tilde{\Omega}_{\rho_0,C}$ as follows:

For $\boldsymbol k \in \mathbb{Z}^2 \setminus (S_0\cup S_2)$,
\begin{align*}
    (L_0(\boldsymbol \omega)\hat{v})(\boldsymbol k) =&  \left(\frac{k_1\omega_1+k_2\omega_2}{m(\boldsymbol k)}-\frac{4\pi^2}{\gamma^2}m(\boldsymbol k)^2-\frac{n(\boldsymbol k)^2}{m(\boldsymbol k)^2}\right)\hat{v}(\boldsymbol k), \\
    (L_1(\boldsymbol r)\hat{v})(\boldsymbol k) =& \widehat{u_0v}(\boldsymbol k),\quad
    [G_1(\boldsymbol r)(\hat{v})](\boldsymbol k) = \frac{ 1}{2}\widehat{v^2}(\boldsymbol k) +  \widehat{P_{2,1}(u_0+v)}(\boldsymbol k),\\
    [G_2(\boldsymbol \omega, \boldsymbol r)(\hat{v})](\boldsymbol k) =& \frac{k_1(\omega_1-\omega_{1,0}(\boldsymbol r))+k_2(\omega_2-\omega_{2,0}(\boldsymbol r))}{m(\boldsymbol k)}\hat{u}_0(\boldsymbol k) +\widehat{P_{2,2}(u_0+v)}(\boldsymbol k).
\end{align*}
All four sequences are extended by zero on $S_0\cup S_2$.
Then \eqref{Pfangcheng2} is equivalent to
\begin{equation}
    L(\boldsymbol \omega, \boldsymbol r)(\hat{v})+G(\boldsymbol \omega, \boldsymbol r)(\hat{v}) = 0.\label{banachform2}
\end{equation}
We also denote the entries of the Fourier matrix representation of $L_0(\boldsymbol \omega), L_1(\boldsymbol r), L(\boldsymbol \omega, \boldsymbol r)$ at $(\boldsymbol k,\boldsymbol k')$ by $L_0(\boldsymbol \omega)(\boldsymbol k,\boldsymbol k'), L_1(\boldsymbol r)(\boldsymbol k,\boldsymbol k'), L(\boldsymbol \omega, \boldsymbol r)(\boldsymbol k,\boldsymbol k')$ in the following subsections.

To describe the Taylor expansions of the reduced perturbations, set $I_1 = \{ 0,1,2 \}, I_2 = \{ 0,1 \}$. 
For $\beta_1 = (\beta_{\alpha_1})_{\alpha_1 \in I_1} \in \mathbb{N}^{3},\ \beta_2 = (\beta_{\alpha_2})_{\alpha_2 \in I_2} \in \mathbb{N}^{2}$, set $\boldsymbol{u}^{\beta_1}=\prod_{\alpha_1 \in I_1}(D_m^{\alpha_1}u)^{\beta_{\alpha_1}}, \boldsymbol{u}^{\beta_2}=\prod_{\alpha_2 \in I_2}(D_n^{\alpha_2}u)^{\beta_{\alpha_2}}$.
Since $P_{2,1}(u),P_{2,2}(u)$ are real analytic, we expand them into Taylor series
\begin{equation*}
    P_{2,j}(u)=\sum_{l=p}^{\infty} P_{2,j,l}(u),\quad
    P_{2,j,l}(u)=\sum_{\beta_j=(\beta_{\alpha_j})_{\alpha_j\in I_j},|\beta_j|_1=l} p_{\beta_j}\boldsymbol{u}^{\beta_j},\quad p_{\beta_j} \in \mathbb{R},\quad j=1,2.
\end{equation*}
In particular,
\begin{equation*}
    P_{2,1,p}(u) = a_p u^{p},\quad P_{2,1,p+1}(u) = a_{p+1}u^{p+1},\quad
    P_{2,2,p}(u) = b_p u^{p},\quad P_{2,2,p+1}(u) = b_{p+1}u^{p+1}.
\end{equation*}
We define the scalar majorants
\begin{align*}
    &f_{2,1}(z) := |a_p|\cdot z^p + |a_{p+1}|\cdot z^{p+1} + f_{2,1,>p+1}(z),\\ 
    &f_{2,1,>p+1}(z) := \sum_{l=p+2}^{\infty} \sum_{|\beta_1|_1=l} |p_{\beta_1}|\cdot z^{l}, \quad z> 0,
\end{align*}
and
\begin{align*}
    &f_{2,2}(z) := |b_p|\cdot z^p + |b_{p+1}|\cdot z^{p+1} + f_{2,2,>p+1}(z),\\ 
    &f_{2,2,>p+1}(z) := \sum_{l=p+2}^{\infty} \sum_{|\beta_2|_1=l} |p_{\beta_2}|\cdot z^{l}, \quad z> 0.
\end{align*}
Denote $f_2(z) = f_{2,1}(z) + f_{2,2}(z)$.
Since $Q_{2,1,>p+1}(z_0, z_1, z_2), Q_{2,2,>p+1}(z_0, z_1)$ are analytic in $|(z_0,z_1,z_2)|_\infty< R_{2}$ and $|(z_0,z_1)|_\infty< R_{2}$ respectively, both series $f_{2,1,>p+1}(z)$ and $f_{2,2,>p+1}(z)$ converge for $0<z< R_2$.

\begin{lemma}\label{lm5.2}
    If $s_1,s_2>\frac{1}{2}$ and $\hat{u}_1,\hat{u}_2 \in H^{\sigma',s_1,s_2}$, there exists a constant $C_{\sigma',s_1,s_2}>0$ such that
    \[
    \| \hat{u}_1*\hat{u}_2\|_{\sigma',s_1,s_2} \leq C_{\sigma', s_1,s_2} \| \hat{u}_1\|_{\sigma',s_1,s_2} \| \hat{u}_2\|_{\sigma',s_1,s_2}.
    \]
\end{lemma}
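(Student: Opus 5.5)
The plan is to reduce to a weighted convolution inequality on $\mathbb{Z}^2$ and then apply Cauchy--Schwarz. Write $w(\boldsymbol k):=e^{\sigma'\nu_2|(m(\boldsymbol k),n(\boldsymbol k))|_1}\langle \nu_2 m(\boldsymbol k)\rangle^{s_1}\langle \nu_2 n(\boldsymbol k)\rangle^{s_2}$, so that $\|\hat u\|_{\sigma',s_1,s_2}=\|w\hat u\|_{\ell^2(\mathbb Z^2)}$.

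The key observation is that $m(\cdot)$ and $n(\cdot)$ are linear. Consequently, for $\boldsymbol k=\boldsymbol k_1+\boldsymbol k_2$ we have $m(\boldsymbol k)=m(\boldsymbol k_1)+m(\boldsymbol k_2)$, and similarly for $n$. The triangle inequality then gives
\[
e^{\sigma'\nu_2|(m(\boldsymbol k),n(\boldsymbol k))|_1}\le e^{\sigma'\nu_2|(m(\boldsymbol k_1),n(\boldsymbol k_1))|_1}e^{\sigma'\nu_2|(m(\boldsymbol k_2),n(\boldsymbol k_2))|_1},
\]
so the exponential weight is submultiplicative and passes through unchanged. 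For the polynomial weights I will use the elementary bound $\langle a+b\rangle^{s}\le 2^{s}(\langle a\rangle^{s}+\langle b\rangle^{s})$, applied separately in the $m$- and $n$-directions. Multiplying the two expansions splits $w(\boldsymbol k)/e^{(\cdot)}$ into four terms. Each term is a product of the form $\langle \nu_2 m(\boldsymbol k_i)\rangle^{s_1}\langle \nu_2 n(\boldsymbol k_j)\rangle^{s_2}$ with $i,j\in\{1,2\}$.

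The two terms with $i=j$ are straightforward. In each of them the full weight $w$ sits on one factor, and I will apply Young's inequality $\ell^2*\ell^1\subset\ell^2$. This requires $\sum_{\boldsymbol k}|\hat u(\boldsymbol k)|e^{\sigma'\nu_2|\cdot|_1}\le \|\hat u\|_{\sigma',s_1,s_2}\big(\sum_{\boldsymbol k}\langle \nu_2 m(\boldsymbol k)\rangle^{-2s_1}\langle \nu_2 n(\boldsymbol k)\rangle^{-2s_2}\big)^{1/2}$. The last sum is finite because $\boldsymbol k\mapsto(m(\boldsymbol k),n(\boldsymbol k))$ is an injective linear map of $\mathbb Z^2$ into $\mathbb Z^2$, which follows from $\det\mathcal M=m_1n_2-m_2n_1\neq0$. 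The sum is therefore dominated by $\sum_{(a,b)\in\mathbb Z^2}\langle \nu_2 a\rangle^{-2s_1}\langle \nu_2 b\rangle^{-2s_2}<\infty$, which holds since $s_1,s_2>\frac12$.

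The mixed terms, such as $\langle \nu_2 m(\boldsymbol k_1)\rangle^{s_1}\langle \nu_2 n(\boldsymbol k_2)\rangle^{s_2}$, are the main obstacle, because the weight is now split across the two factors. For these I will define $F_1(\boldsymbol k)=|\hat u_1(\boldsymbol k)|\langle \nu_2 m(\boldsymbol k)\rangle^{s_1}$ and $F_2(\boldsymbol k)=|\hat u_2(\boldsymbol k)|\langle \nu_2 n(\boldsymbol k)\rangle^{s_2}$, each also carrying its exponential factor. I will change variables to $(a,b)=(m(\boldsymbol k),n(\boldsymbol k))$, which is legitimate by injectivity, and work on the lattice $\mathcal M\mathbb Z^2\subset\mathbb Z^2$. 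The task is then to bound $\|F_1*F_2\|_{\ell^2}$. The idea is a product-type estimate: $F_1$ has $\ell^2$ control in $a$ after weighting by $\langle b\rangle^{s_2}$, with a $\langle a\rangle^{s_1}$ surplus, and $F_2$ has the symmetric property. Concretely, I will apply Cauchy--Schwarz in the form
\[
|(F_1*F_2)(a,b)|^2\le \Big(\sum |\hat u_1|^2w^2\,|\hat u_2|^2w^2\Big)\cdot\sum_{(a_1,b_1)}\langle a_1\rangle^{-2s_1}\langle b_1\rangle^{2s_2}\cdots ,
\]
after redistributing the weights as $\langle \nu_2 m(\boldsymbol k_1)\rangle^{s_1}\langle \nu_2 n(\boldsymbol k_2)\rangle^{s_2}=\frac{w_1w_2}{\langle n(\boldsymbol k_1)\rangle^{s_2}\langle m(\boldsymbol k_2)\rangle^{s_1}}$, with the exponentials absorbed. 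The remaining factor is $\sup_{\boldsymbol k}\sum_{\boldsymbol k_1}\langle n(\boldsymbol k_1)\rangle^{-2s_2}\langle m(\boldsymbol k-\boldsymbol k_1)\rangle^{-2s_1}$. Again by injectivity, this is at most $\sum_{b}\langle b\rangle^{-2s_2}\cdot\sup\sum_{a}\langle a'-a\rangle^{-2s_1}$, a product of two one-dimensional convergent sums that is uniform in $\boldsymbol k$. Summing over $\boldsymbol k$ then gives $\|\hat u_1\|\,\|\hat u_2\|$.

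Finally, I will collect the four contributions, which produces the constant $C_{\sigma',s_1,s_2}$ (depending also on $\nu_2$ and $\mathcal M$). I will note that the evenness and vanishing on $S_0$ are preserved under convolution only as needed for membership in the ambient space; the lemma itself is purely an estimate on sequences.
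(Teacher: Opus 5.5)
The paper states this lemma without proof, treating it, like its one-dimensional analogue, as a standard algebra property. So there is no competing route to compare against: your argument is correct and supplies the omitted proof.

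The structure fits this product-type weight:
\begin{itemize}
\item the exponential factor is submultiplicative because $m$ and $n$ are linear;
\item $\langle a+b\rangle\le\langle a\rangle+\langle b\rangle$ gives the four-term split;
\item the diagonal terms follow from Young's inequality plus the $\ell^1$ bound;
\item the mixed terms follow from Cauchy--Schwarz with the weight $\langle \nu_2 n(\boldsymbol k_1)\rangle^{-2s_2}\langle \nu_2 m(\boldsymbol k-\boldsymbol k_1)\rangle^{-2s_1}$, whose sum over $\boldsymbol k_1$ is bounded uniformly in $\boldsymbol k$ by $\sum_{a}\langle\nu_2 a\rangle^{-2s_1}\sum_{b}\langle\nu_2 b\rangle^{-2s_2}$.
\end{itemize}

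Your proof also makes explicit two points the paper leaves implicit. First, $s_1>\frac12$ and $s_2>\frac12$ are each used separately in the mixed terms. Second, injectivity of $\boldsymbol k\mapsto(m(\boldsymbol k),n(\boldsymbol k))$, i.e.\ $m_1n_2-m_2n_1\neq0$, is indispensable. Without it the fibres of this map are infinite, the weight is constant along them, and the estimate fails in general.

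Once injectivity holds, every lattice sum is dominated by the corresponding sum over all of $\mathbb Z^2$. Hence the constant can be taken as $2^{s_1+s_2+2}\bigl(\sum_{(a,b)\in\mathbb Z^2}\langle\nu_2 a\rangle^{-2s_1}\langle\nu_2 b\rangle^{-2s_2}\bigr)^{1/2}$. This is independent of $\sigma'$ and of $\mathcal M$, so your remark that it depends on $\mathcal M$ is unnecessary.

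Two small corrections.
\begin{itemize}
\item The displayed Cauchy--Schwarz inequality carries the wrong weight. It should be $\langle \nu_2 n(\boldsymbol k_1)\rangle^{-2s_2}\langle \nu_2 m(\boldsymbol k-\boldsymbol k_1)\rangle^{-2s_1}$: a negative exponent on the $n$-factor, and the $m$-factor evaluated at $\boldsymbol k-\boldsymbol k_1$. The sentence following the display does state the correct remaining factor.
\item Vanishing on $S_0$ is \emph{not} preserved by convolution. For instance, the coefficient at $\boldsymbol k=\boldsymbol 0$ is $\sum_{\boldsymbol k_1}\hat u_1(\boldsymbol k_1)\hat u_2(-\boldsymbol k_1)$. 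So the left-hand side must be read as the weighted $\ell^2$ norm over all of $\mathbb Z^2$. Your argument never uses the support condition, so this costs nothing, but the closing remark should say so rather than suggest the condition is preserved.
\end{itemize}
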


\begin{lemma}\label{lm3.8}  
    Denote $\hat{u}=\hat{u}_0+\hat{v},\hat{u}_1= \hat{u}_0+\hat{v}_1,\hat{u}_2= \hat{u}_0+\hat{v}_2$. Suppose $(\boldsymbol \omega,\boldsymbol r) \in \tilde{\Omega}_{\rho_0,C}$ for some constant $C$, $s_1>\frac{5}{2},s_2>\frac{3}{2}$, and $\hat{v}\in PH^{\sigma',s_1,s_2}$ such that $C_{\sigma', s_1,s_2}\|\hat{u}\|_{\sigma',s_1,s_2}< R_2$. Then 
    \begin{align}
        &\|G_1(\boldsymbol r)(\hat{v})\|_{\sigma',s_1-2,s_2,P} \leq C_{\sigma', s_1,s_2}\|\hat{v}\|_{\sigma',s_1,s_2,P}^2+f_{2,1}(C_{\sigma', s_1,s_2}\|\hat{u}_0+\hat{v}\|_{\sigma',s_1,s_2}),\label{nonlinearestimate2.1}\\
        &\|G_2(\boldsymbol \omega, \boldsymbol r)(\hat{v})\|_{\sigma',s_1,s_2-1,P} \leq C_{\mathcal{M}}C|\boldsymbol r|_{\infty}^{p-1} \|\hat{u}_0\|_{\sigma',s_1,s_2,P} + f_{2,2}(C_{\sigma', s_1,s_2}\|\hat{u}_0+\hat{v}\|_{\sigma',s_1,s_2})\label{nonlinearestimate2.2}.
    \end{align}
    Here $C_{\mathcal{M}}$ is a constant depending on $\mathcal{M}$. Moreover, if $\max( \|\hat{v}_1\|_{\sigma',s_1,s_2,P}, \|\hat{v}_2\|_{\sigma',s_1,s_2,P})\leq M_v$, we have
    \begin{align}
        \|G_1(\boldsymbol r)(\hat{v}_2)-G_1(\boldsymbol r)(\hat{v}_1)\|_{\sigma',s_1-2,s_2,P}
        \leq &\big( C_{\sigma', s_1,s_2}M_v \label{Lipchitz2.1line1}\\
        + f_{2,1}'(C_{\sigma', s_1,s_2}&(\|\hat{u}_0\|_{\sigma',s_1,s_2}+M_v))\big) \|\hat{v}_2-\hat{v}_1\|_{\sigma',s_1,s_2,P},\label{Lipchitz2.1line2}\\
        \|G_2(\boldsymbol \omega, \boldsymbol r)(\hat{v}_2)-G_2(\boldsymbol \omega, \boldsymbol r)(\hat{v}_1)\|_{\sigma',s_1,s_2-1,P}\leq &f_{2,2}'(C_{\sigma', s_1,s_2}(\|\hat{u}_0\|_{\sigma',s_1,s_2}+M_v)) \|\hat{v}_2-\hat{v}_1\|_{\sigma',s_1,s_2,P}.\label{Lipchitz2.2}
    \end{align}
\end{lemma}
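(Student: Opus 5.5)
The plan is to mirror the proof of Lemma~\ref{lm3.6}, with one change: the two nonlinear pieces are measured in two different target spaces, and this anisotropy is exactly what the decomposition $Q_2=Q_{2,1}(J_x^2u)+Q_{2,2}(J_y^1u)$ allows. Everything rests on the algebra property of Lemma~\ref{lm5.2}, together with two elementary weight comparisons. First, $D_m^{\alpha}$ maps $H^{\sigma',s_1,s_2}$ into $H^{\sigma',s_1-\alpha,s_2}$ with norm at most $1$, since its Fourier symbol is $(i\nu_2 m(\boldsymbol k))^{\alpha}$ and $|\nu_2 m(\boldsymbol k)|^{\alpha}\le\langle\nu_2 m(\boldsymbol k)\rangle^{\alpha}$. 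Second, $D_n$ maps $H^{\sigma',s_1,s_2}$ into $H^{\sigma',s_1,s_2-1}$ with norm at most $1$, by the same argument in the $n$ variable. Third, the norms are monotone in the indices $s_1,s_2$, and $s_1-2>\tfrac12$, $s_2-1>\tfrac12$ hold under the assumptions $s_1>\tfrac52$, $s_2>\tfrac32$. So Lemma~\ref{lm5.2} is available in both $H^{\sigma',s_1-2,s_2}$ and $H^{\sigma',s_1,s_2-1}$.

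To bound $G_1$, I take a monomial $p_{\beta_1}\boldsymbol u^{\beta_1}$ with $|\beta_1|_1=l$. Each of its factors is $D_m^{\alpha_1}u$ with $\alpha_1\le2$, so every factor lies in $H^{\sigma',s_1-2,s_2}$ with norm at most $\|\hat u\|_{\sigma',s_1,s_2}$. Applying Lemma~\ref{lm5.2} in that space $l-1$ times gives a bound $|p_{\beta_1}|(C_{\sigma',s_1,s_2}\|\hat u\|_{\sigma',s_1,s_2})^l$. Here I take $C_{\sigma',s_1,s_2}$ to dominate the algebra constants of all the spaces involved, which is enough. Summing over $\beta_1$ and $l$, and then adding the term $\tfrac12\widehat{v^2}$, gives \eqref{nonlinearestimate2.1}; the projection $P_{\mathbb Z^2\setminus(S_0\cup S_2)}$ does not increase norms. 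The convergence of this sum comes from $C_{\sigma',s_1,s_2}\|\hat u\|<R_2$ and the definition of the majorant $f_{2,1}$.

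The bound for $G_2$ is the same argument with $D_n^{\alpha_2}$, $\alpha_2\le1$, working in $H^{\sigma',s_1,s_2-1}$. The only new term is the frequency term $\frac{k_1(\omega_1-\omega_{1,0})+k_2(\omega_2-\omega_{2,0})}{m(\boldsymbol k)}\hat u_0(\boldsymbol k)$. Since $m(\boldsymbol k)$ is a nonzero integer off $S_0$, I use $|m(\boldsymbol k)|\ge1$ and $|\boldsymbol k|_1\le\|\mathcal M^{-1}\|_1|(m(\boldsymbol k),n(\boldsymbol k))|_1$. Hence $|k_1|+|k_2|\lesssim\langle\nu_2m(\boldsymbol k)\rangle+\langle\nu_2n(\boldsymbol k)\rangle\lesssim\langle\nu_2 m(\boldsymbol k)\rangle\langle\nu_2 n(\boldsymbol k)\rangle$. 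On $\tilde\Omega_{\rho_0,C}$ the factor $|\omega_j-\omega_{j,0}|$ is at most $C|\boldsymbol r|_\infty^{p-1}$.

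This frequency term is the step I expect to be the main obstacle. The factor $|\boldsymbol k|_1$ costs one derivative in $n$ when $|n(\boldsymbol k)|$ dominates, and one derivative in $m$ otherwise, whereas the target space only gives up one $n$-derivative. I would first try to absorb the $m$-case into the exponential weight $e^{2\sigma'\nu_2|(m,n)|_1}$, or else into the decay of $\hat u_0$: Corollary~\ref{construction2} gives $|\hat u_0(\boldsymbol k)|\lesssim(|r|/\eta)^{|\boldsymbol k|_1}$, and this decay beats any polynomial loss for small $\rho_0$. Either route yields the constant $C_{\mathcal M}$ multiplying $\|\hat u_0\|_{\sigma',s_1,s_2,P}$, possibly after slightly shrinking $\rho_0$.

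The Lipschitz bounds \eqref{Lipchitz2.1line1}--\eqref{Lipchitz2.2} follow as in Lemma~\ref{lm3.6}. I write the differences with the integral form of the remainder, using $\partial P_{2,j,l}/\partial(D^{\alpha}u)$ evaluated along $u_1+t(u_2-u_1)$. The factor $D^{\alpha}(v_2-v_1)$ is bounded in the appropriate lower space, the remaining $l-1$ factors are estimated by Lemma~\ref{lm5.2}, and the derivative majorants $f_{2,1}'$ and $f_{2,2}'$ come out. The frequency term cancels in the $G_2$ difference because it does not depend on $\hat v$. The quadratic term $\tfrac12\widehat{v^2}$ contributes $C_{\sigma',s_1,s_2}M_v$ to the $G_1$ difference.
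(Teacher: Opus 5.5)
Your handling of the polynomial parts of $G_1$ and $G_2$ matches the paper's proof and is fine: the algebra property of Lemma~\ref{lm5.2} in $H^{\sigma',s_1-2,s_2}$ and $H^{\sigma',s_1,s_2-1}$, the majorants $f_{2,1},f_{2,2}$, and the integral form of the remainder for the Lipschitz bounds. The gap is the frequency term of $G_2$, which you leave unresolved.

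The obstacle you describe is an artifact of your own estimate. You used $|m(\boldsymbol k)|\ge 1$ only to throw away the denominator. That turned a harmless multiplier into $|\boldsymbol k|_1$ and created a spurious loss of one $m$-weight. Neither of your workarounds then gives the stated inequality.
\begin{itemize}
\item The exponential weight $e^{2\sigma'\nu_2|(m(\boldsymbol k),n(\boldsymbol k))|_1}$ is identical in $\|\cdot\|_{\sigma',s_1,s_2,P}$ and $\|\cdot\|_{\sigma',s_1,s_2-1,P}$. There is no room to absorb a polynomial factor into it unless you lower $\sigma'$ in the target, which changes the statement.
\item The decay $|\hat u_0(\boldsymbol k)|\lesssim(|\boldsymbol r|/\eta)^{|\boldsymbol k|_1}$ gives a bound $O(C|\boldsymbol r|_\infty^{p+1})$ with constants depending on $\eta,C_\eta,\rho_0$. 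That would suffice for Lemma~\ref{321}. It is not the claimed bound $C_{\mathcal M}C|\boldsymbol r|_\infty^{p-1}\|\hat u_0\|_{\sigma',s_1,s_2,P}$, which would additionally require a lower bound $\|\hat u_0\|_{\sigma',s_1,s_2,P}\gtrsim|\boldsymbol r|_\infty^2$.
\end{itemize}

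The missing step is to keep the denominator. Since $|\boldsymbol k|_1\le\|\mathcal M^{-1}\|_1\bigl(|m(\boldsymbol k)|+|n(\boldsymbol k)|\bigr)$, and $|m(\boldsymbol k)|\ge1$ off $S_0$ is used only on the second term, you get
\[
\frac{|k_1|+|k_2|}{|m(\boldsymbol k)|}\le\|\mathcal M^{-1}\|_1\Bigl(1+\frac{|n(\boldsymbol k)|}{|m(\boldsymbol k)|}\Bigr)\le\|\mathcal M^{-1}\|_1\bigl(1+|n(\boldsymbol k)|\bigr)\le\sqrt2\max\{1,\nu_2^{-1}\}\|\mathcal M^{-1}\|_1\langle\nu_2 n(\boldsymbol k)\rangle .
\]
In the case $|m(\boldsymbol k)|\ge|n(\boldsymbol k)|$ this multiplier is bounded, so there is no $m$-loss at all. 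In general it costs exactly one $n$-weight, which is precisely what passing from the $(s_1,s_2)$ norm to the $(s_1,s_2-1)$ norm allows.

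Combined with $|\omega_j-\omega_{j,0}(\boldsymbol r)|<C|\boldsymbol r|_\infty^{p-1}$, this gives \eqref{nonlinearestimate2.2} with $C_{\mathcal M}=\sqrt2\max\{1,\nu_2^{-1}\}\|\mathcal M^{-1}\|_1$.

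This is also why the frequency term is placed in $G_2$ rather than $G_1$. In the $(s_1-2,s_2)$ norm the ratio $|n(\boldsymbol k)|/|m(\boldsymbol k)|$ could not be controlled when $|n(\boldsymbol k)|\gg|m(\boldsymbol k)|^2$.
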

\begin{proof}
    With Lemma \ref{lm5.2}, for any $\beta_j$ with $|\beta_j|_1=l$, we have
    \begin{align*}
        &\|\widehat{ p_{\beta_1}  \boldsymbol{u}^{\beta_1} }\|_{\sigma',s_1-2,s_2} \leq |p_{\beta_1}| C_{\sigma', s_1,s_2}^l \|\hat{u}\|_{\sigma',s_1,s_2}^{l},\\
        &\|\widehat{ p_{\beta_2}  \boldsymbol{u}^{\beta_2} }\|_{\sigma',s_1,s_2-1} \leq |p_{\beta_2}| C_{\sigma', s_1,s_2}^l \|\hat{u}\|_{\sigma',s_1,s_2}^{l},
    \end{align*}
    which implies that
    \begin{align*}
        \|\widehat{ P_{2,1}(u) }\|_{\sigma',s_1-2,s_2}\leq \sum_{l=p}^{\infty} \sum_{|\beta_1|_1=l} |p_{\beta_1}| (C_{\sigma', s_1,s_2}\|\hat{u}\|_{\sigma',s_1,s_2})^{l},\\
        \|\widehat{ P_{2,2}(u) }\|_{\sigma',s_1,s_2-1}\leq \sum_{l=p}^{\infty} \sum_{|\beta_2|_1=l} |p_{\beta_2}| (C_{\sigma', s_1,s_2}\|\hat{u}\|_{\sigma',s_1,s_2})^{l}.
    \end{align*}
    If $C_{\sigma', s_1,s_2}\|\hat{u}\|_{\sigma',s_1,s_2}< R_2$, we have
    \begin{align*}
        \|G_1(\boldsymbol r)(\hat{v})\|_{\sigma',s_1-2,s_2,P}
        \leq& \left\| \frac{1}{2}\hat{v}*\hat{v}\right\|_{\sigma',s_1-2,s_2,P}+\| \widehat{P_{2,1} (u_0 +v)} \|_{\sigma',s_1-2,s_2}\\
        \leq& C_{\sigma', s_1,s_2}\|\hat{v}\|_{\sigma',s_1,s_2,P}^2 + f_{2,1}( C_{\sigma', s_1,s_2}\|\hat{u}_0+\hat{v}\|_{\sigma',s_1,s_2}),
    \end{align*}
    and
    \begin{align*}
        &\|G_2(\boldsymbol \omega, \boldsymbol r)(\hat{v})\|_{\sigma',s_1,s_2-1,P}\\
        \leq& \left\| \frac{k_1(\omega_1-\omega_{1,0}(\boldsymbol r))+k_2(\omega_2-\omega_{2,0}(\boldsymbol r))}{m(\boldsymbol k)}\hat{u}_0\right\|_{\sigma',s_1,s_2-1,P} + \| \widehat{ P_{2,2}(u_0 +v)} \|_{\sigma',s_1,s_2-1}\\
        \leq& C_{\mathcal{M}}C|\boldsymbol r|_{\infty}^{p-1}\|\hat{u}_0\|_{\sigma',s_1,s_2,P} + f_{2,2}( C_{\sigma', s_1,s_2}\|\hat{u}_0+\hat{v}\|_{\sigma',s_1,s_2}).
    \end{align*}    
    For the Lipschitz estimate, we use the integral form of the remainder:
    \begin{align*}
        G_1(\boldsymbol r)(\hat{v}_2)-G_1(\boldsymbol r)(\hat{v}_1)
        &=\mathcal{F}\bigg( (v_2-v_1)\cdot\int_0^1 (v_1+t(v_2-v_1))dt\\
        &+\sum_{l=p}^{\infty}\sum_{\alpha_1 \in I_1}D_m^{\alpha_1}(v_2-v_1)\int_0^1\frac{\partial P_{2,1,l}}{\partial (D_m^{\alpha_1}u)}(u_1+ t(u_2-u_1)) dt \bigg).
    \end{align*}
    The first term can be estimated as
    \begin{align*}
        \bigg\|\mathcal{F}\left( (v_2-v_1)\int_0^1 (v_1+t(v_2-v_1))dt \right)\bigg\|_{\sigma',s_1-2,s_2,P} \leq &C_{\sigma',s_1,s_2} \max(\|\hat{v}_1\|_{\sigma',s_1,s_2,P},\|\hat{v}_2\|_{\sigma',s_1,s_2,P}) \\
        &\cdot \|\hat{v}_2-\hat{v}_1\|_{\sigma',s_1,s_2,P},
    \end{align*}
    For the second term, with Lemma \ref{lm5.2} and the coefficient-wise estimates of $P_{2,1}$, we have
    \begin{align*}
        &\sum_{l=p}^{\infty}\left\|\mathcal{F} \left( \sum_{\alpha_1 \in I_1}D_m^{\alpha_1}(v_2-v_1)\int_0^1\frac{\partial P_{2,1,l}}{\partial (D_m^{\alpha_1}u)}(u_1+ t(u_2-u_1)) dt \right) \right\|_{\sigma',s_1-2,s_2}\\
        \leq & \sum_{l=p}^{\infty} \sum_{\alpha_1 \in I_1}\|\widehat{ D_m^{\alpha_1}(v_2-v_1) }\|_{\sigma',s_1-2,s_2} \cdot \sum_{|\beta_1|_1=l}\left\| \mathcal{F}\left( \int_0^1 \beta_{\alpha_1} p_{\beta_1}\boldsymbol{\tilde{u}}(t)^{\beta_1-e_{\alpha_1}} dt \right) \right\|_{\sigma',s_1-2,s_2}\\
        \leq &  \sum_{l=p}^{\infty} \sum_{|\beta_1|_1=l} |p_{\beta_1}| l( C_{\sigma', s_1,s_2} \max_{t\in [0,1]}\|\widehat{ \tilde{u}(t) }\|_{\sigma',s_1,s_2})^{l-1}\cdot\|\hat{v}_2-\hat{v}_1\|_{\sigma',s_1,s_2}\\
        =& f_{2,1}'(C_{\sigma', s_1,s_2} \max( \|\hat{u}_1\|_{\sigma',s_1,s_2}, \|\hat{u}_2\|_{\sigma',s_1,s_2} ) )\cdot \|\hat{v}_2-\hat{v}_1\|_{\sigma',s_1,s_2,P},
    \end{align*}
    where $\tilde{u}(t) =u_1+t(u_2-u_1),\ \boldsymbol{\tilde{u}}(t)^{\beta_1} = \prod_{\alpha_1 \in I_1}\left( D_m^{\alpha_1}\tilde{u}(t)\right)^{\beta_{\alpha_1}}$ for $\alpha_1\in I_1$, and $e_{\alpha_1} = (\delta_{\alpha_1,\alpha_1'})_{\alpha_1'\in I_1}$.
    
    In conclusion, we have
    \begin{align*}
        \|G_1(\boldsymbol r)(\hat{v}_2)-G_1(\boldsymbol r)(\hat{v}_1)\|_{\sigma',s_1-2,s_2,P}\leq &\left( C_{\sigma', s_1,s_2} M_v + f_{2,1}'(C_{\sigma', s_1,s_2}(\|\hat{u}_0\|_{\sigma',s_1,s_2}+M_v))\right) \\
        &\cdot \|\hat{v}_2-\hat{v}_1\|_{\sigma',s_1,s_2,P}.
    \end{align*}
    Similarly, we have
    \begin{align*}
        &\|G_2(\boldsymbol \omega, \boldsymbol r)(\hat{v}_2)-G_2(\boldsymbol \omega, \boldsymbol r)(\hat{v}_1)\|_{\sigma',s_1,s_2-1,P}\\
        \leq&\sum_{l=p}^{\infty}\left\| \mathcal{F}\left( \sum_{\alpha_2 \in I_2}D_n^{\alpha_2}(v_2-v_1)\int_0^1\frac{\partial P_{2,2,l}}{\partial (D_n^{\alpha_2}u)}(u_1+ t(u_2-u_1)) dt \right)\right\|_{\sigma',s_1,s_2-1}\\
        \leq & f_{2,2}'(C_{\sigma', s_1,s_2} \max( \|\hat{u}_1\|_{\sigma',s_1,s_2}, \|\hat{u}_2\|_{\sigma',s_1,s_2} ) )\cdot \|\hat{v}_2-\hat{v}_1\|_{\sigma',s_1,s_2,P}.
    \end{align*}
\end{proof}

Choose $\rho_1 \in (0,\rho_0]$ such that $C_{\sigma', s_1,s_2} \|\hat{u}_0(\boldsymbol r)\|_{\sigma',s_1,s_2} < R_2,\textup{ for } \boldsymbol r\in \mathcal{U}_{\rho_1}$.
\begin{lemma}\label{lm3.9}
    Suppose $s_1>\frac{5}{2},s_2>\frac{3}{2}$. Then, there exist constants $C_2>0$, and $\rho_2 \in (0,\rho_1]$ such that $L^{-1}(\boldsymbol \omega, \boldsymbol r)$ is a bounded operator from $PH^{\sigma',s_1-2,s_2}$ or $PH^{\sigma,s_1,s_2-1}$ to $PH^{\sigma',s_1,s_2}$ for $(\boldsymbol \omega, \boldsymbol r) \in \tilde{\Omega}_{\rho_2,C}$ with bound less than $2C_2$.
\end{lemma}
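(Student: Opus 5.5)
The plan follows the one-gap Lemma~\ref{lm3.7}: first invert the diagonal part $L_0(\boldsymbol\omega)$ with explicit divisor bounds, then absorb $L_1(\boldsymbol r)$ by a Neumann series.

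\textbf{Step 1: the divisor.} For $\boldsymbol k\in\mathbb Z^2\setminus(S_0\cup S_2)$ we write
\[
d(\boldsymbol k;\boldsymbol\omega)=\frac{k_1\omega_1+k_2\omega_2}{m(\boldsymbol k)}-\frac{4\pi^2}{\gamma^2}m(\boldsymbol k)^2-\frac{n(\boldsymbol k)^2}{m(\boldsymbol k)^2}.
\]
We compare it with its value at $\boldsymbol\omega_0(\boldsymbol 0)$. By \eqref{omegaj0} and the definition of $\tilde\Omega_{\rho,C}$, $|\omega_j-\omega_{j,0}(\boldsymbol 0)|\le C_0\rho^2+C\rho^{p-1}$. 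The perturbation in $d$ is therefore at most $|\boldsymbol k|_1(C_0\rho^2+C\rho^{p-1})/|m(\boldsymbol k)|$.

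\textbf{Step 2: lower bounds.} We split the modes into two regimes.
\begin{itemize}
\item[(a)] \emph{High modes.} Using $|\boldsymbol k|_1\lesssim|(m(\boldsymbol k),n(\boldsymbol k))|_1$ (the remark with $\mathcal M$) and $|m(\boldsymbol k)|\ge1$, we aim for the coercive bound
\[
|d(\boldsymbol k;\boldsymbol\omega_0(\boldsymbol0))|\ \ge\ c\bigl(m(\boldsymbol k)^2+n(\boldsymbol k)^2/m(\boldsymbol k)^2\bigr)-C'|\boldsymbol k|_1
\]
whenever $|(m(\boldsymbol k),n(\boldsymbol k))|_1$ is large. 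The negative terms $-\frac{4\pi^2}{\gamma^2}m^2-n^2/m^2$ have a definite sign in KP-I, and the linear term $(k_1\omega_1+k_2\omega_2)/m$ is $O(|\boldsymbol k|_1)$. When $|m|$ is large, $m^2$ dominates. When $|m|$ is bounded, $|\boldsymbol k|_1\lesssim|n|$ and $n^2/m^2$ dominates.
\item[(b)] \emph{Finitely many remaining modes.} Here $d(\boldsymbol k;\boldsymbol\omega_0(\boldsymbol0))\ne0$. This follows from the resonance analysis giving $S_2=\{(\pm1,0),(0,\pm1)\}$, which uses $4\pi^2/\gamma^2\notin\mathbb Q$. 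Taking the minimum over this finite set gives a uniform positive bound.
\end{itemize}
Choosing $\rho_2$ small then yields
\[
|d(\boldsymbol k;\boldsymbol\omega)|\ \ge\ c_*\bigl(\langle\nu_2m(\boldsymbol k)\rangle^2+\langle\nu_2n(\boldsymbol k)\rangle/\langle\nu_2m(\boldsymbol k)\rangle\bigr)
\]
uniformly on $\tilde\Omega_{\rho_2,C}$. Hence $|d|\gtrsim\langle\nu_2m\rangle^2$ and, separately, $|d|\gtrsim\langle\nu_2n\rangle$ up to constants; the second uses $n^2/m^2+m^2\ge2|n|$.

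This is the main obstacle. We must show that a \emph{single} divisor dominates both $\langle m\rangle^2$ and $\langle n\rangle$. That requires the AM--GM step $m^2+n^2/m^2\ge 2|n|$, together with a careful check that the linear frequency term $(k_1\omega_1+k_2\omega_2)/m$ cannot cancel the quadratic terms for moderately sized $\boldsymbol k$ when $|m(\boldsymbol k)|$ is small.

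\textbf{Step 3: bound on $L_0^{-1}$.} From Step 2, $L_0^{-1}(\boldsymbol\omega)$ gains two derivatives in the $m$-weight, so it maps $PH^{\sigma',s_1-2,s_2}\to PH^{\sigma',s_1,s_2}$. It also gains one derivative in the $n$-weight, so it maps $PH^{\sigma',s_1,s_2-1}\to PH^{\sigma',s_1,s_2}$. Both norms are bounded by some $C_2$, because the ratio of weights over $|d|$ is bounded.

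\textbf{Step 4: Neumann series.} We write $L=L_0(I+L_0^{-1}L_1)$. By Lemma~\ref{lm5.2} and \eqref{twogapu0estimate},
\[
\|L_0^{-1}L_1\hat v\|_{\sigma',s_1,s_2,P}\le C_2C_{\sigma',s_1,s_2}\|\hat u_0\|_{\sigma',s_1,s_2}\|\hat v\|_{\sigma',s_1,s_2,P}\lesssim|\boldsymbol r|_\infty\|\hat v\|.
\]
The first inequality uses the embedding $\|\cdot\|_{s_1-2,s_2}\le\|\cdot\|_{s_1,s_2}$. Shrinking $\rho_2$ makes this at most $1/2$, so $L^{-1}=(I+L_0^{-1}L_1)^{-1}L_0^{-1}$ has norm at most $2C_2$ on both source spaces.
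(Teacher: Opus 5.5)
Your architecture matches the paper's proof. You split the divisor lower bound into high modes, where the common sign of $-\frac{4\pi^2}{\gamma^2}m(\boldsymbol k)^2$ and $-n(\boldsymbol k)^2/m(\boldsymbol k)^2$ in KP-I gives coercivity, and finitely many low modes, handled by nonresonance off $S_0\cup S_2$ plus smallness of $\rho_2$. You then bound $L_0^{-1}$ from both source spaces and use a Neumann series for $L_0^{-1}L_1$; you pass through $PH^{\sigma',s_1-2,s_2}$ where the paper uses $PH^{\sigma',s_1,s_2-1}$, and either works. One step, however, fails as written.

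\textbf{The gap in Step 2(a).} You bound the frequency term $(k_1\omega_1+k_2\omega_2)/m(\boldsymbol k)$ by $O(|\boldsymbol k|_1)$, dropping the factor $1/|m(\boldsymbol k)|$ that you kept in Step 1. You then assert that $c(m^2+n^2/m^2)$ dominates. Along $n(\boldsymbol k)\approx m(\boldsymbol k)^2$, however, $m^2+n^2/m^2\approx 2m^2$ and $|\boldsymbol k|_1$ are of the same order. So ``when $|m|$ is large, $m^2$ dominates'' is false there, and $c(m^2+n^2/m^2)-C'|\boldsymbol k|_1$ can be negative.

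For a concrete instance, take $\gamma=1$, $(m_1,n_1)=(1,0)$, $(m_2,n_2)=(1,1)$ and $\boldsymbol k=(M-M^2,M^2)$, so $m(\boldsymbol k)=M$, $n(\boldsymbol k)=M^2$. Even the sharp version $4\pi^2M^2+M^2-\max_j|\omega_{j,0}(\boldsymbol 0)|\,|\boldsymbol k|_1$ equals $(4\pi^2+1)(M-M^2)<0$ for $M\ge2$. The true divisor there is $4\pi^2+M-(4\pi^2+1)M^2$, which is large, so the loss comes entirely from the crude bound.

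\textbf{Two ways to repair it.} First, you can keep the $1/|m|$. Since $|\boldsymbol k|_1\lesssim|m|+|n|$, the frequency term is $\lesssim 1+x$ with $x=|n(\boldsymbol k)|/|m(\boldsymbol k)|$. Then $\frac{4\pi^2}{\gamma^2}m^2+x^2$ beats a multiple of $1+x$ except where $|m|$ and $x$ are both bounded, which is a finite set of $\boldsymbol k$.

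Alternatively, as the paper does, multiply through by $m(\boldsymbol k)$. Then $\frac{4\pi^2}{\gamma^2}|m|^3+n^2/|m|\gtrsim(|m|+|n|^{1/2})^3\gtrsim|\boldsymbol k|_1^{3/2}$, which beats the $O(|\boldsymbol k|_1)$ frequency term.

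After either fix, your AM--GM step yields $|d|\gtrsim\langle\nu_2m\rangle^2+\langle\nu_2n\rangle$, which is the paper's $\frac{\pi^2}{\gamma^2}|m|^2+\frac{\pi}{\gamma}|n|$. This is the bound Step 3 actually uses. Your displayed $c_*$-bound with $\langle\nu_2n\rangle/\langle\nu_2m\rangle$ would be too weak for the source space $PH^{\sigma',s_1,s_2-1}$. The rest of your argument then goes through.
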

\begin{proof}
    For $(\boldsymbol \omega, \boldsymbol r) \in \tilde{\Omega}_{\rho_1,C}$, the estimates \eqref{omegaj0} for $\omega_{1,0}(\boldsymbol r), \omega_{2,0}(\boldsymbol r)$ lead to
    \begin{align*}
        &\left|\omega_1-\left( \frac{4\pi^2}{\gamma^2}m_1^3 + \frac{n_1^2}{m_1} \right) \right|\leq C_0\rho_1^{2} + C\rho_1^{p-1},\\
        &\left|\omega_2-\left( \frac{4\pi^2}{\gamma^2}m_2^3 + \frac{n_2^2}{m_2} \right) \right|\leq C_0\rho_1^{2} + C\rho_1^{p-1}.
    \end{align*}
    Suppose $\boldsymbol k=(k_1,k_2)\notin S_0 \cup S_2$.
    If $|m(\boldsymbol k)|^2\geq |n(\boldsymbol k)|$,    
    there exists $N_1 = N_1(m_1,m_2, n_1,n_2,\allowbreak \gamma, C_0,C,\rho_1)$ such that for $|\boldsymbol k|_1\geq N_1$, the denominator
    \begin{align*}
        \left| \frac{k_1\omega_1+k_2\omega_2}{m(\boldsymbol k)}-\frac{4\pi^2}{\gamma^2}m(\boldsymbol k)^2-\frac{n(\boldsymbol k)^2}{m(\boldsymbol k)^2} \right|
        \geq \frac{\pi^2}{\gamma^2}|m(\boldsymbol k)|^2 + \frac{\pi}{\gamma}|n(\boldsymbol k)|,
    \end{align*}
    since
    \begin{align*}
        &\left| k_1\omega_1+k_2\omega_2-\frac{4\pi^2}{\gamma^2}m(\boldsymbol k)^3-\frac{n(\boldsymbol k)^2}{m(\boldsymbol k)} \right|\\
        \geq & \left| \frac{4\pi^2}{\gamma^2}m(\boldsymbol k)^3+\frac{n(\boldsymbol k)^2}{m(\boldsymbol k)} \right| - |k_1\omega_1+k_2\omega_2|\\
        \geq &\left[\frac{2\pi^2}{\gamma^2} \left(\frac{1}{2}|m(\boldsymbol k)|+ \frac{1}{2}|n(\boldsymbol k)|^{1/2} \right)^3 -(|\omega_1|+|\omega_2|)|\boldsymbol k|_1\right] + \frac{\pi^2}{\gamma^2}|m(\boldsymbol k)|^3 + \frac{\pi}{\gamma}|m(\boldsymbol k)|\cdot|n(\boldsymbol k)| \\
        \geq & \left[\frac{\pi^2}{4\gamma^2} (|m(\boldsymbol k)|+|n(\boldsymbol k)|)^{3/2} -(|\omega_1|+|\omega_2|)|\boldsymbol k|_1\right] + \frac{\pi^2}{\gamma^2}|m(\boldsymbol k)|^3 + \frac{\pi}{\gamma}|m(\boldsymbol k)|\cdot|n(\boldsymbol k)| \\
        \geq &\left[\frac{\pi^2|\det \mathcal{M}|^{3/2}}{4\gamma^2\cdot 8\|\mathcal{M}\|_{max}^{3/2}}|\boldsymbol k|_1^{3/2}-(|\omega_1|+|\omega_2|)|\boldsymbol k|_1 \right]+ \frac{\pi^2}{\gamma^2}|m(\boldsymbol k)|^3 + \frac{\pi}{\gamma}|m(\boldsymbol k)|\cdot|n(\boldsymbol k)|\\
        \geq &\frac{\pi^2}{\gamma^2}|m(\boldsymbol k)|^3 + \frac{\pi}{\gamma}|m(\boldsymbol k)|\cdot|n(\boldsymbol k)|.
    \end{align*}
    
    If $|m(\boldsymbol k)|^2\leq |n(\boldsymbol k)|$, there exists $N_2 = N_2(m_1,m_2,n_1,n_2,\gamma,C_0,C,\rho_1)$ such that for $|\boldsymbol k|_1\geq N_2$, the denominator
    \begin{align*}
        &\left| \frac{k_1\omega_1+k_2\omega_2}{m(\boldsymbol k)}-\frac{4\pi^2}{\gamma^2}m(\boldsymbol k)^2-\frac{n(\boldsymbol k)^2}{m(\boldsymbol k)^2} \right|
        \geq \frac{3\pi^2}{\gamma^2}|m(\boldsymbol k)|^2 + \frac{\pi}{\gamma} |n(\boldsymbol k)|,
    \end{align*}
    since
    \begin{align*}
        &\left| k_1\omega_1+k_2\omega_2-\frac{4\pi^2}{\gamma^2}m(\boldsymbol k)^3-\frac{n(\boldsymbol k)^2}{m(\boldsymbol k)} \right|\\
        \geq&\left| \frac{n(\boldsymbol k)^2}{m(\boldsymbol k)} + \frac{4\pi^2}{\gamma^2}m(\boldsymbol k)^3 \right| - |k_1\omega_1+k_2\omega_2|\\
        \geq &\left[ \frac{1}{2} \left( \frac{1}{2}|m(\boldsymbol k)|^2 + \frac{1}{2}|n(\boldsymbol k)| \right)^{\frac{3}{2}}-(|\omega_1|+|\omega_2|) |\boldsymbol k|_1 \right] +\frac{2\pi^2}{\gamma^2}|m(\boldsymbol k)|^3 + \frac{\pi}{\gamma}|m(\boldsymbol k)|\cdot |n(\boldsymbol k)| \\
        \geq &\left[ \left(\frac{|\det \mathcal{M}|}{16\|\mathcal{M}\|_{max}}\right)^{3/2}|\boldsymbol k|_1^{3/2}-(|\omega_1|+|\omega_2|) |\boldsymbol k|_1 \right] +\frac{2\pi^2}{\gamma^2}|m(\boldsymbol k)|^3 + \frac{\pi}{\gamma}|m(\boldsymbol k)|\cdot |n(\boldsymbol k)|\\
        \geq &\frac{2\pi^2}{\gamma^2}|m(\boldsymbol k)|^3 + \frac{\pi}{\gamma}|m(\boldsymbol k)|\cdot |n(\boldsymbol k)|.
    \end{align*}
    
    Set $N = \max\{N_1,N_2\}$.    
    Notice that
    \begin{equation*}
        k_1\omega_{1,0}(\boldsymbol 0)+k_2\omega_{2,0}(\boldsymbol 0)-\frac{4\pi^2}{\gamma^2}m(\boldsymbol k)^3-\frac{n(\boldsymbol k)^2}{m(\boldsymbol k)} \neq 0,\textup{ for }\boldsymbol k\notin S_0\cup S_2.
    \end{equation*}
    For finitely many $\boldsymbol k$ satisfying $|\boldsymbol k|_1\leq N$, there exists a constant $c_1>0$ such that 
    \begin{align*}
        \left| \frac{k_1\omega_{1,0}(\boldsymbol 0)+k_2 \omega_{2,0}(\boldsymbol 0)}{m(\boldsymbol k)}-\frac{4\pi^2}{\gamma^2}m(\boldsymbol k)^2-\frac{n(\boldsymbol k)^2}{m(\boldsymbol k)^2} \right| \geq c_1.
    \end{align*}
    Thus, if $N( C_0 \rho_2^{2} +C \rho_2^{p-1}) \leq \dfrac{c_1}{2}$, the denominator for $|\boldsymbol k|_1\leq N$ satisfies
    \begin{align*}
        \left| \frac{k_1\omega_1+k_2\omega_2}{m(\boldsymbol k)}-\frac{4\pi^2}{\gamma^2}m(\boldsymbol k)^2-\frac{n(\boldsymbol k)^2}{m(\boldsymbol k)^2} \right| &\geq c_1 - N( C_0 \rho_2^{2} +C \rho_2^{p-1})\\
        &\geq \frac{c_1}{2\|\mathcal{M}\|_1^2N^2}(|m(\boldsymbol k)|+|n(\boldsymbol k)|)^2.
    \end{align*}
    
    Thus, there exists a constant $C_2 = C_2(m_1,n_1,m_2,n_2,\gamma, C_0, C, \rho_1)$ such that
    \begin{align*}
        \|L_0^{-1}(\boldsymbol \omega)\hat{v}\|_{\sigma',s_1,s_2,P}^2
        &=\sum_{\boldsymbol k\in \mathbb{Z}^2\setminus (S_0\cup S_2)}\frac{|\hat{v}(\boldsymbol k)|^2(1+|\nu_2m(\boldsymbol k)|^2)^2}{\left|\frac{k_1\omega_1+k_2\omega_2}{m(\boldsymbol k)}-\frac{4\pi^2}{\gamma^2}m(\boldsymbol k)^2-\frac{n(\boldsymbol k)^2}{m(\boldsymbol k)^2}\right|^2} \\
        &\qquad \qquad \qquad\times e^{2\sigma'\nu_2|(m(\boldsymbol k),n(\boldsymbol k))|_1}\langle \nu_2m(\boldsymbol k) \rangle^{2(s_1-2)}\langle \nu_2n(\boldsymbol k) \rangle^{2s_2}\notag\\
        &\leq C_2^2 \|\hat{v}\|_{\sigma',s_1-2,s_2,P}^2<\infty,
    \end{align*}
    and
    \begin{align*}
        \|L_0^{-1}(\boldsymbol \omega)\hat{v}\|_{\sigma',s_1,s_2,P}^2
        &=\sum_{\boldsymbol k\in \mathbb{Z}^2\setminus (S_0\cup S_2)}\frac{|\hat{v}(\boldsymbol k)|^2(1+|\nu_2n(\boldsymbol k)|^2)}{\left|\frac{k_1\omega_1+k_2\omega_2}{m(\boldsymbol k)}-\frac{4\pi^2}{\gamma^2}m(\boldsymbol k)^2-\frac{n(\boldsymbol k)^2}{m(\boldsymbol k)^2}\right|^2} \\
        &\qquad\qquad\qquad \times e^{2\sigma'\nu_2|(m(\boldsymbol k),n(\boldsymbol k))|_1}\langle \nu_2m(\boldsymbol k) \rangle^{2s_1}\langle \nu_2n(\boldsymbol k) \rangle^{2(s_2-1)}\notag\\
        &\leq C_2^2 \|\hat{v}\|_{\sigma',s_1,s_2-1,P}^2<\infty.
    \end{align*}
    
    Since $L_0^{-1}(\boldsymbol \omega)$ is a bounded operator from $PH^{\sigma',s_1-2,s_2}$ or $PH^{\sigma',s_1,s_2-1}$ to $PH^{\sigma',s_1,s_2}$, we rewrite $L^{-1}(\boldsymbol \omega,\boldsymbol r)=(I+L_0^{-1}(\boldsymbol \omega)L_1(\boldsymbol r))^{-1} L_0^{-1}(\boldsymbol \omega)$. By the Neumann-series argument, if $\|L_0^{-1}(\boldsymbol \omega)L_1(\boldsymbol r)\|_{\sigma',(s_1,s_2),(s_1,s_2),P}\leq \frac{1}{2}$, we have
    \begin{equation*}
        \|L^{-1}(\boldsymbol \omega,\boldsymbol r)\|_{\sigma',(s_1-2,s_2),(s_1,s_2),P} \leq 2C_2, \quad\|L^{-1}(\boldsymbol \omega,\boldsymbol r)\|_{\sigma',(s_1,s_2-1),(s_1,s_2),P} \leq 2C_2.
    \end{equation*}
    In fact, we have
    \begin{align*}
        \|L_0^{-1}(\boldsymbol \omega) L_1(\boldsymbol r) \hat{v}\|_{\sigma',s_1,s_2,P}&\leq C_2\|L_1(\boldsymbol r) \hat{v}\|_{\sigma',s_1,s_2-1,P}\\
        &\leq C_2\|\widehat{u_0v}\|_{\sigma',s_1,s_2-1,P}\\
        &\leq C_2C_{\sigma', s_1,s_2}\cdot\|\hat{u}_0\|_{\sigma',s_1,s_2}\|\hat{v}\|_{\sigma',s_1,s_2,P}.
    \end{align*}
    Thus there exist $\rho_2 \in (0,\rho_1]$ such that
    \begin{align*}
        N( C_0 \rho_2^{2} +C \rho_2^{p-1}) &\leq \dfrac{c_1}{2},\\
        \|L_0^{-1}(\boldsymbol \omega) L_1(\boldsymbol r)\|_{\sigma',(s_1,s_2),(s_1,s_2),P} \leq C_2C_{\sigma', s_1, s_2}\|\hat{u}_0\|_{\sigma',s_1,s_2} &\leq \frac{1}{2},\quad \boldsymbol r \in \mathcal{U}_{\rho_2}.
    \end{align*}
\end{proof}

\begin{lemma}\label{321}
    Suppose $s_1>\frac{5}{2},s_2>\frac{3}{2}$ and $p\geq 3$. Then, there exist positive constants $M_0>0$ and $\rho_{3} \in (0,\rho_2]$ such that for any $(\boldsymbol \omega,\boldsymbol r) \in \tilde{\Omega}_{\rho_3,C}$, the functional $-L^{-1}G(\boldsymbol \omega,\boldsymbol r)$ maps
    \begin{equation*}
        \mathcal{B}_{\boldsymbol r} :=\{\hat{v} \in PH^{\sigma',s_1,s_2} : \|\hat{v}\|_{\sigma',s_1,s_2,P} \leq M_0 |\boldsymbol r|_{\infty}^{p}\}
    \end{equation*}
    into itself. Moreover, $-L^{-1}G(\boldsymbol \omega,\boldsymbol r)$ is a uniform contraction on $\mathcal{B}_{\boldsymbol r}$ for $(\boldsymbol \omega, \boldsymbol r) \in \tilde{\Omega}_{\rho_3,C}$, satisfying
    \begin{equation*}
        \|L^{-1}G(\boldsymbol \omega,\boldsymbol r)(\hat{v}_1) - L^{-1}G(\boldsymbol \omega,\boldsymbol r)(\hat{v}_2)\|_{\sigma',s_1,s_2,P} \leq |\boldsymbol r|_{\infty}^{p-2} \|\hat{v}_1 - \hat{v}_2\|_{\sigma',s_1,s_2,P}
    \end{equation*}
    for any $\hat{v}_1, \hat{v}_2 \in \mathcal{B}_{\boldsymbol r}$.
\end{lemma}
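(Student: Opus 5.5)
The proof will mirror Lemma \ref{123}, with the single nonlinear operator replaced by the split $G=G_1+G_2$. The split is forced by the anisotropic smoothing of $L^{-1}$. By Lemma \ref{lm3.9}, $L^{-1}$ maps both $PH^{\sigma',s_1-2,s_2}$ and $PH^{\sigma',s_1,s_2-1}$ into $PH^{\sigma',s_1,s_2}$ with bound $2C_2$. Lemma \ref{lm3.8} places $G_1$ in the first space and $G_2$ in the second. We therefore write
\[
\|L^{-1}G(\boldsymbol\omega,\boldsymbol r)(\hat v)\|_{\sigma',s_1,s_2,P}\le 2C_2\bigl(\|G_1(\boldsymbol r)(\hat v)\|_{\sigma',s_1-2,s_2,P}+\|G_2(\boldsymbol\omega,\boldsymbol r)(\hat v)\|_{\sigma',s_1,s_2-1,P}\bigr),
\]
and we bound differences in the same way.

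\emph{Self-map.} Take $\hat v\in\mathcal B_{\boldsymbol r}$ and insert \eqref{nonlinearestimate2.1}--\eqref{nonlinearestimate2.2}. This reduces the claim to three scalar inequalities:
\begin{itemize}
\item $C_{\sigma',s_1,s_2}(\|\hat u_0\|_{\sigma',s_1,s_2}+2M_0|\boldsymbol r|_\infty^p)<R_2$;
\item $C_{\mathcal M}C|\boldsymbol r|_\infty^{p-1}\|\hat u_0\|_{\sigma',s_1,s_2,P}+C_{\sigma',s_1,s_2}M_0^2|\boldsymbol r|_\infty^{2p}+f_2\bigl(C_{\sigma',s_1,s_2}(\|\hat u_0\|_{\sigma',s_1,s_2}+M_0|\boldsymbol r|_\infty^p)\bigr)\le \frac{M_0}{2C_2}|\boldsymbol r|_\infty^p$;
\item the corresponding contraction inequality, given below.
\end{itemize}
We control each term using \eqref{twogapu0estimate}--\eqref{twogapu0estimateP}, namely $\|\hat u_0\|\lesssim|\boldsymbol r|_\infty$ and $\|\hat u_0\|_P\lesssim|\boldsymbol r|_\infty^2$, together with the bounds $f_2(z)\le C_{f_2}z^p$ and $f_2'(z)\le C_{f_2}'z^{p-1}$ near $0$. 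These bounds hold because $f_{2,1}$ and $f_{2,2}$ vanish to order $p$.

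With these estimates the first term is $O(|\boldsymbol r|_\infty^{p+1})$. The second is $O(|\boldsymbol r|_\infty^{2p})$. The third is at most $C_{f_2}C_{\sigma',s_1,s_2}^p(c|\boldsymbol r|_\infty+M_0|\boldsymbol r|_\infty^p)^p\le K|\boldsymbol r|_\infty^p$, where $K$ does not depend on $M_0$ once $M_0|\boldsymbol r|_\infty^{p-1}\le 1$. We therefore choose $M_0:=4C_2K$ and then take $\rho_3$ small enough that the first two terms are at most $\frac{M_0}{4C_2}|\boldsymbol r|_\infty^p$.

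\emph{Contraction.} Set $M_v=M_0|\boldsymbol r|_\infty^p$ in \eqref{Lipchitz2.1line1}--\eqref{Lipchitz2.2}. The Lipschitz constant of $L^{-1}G$ is then at most
\[
2C_2\bigl(C_{\sigma',s_1,s_2}M_0|\boldsymbol r|_\infty^p+f_2'(C_{\sigma',s_1,s_2}(\|\hat u_0\|+M_0|\boldsymbol r|_\infty^p))\bigr)\lesssim |\boldsymbol r|_\infty^{p}+|\boldsymbol r|_\infty^{p-1}.
\]
Since $p\ge3$, this is at most $|\boldsymbol r|_\infty^{p-2}$ after shrinking $\rho_3$: the implied constant multiplies $|\boldsymbol r|_\infty$, which can be made $\le1$.

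\emph{Main obstacle.} The only delicate point is uniformity over the complex domain $\tilde\Omega_{\rho_3,C}$. All constants $C_2,C_{\mathcal M},K,C_{f_2}$ come from Lemmas \ref{lm3.8}--\ref{lm3.9} and from the estimates on $\mathcal U_{\rho_0}$, so they are independent of $(\boldsymbol\omega,\boldsymbol r)$. One must also check that the frequency term in $G_2$ actually belongs to $PH^{\sigma',s_1,s_2-1}$. This holds because $|k_1(\omega_1-\omega_{1,0})+k_2(\omega_2-\omega_{2,0})|/|m(\boldsymbol k)|\le C|\boldsymbol r|_\infty^{p-1}|\boldsymbol k|_1$ and $|\boldsymbol k|_1\lesssim|(m(\boldsymbol k),n(\boldsymbol k))|_1$. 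The single derivative is then absorbed by lowering $s_2$ by one if $|n(\boldsymbol k)|$ dominates, and otherwise by the exponential weight or by $\langle\nu_2 m(\boldsymbol k)\rangle^{s_1}$ with $s_1$ kept.
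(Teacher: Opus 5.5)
Your proposal is correct and follows the paper's proof essentially step for step. You split $G=G_1+G_2$ to match the two domain spaces of $L^{-1}$ from Lemma~\ref{lm3.9}, reduce to the scalar conditions \eqref{ball4}--\eqref{ball6}, and fix $M_0$ before $\rho_3$ using $f_2(z)\lesssim z^p$, $f_2'(z)\lesssim z^{p-1}$ and \eqref{twogapu0estimate}--\eqref{twogapu0estimateP}; your explicit choice $M_0=4C_2K$ simply makes the paper's existence claim concrete.

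One small aside, which does not affect the proof since the membership of the frequency term in $PH^{\sigma',s_1,s_2-1}$ is already \eqref{nonlinearestimate2.2}. Your ``otherwise'' case is misstated: neither the exponential weight nor the unchanged factor $\langle\nu_2 m(\boldsymbol k)\rangle^{s_1}$ can absorb a growing factor. You should instead keep the denominator. With $\delta_j:=\omega_j-\omega_{j,0}(\boldsymbol r)$ one has $k_1\delta_1+k_2\delta_2=\alpha\, m(\boldsymbol k)+\beta\, n(\boldsymbol k)$, where $\alpha,\beta=O(|\boldsymbol r|_\infty^{p-1})$. Together with $|m(\boldsymbol k)|\geq 1$, this gives a multiplier of size $O(|\boldsymbol r|_\infty^{p-1})\,(1+|n(\boldsymbol k)|)$, which is absorbed by lowering $s_2$ by one.
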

\begin{proof}    
    According to Lemmas \ref{lm3.8} and \ref{lm3.9}, the mapping $-L^{-1}G(\boldsymbol \omega,\boldsymbol r)$ defines a self-mapping and a uniform contraction on $\mathcal{B}_{\boldsymbol r}$ provided that the following three conditions are met uniformly for $(\boldsymbol \omega, \boldsymbol r) \in \tilde{\Omega}_{\rho_3,C}$ with $\rho_3 \leq \rho_2$:
    \begin{align}
        C_{\sigma', s_1,s_2}( \|\hat{u}_0(\boldsymbol r)\|_{\sigma',s_1,s_2}+M_0 |\boldsymbol r|_{\infty}^{p} ) &< R_2, \label{ball4}\\
        C_{\mathcal{M}}C |\boldsymbol r|_{\infty}^{p-1}\|\hat{u}_0(\boldsymbol r)\|_{\sigma',s_1,s_2,P}+ C_{\sigma', s_1,s_2} M_0^2 |\boldsymbol r|_{\infty}^{2p}&\label{ball5.1}\\
        + f_2(C_{\sigma', s_1,s_2}(\|\hat{u}_0(\boldsymbol r)\|_{\sigma',s_1,s_2}+ M_0 |\boldsymbol r|_{\infty}^{p})) &\leq \frac{M_0}{2C_2} |\boldsymbol r|_{\infty}^{p}, \label{ball5.2}\\
        2C_2 \left( 2C_{\sigma', s_1,s_2}M_0 |\boldsymbol r|_{\infty}^{p} + f_2'( C_{\sigma', s_1,s_2} (\|\hat{u}_0(\boldsymbol r)\|_{\sigma',s_1,s_2} + 2M_0 |\boldsymbol r|_{\infty}^{p}) ) \right) &\leq |\boldsymbol r|_{\infty}^{p-2}<1 \label{ball6}.
    \end{align}
    With the high-order vanishing property of $f_2$ and $f_2'$ at the origin, there exist positive constants $C_{f_2}$ and $C_{f_2}'$ such that $f_2(z) \leq C_{f_2} |z|^p$ and $f_2'(z) \leq C_{f_2}' |z|^{p-1}$ for $p \geq 3$.
    
    Since \eqref{twogapu0estimate}--\eqref{twogapu0estimateP} hold, there exist positive constants $M_0 = M_0(P_2, C, C_{\mathcal{M}}, C_2, C_{\sigma', s_1, s_2},R_2,\allowbreak \| \hat{u}_0 \|_{\sigma',s_1,s_2}\allowbreak ,\rho_2) >0$ and $\rho_{3} = \rho_{3}(P_2, C_2, C_{\sigma', s_1, s_2}, R_2, \| \hat{u}_0 \|_{\sigma',s_1,s_2}, M_0) >0$ such that the inequalities \eqref{ball4}--\eqref{ball6} hold.    
\end{proof}

With Lemma \ref{321}, we obtain the following theorem.
\begin{theorem}\label{Banachfixed2}
    Suppose $s_1>\dfrac{5}{2}, s_2>\dfrac{3}{2}$ and $p\geq 3$. For $(\boldsymbol \omega, \boldsymbol r) \in \tilde{\Omega}_{\rho_3,C}$, the range equation \eqref{banachform2} admits a unique solution $\hat{v}(\boldsymbol \omega, \boldsymbol r)$ in $\mathcal{B}_{\boldsymbol r} $. In particular, for every $(\boldsymbol \omega, \boldsymbol r) \in \tilde{\Omega}_{\rho_3,C}$,
    \[
        \| \hat{v}(\boldsymbol \omega, \boldsymbol r) \|_{\sigma',s_1,s_2,P} \leq M_0 |\boldsymbol r|_{\infty}^{p}.
    \]
    The solution $\hat{v}(\boldsymbol \omega, \boldsymbol r)$ is holomorphic in $\tilde{\Omega}_{\rho_3,C}$. Moreover, for every $(\boldsymbol \omega, \boldsymbol r) \in \Omega_{\rho_3,C}$, we have $\hat{v}(\boldsymbol \omega, \boldsymbol r) \in PH^{\sigma',s_1,s_2}_{\mathrm{re}}$. Consequently, the restriction of $\hat{v}(\boldsymbol \omega, \boldsymbol r)$ to $\Omega_{\rho_3,C}$ is real analytic.

    Finally, for every $(\boldsymbol \omega, \boldsymbol r) \in \tilde{\Omega}_{\rho_3,C}$, we have the following estimates:
    \begin{equation}\label{qiudaoguji2}
        \left\| \dfrac{\partial \hat{v}}{\partial \omega_j} \right\|_{\sigma',s_1,s_2,P} \lesssim |\boldsymbol r|_{\infty}^2,\quad \left\| \dfrac{\partial^2 \hat{v}}{\partial \omega_i\partial \omega_j} \right\|_{\sigma',s_1,s_2,P} \lesssim |\boldsymbol r|_{\infty}^2,\quad i,j\in \{1,2\}.
    \end{equation}
\end{theorem}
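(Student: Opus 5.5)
The proof follows the one-gap argument of Theorem \ref{Banachfixed1} step by step, with Lemma \ref{321} in place of Lemma \ref{123}.

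\emph{Step 1 (existence, holomorphy, a priori bound).} Run the Picard iteration
\[
\hat v_0=0,\qquad \hat v_{n+1}=-L^{-1}(\boldsymbol\omega,\boldsymbol r)G(\boldsymbol\omega,\boldsymbol r)(\hat v_n).
\]
By Lemma \ref{321}, every iterate lies in the closed ball $\mathcal B_{\boldsymbol r}$, and the map is a contraction with factor $\mathfrak q:=\rho_3^{p-2}<1$, uniformly on $\tilde\Omega_{\rho_3,C}$. The data $\hat u_0(\boldsymbol k;\boldsymbol r)$ and $\omega_{j,0}(\boldsymbol r)$ are holomorphic on $\mathcal U_{\rho_0}$ by Corollary \ref{construction2}. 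The divisors in $L_0$ are affine in $\boldsymbol\omega$, and $L^{-1}$ is given by a uniformly convergent Neumann series (Lemma \ref{lm3.9}). Hence each $\hat v_n$ is holomorphic. The uniform Cauchy estimate
\[
\|\hat v_{n+k}-\hat v_n\|_{\sigma',s_1,s_2,P}\le \frac{\mathfrak q^n}{1-\mathfrak q}M_0\rho_3^p
\]
and Weierstrass' theorem for Banach-valued holomorphic maps give a holomorphic limit $\hat v$. This limit solves \eqref{banachform2}, lies in $\mathcal B_{\boldsymbol r}$ (closedness), and is unique there by the contraction property.

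\emph{Step 2 (reality).} Define the reflection $(\mathcal R\hat v)(\boldsymbol\omega,\boldsymbol r)=\overline{\hat v(\overline{\boldsymbol\omega},\overline{\boldsymbol r})}$. Since $\mathcal U_{\tilde\rho_0}$ is symmetric, Schwarz reflection gives $\hat u_0(\overline{\boldsymbol r})=\overline{\hat u_0(\boldsymbol r)}$ and $\omega_{j,0}(\overline{\boldsymbol r})=\overline{\omega_{j,0}(\boldsymbol r)}$. The divisors have real coefficients. The Taylor coefficients of $Q_{2,1},Q_{2,2}$ are real, and the parity conditions \eqref{eq:intro-P21-parity}--\eqref{eq:intro-P22-parity} ensure that $\widehat{P_{2}(u)}$ maps the even, real coefficient sequences to even, real sequences: odd-order derivatives $D_m u$, $D_n u$ produce factors $i\,m(\boldsymbol k)$, $i\,n(\boldsymbol k)$ that would otherwise destroy reality, and the evenness in $z_1$ removes them. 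Therefore $\mathcal R$ commutes with $L^{-1}G$. Inductively $\mathcal R\hat v_n=\hat v_n$, so $\mathcal R\hat v=\hat v$, and on $\Omega_{\rho_3,C}$ this gives $\hat v\in PH^{\sigma',s_1,s_2}_{\mathrm{re}}$. Real analyticity follows.

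\emph{Step 3 (derivative estimates).} Lemma \ref{lm3.8} together with \eqref{ball6} gives
\[
\|L^{-1}DG(\hat v)\|_{\sigma',(s_1,s_2),(s_1,s_2),P}\le\mathfrak q .
\]
Here $DG=DG_1+DG_2$ is estimated in the two target spaces $PH^{\sigma',s_1-2,s_2}$ and $PH^{\sigma',s_1,s_2-1}$, and both are mapped back by $L^{-1}$ with bound $2C_2$ (Lemma \ref{lm3.9}). Differentiate the fixed-point identity in $\omega_j$ and solve via the Neumann series for $[I+L^{-1}DG(\hat v)]^{-1}$, which has norm at most $(1-\mathfrak q)^{-1}$. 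There are two source terms.

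The first is $\partial_{\omega_j}L^{-1}\,G(\hat v)=-L^{-1}(\partial_{\omega_j}L_0)L^{-1}G(\hat v)$. The multiplier $\partial_{\omega_j}L_0=k_j/m(\boldsymbol k)$ is bounded by a constant depending on $\mathcal M$, because $m(\boldsymbol k)$ is a nonzero integer off $S_0$. This term is therefore $O(|\boldsymbol r|_\infty^p)$.

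The second is $L^{-1}\partial_{\omega_j}G_2(\hat v)=L^{-1}\bigl(k_j\hat u_0/m(\boldsymbol k)\bigr)$ restricted to the projected modes. By \eqref{twogapu0estimateP} it is $\lesssim|\boldsymbol r|_\infty^2$.

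For the second derivatives, differentiate \eqref{banachform2} twice. The source terms are $(\partial_{\omega_i}L)\partial_{\omega_j}\hat v+(\partial_{\omega_j}L)\partial_{\omega_i}\hat v$, which are $O(|\boldsymbol r|^2_\infty)$, and $D^2G(\hat v)[\partial_{\omega_i}\hat v,\partial_{\omega_j}\hat v]$, which is $O(|\boldsymbol r|_\infty^4)$ by the product estimate of Lemma \ref{lm5.2} and the bound $f_2''(z)\lesssim z^{p-2}$.

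\emph{Main obstacle.} The delicate point is the anisotropic bookkeeping in Steps 1 and 3. $G_1$ loses two $m$-derivatives and $G_2$ loses one $n$-derivative, so every Lipschitz and derivative bound must be routed through the correct intermediate space before applying $L^{-1}$. The contraction constant must also be checked to be uniform over the complex domain $\tilde\Omega_{\rho_3,C}$, not only over real parameters.
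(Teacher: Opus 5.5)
Your proposal follows the paper's proof step for step: Picard iteration with Weierstrass' theorem, the reflection $\mathcal R$ combined with uniqueness of the fixed point, and differentiation of the fixed-point identity with the Neumann bound $(1-\mathfrak q)^{-1}$. Steps 1 and 2 are fine.

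There is one false claim in Step 3. The multiplier $\partial_{\omega_j}L_0=k_j/m(\boldsymbol k)$ is \emph{not} bounded by a constant on $\mathbb Z^2\setminus(S_0\cup S_2)$. The condition $\boldsymbol k\notin S_0$ only gives $|m(\boldsymbol k)|\ge1$. The lattice line $m_1k_1+m_2k_2=\gcd(m_1,m_2)$ contains infinitely many $\boldsymbol k$, and since $m_1m_2\neq0$ both $|k_1|,|k_2|\to\infty$ along it, so $|k_j/m(\boldsymbol k)|$ is unbounded. What is true, by invertibility of $\mathcal M$, is $|k_j|\le\|\mathcal M^{-1}\|_1\bigl(|m(\boldsymbol k)|+|n(\boldsymbol k)|\bigr)$. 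Hence $|k_j/m(\boldsymbol k)|\lesssim\langle\nu_2 n(\boldsymbol k)\rangle$. So $\partial_{\omega_j}L$ costs one $n$-derivative and is bounded only from $PH^{\sigma',s_1,s_2}$ to $PH^{\sigma',s_1,s_2-1}$, which is exactly how the paper uses it.

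The repair is immediate and uses Lemma \ref{lm3.9} as stated. Send every occurrence of this multiplier into $PH^{\sigma',s_1,s_2-1}$, then apply $L^{-1}\colon PH^{\sigma',s_1,s_2-1}\to PH^{\sigma',s_1,s_2}$, whose norm is at most $2C_2$. This gives:
\begin{itemize}
\item $\partial_{\omega_j}(L^{-1})G(\hat v)=L^{-1}(\partial_{\omega_j}L)\hat v$, using $L^{-1}G(\hat v)=-\hat v$, has norm $\lesssim\|\hat v\|_{\sigma',s_1,s_2,P}\lesssim|\boldsymbol r|_\infty^p$.
\item $L^{-1}\bigl(k_j\hat u_0/m(\boldsymbol k)\bigr)$ is $\lesssim\|\hat u_0\|_{\sigma',s_1,s_2,P}\lesssim|\boldsymbol r|_\infty^2$ by \eqref{twogapu0estimateP}.
\item The second-derivative sources $(\partial_{\omega_i}L)\partial_{\omega_j}\hat v$ are handled the same way.
\end{itemize}
With this correction all the orders you state follow, and hence \eqref{qiudaoguji2}. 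The slip is an instance of the anisotropic bookkeeping you yourself identified as the main obstacle.
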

\begin{proof}
    \textit{Step 1}.
    We initiate the standard Picard iteration scheme for the range equation:
    \begin{equation*}
        \hat{v}_0(\boldsymbol \omega, \boldsymbol r) = 0, \quad \hat{v}_{n+1}(\boldsymbol \omega, \boldsymbol r) = -L^{-1}(\boldsymbol \omega, \boldsymbol r)G(\boldsymbol \omega, \boldsymbol r)(\hat{v}_n(\boldsymbol \omega, \boldsymbol r)), \quad n \geq 0.
    \end{equation*}
    With Lemma \ref{321}, it is easy to see that $\|\hat{v}_1\|_{\sigma',s_1,s_2,P} \leq M_0\rho_3^{p}$ uniformly on $\tilde{\Omega}_{\rho_3,C}$.
    Since $\hat{u}_0(\boldsymbol k;\boldsymbol r),\omega_{1,0}(\boldsymbol r)$ and $\omega_{2,0}(\boldsymbol r)$ are holomorphic functions, the operators $L^{-1}(\boldsymbol \omega, \boldsymbol r)$ and $G(\boldsymbol \omega, \boldsymbol r)$ are holomorphic in $\tilde{\Omega}_{\rho_3,C}$. Since $\hat{v}_0 = 0$ is holomorphic, by finite composition rules, each iteration layer $\hat{v}_n(\boldsymbol \omega, \boldsymbol r)$ defines a well-defined holomorphic function of $(\boldsymbol \omega, \boldsymbol r)$ on $\tilde{\Omega}_{\rho_3,C}$.

    With the contraction property in Lemma \ref{321}, we have
    \begin{align*}
        \|\hat{v}_{n+1} - \hat{v}_n\|_{\sigma',s_1,s_2,P} &= \|L^{-1}(\boldsymbol \omega, \boldsymbol r)G(\boldsymbol \omega, \boldsymbol r)(\hat{v}_n) - L^{-1}(\boldsymbol \omega, \boldsymbol r)G(\boldsymbol \omega, \boldsymbol r)(\hat{v}_{n-1})\|_{\sigma',s_1,s_2,P}\\
        &\leq \mathfrak q \|\hat{v}_n - \hat{v}_{n-1}\|_{\sigma',s_1,s_2,P},
    \end{align*}
    uniformly on $\tilde{\Omega}_{\rho_3,C}$, where $\mathfrak q := \rho_3^{p-2} \in (0,1)$. A geometric telescoping expansion reveals that for any shift $k \geq 1$:
    \begin{align*}
        \|\hat{v}_{n+k} - \hat{v}_n\|_{\sigma',s_1,s_2,P} \leq \sum_{j=n}^{n+k-1} \mathfrak q^j \|\hat{v}_1 - \hat{v}_0\|_{\sigma',s_1,s_2,P} \leq \frac{\mathfrak q^n}{1-\mathfrak q} M_0 \rho_3^{p}.
    \end{align*}
    Since $\mathfrak q < 1$ is uniform, $\{\hat{v}_n(\boldsymbol \omega, \boldsymbol r)\}_{n=0}^{\infty}$ forms a Cauchy sequence  in $PH^{\sigma',s_1,s_2}$ that converges locally uniformly on $\tilde{\Omega}_{\rho_3,C}$. By Weierstrass's theorem for holomorphic functions in Banach spaces, the uniform limit $\hat{v}(\boldsymbol \omega, \boldsymbol r) \in PH^{\sigma',s_1,s_2}$ is holomorphic in $\tilde{\Omega}_{\rho_3,C}$.
    Passing to the limit in the Picard iteration shows that $\hat{v}(\boldsymbol \omega, \boldsymbol r) $ solves the range equation \eqref{banachform2}.
    Moreover, Lemma~\ref{321} implies that, for every fixed $(\boldsymbol\omega, \boldsymbol r) \in \tilde{\Omega}_{\rho_3,C}$ and $n\geq0,\ \hat{v}_n(\boldsymbol \omega, \boldsymbol r) \in \mathcal B_{\boldsymbol r}$. Since $\mathcal B_{\boldsymbol r}$ is closed in $PH^{\sigma',s_1,s_2}$, the limit also belongs to $\mathcal B_{\boldsymbol r}$. Therefore, $\|\hat{v}(\boldsymbol \omega, \boldsymbol r)\|_{\sigma',s_1,s_2,P} \leq M_0 |\boldsymbol r|_{\infty}^p$.
    
    \textit{Step 2}.
    We next show that $\hat{v}(\boldsymbol \omega, \boldsymbol r) \in PH_{\mathrm{re}}^{\sigma',s_1,s_2}$ for $(\boldsymbol\omega, \boldsymbol r) \in \Omega_{\rho_3,C}$.
    For a $PH^{\sigma',s_1,s_2}$-valued function $\hat{v}: \tilde{\Omega}_{\rho_3,C} \to PH^{\sigma',s_1,s_2}$, define its reflection $\mathcal{R}$ by
    \begin{equation*}
        (\mathcal{R}\hat{v})( \boldsymbol \omega, \boldsymbol r) := \overline{\hat{v}( \overline{\boldsymbol \omega}, \overline{\boldsymbol r})}.
    \end{equation*}
    Since $\widehat{ u_0(\boldsymbol r)}, \omega_{1,0}(\boldsymbol r), \omega_{2,0}(\boldsymbol r)$ are holomorphic in $\mathcal{U}_{\rho_3}$ with real-analytic restriction to $(0,\rho_3)^2$, by the Schwarz reflection principle,
    \[
        \omega_{1,0}(\overline{\boldsymbol r}) = \overline{\omega_{1,0}(\boldsymbol r)}, \quad \omega_{2,0}(\overline{\boldsymbol r})\allowbreak = \overline{\omega_{2,0}(\boldsymbol r)},\quad \widehat{ u_0( \overline{\boldsymbol r})} = \overline{\widehat{ u_0 (\boldsymbol r)}}.
    \]
    Together with the real analyticity and the parity properties of $Q_{2,1}, Q_{2,2}$, we have
    \[
        \mathcal{R}(L^{-1}(\boldsymbol \omega, \boldsymbol r)G(\boldsymbol \omega, \boldsymbol r)(\hat{v})) = L^{-1}(\boldsymbol \omega, \boldsymbol r)G(\boldsymbol \omega, \boldsymbol r)(\mathcal{R}\hat{v}).
    \]
    
    Since the base approximation $\hat{v}_0 = 0$ satisfies $\mathcal{R}\hat{v}_0 = \hat{v}_0$, mathematical induction guarantees that $\mathcal{R}\hat{v}_n = \hat{v}_n$ holds identically for all $n \geq 0$ across $\tilde{\Omega}_{\rho_3,C}$. By the uniqueness of the fixed point guaranteed by Lemma \ref{321}, the uniform limit must inherit the exact symmetry, yielding $\mathcal{R}\hat{v} = \hat{v}$, which means $\overline{\hat{v}(\overline{\boldsymbol \omega}, \overline{\boldsymbol r})} = \hat{v}(\boldsymbol\omega, \boldsymbol r)$. 
    Restricting $(\boldsymbol\omega, \boldsymbol r) \in \Omega_{\rho_3,C}$, the relation implies that $\overline{\widehat{v(\boldsymbol \omega,\boldsymbol  r)}(\boldsymbol k)} = \widehat{v(\boldsymbol \omega,\boldsymbol r)}(\boldsymbol k) \in \mathbb{R}$. Hence the solution is real-valued and real analytic on $\Omega_{\rho_3,C}$.

    \textit{Step 3}.
    For any $\hat{h} \in PH^{\sigma',s_1,s_2}$, there exists sufficiently small $\varepsilon >0$ such that $\|\hat{v}+\varepsilon\hat{h}\|_{\sigma',s_1,s_2,P} \leq 2M_0 |\boldsymbol r|_{\infty}^{p}$. Hence, the definition of the Fr\'echet derivative with \eqref{ball6} implies
    \begin{align*}
        &\|L^{-1}(\boldsymbol\omega, \boldsymbol r)DG(\boldsymbol\omega, \boldsymbol r)(\hat{v})\|_{\sigma',(s_1,s_2),(s_1,s_2),P}\\
        \leq& 2C_2 \frac{\|DG_1(\boldsymbol r)(\hat{v})[\hat{h}]\|_{\sigma',s_1-2,s_2,P}}{\|\hat{h}\|_{\sigma',s_1,s_2,P}} + 2C_2 \frac{\|DG_2(\boldsymbol\omega, \boldsymbol r)(\hat{v})[\hat{h}]\|_{\sigma',s_1,s_2-1,P}}{\|\hat{h}\|_{\sigma',s_1,s_2,P}}  \\
        =& \lim_{\epsilon \to 0} \frac{2C_2 \|G_1(\boldsymbol r)(\hat{v}+\epsilon \hat{h}) - G_1(\boldsymbol r)(\hat{v})\|_{\sigma',s_1-2,s_2,P}}{\epsilon \|\hat{h}\|_{\sigma',s_1,s_2,P}} \\
        +& \lim_{\epsilon \to 0} \frac{ 2C_2 \|G_2(\boldsymbol\omega, \boldsymbol r)(\hat{v}+\epsilon \hat{h}) - G_2(\boldsymbol\omega, \boldsymbol r)(\hat{v})\|_{\sigma',s_1,s_2-1,P}}{\epsilon \|\hat{h}\|_{\sigma',s_1,s_2,P}}
        \leq \mathfrak q.
    \end{align*}
    For $j=1,2$, differentiating the fixed-point identity $\hat{v} = -L^{-1}(\boldsymbol\omega, \boldsymbol r)G(\boldsymbol\omega, \boldsymbol r)(\hat{v})$ with respect to $\omega_j$ via the chain rule provides:
    \begin{align*}
        \frac{\partial \hat{v}}{\partial \omega_j} = -\bigg[\frac{\partial (L^{-1}(\boldsymbol\omega, \boldsymbol r))}{\partial \omega_j} \bigg] G(\boldsymbol\omega, \boldsymbol r)(\hat{v}) - L^{-1}(\boldsymbol\omega, \boldsymbol r) DG(\boldsymbol\omega, \boldsymbol r)(\hat{v})\left[ \frac{\partial \hat{v}}{\partial \omega_j} \right] - L^{-1}(\boldsymbol\omega, \boldsymbol r) \frac{\partial G(\boldsymbol\omega, \boldsymbol r)}{\partial \omega_j}(\hat{v}),
    \end{align*}
    which can be rearranged into:
    \begin{equation}
        \frac{\partial \hat{v}}{\partial \omega_j} = -\big[ I + L^{-1}(\boldsymbol\omega, \boldsymbol r) DG(\boldsymbol\omega, \boldsymbol r)(\hat{v}) \big]^{-1}\bigg( \frac{\partial (L^{-1}(\boldsymbol\omega, \boldsymbol r))}{\partial \omega_j} G(\boldsymbol\omega, \boldsymbol r)(\hat{v}) + L^{-1}(\boldsymbol\omega, \boldsymbol r) \frac{\partial G(\boldsymbol\omega, \boldsymbol r)}{\partial \omega_j}(\hat{v}) \bigg). \label{partialformula2}
    \end{equation}
    Since $\|L^{-1}(\boldsymbol\omega, \boldsymbol r)DG(\boldsymbol\omega, \boldsymbol r)(\hat{v})\|_{\sigma',(s_1,s_2),(s_1,s_2),P} \leq \mathfrak q < 1$, the Neumann series expansion is valid, guaranteeing
    \begin{equation*}
        \| [ I + L^{-1}(\boldsymbol\omega, \boldsymbol r) DG(\boldsymbol\omega, \boldsymbol r)(\hat{v}) ]^{-1} \|_{\sigma',(s_1,s_2),(s_1,s_2),P} \leq \frac{1}{1-\mathfrak q}.
    \end{equation*}
    Notice that $L^{-1}(\boldsymbol\omega, \boldsymbol r)$ is bounded from $ PH^{\sigma',s_1-2,s_2}$ or $PH^{\sigma',s_1,s_2-1} $ to $PH^{\sigma',s_1,s_2}$, and $\frac{\partial L(\boldsymbol\omega, \boldsymbol r)}{\partial \omega_j}$ is bounded from $PH^{\sigma',s_1,s_2}$ to $PH^{\sigma',s_1,s_2-1}$ with $\frac{\partial L(\boldsymbol\omega, \boldsymbol r)}{\partial \omega_j}(\boldsymbol k) = \frac{k_j}{m(\boldsymbol k)}$. Thus, the operator
    \begin{equation*}
        \dfrac{\partial(L^{-1}(\boldsymbol\omega, \boldsymbol r))}{\partial\omega_j} = -L^{-1}(\boldsymbol\omega, \boldsymbol r) \dfrac{\partial L(\boldsymbol\omega, \boldsymbol r)}{\partial\omega_j} L^{-1}(\boldsymbol\omega, \boldsymbol r)
    \end{equation*}
    is bounded from $PH^{\sigma',s_1-2,s_2}$ or $PH^{\sigma',s_1,s_2-1}$ to $PH^{\sigma',s_1,s_2}$.
    Similarly, we have
    \begin{align*}
        \left\| L^{-1}(\boldsymbol\omega, \boldsymbol r) \frac{\partial G(\boldsymbol\omega, \boldsymbol r)}{\partial \omega_j}(\hat{v}) \right\|_{\sigma',s_1,s_2,P} &= \left\|L^{-1}(\boldsymbol\omega, \boldsymbol r)\frac{k_j}{m(\boldsymbol k)}\hat{u}_0 \right\|_{\sigma',s_1,s_2,P}\\
        &\leq \left\|L^{-1}(\boldsymbol\omega, \boldsymbol r)\frac{k_j}{m(\boldsymbol k)} \right\|_{\sigma',(s_1,s_2),(s_1,s_2),P}\|\hat{u}_0\|_{\sigma',s_1,s_2,P}\lesssim |\boldsymbol r|_{\infty}^{2}.
    \end{align*}
    Taking the norm on both sides of \eqref{partialformula2} yields
    \begin{align*}
        \left\| \frac{\partial \hat{v}}{\partial \omega_j} \right\|_{\sigma',s_1,s_2,P} 
        \leq&  \frac{1}{1-\mathfrak q} \left\| \frac{\partial (L^{-1}(\boldsymbol\omega, \boldsymbol r))}{\partial \omega_j} \right\|_{\sigma',(s_1-2,s_2),(s_1,s_2),P} \|G_1( \boldsymbol r)(\hat{v})\|_{\sigma',s_1-2,s_2,P} \\
        &+ \frac{1}{1-\mathfrak q}\left\| \frac{\partial (L^{-1}(\boldsymbol\omega, \boldsymbol r))}{\partial \omega_j} \right\|_{\sigma',(s_1,s_2-1),(s_1,s_2),P} \|G_2(\boldsymbol\omega, \boldsymbol r)(\hat{v})\|_{\sigma',s_1,s_2-1,P}  \\
        &+ \frac{1}{1-\mathfrak q}\left\|L^{-1}(\boldsymbol\omega, \boldsymbol r) \frac{\partial G(\boldsymbol\omega, \boldsymbol r)(\hat{v})}{\partial \omega_j} \right\|_{\sigma',s_1,s_2,P} \\
        \lesssim& |\boldsymbol r|_{\infty}^{2}.
    \end{align*}
    In addition, differentiating the range equation $L(\boldsymbol\omega,\boldsymbol r)\hat{v} + G(\boldsymbol \omega, \boldsymbol r)(\hat{v}) = 0$ twice yields
    \begin{equation*}
        \frac{\partial^2 \hat{v}}{\partial \omega_i\partial \omega_j} = -\big[ L(\boldsymbol\omega, \boldsymbol r) + DG(\boldsymbol\omega, \boldsymbol r)(\hat{v}) \big]^{-1}\bigg( \frac{\partial L(\boldsymbol \omega, \boldsymbol r)}{\partial \omega_j} \frac{\partial \hat{v}}{\partial \omega_i} + \frac{\partial L(\boldsymbol \omega, \boldsymbol r)}{\partial \omega_i} \frac{\partial \hat{v}}{\partial \omega_j} + D^2G(\boldsymbol\omega, \boldsymbol r)(\hat{v})\left[\frac{\partial \hat{v}}{\partial \omega_i},\frac{\partial \hat{v}}{\partial \omega_j}\right] \bigg).
    \end{equation*}
    Similarly, for $(\boldsymbol\omega,\boldsymbol r)\in\tilde\Omega_{\rho_3,C}$ and $\hat{h}_1,\hat{h}_2 \in PH^{\sigma',s_1,s_2}$, the definition of the second Fr\'echet derivative and the definition of $G$ implies that
    \begin{align*}
        &\| L^{-1}(\boldsymbol \omega, \boldsymbol r) D^2G(\boldsymbol \omega, \boldsymbol r)(\hat{v})[\hat{h}_1,\hat{h}_2] \|_{\sigma',s_1,s_2,P} \\
        \leq &2C_2 \| D^2G_1(\boldsymbol r)(\hat{v})[\hat{h}_1,\hat{h}_2] \|_{\sigma',s_1-2,s_2,P} + 2C_2 \| D^2G_2(\boldsymbol \omega, \boldsymbol r)(\hat{v})[\hat{h}_1,\hat{h}_2] \|_{\sigma',s_1,s_2-1,P}\\
        \leq &2C_2\cdot \|\widehat{h_1h_2} + \widehat{ D^2P_{2,1}(u_0+v)[h_1,h_2]} \|_{\sigma',s_1-2,s_2,P} + 2C_2\cdot \|\widehat{ D^2P_{2,2}(u_0+v)[h_1,h_2]} \|_{\sigma',s_1,s_2-1,P}\\
        \leq &2C_2\cdot [1 +f_{2}''(C_{\sigma', s_1,s_2}(\|\hat{u}_0\|_{\sigma',s_1,s_2}+\|\hat{v}\|_{\sigma',s_1,s_2,P}))]\cdot C_{\sigma', s_1,s_2}\|\hat{h}_1\|_{\sigma',s_1,s_2,P}\|\hat{h}_2\|_{\sigma',s_1,s_2,P}
    \end{align*}
    Notice that $f_2''(z)\lesssim z^{p-2}$ for $p\geq 3$. Taking $\hat{h}_1 = \frac{\partial \hat{v}}{\partial \omega_i},\ \hat{h}_2 = \frac{\partial \hat{v}}{\partial \omega_j}$, we obtain
    \[
        \| L^{-1}(\boldsymbol \omega, \boldsymbol r) D^2G(\boldsymbol \omega, \boldsymbol r)(\hat{v})[h_1,h_2] \|_{\sigma',s_1,s_2,P} \lesssim |\boldsymbol r|_{\infty}^4.
    \]
    Thus, we have
    \begin{align*}
        \left\|\frac{\partial^2 \hat{v}}{\partial \omega_i\partial \omega_j}\right\|_{\sigma',s_1,s_2,P} &\leq \frac{1}{1-\mathfrak q} \bigg( 4C_2C_{\mathcal{M}} \max_{j=1,2} \left\| \frac{\partial \hat{v}}{\partial \omega_j}\right\|_{\sigma',s_1,s_2,P} \\
        &+ \left\| L^{-1} D^2G(\boldsymbol \omega, \boldsymbol r)(\hat{v})\left[\frac{\partial \hat{v}}{\partial \omega_i},\frac{\partial \hat{v}}{\partial \omega_j}\right]\right\|_{\sigma',s_1,s_2,P} \bigg)\\
        &\lesssim |\boldsymbol r|_{\infty}^2.
    \end{align*}
    This proves \eqref{qiudaoguji2} and completes the proof.
\end{proof}

\subsection{Solving the bifurcation equation}\label{sec:two-gap-2}
As in the one-gap case, we solve the bifurcation equation and calculate the perturbed frequency by a perturbative argument.
Recall the bifurcation equation \eqref{qfangcheng2} is equivalent to
\begin{equation}\label{solvebifurcation2}
    \left(\frac{k_1\omega_1+k_2\omega_2}{m(\boldsymbol k)}-\frac{4\pi^2}{\gamma^2}m(\boldsymbol k)^2-\frac{n(\boldsymbol k)^2}{m(\boldsymbol k)^2}\right)\hat{u}(\boldsymbol k)
    + \frac{1}{2} (\hat{u}*\hat{u})(\boldsymbol k)
    + \widehat{P_2(u)}(\boldsymbol k)=0,
\end{equation}
for $\boldsymbol k=(k_1,k_2)\in S_2$.

For notational simplicity, we denote the Fourier coefficients $\hat{u}_0(\boldsymbol k;\boldsymbol r)$ by $\hat{u}_0(\boldsymbol k)$, and set $\mathcal{A}_1=\hat{u}_0(1,0)\hat{u}_0(-1,0), \mathcal{A}_2=\hat{u}_0(0,1)\hat{u}_0(0,-1)$.
For $\rho \in (0,1)$, denote $\tilde{\mathcal{U}}_{\rho} = \mathcal{U}_{\rho} \cap \{\boldsymbol r=(r_1,r_2): \rho/2< |r_j|< \rho,j=1,2\}$.
\begin{theorem}\label{solvetwogapbifurcation}
    After reducing $\rho_3$ if necessary, there exists a real-analytic function $\boldsymbol\omega =(\omega_1, \omega_2) : (\rho_3/2,\rho_3)^2 \to \mathbb{R}^2$ such that $\hat{u} = \hat{u}_0 + \widehat{v(\boldsymbol \omega(\boldsymbol r),\boldsymbol r)}$ satisfies the bifurcation equation \eqref{solvebifurcation2} on $(\rho_3/2,\rho_3)^2$. Moreover, when $p$ is odd, we have
    \begin{align*}
        \omega_1(\boldsymbol r) =\omega_{1,0}(\boldsymbol r) - m_1\cdot \mathscr{M}_{1,odd}(\boldsymbol r) +O(\boldsymbol r^{p}),\quad
        \omega_2(\boldsymbol r) =\omega_{2,0}(\boldsymbol r) - m_2\cdot \mathscr{M}_{2,odd}(\boldsymbol r) +O(\boldsymbol r^{p}),
    \end{align*}
    when $p$ is even, we have
    \begin{align*}
        \omega_1(\boldsymbol r) &=\omega_{1,0}(\boldsymbol r) - 
         \Gamma_{1,1}(\boldsymbol r) - \Gamma_{1,2}(\boldsymbol r) - m_1 \mathscr{M}_{1,even}(\boldsymbol r) + O( \boldsymbol r^{p+1}),\\
        \omega_2(\boldsymbol r) &=\omega_{2,0}(\boldsymbol r) - \Gamma_{2,1}(\boldsymbol r) -\Gamma_{2,2}(\boldsymbol r) - m_2 \mathscr{M}_{2,even}(\boldsymbol r)+ O(\boldsymbol r^{p+1}),
    \end{align*}
    where these notations are presented in Appendix \ref{Appendix}.
    
    Because of the expansion of $\boldsymbol\omega$, for sufficiently large $C$, $(\boldsymbol \omega(\boldsymbol r),\boldsymbol r) \in \Omega_{\rho_3,C}$.
\end{theorem}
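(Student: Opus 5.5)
I will follow the scheme of Theorem~\ref{thm:solveomega}, now for a two-dimensional unknown $\boldsymbol\omega=(\omega_1,\omega_2)$. Take $(\boldsymbol\omega,\boldsymbol r)\in\tilde\Omega_{\rho_3,C}$. Theorem~\ref{Banachfixed2} supplies the range solution $\hat v(\boldsymbol\omega,\boldsymbol r)$. By reality on $\Omega_{\rho_3,C}$ and evenness, the four equations of \eqref{solvebifurcation2} for $\boldsymbol k\in S_2$ collapse to the two equations $\boldsymbol k=(1,0)$ and $\boldsymbol k=(0,1)$. Since $\hat u_0$ satisfies \eqref{kp-fourier2} on $\mathcal U_{\rho_0}$, the equation for $\boldsymbol k=(1,0)$ reduces to
\[
\frac{\omega_1-\omega_{1,0}}{m_1}\hat u_0(1,0)+\widehat{u_0v}(1,0)+\tfrac12\widehat{v^2}(1,0)+\widehat{P_2(u_0+v)}(1,0)=0,
\]
and the equation for $(0,1)$ is analogous. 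I then multiply each equation by $m_j/\hat u_0(e_j)$. On $\tilde{\mathcal U}_\rho$ we have $|\hat u_0(1,0)|\sim r_1\sim|\boldsymbol r|_\infty$ and $|\hat u_0(0,1)|\sim r_2$; this is exactly where the comparability $\rho/2<|r_j|<\rho$ is used. The result is a holomorphic system $\mathbf F(\boldsymbol\omega,\boldsymbol r)=\boldsymbol\omega-\boldsymbol\omega_0(\boldsymbol r)+\mathbf N(\boldsymbol\omega,\boldsymbol r)=0$.

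Next I extract the leading terms of $\widehat{P_2(u_0+v)}(e_j)$. By \eqref{Lipchitz2.1line1}--\eqref{Lipchitz2.2} this equals $\widehat{P_2(u_0)}(e_j)+O(\boldsymbol r^{2p-1})$. For $p$ odd, the terms of degree $p$ are the products $\hat u_0(\pm e_1)^{a}\hat u_0(\pm e_2)^{b}$ with total momentum $e_j$. Their sum over multinomial choices gives $(a_p+b_p)$ times a polynomial in $\mathcal A_1,\mathcal A_2$ times $\hat u_0(e_j)$; this defines $\mathscr M_{j,odd}$ of Appendix~\ref{Appendix}. The cross terms $\widehat{u_0v}(e_j)$ and $\widehat{v^2}(e_j)$ are $O(\boldsymbol r^{p+1})$, because $\hat u_0(\boldsymbol k)$ decays like $r_1^{|k_1|}r_2^{|k_2|}$. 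For $p$ even, the degree-$p$ resonant sum vanishes by parity. One then needs the degree-$(p+1)$ contributions: the $u^{p+1}$ term, the terms containing one $\hat u_0(\pm2e_i)$ or $\hat u_0(\pm e_1\pm e_2)$, and the term $\widehat{u_0v}(e_j)$. For the last of these I compute the first Picard iterate $\hat v_1=-L_0^{-1}G(\boldsymbol\omega,\boldsymbol r)(0)+O(\boldsymbol r^{p+2})$ at the modes $\boldsymbol k=e_j\pm e_i,\ 2e_j$. The divisors there are evaluated at $\boldsymbol\omega_0(\boldsymbol 0)$ and are nonzero because these modes lie outside $S_0\cup S_2$. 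By Theorem~\ref{Banachfixed2}, $\|\hat v-\hat v_1\|\lesssim|\boldsymbol r|_\infty^{2p-2}$, so this accounts for $\Gamma_{j,1}$ and $\Gamma_{j,2}$. The terms $\widehat{v^2}$ remain $O(\boldsymbol r^{2p})$.

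Set $\boldsymbol\omega_{0,\cdot}(\boldsymbol r)$ equal to $\boldsymbol\omega_0$ minus these explicit leading terms. Then $|\mathbf F(\boldsymbol\omega_{0,\cdot},\boldsymbol r)|\le\mathscr E_0|\boldsymbol r|_\infty^{p}$ for $p$ odd, and $\le\mathscr E_0|\boldsymbol r|_\infty^{p+1}$ for $p$ even. Using \eqref{qiudaoguji2} together with the analogue of \eqref{setm}, $D_{\boldsymbol\omega}\mathbf F=I+O(\boldsymbol r^2)$, which is invertible with uniformly bounded inverse, and $D^2_{\boldsymbol\omega}\mathbf F=O(\boldsymbol r^2)$. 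The derivative bound again requires dividing by $\hat u_0(e_j)$, and hence again uses comparability. I then run the same quadratically convergent Newton scheme as in the one-gap proof, now with matrix inverses. I choose $C>2C_{\cdot}$, where $|\boldsymbol\omega_{0,\cdot}-\boldsymbol\omega_0|\le C_\cdot|\boldsymbol r|_\infty^{p-1}$, and then shrink $\rho_3$. The iterates stay in $\tilde\Omega_{\rho_3,C}$ and remain holomorphic and conjugation-symmetric, and their limit gives a real-analytic $\boldsymbol\omega(\boldsymbol r)$ on $(\rho_3/2,\rho_3)^2$ with the stated error.

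The main obstacle is the even-$p$ bookkeeping: identifying all degree-$(p+1)$ contributions to mode $e_j$, in particular the resonant-adjacent modes $e_1\pm e_2$ entering through $\hat v_1$ and $\hat u_0$. Each must be shown to be genuinely of order $|\boldsymbol r|_\infty^{p+1}$ after division by $\hat u_0(e_j)$, and the Taylor weights $a_\cdot,b_\cdot$ must be tracked with the extra factors $m(\boldsymbol k),n(\boldsymbol k)$ coming from $P_{2,1}$ versus $P_{2,2}$. Control of the remainder is routine given Theorem~\ref{Banachfixed2}. The explicit coefficients are deferred to Appendix~\ref{Appendix}.
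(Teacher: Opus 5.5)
Your proposal is correct and follows the paper's proof essentially step for step. The steps match: collapsing $S_2$ to the $(1,0),(0,1)$ equations, dividing by $\hat u_0(e_j)$ using comparability on $\tilde{\mathcal U}_{\rho_3}$, extracting leading terms (via the first Picard iterate at $(2,0),(0,2),(1,\pm1),(-1,1)$ for even $p$), and closing with a conjugation-symmetric Newton iteration based on $D_{\boldsymbol\omega}F=I+O(\boldsymbol r^2)$ and $D^2_{\boldsymbol\omega}F=O(\boldsymbol r^2)$. Two minor corrections: if $m_1=\pm m_2$ the mode $(1,\mp1)$ lies in $S_0$ rather than outside $S_0\cup S_2$, so that term simply drops (this is the case split in $C_{\Gamma,j}$); and no factors $m(\boldsymbol k),n(\boldsymbol k)$ need tracking, since the degree-$p$ and degree-$(p+1)$ parts of $Q_{2,1},Q_{2,2}$ are derivative-free, so only $a_p+b_p$ and $a_{p+1}+b_{p+1}$ enter, while all derivative terms sit in remainders of order $O(\boldsymbol r^{p+2})$.
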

\begin{proof}
    Fix a constant $C>0$, which will be chosen sufficiently large later. Denote
    \begin{equation*}
        I_{C}(\boldsymbol r) := \{(\omega_1,\omega_2) \in \mathbb{C}^2: |\omega_1 - \omega_{1,0}(\boldsymbol r)| < C|\boldsymbol r|_{\infty}^{p-1}, |\omega_2 - \omega_{2,0}(\boldsymbol r)| < C|\boldsymbol r|_{\infty}^{p-1} \},\quad \boldsymbol r\in \tilde{\mathcal{U}}_{\rho_3}.
    \end{equation*}
    Throughout the proof, we seek a solution satisfying the a priori restriction 
    \begin{equation}
       \boldsymbol \omega = (\omega_1,\omega_2) \in I_{C}(\boldsymbol r),\textup{ for } \boldsymbol r\in \tilde{\mathcal{U}}_{\rho_3}.\label{xianyanguji2}
    \end{equation}
    If $(\boldsymbol \omega,\boldsymbol r) \in \Omega_{\rho_3,C}$, Theorem \ref{Banachfixed2} implies that $\widehat{u_0(\boldsymbol r)} + \widehat{v(\boldsymbol\omega,\boldsymbol r)} \in \ell_{re}^2(\mathbb{T}_{\gamma}^2)$. Hence the bifurcation equation \eqref{solvebifurcation2} for $\boldsymbol k=(1,0)$ and $\boldsymbol k=(-1,0)$ is the same equation; the bifurcation equation \eqref{solvebifurcation2} for $\boldsymbol k=(0,1)$ and $\boldsymbol k=(0,-1)$ is the same equation.
    
    Since $\widehat{u_0(\boldsymbol r)}$ satisfies \eqref{kp-fourier2} for $\boldsymbol r\in \tilde{\mathcal{U}}_{\rho_3}$, with $\hat{u} = \widehat{u_0(\boldsymbol r)} + \widehat{v(\boldsymbol \omega,\boldsymbol r)}$, the bifurcation equations \eqref{qfangcheng2} for $\boldsymbol k = (1,0),(0,1)$ become
    \begin{align}
        (\omega_1-\omega_{1,0})\hat{u}_0(1,0)+\frac{ m_1 }{2}\widehat{[2u_0 v(\boldsymbol\omega,\boldsymbol r)+v(\boldsymbol\omega,\boldsymbol r)^2]}(1,0)+m_1\widehat{P_2(u_0+v(\boldsymbol\omega,\boldsymbol r))}(1,0)=0,\label{solvebifurcationcomplex2.1}\\
        (\omega_2-\omega_{2,0})\hat{u}_0(0,1)+\frac{ m_2 }{2}\widehat{[2u_0 v(\boldsymbol\omega,\boldsymbol r)+v(\boldsymbol\omega,\boldsymbol r)^2]}(0,1)+m_2\widehat{P_2(u_0+v(\boldsymbol\omega,\boldsymbol r))}(0,1)=0.\label{solvebifurcationcomplex2.2}
    \end{align}
    Denote the left-hand sides of \eqref{solvebifurcationcomplex2.1} and \eqref{solvebifurcationcomplex2.2} by $\tilde{F}_{1}(\boldsymbol\omega, \boldsymbol r)$ and $\tilde{F}_{2}(\boldsymbol\omega, \boldsymbol r)$, respectively.
    Then, $\tilde{F}_{1}(\boldsymbol\omega, \boldsymbol r)$ and $\tilde{F}_{2}(\boldsymbol\omega, \boldsymbol r)$ are holomorphic functions on $\tilde{\Omega}_{\rho_3,C}$, whose restrictions to $\Omega_{\rho_3,C}$ are real analytic. 
    Denote $ \tilde{\boldsymbol F} = (\tilde{F}_{1}, \tilde{F}_{2})$. As a consequence, to obtain a real-analytic solution on $(\rho_3/2,\rho_3)^2$ to the bifurcation equation \eqref{solvebifurcation2}, we only need to find a holomorphic function $\boldsymbol\omega(\boldsymbol r): \tilde{\mathcal{U}}_{\rho_3} \to \mathbb{C}^2$ such that $\tilde{\boldsymbol F}(\boldsymbol\omega(\boldsymbol r), \boldsymbol r) = 0$ and $\boldsymbol\omega(\overline{\boldsymbol r}) = \overline{\boldsymbol\omega(\boldsymbol r)}$.
    
    Now we analyze the bifurcation equation.
    
    The leading-order term of $\widehat{P_2(u_0+v(\boldsymbol\omega,\boldsymbol r))}(1,0),\widehat{P_2(u_0+v(\boldsymbol\omega,\boldsymbol r))}(0,1)$ depends on whether $p$ is odd or even. Denote $\mathcal{A}_1=\hat{u}_0(1,0)\hat{u}_0(-1,0), \mathcal{A}_2=\hat{u}_0(0,1)\hat{u}_0(0,-1)$.
    To start with, with Theorem \ref{Banachfixed2}, \eqref{Lipchitz2.1line1} and \eqref{Lipchitz2.1line2}, we have
    \begin{align*}
        \|\widehat{ P_{2,1}(u_0) }- \widehat{ P_{2,1}(u_0 + v(\boldsymbol\omega,\boldsymbol r) })\|_{\sigma',s_1-2,s_2}
        \leq f_{2,1}'(C_{\sigma', s_1,s_2}(\| \hat{ u }_0\|_{\sigma',s_1,s_2}+ M_0|\boldsymbol r|_{\infty}^{p} ))\cdot M_0|\boldsymbol r|_{\infty}^{p},
    \end{align*}
    and
    \begin{equation*}
        \|\widehat{ P_{2,2}(u_0) }- \widehat{ P_{2,2}(u_0 + v(\boldsymbol\omega,\boldsymbol r) })\|_{\sigma',s_1,s_2-1}\leq  f_{2,2}'(C_{\sigma', s_1, s_2}(\| \hat{ u }_0\|_{\sigma',s_1,s_2}+ M_0|\boldsymbol r|_{\infty}^{p} ))\cdot M_0|\boldsymbol r|_{\infty}^{p}.
    \end{equation*}
    Moreover, with \eqref{nonlinearestimate2.1} and \eqref{nonlinearestimate2.2}, we have
    \begin{equation*}
        \|\widehat{ P_{2,1,>p+1}(u_0 + v(\boldsymbol\omega,\boldsymbol r) })\|_{\sigma',s_1-2,s_2}\leq f_{2,1,>p+1}(C_{\sigma', s_1,s_2} (\|\hat{u}_0\|_{\sigma',s_1,s_2} + M_0|\boldsymbol r|_{\infty}^{p})),
    \end{equation*}
    and
    \begin{equation*}
        \|\widehat{ P_{2,2,>p+1}(u_0 + v(\boldsymbol\omega,\boldsymbol r) })\|_{\sigma',s_1,s_2-1}\leq f_{2,2,>p+1}(C_{\sigma', s_1,s_2} (\|\hat{u}_0\|_{\sigma',s_1,s_2} + M_0|\boldsymbol r|_{\infty}^{p})).
    \end{equation*}
    With these estimates and the calculations in the Appendix \ref{Appendix}, if $p$ is odd,
    \[
        \widehat{P_2(u_0+v)}(1,0) = \mathscr{M}_{1,odd}(\boldsymbol r) \cdot \hat{u}_0(1,0) + O( \boldsymbol r^{p+1}),\\
        \widehat{P_2(u_0+v)}(0,1) = \mathscr{M}_{2,odd}(\boldsymbol r) \cdot \hat{u}_0(0,1) + O( \boldsymbol r^{p+1}),
    \]
    whose leading orders are $p$.
    When $p$ is even, they can be rewritten into
    \begin{align*}
        &\widehat{P_2(u_0+v)}(1,0) =  \mathscr{M}_{1,even}(\boldsymbol r)\cdot \hat{u}_0(1,0) + O(\boldsymbol r^{p+2}),\\
        &\widehat{P_2(u_0+v)}(0,1) = \mathscr{M}_{2,even}(\boldsymbol r)\cdot \hat{u}_0(0,1)+ O(\boldsymbol r^{p+2}),
    \end{align*}
    whose leading orders are $p+1$.

    When $p$ is odd, a simple calculation implies that
    \begin{align*}
        &|\widehat{u_0v}(1,0)|
        \leq \sum_{(k,l)\in\mathbb{Z}^2\setminus S_0} \left|\hat{u}_0(k,l)\hat{v}(1-k,-l)\right| \lesssim \sum_{|\boldsymbol k|_1\geq 1} |\boldsymbol r|_{\infty}^{|\boldsymbol k|_1+p} \lesssim |\boldsymbol r|_{\infty}^{p+1},\\
        &|\widehat{u_0v}(0,1)|
        \leq \sum_{(k,l)\in\mathbb{Z}^2\setminus S_0} \left| \hat{u}_0(k,l)\hat{v}(-k,1-l)\right| \lesssim \sum_{|\boldsymbol k|_1\geq 1} |\boldsymbol r|_{\infty}^{|\boldsymbol k|_1+p} \lesssim |\boldsymbol r|_{\infty}^{p+1},\\
        &|\widehat{v^2}(1,0)|\leq \sum_{(k,l)\in\mathbb{Z}^2\setminus S_0} \left| \hat{v}(k,l)\hat{v}(1-k,-l) \right|\lesssim \sum_{|\boldsymbol k|_1\geq 1} e^{-\sigma\nu_2|\boldsymbol k|_1}|\boldsymbol r|_{\infty}^{2p} \lesssim |\boldsymbol r|_{\infty}^{2p},\\
        &|\widehat{v^2}(0,1)|\leq \sum_{(k,l)\in\mathbb{Z}^2\setminus S_0} \left| \hat{v}(k,l)\hat{v}(-k,1-l) \right| \lesssim \sum_{|\boldsymbol k|_1\geq 1} e^{-\sigma\nu_2|\boldsymbol k|_1}|\boldsymbol r|_{\infty}^{2p} \lesssim |\boldsymbol r|_{\infty}^{2p}.
    \end{align*}
    Notice that
    \[
        |\hat{u}_0(1,0)| \gtrsim |\boldsymbol r|_{\infty}, \quad |\hat{u}_0(0,1)| \gtrsim |\boldsymbol r|_{\infty},\quad \textup{for } \boldsymbol r \in \tilde{\mathcal{U}}_{\rho_3}.
    \]
    Thus, when $p$ is odd, we can obtain the bifurcation equation in the following form:
    \begin{align}
        (\omega_1-\omega_{1,0}(\boldsymbol r)) &+ m_1\cdot \mathscr{M}_{1,odd}(\boldsymbol r) + \mathscr{R}_{1,odd}(\boldsymbol \omega, \boldsymbol r) = 0,\label{reducedbifurcation3.1}\\
        (\omega_2-\omega_{2,0}(\boldsymbol r)) &+ m_2\cdot \mathscr{M}_{2,odd}(\boldsymbol r) + \mathscr{R}_{2,odd}(\boldsymbol \omega, \boldsymbol r) = 0,\label{reducedbifurcation3.2}
    \end{align}
    where $\mathscr{R}_{j,odd}(\boldsymbol \omega, \boldsymbol r) = O(\boldsymbol r^{p}),\ j=1,2$.
    When $p$ is even, we have
    \begin{align*}
        &|\widehat{u_0v}(1,0) - \hat{u}_0(-1,0) \hat{v}(2,0) - \hat{u}_0(0,1)\hat{v}(1,-1) - \hat{u}_0(0,-1)\hat{v}(1,1)| \\
        \leq &\sum_{(k,l)\in\mathbb{Z}^2\setminus (S_0\cup S_2)} \left|\hat{u}_0(k,l)\hat{v}(1-k,-l)\right| \lesssim \sum_{|\boldsymbol k|_1\geq 2} |\boldsymbol r|_{\infty}^{|\boldsymbol k|_1+p} \lesssim |\boldsymbol r|_{\infty}^{p+2},\\
        &|\widehat{u_0v}(0,1) - \hat{u}_0(0,-1) \hat{v}(0,2) - \hat{u}_0(-1,0)\hat{v}(1,1) - \hat{u}_0(1,0)\hat{v}(-1,1)| \\
        \leq &\sum_{(k,l)\in\mathbb{Z}^2\setminus (S_0\cup S_2)} \left| \hat{u}_0(k,l)\hat{v}(-k,1-l)\right| \lesssim \sum_{|\boldsymbol k|_1\geq 2} |\boldsymbol r|_{\infty}^{|\boldsymbol k|_1+p} \lesssim |\boldsymbol r|_{\infty}^{p+2},               
    \end{align*}
    and
    \begin{align*}
        &|\widehat{v^2}(1,0)|\leq \sum_{(k,l)\in\mathbb{Z}^2\setminus S_0} \left| \hat{v}(k,l)\hat{v}(1-k,-l) \right|\lesssim \sum_{|\boldsymbol k|_1\geq 1} e^{-\sigma\nu_2|\boldsymbol k|_1}|\boldsymbol r|_{\infty}^{2p} \lesssim |\boldsymbol r|_{\infty}^{2p},\\
        &|\widehat{v^2}(0,1)|\leq \sum_{(k,l)\in\mathbb{Z}^2\setminus S_0} \left| \hat{v}(k,l)\hat{v}(-k,1-l) \right| \lesssim \sum_{|\boldsymbol k|_1\geq 1} e^{-\sigma\nu_2|\boldsymbol k|_1}|\boldsymbol r|_{\infty}^{2p} \lesssim |\boldsymbol r|_{\infty}^{2p}.
    \end{align*}    
    We therefore need to calculate the leading-order term of $\hat{v}(2,0), \hat{v}(0,2),\hat{v}(1,1), \hat{v}(1,-1), \hat{v}(-1,1)$ when $p$ is even.
    
    Recall the Picard iteration sequence for the range equation \eqref{banachform2}:
    \begin{equation*}
        \hat{v}_0(\boldsymbol \omega, \boldsymbol r)=0,\hat{v}_{n+1}(\boldsymbol \omega, \boldsymbol r)= -L^{-1}(\boldsymbol \omega, \boldsymbol r)G(\boldsymbol \omega, \boldsymbol r)(\hat{v}_n(\boldsymbol \omega, \boldsymbol r)),n\geq 0.
    \end{equation*}
    The first iterate is
    \begin{equation*}
        \hat{v}_1(\boldsymbol\omega,\boldsymbol r) = -L^{-1}(\boldsymbol \omega, \boldsymbol r)G(\boldsymbol \omega, \boldsymbol r)(0) = \sum_{l=0}^{\infty} (-L_0^{-1}(\boldsymbol \omega)L_1(\boldsymbol r))^l(-L_0^{-1}(\boldsymbol \omega)G(\boldsymbol \omega, \boldsymbol r)(0)).
    \end{equation*}
    Notice that for $\boldsymbol k=(2,0), (0,2), (1,1), (1,-1), (-1,1)$, $\left|\boldsymbol k\cdot (\boldsymbol\omega-\boldsymbol \omega_{0})\hat{u}_0(\boldsymbol k)\right| \lesssim |\boldsymbol r|_{\infty}^{p+1}$.
    With the preceding estimate and \eqref{omega10line1}--\eqref{omega20line2}, we have
    \begin{align*}
        L_0(\boldsymbol \omega)(\boldsymbol k, \boldsymbol k) &= L_0(\boldsymbol \omega_0(\boldsymbol 0))(\boldsymbol k, \boldsymbol k)+O(\boldsymbol r^{2}),\\
        \left(G(\boldsymbol \omega, \boldsymbol r)(0)\right)(\boldsymbol k) &= (a_p+b_p) \widehat{u_0^{p}}(\boldsymbol k) + O(\boldsymbol r^{p+1}).
    \end{align*}
    Hence,
    \begin{align*}
        \left(-L_0^{-1}(\boldsymbol \omega) G(\boldsymbol \omega, \boldsymbol r)(0)\right)(\boldsymbol k)=& -(a_p+b_p)L_0(\boldsymbol \omega_0(\boldsymbol 0))(\boldsymbol k,\boldsymbol k)^{-1} \widehat{u_0^{p}}(\boldsymbol k) + O(\boldsymbol r^{p+1}),
    \end{align*}
    and for all $l>0$,
    \begin{align*}
        &\left| \left( (-L_0^{-1}(\boldsymbol \omega)L_1(\boldsymbol r))^l (-L_0^{-1}(\boldsymbol \omega) G(\boldsymbol \omega, \boldsymbol r)(0)) \right)(\boldsymbol k) \right|\\
        = &\left| \sum_{\boldsymbol k'\in \mathbb{Z}^2} (-L_0^{-1}(\boldsymbol \omega) L_1(\boldsymbol r))^l(\boldsymbol k,\boldsymbol k')\cdot -L_0^{-1}(\boldsymbol \omega)(\boldsymbol k',\boldsymbol k')\cdot\left(G(\boldsymbol \omega, \boldsymbol r)(0)\right)(\boldsymbol k') \right|\\
        \lesssim &|\boldsymbol r|_{\infty}^{p+l} \sum_{\boldsymbol k' \in \mathbb{Z}^2}|\boldsymbol r|_{\infty}^{2\max\{|\boldsymbol k-\boldsymbol k'|_1-l,0\}} \lesssim |\boldsymbol r|_{\infty}^{p+l}.
    \end{align*}  
    Thus
    \begin{align*}
        &\hat{v}_1(\boldsymbol k)=-(a_p+b_p)L_0(\boldsymbol \omega_0(\boldsymbol 0))(\boldsymbol k,\boldsymbol k)^{-1} \widehat{u_0^{p}}(\boldsymbol k) + O(\boldsymbol r^{p+1}).
    \end{align*}
    With Theorem \ref{Banachfixed2}, we have
    \begin{align*}
        \|\hat{v}_{n} - \hat{v}_1\|_{\sigma',s_1,s_2,P}& \leq \sum_{j=1}^{n-1} |\boldsymbol r|_{\infty}^{(p-2)j}  \|\hat{v}_1 - \hat{v}_{0}\|_{\sigma',s_1,s_2,P} \leq \frac{M_0|\boldsymbol r|_{\infty}^{2p-2}}{1-|\boldsymbol r|_{\infty}^{p-2}},
    \end{align*}
    which implies that
    \begin{equation*}
        \| \widehat{v(\boldsymbol \omega,\boldsymbol r)}-\widehat{v_1(\boldsymbol \omega,\boldsymbol r)} \|_{\sigma',s_1,s_2,P} \lesssim |\boldsymbol r|_{\infty}^{2p-2}.
    \end{equation*}
    Thus,
    \begin{equation}\label{approximationv}
        \widehat{v(\boldsymbol \omega,\boldsymbol r)}(\boldsymbol k)=-(a_p+b_p)L_0(\boldsymbol \omega_0(\boldsymbol 0))(\boldsymbol k,\boldsymbol k)^{-1} \widehat{u_0^{p}}(\boldsymbol k) + O(\boldsymbol r^{p+1}).
    \end{equation}
    Notice that if $m_1 + m_2 = 0$ or $m_1 - m_2 = 0$, the leading-order term in \eqref{approximationv} at $(1,-1),(-1,1)$ or $(1,1)$ vanishes.
    When $p$ is even, a simple calculation yields
    \begin{align*}
        L_0(\boldsymbol \omega_0(\boldsymbol 0))((2,0),(2,0))^{-1} &=  -\frac{\gamma^2}{12\pi^2m_1^2} ,\quad
        L_0(\boldsymbol \omega_0(\boldsymbol 0))((0,2),(0,2))^{-1} =  -\frac{\gamma^2}{12\pi^2m_2^2},\\
        L_0(\boldsymbol \omega_0(\boldsymbol 0))((1,1),(1,1))^{-1} &= \left(-\frac{12\pi^2}{\gamma^2} m_1m_2 + \frac{(m_1n_2-m_2n_1)^2}{m_1m_2(m_1+m_2)^2}\right)^{-1},\textup{when }m_1+m_2 \neq 0,\\
        L_0(\boldsymbol \omega_0(\boldsymbol 0))((1,-1),(1,-1))^{-1} &= L_0(\boldsymbol \omega_0(\boldsymbol 0))((-1,1),(-1,1))^{-1}\\
        &= \left(\frac{12\pi^2}{\gamma^2} m_1m_2 - \frac{(m_1n_2-m_2n_1)^2}{m_1m_2(m_1-m_2)^2}\right)^{-1},\textup{when }m_1 - m_2 \neq 0.
    \end{align*}
    Moreover,
    \begin{align*}
        \widehat{u_0^{p}}(2,0) = \sum_{\substack{ \boldsymbol k_1+\cdots+\boldsymbol k_p = (2,0) \\ |\boldsymbol k_1|_1+\cdots +|\boldsymbol k_p|_1 = p}} &\hat{u}_0(\boldsymbol k_1)\cdots \hat{u}_0(\boldsymbol k_p) +  \sum_{L\geq p+2}\sum_{\substack{ \boldsymbol k_1+\cdots+\boldsymbol k_p = (2,0) \\ |\boldsymbol k_1|_1+\cdots +|\boldsymbol k_p|_1 = L}} \hat{u}_0(\boldsymbol k_1)\cdots \hat{u}_0(\boldsymbol k_p), \\
        \sum_{L\geq p+2}\sum_{\substack{ \boldsymbol k_1+\cdots+\boldsymbol k_p = (2,0) \\ |\boldsymbol k_1|_1+\cdots +|\boldsymbol k_p|_1 = L}} &|\hat{u}_0(\boldsymbol k_1)\cdots \hat{u}_0(\boldsymbol k_p)| \leq \sum_{L\geq p+2} L^{2p}|\boldsymbol r|_{\infty}^{L} \lesssim |\boldsymbol r|_{\infty}^{p+2},\\
        \sum_{\substack{ \boldsymbol k_1+\cdots+\boldsymbol k_p = (2,0) \\ |\boldsymbol k_1|_1+\cdots +|\boldsymbol k_p|_1 = p}} \hat{u}_0(\boldsymbol k_1)\cdots \hat{u}_0(\boldsymbol k_p) &= \sum_{l=0}^{p/2-1} \binom{p}{l}\binom{p-l}{l}\binom{p-2l}{p/2-l-1} \mathcal{A}_2^{l} \mathcal{A}_1^{p/2-l-1} \cdot \hat{u}_0(1,0)^{2}\\
        &+ O(\boldsymbol r^{p+2}).
    \end{align*}
    Similarly, we have
    \begin{align*}
        \widehat{u_0^{p}}(0,2) =& \sum_{l=0}^{p/2-1} \binom{p}{l} \binom{p-l}{l} \binom{p-2l}{p/2-l-1} \mathcal{A}_1^{l} \mathcal{A}_2^{p/2-l-1} \hat{u}_0(0,1)^{2} + O(\boldsymbol r^{p+2}),\\
        \widehat{u_0^{p}}(1,1) =& \sum_{l=0}^{p/2-1} \binom{p}{l+1} \binom{p-l-1}{l} \binom{p-2l-1}{p/2-l} \mathcal{A}_1^{l} \mathcal{A}_2^{p/2-l-1} \hat{u}_0(1,0) \hat{u}_0(0,1) + O(\boldsymbol r^{p+2}),\\
        \widehat{u_0^{p}}(1,-1) =& \sum_{l=0}^{p/2-1} \binom{p}{l+1} \binom{p-l-1}{l} \binom{p-2l-1}{p/2-l}\mathcal{A}_1^{l} \mathcal{A}_2^{p/2-l-1} \hat{u}_0(1,0) \hat{u}_0(0,-1) + O(\boldsymbol r^{p+2}),\\
        \widehat{u_0^{p}}(-1,1) =& \sum_{l=0}^{p/2-1} \binom{p}{l+1} \binom{p-l-1}{l} \binom{p-2l-1}{p/2-l} \mathcal{A}_1^{l} \mathcal{A}_2^{p/2-l-1} \hat{u}_0(-1,0) \hat{u}_0(0,1) + O(\boldsymbol r^{p+2}).
    \end{align*}
    With the above calculations, we can obtain the bifurcation equation when $p$ is even as follows:
    \begin{align}
            &(\omega_1-\omega_{1,0}(\boldsymbol r)) + 
        \Gamma_{1,1}(\boldsymbol r) + \Gamma_{1,2}(\boldsymbol r) + m_1 \mathscr{M}_{1,even}(\boldsymbol r) + \mathscr{R}_{1,even}(\boldsymbol\omega, \boldsymbol r)=0,\label{reducedbifurcation4.1}\\
        &(\omega_2-\omega_{2,0}(\boldsymbol r)) + \Gamma_{2,1}(\boldsymbol r) + \Gamma_{2,2}(\boldsymbol r) + m_2 \mathscr{M}_{2,even}(\boldsymbol r) + \mathscr{R}_{2,even}(\boldsymbol\omega, \boldsymbol r) =0.\label{reducedbifurcation4.2}
    \end{align}
    where $ \mathscr{R}_{j,even}(\boldsymbol\omega, \boldsymbol r) = O( \boldsymbol r^{p+1}),\ j=1,2$.
    
    Denote the left-hand sides of \eqref{reducedbifurcation3.1}--\eqref{reducedbifurcation3.2} and \eqref{reducedbifurcation4.1}--\eqref{reducedbifurcation4.2} by
    \[
    F_{odd}(\boldsymbol\omega, \boldsymbol r) = (F_{1,odd}(\boldsymbol\omega, \boldsymbol r),F_{2,odd}(\boldsymbol\omega, \boldsymbol r))^{T},\quad F_{even}(\boldsymbol \omega, \boldsymbol r) = (F_{1,even}(\boldsymbol \omega, \boldsymbol r), F_{2,even}(\boldsymbol \omega, \boldsymbol r))^{T},
    \]
    respectively.
    Let
    \begin{align}
        &\omega_{1,0,odd}(\boldsymbol r) = \omega_{1,0}(\boldsymbol r) - m_1\cdot \mathscr{M}_{1,odd}(\boldsymbol r),\label{zhubuodd1.1}\\
        &\omega_{2,0,odd}(\boldsymbol r) = \omega_{2,0}(\boldsymbol r) - m_2\cdot \mathscr{M}_{2,odd}(\boldsymbol r),\label{zhubuodd1.2}
    \end{align}
    and
    \begin{align}
        &\omega_{1,0,even}(\boldsymbol r) = \omega_{1,0}(\boldsymbol r) - 
        \Gamma_{1,1}(\boldsymbol r) - \Gamma_{1,2}(\boldsymbol r)
        - m_1\cdot \mathscr{M}_{1,even}(\boldsymbol r),\label{zhubueven2.1}\\
        &\omega_{2,0,even}(\boldsymbol r) = \omega_{2,0}(\boldsymbol r) - 
         \Gamma_{2,1}(\boldsymbol r) - \Gamma_{2,2}(\boldsymbol r) - m_2\cdot \mathscr{M}_{2,even}(\boldsymbol r).\label{zhubueven2.2}
    \end{align}
    Denote $\boldsymbol\omega_{0,odd}(\boldsymbol r) = (\omega_{1,0,odd}(\boldsymbol r), \omega_{2,0,odd}(\boldsymbol r)), \boldsymbol\omega_{0,even}(\boldsymbol r) = (\omega_{1,0,even}(\boldsymbol r), \omega_{2,0,even}(\boldsymbol r))$.
    Then,
    \begin{align*}
        F_{k,odd}(\boldsymbol\omega_{0,odd}(\boldsymbol r), \boldsymbol r) = O(\boldsymbol r^{p}),\quad
        F_{k,even}(\boldsymbol\omega_{0,even}(\boldsymbol r), \boldsymbol r) = O(\boldsymbol r^{p+1}),\quad k=1,2.
    \end{align*}
    We may suppose
    \[
        |F_{k,odd}(\boldsymbol\omega_{0,odd}(\boldsymbol r), \boldsymbol r)|\leq \mathscr E_0 |\boldsymbol r|_{\infty}^p,\quad |F_{k,even}(\boldsymbol\omega_{0,even}(\boldsymbol r), \boldsymbol r)|\leq \mathscr E_0 |\boldsymbol r|_{\infty}^{p+1},
    \]
    for a constant $\mathscr E_0>0$ independent of $\rho_3$.
    With \eqref{Lipchitz2.1line1}, \eqref{Lipchitz2.1line2}, \eqref{Lipchitz2.2} and \eqref{qiudaoguji2}, we have
    \begin{align}
        \dfrac{\partial F_{k,\cdot}}{\partial \omega_j}(\boldsymbol\omega,\boldsymbol r) &= \delta_{k,j} + \frac{m_k}{\hat{u}_0(e_k)} \left[ \left(\hat{u}_0*\frac{\partial \hat{v}}{\partial \omega_j}\right)(e_k) + \left(\hat{v}*\frac{\partial \hat{v}}{\partial \omega_j}\right)(e_k) \right] \notag\\
        &\qquad+ \frac{m_k}{\hat{u}_0(e_k)} \widehat{DP_2(u_0+v)[\partial_{\omega_j}v]}(e_k) \notag\\
        &= \delta_{k,j} + O(\boldsymbol r^{2})+O(\boldsymbol r^{p-1}),\label{setMinv}
    \end{align}
    and
    \begin{align*}
        \left| \dfrac{\partial^2 F_{k,\cdot}}{\partial \omega_i \partial \omega_j}(\boldsymbol\omega,\boldsymbol r) \right| &= \bigg| \frac{m_k}{\hat{u}_0(e_k)} \left[ \left( \hat{u}_0*\frac{\partial^2 \hat{v}}{\partial \omega_i\partial \omega_j}\right)(e_k) + \left( \frac{\partial \hat{v}}{\partial \omega_i} * \frac{\partial \hat{v}}{\partial \omega_j}\right)(e_k) + \left( \hat{v}*\frac{\partial^2 \hat{v}}{\partial \omega_i\partial \omega_j}\right)(e_k) \right] \\
        &\qquad+ \frac{m_k}{\hat{u}_0(e_k)} \left[ \widehat{D^2P_2(u_0+v)[\partial_{\omega_i}v, \partial_{\omega_j}v]}(e_k) + \widehat{DP_2(u_0+v)[\partial_{\omega_i\omega_j}v]}(e_k) \right] \bigg| \\
        &\lesssim |\boldsymbol r|^2,
    \end{align*}
    for $\cdot = \textup{odd, even}$, $e_1 = (1,0), e_2 = (0,1)$, and $i,j,k\in \{1,2\}$.
    Thus, the Jacobian matrix of $F_{odd}$ admits a uniform inverse on $\overline{\tilde{\Omega}}_{\rho_3,C}$.
    With the uniform estimate \eqref{setMinv}, we set
    \[
        M_{inv}:= \sup_{(\boldsymbol\omega, \boldsymbol r)\in\overline{\tilde \Omega}_{\rho_3,C}}\frac{\max_{j,k\in \{1,2\}}\left| \partial_{\omega_j} F_{k,odd} (\boldsymbol\omega, \boldsymbol r) \right|}{|\det [\partial_{\boldsymbol\omega} F_{odd} (\boldsymbol\omega, \boldsymbol r)]|}< \infty.
    \]
    Moreover, by the preceding estimate for the second derivative, there
    exists a constant $M>0$ such that
    \[
        \left| \partial_{\omega_i \omega_j}F_{k,odd}(\boldsymbol \omega, \boldsymbol r) \right| \leq M|\boldsymbol r|_{\infty}^2, \quad (\boldsymbol \omega, \boldsymbol r)\in\overline{\tilde\Omega}_{\rho_3,C},\quad i,j,k \in \{1,2\}.
    \]    
    Notice that with \eqref{zhubuodd1.1} and \eqref{zhubuodd1.2}, there exists a constant $C_{\mathrm{odd}}>0$ such that
    \[
        \left| \boldsymbol\omega_{0,\mathrm{odd}}(\boldsymbol r) - \boldsymbol\omega_{0}(\boldsymbol r) \right|_{\infty} \leq C_{\mathrm{odd}}|\boldsymbol r|_{\infty}^{p-1}.
    \]
    Choose $C>2C_{\mathrm{odd}}$. After reducing $\rho_3$ if necessary, we may
    assume that
    \[
        M_{inv}\mathscr E_0 \sum_{l\geq1} |\boldsymbol r|_{\infty}^{2^{l-1}p} \leq \left( \frac C2-C_{\mathrm{odd}} \right) |\boldsymbol r|_{\infty}^{p-1}.
    \]
    For $l\geq 1$, let
    \begin{align*}
        E_{l-1}(\boldsymbol r) &:= |F_{odd}(\boldsymbol\omega_{l-1,odd}(\boldsymbol r),\boldsymbol r)|_{\infty},\\
        \Delta_{l,odd}(\boldsymbol{r}) &:= -\partial _{\boldsymbol \omega} F_{odd}(\boldsymbol \omega_{l-1,odd}(\boldsymbol{r}),\boldsymbol{r})^{-1} F_{odd}(\boldsymbol \omega_{l-1,odd}(\boldsymbol{r}),\boldsymbol{r}),\\
        \boldsymbol\omega_{l,odd}(\boldsymbol r) &:= \boldsymbol\omega_{0,odd}(\boldsymbol r)+\sum_{l'=1}^{l}\Delta_{l',odd}(\boldsymbol r).
    \end{align*}
    Fix $l=1$. Notice that $\boldsymbol\omega_{0,odd}(\boldsymbol r) \in I_C(\boldsymbol r)$. Since $\boldsymbol \omega_{0,odd}(\boldsymbol r),F_{odd}(\boldsymbol \omega, \boldsymbol r)$ are real analytic on $\Omega_{\rho_3,C}$ and holomorphic on $\tilde{\Omega}_{\rho_3,C}$, we have
    \begin{align*}
        &\boldsymbol\omega_{0,odd}(\overline {\boldsymbol r}) = \overline{\boldsymbol\omega_{0,odd}(\boldsymbol r)},\\
        &\Delta_{1,odd}(\overline {\boldsymbol r}) = -\partial _{\boldsymbol\omega} F_{odd}(\overline{\boldsymbol\omega_{0,odd}(\boldsymbol r)},\overline {\boldsymbol r})^{-1} F_{odd}(\overline{\boldsymbol\omega_{0,odd}(\boldsymbol r)},\overline{\boldsymbol r}) = \overline{\Delta_{1,odd}(\boldsymbol r)}.
    \end{align*}
    Hence $\Delta_{1,odd}(\boldsymbol{r})$ is holomorphic in $\tilde{\mathcal{U}}_{\rho_3}$ and $|\Delta_{1,odd}(\boldsymbol{r})|_{\infty} \leq M_{inv}E_0(\boldsymbol r)$. Moreover, since
    \[
        |\boldsymbol\omega_{0,odd}(\boldsymbol r)-\boldsymbol\omega_{0}(\boldsymbol r)|_{\infty} + |\Delta_{1,odd}(\boldsymbol r)|_{\infty} < C|\boldsymbol r|_{\infty}^{p-1},
    \]
    the function $\boldsymbol\omega_{0,odd}(\boldsymbol r) + t \Delta_{1,odd}(\boldsymbol r) \in I_C(\boldsymbol r)$ for $t\in [0,1]$. This implies that
    \begin{align*}
        &|F_{odd}(\boldsymbol \omega_{0,odd}(\boldsymbol{r})+\Delta_{1,odd}(\boldsymbol{r}),\boldsymbol{r})|_{\infty} \\
        \leq&|F_{odd}(\boldsymbol \omega_{0,odd}(\boldsymbol{r}),\boldsymbol{r})+\partial _{\boldsymbol \omega} F_{odd}(\boldsymbol \omega_{0,odd}(\boldsymbol{r}),\boldsymbol{r})\Delta_{1,odd}(\boldsymbol{r})|_{\infty}\\
        &+ \left|\Delta_{1,odd}(\boldsymbol{r})^{T}\left[ \int_0^1 \partial_{\boldsymbol \omega}^2F_{odd}(\boldsymbol \omega_{0,odd}(\boldsymbol{r})+t\Delta_{1,odd}(\boldsymbol{r}),\boldsymbol{r})(1-t) dt\right] \Delta_{1,odd}(\boldsymbol{r})\right|_{\infty} \\
        \leq &M |\boldsymbol{r}|_{\infty}^{2} \cdot |\Delta_{1,odd}(\boldsymbol{r})|_{\infty}^2 \leq MM_{inv}^2|\boldsymbol{r}|_{\infty}^{2}E_0(\boldsymbol r)^2.
    \end{align*}
    Suppose the preceding results hold for $l-1$. Then, $\Delta_{l,odd}(\boldsymbol{r})$ is holomorphic in $\tilde{\mathcal{U}}_{\rho_3}$ and $|\Delta_{l,odd}(\boldsymbol{r})|_{\infty} \leq M_{inv}E_{l-1}(\boldsymbol r)$. If $MM_{inv}^2\mathscr E_0 \rho_3^2<1$, we claim that $E_{l'}(\boldsymbol r) \leq \mathscr E_0 |\boldsymbol r|_{\infty}^{2^{l'}p}$ for $l'<l$. It is easy to see that $E_0(\boldsymbol r)$ satisfies the estimate. If it holds for $E_{l'-1}(\boldsymbol r)$, then
    \begin{align*}
        E_{l'}(\boldsymbol r) &\leq M|\boldsymbol r|_{\infty}^2|\Delta_{l'}(\boldsymbol r)|^2 \leq MM_{inv}^2|\boldsymbol r|_{\infty}^2E_{l'-1}(\boldsymbol r)^2 \\
        &\leq MM_{inv}^2\mathscr E_0 |\boldsymbol r|_{\infty}^2\cdot \mathscr E_0|\boldsymbol r|_{\infty}^{2^{l'}p}\leq \mathscr E_0 |\boldsymbol r|_{\infty}^{2^{l'}p}.
    \end{align*}
    Thus, for $t \in [0,1]$,
    \begin{align*}
        \left| \boldsymbol\omega_{l-1,\mathrm{odd}}(\boldsymbol r) + t\Delta_{l,odd}(\boldsymbol r) - \boldsymbol\omega_{0}(\boldsymbol r) \right|_{\infty} &\leq \left| \boldsymbol\omega_{0,\mathrm{odd}}(\boldsymbol r) - \boldsymbol\omega_{0}(\boldsymbol r) \right|_{\infty} + \sum_{l'=1}^{l}|\Delta_{l',odd}(\boldsymbol r)|_{\infty}\\
        &\leq C_{\mathrm{odd}}|\boldsymbol r|_{\infty}^{p-1} + M_{inv}\mathscr E_0\sum_{l\geq 1} |\boldsymbol r|_{\infty}^{2^{l-1}p} < C|\boldsymbol r|_{\infty}^{p-1},
    \end{align*}
    which implies that $\boldsymbol\omega_{l-1,\mathrm{odd}}(\boldsymbol r) + t\Delta_{l,odd}(\boldsymbol r) \in I_C(\boldsymbol r)$.
    As a consequence,
    \begin{align*}
        &|F_{odd}(\boldsymbol \omega_{l-1,odd}(\boldsymbol r)+\Delta_{l,odd}(\boldsymbol r),\boldsymbol r)|_{\infty} \\
        = &\left|\Delta_{l,odd}(\boldsymbol r)^T \left[ \int_0^1 \partial_{\boldsymbol \omega}^2F_{odd}(\boldsymbol \omega_{l-1,odd}(\boldsymbol r)+t\Delta_{l,odd}(\boldsymbol r),\boldsymbol r)(1-t) dt\right] \Delta_{l,odd}(\boldsymbol r)\right|_{\infty}\\
        \leq &MM_{inv}^2|\boldsymbol r|_{\infty}^2E_{l-1}(\boldsymbol r)^2 \leq MM_{inv}^2\mathscr E_0|\boldsymbol r|_{\infty}^2 \cdot \mathscr E_0|\boldsymbol r|_{\infty}^{2^lp} \leq \mathscr E_0|\boldsymbol r|_{\infty}^{2^lp}.
    \end{align*}
    It also follows inductively that
    \[
        \boldsymbol \omega_{l,odd}(\overline{\boldsymbol r}) = \overline{\boldsymbol \omega_{l,odd}(\boldsymbol r)}, \quad \Delta_{l,odd}(\overline{\boldsymbol r}) = \overline{\Delta_{l,odd}(\boldsymbol r)}.
    \]
    Thus, for $j=1,2$, the series
    \begin{align*}
        \sum_{l\geq 1} |\Delta_{l,odd}(\boldsymbol{r})|_{\infty} \leq M_{inv}\mathscr E_0 \sum_{l\geq 1} |\boldsymbol r|_{\infty}^{2^{l-1}p} \lesssim |\boldsymbol{r}|_{\infty}^{p} 
    \end{align*}
    converges absolutely and locally uniformly on $\tilde{\mathcal{U}}_{\rho_3}$.
    Hence we obtain a holomorphic function
    \begin{equation}
        \boldsymbol \omega_{odd}(\boldsymbol{r}) = \boldsymbol \omega_{0,odd}(\boldsymbol{r}) + \sum_{l=1}^{\infty}\Delta_{l,odd}(\boldsymbol{r}). \label{jieoddbeq2}
    \end{equation}
    Since $F_{odd}$ is continuous, $|F_{odd}(\boldsymbol\omega_{odd}(\boldsymbol r),\boldsymbol r)|_{\infty} = \lim_{l\to\infty} |F_{\mathrm{odd}}(\boldsymbol\omega_{l,odd}(\boldsymbol r),\boldsymbol r)|_{\infty} =0$. Thus, $\boldsymbol \omega_{odd}(\boldsymbol{r})$ is a holomorphic solution of the equation \eqref{reducedbifurcation3.1}--\eqref{reducedbifurcation3.2} on $\tilde{\mathcal{U}}_{\rho_3}$.
    Moreover, since $\boldsymbol\omega_{l,odd}(\overline{\boldsymbol r}) = \overline{\boldsymbol \omega_{l,odd}(\boldsymbol r)}$ hold for any $l\geq 1$, the solution satisfies $\boldsymbol \omega_{odd}(\overline{\boldsymbol r}) = \overline{\boldsymbol \omega_{odd}(\boldsymbol r)}$.
    Similarly, we can obtain a real-analytic solution
    \begin{equation}
        \boldsymbol \omega_{even}(\boldsymbol{r}) = \boldsymbol\omega_{0,even}(\boldsymbol{r}) + \sum_{l=1}^{\infty}\Delta_{l,even}(\boldsymbol{r}) \label{jieevenbeq2}
    \end{equation}
    of the equation \eqref{reducedbifurcation4.1}--\eqref{reducedbifurcation4.2} on $(\rho_3/2,\rho_3)^2$.
    
    Therefore, after first choosing $C$ sufficiently large and then reducing $\rho_3$, the a priori restriction \eqref{xianyanguji2} is satisfied. Thus, the functions \eqref{jieoddbeq2}--\eqref{jieevenbeq2} are real-analytic solutions to the bifurcation equation \eqref{solvebifurcation2} on $(\rho_3/2,\rho_3)^2$.
\end{proof}

\appendix
\section{Proof of Corollary \ref{cor:finite-gap-KP}}
\begin{proof}
Throughout this paper, we use the  normalization \eqref{kp} of the KP equation.
However, the finite-gap solutions which we invoke are usually stated in the literature for \eqref{transformedKP2} and \eqref{transformedKP1}.
These two forms are equivalent up to elementary changes of variables and a rescaling of the dependent variable. 
We now transform the Its--Matveev formula to the form used in
\eqref{kp}. 
Take KP-II equation for example.
Let \(u(T,X,Y)\) be a solution of
\[
    \partial_X\left(
    u_T-\frac14(6uu_X+u_{XXX})
    \right)
    -\frac34u_{YY}=0.
\]
Equivalently, after applying \(\partial_X^{-1}\), this equation can be
written as
\[
    u_T-\frac14u_{XXX}-\frac32uu_X
    -\frac34\partial_X^{-1}u_{YY}=0.
\]
Set
\[
    \tilde u(t,x,y)=A\,u(a_3t,a_1x,a_2y).
\]
Then
\[
    \tilde u_t=Aa_3u_T,\qquad
    \tilde u_{xxx}=Aa_1^3u_{XXX},\qquad
    \tilde u\tilde u_x=A^2a_1uu_X,
\]
and
\[
    \partial_x^{-1}\tilde u_{yy}
    =
    \frac{Aa_2^2}{a_1}\partial_X^{-1}u_{YY}.
\]
Therefore, in order that \(\tilde u\) solve
\[
    \tilde u_t+\tilde u_{xxx}
    +\partial_x^{-1}\tilde u_{yy}
    +\tilde u\tilde u_x=0,
\]
it is enough to require $a_3=-4a_1^3,\ A=6a_1^2,\ a_2^2=3 a_1^4$.
Taking \(a_1=1\), we obtain
\[
    A=6,\qquad a_3=-4,\qquad a_2^2=3.
\]

We choose $a_2=\sqrt3$.
Hence, the function $\tilde u(t,x,y)=6u(-4t,x,\sqrt3 y)$ solves \eqref{kp} with \(\lambda=-1\). Consequently, from the Its--Matveev formula, we obtain the solution of the equation \eqref{kp} with $\lambda=-1$:
\[
    \tilde u(t,x,y)
    =
    12\frac{\partial^2}{\partial x^2}
    \log\theta\left(
        \boldsymbol Ux+\sqrt3\boldsymbol Vy
        -4\boldsymbol Wt+\boldsymbol D;B
    \right)
    +12c.
\]

Using the Galilean symmetry of \eqref{kp}, namely that if \(v(t,x,y)\) is
a solution then
\[
    v(t,x+st,y)-s
\]
is also a solution, the additive constant can be removed. Taking
\(s=12c\), we obtain the solution of the equation \eqref{kp} with $\lambda=-1$:
\begin{equation*}
    \tilde u(t,x,y)
    =
    12\frac{\partial^2}{\partial x^2}
    \log\theta\left(
        \boldsymbol Ux+\sqrt3\boldsymbol Vy
        +\boldsymbol W't+\boldsymbol D;B
    \right),\quad \boldsymbol W'=-4\boldsymbol W+12c\,\boldsymbol U.
\end{equation*}

Similarly, for the KP-I equation, applying the complex transformation
from KP-II to KP-I in Remark \ref{transformKP12} gives the finite-gap solution
\begin{equation*}
    \tilde u(t,x,y)
    =
    12\frac{\partial^2}{\partial x^2}
    \log\theta\left(
        i\bigl(-\boldsymbol Ux+\sqrt3\boldsymbol Vy
        +\boldsymbol W't\bigr)+\boldsymbol D;B
    \right)
\end{equation*}
of the equation \eqref{kp} with $\lambda = 1$.
\end{proof}

\section{Expansion of \texorpdfstring{$\boldsymbol{\omega}$}{omega} in Theorem \ref{solvetwogapbifurcation}}\label{Appendix}

Set
\begin{align*}
    &\mathscr{M}_{1,odd}(\boldsymbol r) = (a_p+b_p)\sum_{l=0}^{(p-1)/2} \binom{p}{l+1} \binom{p-l-1}{l} \binom{p-2l-1}{(p-1)/2-l} \mathcal{A}_1^{l} \mathcal{A}_2^{(p-1)/2-l},\\
    &\mathscr{M}_{2,odd}(\boldsymbol r) = (a_p+b_p)\sum_{l=0}^{(p-1)/2} \binom{p}{l+1} \binom{p-l-1}{l} \binom{p-2l-1}{(p-1)/2-l} \mathcal{A}_2^{l} \mathcal{A}_1^{(p-1)/2-l},
\end{align*}
and
\begin{align*}
    &C_{\Gamma,1} :=
    \begin{cases}
        D_{1,+}+D_{1,-},\ m_1\pm m_2 \neq 0,\\
        D_{1,+},\ m_1- m_2 = 0,\\
        D_{1,-},\ m_1 + m_2 =0,          
    \end{cases}\quad
    C_{\Gamma,2} :=
    \begin{cases}
        D_{2,+}+D_{2,-},\ m_1\pm m_2 \neq 0,\\
        D_{2,+},\ m_1 - m_2 = 0,\\
        D_{2,-},\ m_1 + m_2 =0,            
    \end{cases}\\
    &D_{1,+} = \bigg( \frac{12\pi^2 }{\gamma^2}m_2  - \frac{(m_1n_2-m_2n_1)^2}{m_1^2 m_2 (m_1+m_2)^2}\bigg)^{-1},\quad D_{1,-} = -\bigg( \frac{12\pi^2}{\gamma^2} m_2 -\frac{(m_1n_2-m_2n_1)^2}{m_1^2 m_2 (m_1-m_2)^2}\bigg)^{-1},\\
    &D_{2,+} = \bigg( \frac{12\pi^2 }{\gamma^2}m_1  - \frac{(m_1n_2-m_2n_1)^2}{m_1 m_2^2 (m_1+m_2)^2}\bigg)^{-1},\quad D_{2,-} = -\bigg( \frac{12\pi^2}{\gamma^2} m_1 -\frac{(m_1n_2-m_2n_1)^2}{m_1 m_2^2 (m_2-m_1)^2}\bigg)^{-1},\\
    &\Gamma_{1,1}(\boldsymbol r) := \frac{\gamma^2(a_p+b_p)}{12 \pi^2 m_1}\sum_{l=0}^{p/2-1} \binom{p}{l}\binom{p-l}{l}\binom{p-2l}{p/2-l-1} \mathcal{A}_1^{p/2-l} \mathcal{A}_2^{l},\\
    &\Gamma_{1,2}(\boldsymbol r) := C_{\Gamma,1}(a_p+b_p)\sum_{l=0}^{p/2-1} \binom{p}{l+1} \binom{p-l-1}{l} \binom{p-2l-1}{p/2-l}\mathcal{A}_1^{l} \mathcal{A}_2^{p/2-l},\\
    &\Gamma_{2,1}(\boldsymbol r) := \frac{\gamma^2(a_p+b_p)}{12 \pi^2 m_2}\sum_{l=0}^{p/2-1} \binom{p}{l} \binom{p-l}{l} \binom{p-2l}{p/2-l-1} \mathcal{A}_1^{l} \mathcal{A}_2^{p/2-l},\\
    &\Gamma_{2,2}(\boldsymbol r) := C_{\Gamma,2}(a_p+b_p)\sum_{l=0}^{p/2-1} \binom{p}{l+1} \binom{p-l-1}{l} \binom{p-2l-1}{p/2-l}\mathcal{A}_1^{l+1} \mathcal{A}_2^{p/2-l-1}.
\end{align*}
Moreover,
\begin{align*}
    &\mathscr{M}_{1,even}(\boldsymbol r):= (a_p+b_p)\sum_{l=0}^{p/2-1} \binom{p}1 \binom{p-1}{l} \binom{p-l-1}{l} \binom{p-2l-1}{p/2-l-1} \mathcal{A}_1^{l} \mathcal{A}_2^{p/2-l-1} \\
    &\quad\cdot \frac{\hat{u}_0(1,1) \hat{u}_0(0,-1) + \hat{u}_0(1,-1) \hat{u}_0(0,1)}{ \hat{u}_0(1,0)}\\
    &+ (a_p+b_p) \sum_{l=0}^{p/2-1} \binom{p}1 \binom{p-1}{l} \binom{p-l-1}{l+1} \binom{p-2l-2}{p/2-l-1} \mathcal{A}_1^{l} \mathcal{A}_2^{p/2-l-1} \cdot \frac{\hat{u}_0(2,0) \hat{u}_0(-1,0)}{\hat{u}_0(1,0)} \\
    &+ (a_p+b_p)\sum_{l=0}^{p/2-2} \binom{p}1 \binom{p-1}{l} \binom{p-l-1}{l+3} \binom{p-2l-4}{p/2-l-2} \mathcal{A}_1^{l} \mathcal{A}_2^{p/2-l-2} \cdot \hat{u}_0(-2,0) \hat{u}_0(1,0)^{2}\\
    &+ (a_p+b_p)\sum_{l=0}^{p/2-2} \binom{p}1 \binom{p-1}{l+2} \binom{p-l-3}{l} \binom{p-2l-3}{p/2-l-2} \mathcal{A}_1^{l} \mathcal{A}_2^{p/2-l-2} \\
    &\quad \cdot [\hat{u}_0(-1,1) \hat{u}_0(0,-1)\hat{u}_0(1,0) + \hat{u}_0(-1,-1) \hat{u}_0(0,1)\hat{u}_0(1,0)]\\
    &+ (a_p+b_p) \sum_{l=0}^{p/2-2} \binom{p}1 \binom{p-1}{l+1} \binom{p-l-2}{l} \binom{p-2l-2}{p/2-l-2} \mathcal{A}_1^{l} \mathcal{A}_2^{p/2-l-2} \\
    &\quad \cdot [\hat{u}_0(0,2) \hat{u}_0(0,-1)^2 + \hat{u}_0(0,-2) \hat{u}_0(0,1)^2] \\
    &+ (a_{p+1}+b_{p+1})\sum_{l=0}^{p/2} \binom{p+1}{l+1} \binom{p-l}{l} \binom{p-2l}{p/2-l} \mathcal{A}_1^{l} \mathcal{A}_2^{p/2-l},\\
    &\mathscr{M}_{2,even}(\boldsymbol r) := (a_p+b_p)\sum_{l=0}^{p/2-1} \binom{p}1 \binom{p-1}{l} \binom{p-l-1}{l} \binom{p-2l-1}{p/2-l-1} \mathcal{A}_2^{l} \mathcal{A}_1^{p/2-l-1} \\
    &\quad\cdot \frac{\hat{u}_0(1,1) \hat{u}_0(-1,0) + \hat{u}_0(-1,1) \hat{u}_0(1,0)} {\hat{u}_0(0,1)}\\
    &+ (a_p+b_p) \sum_{l=0}^{p/2-1} \binom{p}1 \binom{p-1}{l} \binom{p-l-1}{l+1} \binom{p-2l-2}{p/2-l-1} \mathcal{A}_2^{l} \mathcal{A}_1^{p/2-l-1} \cdot \frac{\hat{u}_0(0,2) \hat{u}_0(0,-1)}{\hat{u}_0(0,1)} \\
    &+ (a_p+b_p)\sum_{l=0}^{p/2-2} \binom{p}1 \binom{p-1}{l} \binom{p-l-1}{l+3} \binom{p-2l-4}{p/2-l-2} \mathcal{A}_2^{l} \mathcal{A}_1^{p/2-l-2} \cdot \hat{u}_0(0,-2) \hat{u}_0(0,1)^{2}\\
    &+ (a_p+b_p)\sum_{l=0}^{p/2-2} \binom{p}1 \binom{p-1}{l+2} \binom{p-l-3}{l} \binom{p-2l-3}{p/2-l-2} \mathcal{A}_2^{l} \mathcal{A}_1^{p/2-l-2} \\
    &\quad \cdot [\hat{u}_0(1,-1) \hat{u}_0(-1,0)\hat{u}_0(0,1) + \hat{u}_0(-1,-1) \hat{u}_0(1,0)\hat{u}_0(0,1)]\\
    &+ (a_p+b_p) \sum_{l=0}^{p/2-2} \binom{p}1 \binom{p-1}{l+1} \binom{p-l-2}{l} \binom{p-2l-2}{p/2-l-2} \mathcal{A}_2^{l} \mathcal{A}_1^{p/2-l-2} \\
    &\quad \cdot [\hat{u}_0(2,0) \hat{u}_0(-1,0)^2 + \hat{u}_0(-2,0) \hat{u}_0(1,0)^2] \\
    &+ (a_{p+1}+b_{p+1})\sum_{l=0}^{p/2} \binom{p+1}{l+1} \binom{p-l}{l} \binom{p-2l}{p/2-l} \mathcal{A}_2^{l} \mathcal{A}_1^{p/2-l}.
\end{align*}
If $p$ is odd, we have
\begin{align*}
    \widehat{P_2(u_0+v)}(1,0)
    =&\widehat{P_2(u_0)}(1,0)+O(\boldsymbol r^{2p-1})\\
    =& (a_p+b_p)\sum_{\boldsymbol k_1+\cdots+\boldsymbol k_p = (1,0)} \hat{u}_0(\boldsymbol k_1)\cdots \hat{u}_0(\boldsymbol k_p) 
    + (a_{p+1} + b_{p+1}) \widehat{u_0^{p+1}}(1,0)\\ 
    &+ \widehat{P_{2,1,>p+1}(u_0)}(1,0) + \widehat{P_{2,2,>p+1}(u_0)}(1,0) + O(\boldsymbol r^{2p-1})\\
    =& (a_p+b_p)\sum_{\substack{ \boldsymbol k_1+\cdots+\boldsymbol k_p = (1,0) \\ |\boldsymbol k_1|_1+\cdots +|\boldsymbol k_p|_1 = p}} \hat{u}_0(\boldsymbol k_1)\cdots \hat{u}_0(\boldsymbol k_p)\\
    &+ (a_p+b_p)\sum_{L\geq p+2}\sum_{\substack{ \boldsymbol k_1+\cdots+\boldsymbol k_p = (1,0) \\ |\boldsymbol k_1|_1+\cdots +|\boldsymbol k_p|_1 = L}} \hat{u}_0(\boldsymbol k_1)\cdots \hat{u}_0(\boldsymbol k_p) + O( \boldsymbol r^{p+1})\\
    =& (a_p+b_p)\sum_{\substack{ \boldsymbol k_1+\cdots+\boldsymbol k_p = (1,0) \\ |\boldsymbol k_1|_1+\cdots +|\boldsymbol k_p|_1 = p}} \hat{u}_0(\boldsymbol k_1)\cdots \hat{u}_0(\boldsymbol k_p) + O(\boldsymbol r^{p+1})\\
    =& \mathscr{M}_{1,odd}(\boldsymbol r) \cdot \hat{u}_0(1,0) + O( \boldsymbol r^{p+1}) ,
\end{align*}
and
\begin{align*}
    \widehat{P_2(u_0+v)}(0,1)  = \widehat{P_2(u_0)}(0,1) + O(\boldsymbol r^{2p-1}) = \mathscr{M}_{2,odd}(\boldsymbol r) \cdot \hat{u}_0(0,1) + O(\boldsymbol r^{p+1}).
\end{align*}

If $p$ is even, we have
\begin{align*}
    &\widehat{P_2(u_0+v)}(1,0)\\
    =&\widehat{P_2(u_0)}(1,0)+O(\boldsymbol r^{2p-1})\\
    =& (a_p+b_p)\sum_{\boldsymbol k_1+\cdots+\boldsymbol k_p = (1,0)} \hat{u}_0(\boldsymbol k_1)\cdots \hat{u}_0(\boldsymbol k_p) 
    + (a_{p+1}+b_{p+1}) \sum_{\boldsymbol k_1+\cdots+\boldsymbol k_{p+1} = (1,0)} \hat{u}_0(\boldsymbol k_1)\cdots \hat{u}_0(\boldsymbol k_{p+1})\\
    &+ \widehat{P_{2,1,>p+1}(u_0)}(1,0) +
    \widehat{P_{2,2,>p+1}(u_0)}(1,0) + O(\boldsymbol r^{2p-1})\\
    =&(a_p+b_p) \sum_{\substack{ \boldsymbol k_1+\cdots+\boldsymbol k_p = (1,0) \\ |\boldsymbol k_1|_1+\cdots +|\boldsymbol k_p|_1 = p+1}} \hat{u}_0(\boldsymbol k_1)\cdots \hat{u}_0(\boldsymbol k_p)
    + (a_p+b_p) \sum_{L\geq p+2}\sum_{\substack{ \boldsymbol k_1+\cdots+\boldsymbol k_p = (1,0) \\ |\boldsymbol k_1|_1+\cdots +|\boldsymbol k_p|_1 = L}} \hat{u}_0(\boldsymbol k_1)\cdots \hat{u}_0(\boldsymbol k_p) \\
    &+ (a_{p+1}+b_{p+1}) \sum_{\substack{ \boldsymbol k_1+\cdots+\boldsymbol k_{p+1} = (1,0) \\ |\boldsymbol k_1|_1+\cdots +|\boldsymbol k_{p+1}|_1 = p+1}} \hat{u}_0(\boldsymbol k_1)\cdots \hat{u}_0(\boldsymbol k_{p+1})\\
    &+ (a_{p+1}+b_{p+1}) \sum_{L\geq p+2}\sum_{\substack{ \boldsymbol k_1+\cdots+\boldsymbol k_{p+1} = (1,0) \\ |\boldsymbol k_1|_1+\cdots +|\boldsymbol k_{p+1}|_1 = L}} \hat{u}_0(\boldsymbol k_1)\cdots \hat{u}_0(\boldsymbol k_{p+1}) + O(\boldsymbol r^{p+2})\\
    =&(a_p+b_p) \sum_{\substack{ \boldsymbol k_1+\cdots+\boldsymbol k_p = (1,0) \\ |\boldsymbol k_1|_1+\cdots +|\boldsymbol k_p|_1 = p+1}} \hat{u}_0(\boldsymbol k_1)\cdots \hat{u}_0(\boldsymbol k_p)\\
    &+ (a_{p+1}+b_{p+1}) \sum_{\substack{ \boldsymbol k_1+\cdots+\boldsymbol k_{p+1} = (1,0) \\ |\boldsymbol k_1|_1+\cdots +|\boldsymbol k_{p+1}|_1 = p+1}} \hat{u}_0(\boldsymbol k_1)\cdots \hat{u}_0(\boldsymbol k_{p+1})+ O(\boldsymbol r^{p+2})\\
    =  & \mathscr{M}_{1,even}(\boldsymbol r)\cdot \hat{u}_0(1,0) + O(\boldsymbol r^{p+2}),
\end{align*}
and
\begin{align*}
    &\widehat{P_2(u_0+v)}(0,1) = \mathscr{M}_{2,even}(\boldsymbol r)\cdot \hat{u}_0(0,1)+ O(\boldsymbol r^{p+2}),
\end{align*}

\bibliographystyle{alpha}
\bibliography{sample2}
\end{document}